\newif\iflibertine
\libertinetrue
\pdfoutput=1
\documentclass{article}
\usepackage{import}
\usepackage{amsmath, amsthm, amssymb}
\usepackage{leftindex}
\usepackage{eucal}
\usepackage{mathrsfs}
\usepackage{geometry}
\usepackage[bottom]{footmisc}
\usepackage[unicode,bookmarksnumbered=true,colorlinks=true,allcolors=blue,linktoc=all,hypertexnames=false]{hyperref}
\usepackage[capitalise,nameinlink]{cleveref}
\usepackage{tikz-cd}
\usepackage{tikz}
\usetikzlibrary{arrows.meta,calc,positioning}
\usepackage{stmaryrd}
\usepackage{listings}
\usepackage{etoolbox}
\usepackage{thm-restate}
\usepackage[nottoc,notlot,notlof]{tocbibind}
\usepackage{tocloft}
\usepackage{enumitem}
\usepackage{mathtools}
\usepackage{quiver}
\usepackage{caption}
\usepackage{titlesec}

\iflibertine
\usepackage[T1]{fontenc}
\usepackage{libertine}
\usepackage[libertine]{newtxmath}
\input{glyphtounicode}
\pdfglyphtounicode{u1D434}{0041} % A
\pdfglyphtounicode{u1D435}{0042} % B
\pdfglyphtounicode{u1D436}{0043} % C
\pdfglyphtounicode{u1D437}{0044} % D
\pdfglyphtounicode{u1D438}{0045} % E
\pdfglyphtounicode{u1D439}{0046} % F
\pdfglyphtounicode{u1D43A}{0047} % G
\pdfglyphtounicode{u1D43B}{0048} % H
\pdfglyphtounicode{u1D43C}{0049} % I
\pdfglyphtounicode{u1D43D}{004A} % J
\pdfglyphtounicode{u1D43E}{004B} % K
\pdfglyphtounicode{u1D43F}{004C} % L
\pdfglyphtounicode{u1D440}{004D} % M
\pdfglyphtounicode{u1D441}{004E} % N
\pdfglyphtounicode{u1D442}{004F} % O
\pdfglyphtounicode{u1D443}{0050} % P
\pdfglyphtounicode{u1D444}{0051} % Q
\pdfglyphtounicode{u1D445}{0052} % R
\pdfglyphtounicode{u1D446}{0053} % S
\pdfglyphtounicode{u1D447}{0054} % T
\pdfglyphtounicode{u1D448}{0055} % U
\pdfglyphtounicode{u1D449}{0056} % V
\pdfglyphtounicode{u1D44A}{0057} % W
\pdfglyphtounicode{u1D44B}{0058} % X
\pdfglyphtounicode{u1D44C}{0059} % Y
\pdfglyphtounicode{u1D44D}{005A} % Z

\pdfglyphtounicode{u1D44E}{0061} % a
\pdfglyphtounicode{u1D44F}{0062} % b
\pdfglyphtounicode{u1D450}{0063} % c
\pdfglyphtounicode{u1D451}{0064} % d
\pdfglyphtounicode{u1D452}{0065} % e
\pdfglyphtounicode{u1D453}{0066} % f
\pdfglyphtounicode{u1D454}{0067} % g

\pdfglyphtounicode{uni210E}{0068} % h
\pdfglyphtounicode{u1D455}{0068} % h

\pdfglyphtounicode{u1D456}{0069} % i
\pdfglyphtounicode{u1D457}{006A} % j
\pdfglyphtounicode{u1D458}{006B} % k
\pdfglyphtounicode{u1D459}{006C} % l
\pdfglyphtounicode{u1D45A}{006D} % m
\pdfglyphtounicode{u1D45B}{006E} % n
\pdfglyphtounicode{u1D45C}{006F} % o
\pdfglyphtounicode{u1D45D}{0070} % p
\pdfglyphtounicode{u1D45E}{0071} % q
\pdfglyphtounicode{u1D45F}{0072} % r
\pdfglyphtounicode{u1D460}{0073} % s
\pdfglyphtounicode{u1D461}{0074} % t
\pdfglyphtounicode{u1D462}{0075} % u
\pdfglyphtounicode{u1D463}{0076} % v
\pdfglyphtounicode{u1D464}{0077} % w
\pdfglyphtounicode{u1D465}{0078} % x
\pdfglyphtounicode{u1D466}{0079} % y
\pdfglyphtounicode{u1D467}{007A} % z

\pdfglyphtounicode{u1D6E2}{0391} % Alpha
\pdfglyphtounicode{u1D6E3}{0392} % Beta
\pdfglyphtounicode{u1D6E4}{0393} % Gamma
\pdfglyphtounicode{u1D6E5}{0394} % Delta
\pdfglyphtounicode{u1D6E6}{0395} % Epsilon
\pdfglyphtounicode{u1D6E7}{0396} % Zeta
\pdfglyphtounicode{u1D6E8}{0397} % Eta
\pdfglyphtounicode{u1D6E9}{0398} % Theta
\pdfglyphtounicode{u1D6EA}{0399} % Iota
\pdfglyphtounicode{u1D6EB}{039A} % Kappa
\pdfglyphtounicode{u1D6EC}{039B} % Lambda
\pdfglyphtounicode{u1D6ED}{039C} % Mu
\pdfglyphtounicode{u1D6EE}{039D} % Nu
\pdfglyphtounicode{u1D6EF}{039E} % Xi
\pdfglyphtounicode{u1D6F0}{039F} % Omicron
\pdfglyphtounicode{u1D6F1}{03A0} % Pi
\pdfglyphtounicode{u1D6F2}{03A1} % Rho
\pdfglyphtounicode{u1D6F3}{03F4} % capital theta symbol
\pdfglyphtounicode{u1D6F4}{03A3} % Sigma
\pdfglyphtounicode{u1D6F5}{03A4} % Tau
\pdfglyphtounicode{u1D6F6}{03A5} % Upsilon
\pdfglyphtounicode{u1D6F7}{03A6} % Phi
\pdfglyphtounicode{u1D6F8}{03A7} % Chi
\pdfglyphtounicode{u1D6F9}{03A8} % Psi
\pdfglyphtounicode{u1D6FA}{03A9} % Omega

\pdfglyphtounicode{u1D6FB}{2207} % nabla

\pdfglyphtounicode{u1D6FC}{03B1} % alpha
\pdfglyphtounicode{u1D6FD}{03B2} % beta
\pdfglyphtounicode{u1D6FE}{03B3} % gamma
\pdfglyphtounicode{u1D6FF}{03B4} % delta
\pdfglyphtounicode{u1D700}{03B5} % epsilon
\pdfglyphtounicode{u1D701}{03B6} % zeta
\pdfglyphtounicode{u1D702}{03B7} % eta
\pdfglyphtounicode{u1D703}{03B8} % theta
\pdfglyphtounicode{u1D704}{03B9} % iota
\pdfglyphtounicode{u1D705}{03BA} % kappa
\pdfglyphtounicode{u1D706}{03BB} % lambda
\pdfglyphtounicode{u1D707}{03BC} % mu
\pdfglyphtounicode{u1D708}{03BD} % nu
\pdfglyphtounicode{u1D709}{03BE} % xi
\pdfglyphtounicode{u1D70A}{03BF} % omicron
\pdfglyphtounicode{u1D70B}{03C0} % pi
\pdfglyphtounicode{u1D70C}{03C1} % rho
\pdfglyphtounicode{u1D70D}{03C2} % final sigma
\pdfglyphtounicode{u1D70E}{03C3} % sigma
\pdfglyphtounicode{u1D70F}{03C4} % tau
\pdfglyphtounicode{u1D710}{03C5} % upsilon
\pdfglyphtounicode{u1D711}{03C6} % phi
\pdfglyphtounicode{u1D712}{03C7} % chi
\pdfglyphtounicode{u1D713}{03C8} % psi
\pdfglyphtounicode{u1D714}{03C9} % omega

\pdfglyphtounicode{u1D715}{2202} % partial
\pdfglyphtounicode{u1D716}{03F5} % epsilon symbol
\pdfglyphtounicode{u1D717}{03D1} % theta symbol
\pdfglyphtounicode{u1D718}{03F0} % kappa symbol
\pdfglyphtounicode{u1D719}{03D5} % phi symbol
\pdfglyphtounicode{u1D71A}{03F1} % rho symbol
\pdfglyphtounicode{u1D71B}{03D6} % pi symbol
\fi

\crefformat{enumi}{(#2#1#3)}

\crefname{subsection}{Subsection}{Subsections}

\hypersetup{bookmarksdepth=2}
\newcommand{\subsectionnotoc}[1]{%
	\let\oldaddcontentsline\addcontentsline
	\renewcommand{\addcontentsline}[3]{}%
	\subsection{#1}%
	\let\addcontentsline\oldaddcontentsline
}

\newcommand{\subsecref}[1]{\hyperref[#1]{\S\ref*{#1}}}

\titleformat{\section}{\normalfont\LARGE\bfseries}{\thesection}{1em}{}
\titleformat{\subsection}{\normalfont\Large\bfseries}{\thesubsection}{0.7em}{}
\titleformat{\subsubsection}{\normalfont\large\bfseries}{\thesubsubsection}{0.5em}{}

\titlespacing{\paragraph}{\parindent}{0pt}{1em}

\newtheorem{theorem}{Theorem}

\newtheorem{thm}{Theorem}[subsection]
\newtheorem{lem}[thm]{Lemma}
\newtheorem{prop}[thm]{Proposition}
\newtheorem{cor}[thm]{Corollary}

\newtheorem{question}[thm]{Question}

\theoremstyle{definition}
\newtheorem{defn}[thm]{Definition}
\newtheorem{notn}[thm]{Notation}
\newtheorem{const}[thm]{Construction}

\newtheorem{remark}[thm]{Remark}
\newtheorem{example}[thm]{Example}
\newtheorem{warn}[thm]{Warning}

\newtheorem*{thm*}{Theorem}

\newtheoremstyle{namedassumstyle}
  {}{}                  % default space above/below
  {\normalfont}{}       % body font, indent
  {\bfseries}{.}        % head font, punctuation
  { }                   % normal interword space
  {\thmnote{#3}\thmnumber{ #2}}
\theoremstyle{namedassumstyle}
\newtheorem{namedassuminner}[thm]{}
\theoremstyle{definition}
\newenvironment{namedassum}[1]{\begin{namedassuminner}[#1]}{\end{namedassuminner}}
\newcommand{\assref}[1]{\hyperref[#1]{\nameref*{#1}~\ref*{#1}}}

\newcommand{\ncmd}{\newcommand}

\definecolor{DefColor}{rgb}{0.6,0.15,0.25}
\newcommand{\mdef}[1]{\textcolor{DefColor}{#1}}
\newcommand{\tdef}[1]{\mdef{\emph{#1}}}

\ncmd{\mathbfsf}[1]{\mathord{\text{\normalfont\bfseries #1}}}

\iflibertine
\ncmd{\mbb}[1]{\mathbfsf{#1}}
\else
\ncmd{\mbb}[1]{\mathbb{#1}}
\fi
\ncmd{\mrm}[1]{\mathrm{#1}}
\ncmd{\mcl}[1]{\mathcal{#1}}
\ncmd{\mfk}[1]{\mathfrak{#1}}
\ncmd{\mbf}[1]{\mathbf{#1}}
\ncmd{\mscr}[1]{\mathscr{#1}}

\ncmd{\todo}[1]{\textbf{TODO #1}}
\ncmd{\reftodo}[1]{\textbf{REF #1}}

\DeclareRobustCommand{\minwidthbox}[2]{%
	\mathmakebox[\ifdim#2<\width\width\else#2\fi]{#1}%
}
\ncmd{\too}[1][]{\xrightarrow{\minwidthbox{#1}{1em}}}
\ncmd{\oot}[1][]{\xleftarrow{\minwidthbox{#1}{1em}}}
\def\isoraise{\iflibertine -0.8ex\else -0.5ex\fi}
\ncmd{\iso}{\too[\smash{\raisebox{\isoraise}{\ensuremath{\scriptstyle\sim}}}]}
\ncmd{\osi}{\oot[\smash{\raisebox{\isoraise}{\ensuremath{\scriptstyle\sim}}}]}
\ncmd{\hoook}[1][]{\xhookrightarrow{\minwidthbox{#1}{1em}}}
\ncmd{\adj}[1][]{\mathrel{\substack{\xrightarrow{\minwidthbox{#1}{1em}} \\[-.7ex] \xleftarrow{\minwidthbox{#1}{1em}}}}}
\ncmd{\mapstoo}[1][]{\xmapsto{\minwidthbox{#1}{1em}}}

\ncmd{\qin}{\quad\in\quad}
\ncmd{\mmod}{/\mkern-4mu/}
\ncmd{\Id}{\mrm{Id}}
\ncmd{\Nm}{\mathrm{Nm}}
\ncmd{\cont}{\mrm{cont}}

\ncmd{\bbA}{\mbb{A}}
\ncmd{\bbB}{\mbb{B}}
\ncmd{\clA}{\mcl{A}}
\ncmd{\clB}{\mcl{B}}
\ncmd{\CC}{\mcl{C}}
\ncmd{\DD}{\mcl{D}}
\ncmd{\EE}{\mcl{E}}
\ncmd{\TT}{\mcl{T}}
\ncmd{\MM}{\mcl{M}}
\ncmd{\KK}{\mrm{K}}
\ncmd{\Kcont}{\KK^\cont}
\ncmd{\Uloc}{\mcl{U}_\mrm{loc}}
\ncmd{\Uloccont}{\Uloc^\cont}
\ncmd{\ku}{\mrm{ku}}
\ncmd{\KU}{\mrm{KU}}
\ncmd{\cX}{\mcl{X}}
\ncmd{\cY}{\mcl{Y}}
\ncmd{\kn}{\mrm{k}(n)}
\ncmd{\Kn}{\KK(n)}
\ncmd{\Knp}{\KK(n{+}1)}
\ncmd{\Ktn}{\KK_n}
\ncmd{\Ko}{\KK(1)}
\ncmd{\Tn}{\mrm{T}(n)}
\ncmd{\Tnp}{\mrm{T}(n{+}1)}
\ncmd{\To}{\mrm{T}(1)}
\ncmd{\Tm}{\mrm{T}(m)}
\ncmd{\Tmn}{\mrm{T}(m{-}1)}
\ncmd{\Tmp}{\mrm{T}(m{+}1)}
\ncmd{\Ti}{\mrm{T}(i)}
\ncmd{\KTnp}{\KK_{\Tnp}}
\ncmd{\KKo}{\KK_{\Ko}}
\ncmd{\KTo}{\KK_{\To}}
\ncmd{\THH}{\mrm{THH}}
\ncmd{\TC}{\mrm{TC}}
\ncmd{\TCm}{\TC^-}
\ncmd{\rmE}{\mrm{E}}
\ncmd{\En}{\rmE_n}
\ncmd{\Enp}{\rmE_{n{+}1}}
\ncmd{\JWn}{\rmE(n)}
\ncmd{\cJWn}{\widehat{\JWn}}
\ncmd{\BP}{\mrm{BP}}
\ncmd{\BPn}{\BP\langle n\rangle}
\ncmd{\Enpk}{\Enp(\kappa)}
\ncmd{\lsF}{\mscr{F}}
\ncmd{\lsG}{\mscr{G}}
\ncmd{\bbC}{\mbb{C}}
\ncmd{\bbE}{\mbb{E}}
\ncmd{\Eo}{\bbE_1}
\ncmd{\Et}{\bbE_2}
\ncmd{\Eth}{\bbE_3}
\ncmd{\GG}{\mbb{G}}
\ncmd{\NN}{\mbb{N}}
\ncmd{\Ng}{\NN_>}
\ncmd{\ZZ}{\mbb{Z}}
\ncmd{\QQ}{\mbb{Q}}
\ncmd{\Qbar}{\overline{\QQ}}
\ncmd{\clQ}{\mcl{Q}}
\ncmd{\Qab}{\QQ(\zeta_\infty)}
\ncmd{\Zp}{\ZZ_p}
\ncmd{\Qp}{\QQ_p}
\ncmd{\Fp}{\mbb{F}_p}
\ncmd{\Fpbar}{\overline{\mbb{F}}_p}
\ncmd{\Fell}{\mbb{F}_\ell}
\renewcommand{\SS}{\mbb{S}}
\ncmd{\SKn}{\SS_{\Kn}}
\ncmd{\SKnp}{\SS_{\Knp}}
\ncmd{\SKo}{\SS_{\Ko}}
\ncmd{\STn}{\SS_{\Tn}}
\ncmd{\STnp}{\SS_{\Tnp}}
\ncmd{\MU}{\mrm{MU}}
\ncmd{\rmP}{\mrm{P}}
\ncmd{\OO}{\mcl{O}}
\ncmd{\one}{\mbf{1}}
\ncmd{\ii}{\mbf{i}}
\ncmd{\jj}{\mbf{j}}
\ncmd{\kk}{\mbf{k}}
\ncmd{\xx}{\mbf{x}}
\ncmd{\yy}{\mbf{y}}
\ncmd{\ci}{\underline{i}}
\ncmd{\cbullet}{\underline{\bullet}}
\renewcommand{\AA}{\one}
\ncmd{\Ax}{\kappa}
\ncmd{\BB}{B}
\ncmd{\xc}[1]{\widehat{#1}}
\ncmd{\EEc}{\widehat{\EE}}
\ncmd{\CCc}{\widehat{\CC}}

\ncmd{\irchi}[2]{\raisebox{\depth/2}{$#1\chi$}}
\DeclareRobustCommand{\rchi}{{\mathpalette\irchi\relax}}
\ncmd{\ch}{\rchi}
\ncmd{\cch}{\widehat{\scalebox{1.15}{$\rchi$}}}

\ncmd{\otimesu}{\mathop{\otimes}\limits}
\ncmd{\otimesA}{\otimes}

\ncmd{\fL}{L}
\ncmd{\fLLam}{\fL^\Lambda}
\ncmd{\LKn}{L_{\Kn}}
\ncmd{\LKnp}{L_{\Knp}}
\ncmd{\LKo}{L_{\Ko}}
\ncmd{\LTn}{L_{\Tn}}
\ncmd{\LTnp}{L_{\Tnp}}
\ncmd{\LTm}{L_{\Tm}}
\ncmd{\Ln}{L_n}
\ncmd{\Lnm}{L_{n{-}1}}
\ncmd{\Mn}{M_n}
\ncmd{\Mnf}{M_n^f}
\ncmd{\Lnf}{L_n^f}
\ncmd{\Lnmf}{L_{n{-}1}^f}
\ncmd{\Mz}{M_0}
\ncmd{\Lz}{L_0}
\ncmd{\Gammac}{\Gamma_c}

\ncmd{\LL}{\mrm{L}}
\ncmd{\RR}{\mrm{R}}
\ncmd{\BC}{\mrm{BC}}
\ncmd{\psa}{\oplus}
\ncmd{\dbl}{\mrm{dbl}}
\ncmd{\dual}{\mrm{dual}}
\ncmd{\op}{\mrm{op}}
\ncmd{\pifin}{\pi\text{-}\mathrm{fin}}
\ncmd{\pfin}{p\text{-}\mathrm{fin}}
\ncmd{\seg}{\mrm{seg}}
\ncmd{\perf}{\mrm{perf}}
\ncmd{\aug}{\mrm{aug}}
\ncmd{\nonu}{\mrm{nu}}
\ncmd{\Eonu}{\Eo^\nonu}
\ncmd{\ldual}[1]{{}^\vee#1}

\ncmd{\pt}{\mrm{pt}}
\ncmd{\Perf}{\mrm{Perf}}
\ncmd{\Motloc}{\mrm{Mot}_\mrm{loc}}
\ncmd{\Mod}{\mrm{Mod}}
\ncmd{\RMod}{\mrm{RMod}}
\ncmd{\BMod}[2]{\leftindex_{#1}{\mrm{BMod}}_{#2}}
\ncmd{\cMod}{\widehat{\Mod}\vphantom{\Mod}}
\ncmd{\Vect}{\mrm{Vect}}
\ncmd{\Ab}{\mrm{Ab}}
\ncmd{\Fin}{\mrm{Fin}}
\ncmd{\Spaces}{\mcl{S}}
\ncmd{\Spacespifin}{\Spaces_{\pifin}}
\ncmd{\Spacespfin}{\Spaces_{\pfin}}
\ncmd{\Sp}{\mrm{Sp}}
\ncmd{\cSp}{\widehat{\Sp}\vphantom{\Sp}}
\ncmd{\Spcn}{\Sp_{\geq 0}}
\ncmd{\SpTn}{\Sp_{\Tn}}
\ncmd{\SpTnp}{\Sp_{\Tnp}}
\ncmd{\SpTm}{\Sp_{\Tm}}
\ncmd{\SpKn}{\Sp_{\Kn}}
\ncmd{\SpKo}{\Sp_{\Ko}}
\ncmd{\SpKnp}{\Sp_{\Knp}}
\ncmd{\Span}{\mrm{Span}}
\ncmd{\Spano}{\Span_1}
\ncmd{\Cat}{\mrm{Cat}}
\ncmd{\Catpifin}{\Cat_{\pifin}}
\ncmd{\Catpfin}{\Cat_{\pfin}}
\ncmd{\CatLn}{\Cat_{\Ln}}
\ncmd{\CatLnm}{\Cat_{\Lnm}}
\ncmd{\CatMn}{\Cat_{\Mn}}
\ncmd{\CatMnf}{\Cat_{\Mnf}}
\ncmd{\CatLnf}{\Cat_{\Lnf}}
\ncmd{\Catperf}{\Cat_{\perf}}
\ncmd{\CAT}{\widehat{\Cat}\vphantom{\Cat}}
\ncmd{\PrL}{\mrm{Pr}^\LL}
\ncmd{\PrLst}{\PrL_{\mrm{st}}}
\ncmd{\PrLstw}{\mrm{Pr}^{\omega}_{\mrm{st}}}
\ncmd{\PrLstdbl}{\mrm{Pr}^{\dbl}_{\mrm{st}}}
\ncmd{\PrLE}{\PrL_{\EE}}
\ncmd{\PrLEdbl}{\mrm{Pr}^{\dbl}_{\EE}}
\ncmd{\PrLR}{\PrL_R}
\ncmd{\Mor}{\mrm{Mor}}
\ncmd{\MorE}{\Mor(\EE)}
\ncmd{\Mfg}{\mcl{M}_\mrm{fg}}
\ncmd{\Thick}{\mrm{Thick}}
\ncmd{\Pro}{\mrm{Pro}}
\ncmd{\Nuc}{\mrm{Nuc}}
\ncmd{\NucE}{\Nuc(\EEc)}
\ncmd{\NucR}{\Nuc(\xc{R})}
\ncmd{\Gr}{\mrm{Gr}}
\ncmd{\GrN}{\Gr_{\geq 0}}
\ncmd{\GrZ}{\Gr}
\ncmd{\Gri}[1][i]{\Gr_{[0,#1)}}

\ncmd{\yon}{\text{\usefont{U}{min}{m}{n}\symbol{'110}}}
\DeclareFontFamily{U}{min}{}
\DeclareFontShape{U}{min}{m}{n}{<-> dmjhira}{}

\DeclareMathOperator{\fib}{fib}

\DeclareMathOperator{\free}{free}

\DeclareMathOperator{\Ind}{Ind}

\DeclareMathOperator{\End}{End}
\DeclareMathOperator{\Fun}{Fun}
\ncmd{\FunL}{\Fun^\LL}
\ncmd{\FunLE}{\FunL_\EE}
\DeclareMathOperator{\Alg}{Alg}
\ncmd{\Algaug}{\Alg^\aug}
\ncmd{\Algnu}{\Alg^\nonu}
\DeclareMathOperator{\CAlg}{CAlg}
\DeclareMathOperator{\CMon}{CMon}
\DeclareMathOperator{\coCMon}{coCMon}
\ncmd{\CMoninf}{\CMon_\infty}
\ncmd{\coCMoninf}{\coCMon_\infty}
\DeclareMathOperator*{\colim}{colim}

\ncmd{\limdual}[1][]{\lim_{#1}\nolimits^{\dbl}}
\ncmd{\limw}[1][]{\lim_{#1}\nolimits^{\omega}}
\ncmd{\prolim}[1][]{\operatorname*{``lim''}_{#1}}

\DeclareMathOperator{\Shv}{Sh}

\ncmd\noloc{%
	\nobreak
	\mspace{6mu plus 1mu}
	{:}
	\nonscript\mkern-\thinmuskip
	\mathpunct{}
	\mspace{2mu}
}

\title{\vspace{-1.2em}\makebox[\textwidth][c]{Algebraic K-theory of quotient ring spectra}}
\author{Shay Ben-Moshe\thanks{Max Planck Institute for Mathematics, Bonn, Germany.} \thanks{Faculty of Mathematics and Computer Science, Weizmann Institute of Science, Israel.}}
\date{}

\begin{document}
	\maketitle

	\vspace{-1.8em}
	
	\begin{abstract}
		We study the algebraic K-theory of quotients of even ring spectra. Building on Efimov's work, we prove that the limit of the K-theories of these quotients is the continuous K-theory of nuclear modules. For quotients by elements in the height $n$ chromatic ideal, we show that this agrees with the K-theory of the ring spectrum, after chromatic localization at heights $\geq n{+}1$.
	\end{abstract}

    \vfill

    % \begin{center}
    %     \includegraphics[width=140mm]{painting/painting.jpg}
		
	% 	\vspace{.6em}

	% 	\small
    %     \makebox[\linewidth][c]{\begin{tabular}{@{}l@{}}
	% 		Sol LeWitt, \emph{Incomplete Open Cubes 6/1, 9/4, 8/10, 8/19, 8/22, 10/1}, 1974.\\
	% 		Photo by K. Daem, Raas van Gaverestraat, Ghent, 1994.\\
	% 		Collection Herbert Foundation, Ghent, \iflibertine \textcopyright{} \else \raisebox{0.4ex}{\scalebox{0.6}{©}} \fi Herbert Foundation.
	% 	\end{tabular}}
    % \end{center}
	
	\tableofcontents

    \vfill

	\clearpage
	
	\section{Introduction}

\subsectionnotoc{Background}

Given an ideal $I = (x_1,\dotsc,x_r)$ in a ring $R$, it is natural to study $R$, and invariants attached to it, by its quotients or completion.
For algebraic K-theory, there are several competing (small stable) categories\footnote{We use the term category to refer to an $\infty$-category and $2$-category to refer to an $(\infty,2)$-category.} to consider, including perfect modules over the $I$-adic completion $\Perf(\xc{R})$, or $I$-adically complete modules $\cMod_R$ with the finiteness condition being either dualizability or compactness.
Another candidate is the limit of K-theories of the quotients
\[
	\KK(R) \too \lim \KK(R/(x_1^{i_1},\dotsc,x_r^{i_r})).
\]
For ordinary rings, both the map and its target have been studied extensively, and we refer the reader to \cite[\S5.1]{henselian-pairs} and \cite{dahlhausen2024continuous,DYZ} for detailed accounts of the literature and recent results.

In this paper we study these for ring spectra.
Quotients in higher algebra, however, are subtle, often failing to have a multiplicative structure.
Nevertheless, a construction for \emph{even} $\Et$-ring spectra is available, by insights of Hahn--Wilson \cite{E2quot,HW} and Ausoni--Bay{\i}nd{\i}r--Moulinos and Rognes--Sagave--Schlichtkrull \cite{ABM,RSS}, building on \cite{rotation}.\footnote{A different approach to quotients due to Burklund \cite{BurklundMoore} is discussed in more detail in \cref{related-work}.}
This relies on two fortunate miracles:
\begin{enumerate}
	\item The free $\Eo$-algebra on a generator in an even degree $d \in 2\ZZ$
	\[
		\SS[x] := \free_{\Eo}(\Sigma^{d}\SS)
		\qin \Alg_{\Eo}(\Sp)
	\]
	has a canonical $\Et$-algebra structure, which moreover respects the grading by powers of $x$.
	\item For an even $\Et$-ring $R$ and elements $x_1,\dotsc,x_r$ of non-negative even degrees, there is\footnote{The argument is by obstruction theory, so the $\Et$-map is not obtained canonically or guaranteed to be unique.} an $\Et$-map
	\[
		\SS[x_1] \otimes \cdots \otimes \SS[x_r] \too R
		\qin \Alg_{\Et}(\Sp).
	\]
\end{enumerate}
The first part endows the universal quotient $\SS[x]/x^i$ with an $\Eo$-$\SS[x]$-algebra structure, by truncating the grading.
Using the second part, this can be base-changed to endow the quotients $R/(x_1^{i_1},\dotsc,x_r^{i_r})$ with an $\Eo$-$R$-algebra structure.
We review these constructions in \cref{subsec-gr-poly,subsec-e2-quot}.

Importantly, even ring spectra are ubiquitous in chromatic homotopy theory, including for example the complex cobordism spectrum $\MU$, the Brown--Peterson spectrum $\BP$, and various variants of Morava E-theory such as $\En$, $\JWn$, $\cJWn$ and $\BPn$.
In fact, since $R$ is even, it is complex orientable, demonstrating that the connection to chromatic homotopy theory is not merely by way of examples.

In this paper we address the following two questions, raised by the above discussion, for quotients of even $\Et$-ring spectra, and, in particular, under chromatic assumptions.

% Moreover, complex orientability gives rise to the height $n$ ideal $(p,v_1,\dotsc,v_{n-1}) \subset \pi_*(R)$.
% In this paper, we shall address the following two questions raised by the above discussion, and in particular under the assumption that the elements $x_1,\dotsc,x_r$ are in the height $n$ ideal.
% The complex orientation gives rise to the height $n$ ideal $(p,v_1,\dotsc,v_{n-1}) \subset \pi_*(R)$.
% The main object of study of this paper is the map
% \[
% 	\KK(R) \too \lim \KK(R/(x_1^{i_1},\dotsc,x_r^{i_r})),
% \]
% under the assumption that the elements $x_1,\dotsc,x_r$ are in (the radical of) the height $n$ ideal $(p,v_1,\dotsc,v_{n-1}) \subset \pi_*(R)$.

% We note that these results do not make any completeness assumptions on $R$ or the modules over it.
% It may be more natural instead to study the algebraic K-theory of other (small stable) categories, such as perfect modules over the $I$-adic completion $\Perf(\xc{R})$, or $I$-adically complete modules $\cMod_R$ with the finiteness condition being either dualizability or compactness.
% In fact, this conceals two a priori separate questions.

\begin{question}\label{k-of-what}
	Is the limit of K-theories of quotients the K-theory of some category attached to $I \subset R$?
\end{question}

\begin{question}\label{is-iso}
	What is the relation between (the K-theory of) the different candidate module categories?
\end{question}

\subsectionnotoc{Main results}

\cref{k-of-what} was recently answered by Efimov for ordinary commutative rings.
As alluded to above, algebraic K-theory is typically applied to small stable categories, which are (up to idempotent completion) equivalent to compactly generated presentable stable categories.
In his influential work \cite{EfimovK}, Efimov has vastly expanded the collection of categories to which K-theory can be applied to all dualizable presentable stable categories, where it is denoted $\Kcont$.
These include various new examples of interest, such as categories of sheaves and local systems among others \cite[\S1.5]{EfimovK}.
In a sequel paper \cite{EfimovLimit}, Efimov shows that the limit of the K-theories of the quotients is the continuous K-theory of the category of nuclear modules,\footnote{A different version of nuclear modules was originally introduced by Clausen--Scholze. These make no appearance in the present paper, and we refer the reader to \cite{EfimovLimit} for more details.} defined to be the limit in dualizable categories
\[
	\NucR := \limdual \Mod_{R/(x_1^{i_1},\dotsc,x_r^{i_r})}
	\qin \PrLstdbl.
\]
We extend this result to quotients of even $\Et$-ring spectra, taking the same definition of nuclear modules.

\begin{theorem}[{\cref{k-nuc}}]\label{k-nuc-intro}
	For an even $\Et$-ring spectrum $R$ and elements $x_1,\dotsc,x_r$ of non-negative even degrees, the assembly map induces an isomorphism
	\[
		\Kcont(\NucR) \iso \lim \KK(R/(x_1^{i_1},\dotsc,x_r^{i_r})).
	\]
	More generally, for any dualizable $\DD \in \PrLstdbl$, the assembly map induces an isomorphism
	\[
		\Kcont(\NucR \otimes \DD) \iso \lim \Kcont(\Mod_{R/(x_1^{i_1},\dotsc,x_r^{i_r})} \otimes \DD).
	\]
\end{theorem}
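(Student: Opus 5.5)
We follow Efimov's strategy from \cite{EfimovLimit}. The key input is his criterion for when continuous K-theory, or any finitary localizing invariant, commutes with a sequential limit in $\PrLstdbl$. Given a tower $\cdots \to \CC_2 \to \CC_1$ of dualizable categories with strongly continuous transition functors, the comparison map
\[
	\Kcont(\limdual \CC_i) \too \lim \Kcont(\CC_i)
\]
is an isomorphism provided the tower is \emph{ind-nuclear}, or more precisely satisfies a Mittag-Leffler-type condition. Concretely, the transition functors should, up to passing to a cofinal subsequence, be trace-class or \emph{compact} morphisms in the $2$-category sense. Efimov verifies this for $\Mod_{R/I^i}$ when $R$ is an ordinary ring.

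So the plan is to reduce to that criterion. First, since $\NucR$ is defined via a multi-indexed limit over $(i_1,\dotsc,i_r)$, we pass to the cofinal diagonal tower $i_1=\cdots=i_r=i$. We then write the quotient as an iterated base change
\[
	R/(x_1^{i},\dotsc,x_r^{i}) \simeq R \otimes_{\SS[x_1]\otimes\cdots\otimes\SS[x_r]} \bigotimes_j \SS[x_j]/x_j^{i},
\]
using the $\Et$-map from the second miracle. The tensor product of dualizable categories commutes appropriately with these limits, and tensoring with $\DD$ preserves the relevant property of towers. It therefore suffices to treat the universal case: a single graded polynomial algebra $\SS[x]$ with the tower $\Mod_{\SS[x]/x^{i}}$, regarded in $\Mod_{\SS[x]}$-linear dualizable categories. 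The general statement then follows by base change along $\Mod_{\SS[x_1]\otimes\cdots}\to\Mod_R$ and tensoring with $\DD$.

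In the universal case we show that each composite $\Mod_{\SS[x]/x^{2i}} \to \Mod_{\SS[x]/x^{i}}$ is compact in the appropriate sense. This means the corresponding bimodule $\SS[x]/x^{i}$, viewed over $\SS[x]/x^{2i}$, factors through a trace-class map. We build the required factorization explicitly from the grading: the element $x^i$ acts by zero on $\SS[x]/x^{i}$ but nontrivially on $\SS[x]/x^{2i}$. This yields a map from the "dual" module $\mathrm{Hom}_{\SS[x]/x^{2i}}(\SS[x]/x^i,\SS[x]/x^{2i}) \simeq \Sigma^{d i}\SS[x]/x^i$ (shifted by the degree $d$ of $x$) realizing the identity through a finite piece. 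The grading is what lets us write these maps down, and evenness of $d$ is what makes the $\Eo$-structures on the truncations compatible.

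We expect the main obstacle to be exactly this step. We must verify that Efimov's trace-class criterion holds for quotients that are only $\Eo$-$R$-algebras, where in his setting they are commutative. In particular, the transition functors must be compatible with the $R$-linear (indeed $\SS[x]$-linear, via the $\Et$-structure) tensoring. Having done so, Efimov's theorem applied to the tower gives both isomorphisms in the statement, the second by applying it to the tower $\Mod_{R/(x^{i})}\otimes\DD$, which inherits compactness of its transition maps.
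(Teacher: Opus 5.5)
\textbf{There is a genuine gap: the hypothesis you feed into Efimov's theorem is misidentified, and its verification is left open.} Your overall architecture does match the paper's: pass to the cofinal diagonal, reduce to the universal graded tower $\SS[x]\to(\SS[x]/x^\bullet)$, transport along base change to $R$, tensor with $\DD$, and invoke Efimov.

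The result you need, \cite[Corollary 6.2]{EfimovLimit}, takes the strong Mittag-Leffler condition as input. For each $j,k$, the pro-object $\prolim[i\geq j,k]F_{ij}F_{ik}^{\RR}$ must be pro-constant, and its limit must admit both a left and a right adjoint. For module categories this becomes (\cref{cat-ML-morita}): the pro-bimodule $\prolim[i](\SS[x]/x^j\otimes_{\SS[x]/x^i}\SS[x]/x^k)$ is pro-constant, and its limit is left and right dualizable. Asking that the transition functors be ``trace-class'' or ``compact'' after passing to a subsequence is not this hypothesis, and you give no argument that it implies it. Your proposed verification is also not a precise statement. A bimodule is an object, not a morphism that can ``factor through a trace-class map''. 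Identifying $\hom_{\SS[x]/x^{2i}}(\SS[x]/x^i,\SS[x]/x^{2i})$ says nothing about the behaviour of the pro-system in $i$. You correctly flag this step as the main obstacle, but it is the whole content of the theorem and it remains unresolved.

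\textbf{The missing idea.} The relative tensor products $\SS[x]/x^j\otimes_{\SS[x]/x^i}\SS[x]/x^k$ are not finite; classically they carry infinitely many nonzero Tor terms. What must be shown is that these extra pieces are killed by the transition maps in $i$. The paper does this with weights.
\begin{itemize}
\item \cref{double-quotient-identification} gives $\SS\otimes_{\SS[x]}\SS[x]/x^i\simeq\SS\oplus\Sigma^{i|x|+1}\SS(i)$, with the non-unit summand sitting in weight $i$.
\item By the weight-truncation adjunction, every mapping space in augmented $\Eo$-algebras from stage $i$ to stage $j<i$ is contractible. This yields $c(\SS)\simeq\prolim[i](\SS\otimes_{\SS[x]}\SS[x]/x^i)$ as augmented $\Eo$-algebras (\cref{double-quotient-pro-sphere}).
\item It matters that this pro-isomorphism holds with the algebra structure, because one then tensors \emph{over} these algebras to get $c(\SS\otimes_{\SS[x]}\SS)\simeq\prolim[i](\SS\otimes_{\SS[x]/x^i}\SS)$ (\cref{quotient-base-Sx}).
\item A finite weight-filtration induction upgrades this to the bimodule statement (\cref{higher-quotient-base-Sx}).
\end{itemize}
The adjointability half of the condition comes from left and right dualizability of $\SS[x]/x^i$ over $\SS[x]$ (\cref{quotient-dualizable}), which you do not address.

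\textbf{The base-change step.} It does not work because tensor products ``commute appropriately'' with dualizable limits. It works because the Mittag-Leffler condition is $2$-categorical, so it is preserved by the $2$-functor $\PrL_{\Mod_{\SS[x]}(\GrZ)}\to\PrLE$ (\cref{ML-2}, \cref{cat-ML-quotient}). Tensoring with $\DD$ preserves it by \cite[Proposition 5.3(iii)]{EfimovLimit}.
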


\begin{remark}
	The quotients depend on a choice of an $\Et$-map $\SS[x_1] \otimes \cdots \otimes \SS[x_r] \to R$, but all the results hold for any such choice, which we therefore leave implicit in the introduction.
\end{remark}

Having introduced yet another candidate category of modules, we turn our attention to \cref{is-iso}.
Nuclear modules were defined as the limit of modules over the quotients, taken among dualizable categories.
Dualizable categories are, on the one hand, more general than compactly generated categories, and on the other hand, less general than presentable categories
\[
	\PrLstw \quad\subset\quad \PrLstdbl \quad\subset\quad \PrLst.
\]
We show that the ordinary and compactly generated limits give the (Ind-completion of) compact or dualizable complete modules.
Moreover, we compare the three categories, showing that they embed fully faithfully into one another, and control their Verdier quotients.
As we explain in \cref{outline}, while not needed for the statements, our proofs require $R$ to have an $\Eth$-ring spectrum structure.
We also refer the reader to \cref{related-work} where we discuss related results.

\begin{theorem}[{\cref{comp-nuc}, \cref{three-lims}, \cref{verdier-complete-compact-dualizable}}]\label{three-lims-intro}
	Let $R$ be an even $\Eth$-ring spectrum and let $x_1,\dotsc,x_r$ be elements of non-negative even degrees.
	The ordinary and compactly generated limits are
	\[
		\cMod_R \simeq \lim \Mod_{R/(x_1^{i_1},\dotsc,x_r^{i_r})},
		\qquad \Ind(\cMod_R^\dbl) \simeq \limw \Mod_{R/(x_1^{i_1},\dotsc,x_r^{i_r})}.
	\]
	Moreover, there are $R$-linear strongly continuous fully faithful functors
	\[
		\cMod_R
		\hoook \Ind(\cMod_R^\dbl)
		\hoook \NucR,
	\]
	and their Verdier quotients are $R/(x_1,\dotsc,x_r)$-acyclic over $R$.
\end{theorem}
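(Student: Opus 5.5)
The plan is to write $A_\ci := R/(x_1^{i_1},\dotsc,x_r^{i_r})$ and compute all three limits through the base-change functors $M \mapsto A_\ci \otimes_R M$. Everything rests on two inputs: each $A_\ci$ is a perfect $R$-module (an iterated cofiber of multiplication by $x_k^{i_k}$), and the pro-system $\{A_\ci\}$ is pro-equivalent to the Koszul pro-system. Together these give $\lim_\ci A_\ci \otimes_R M \simeq M^\wedge_I$ for every $R$-module $M$. The $\Eth$-structure on $R$ is used as follows. It makes $\Mod_R$ $\Et$-monoidal, so the $\Eo$-$R$-algebras $A_\ci$ carry enough structure for $A_\ci\otimes_R A_\jj$ and the transition bimodules $A_\jj \to A_\ci$ to be computed from the graded polynomial pieces $\SS[x]/x^i$ of \cref{subsec-gr-poly,subsec-e2-quot}. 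The key computation I aim for is that, for fixed $\ci$, the pro-system $\{A_\ci \otimes_R A_\jj\}_\jj$ is pro-equivalent to the constant object $A_\ci$ as an $A_\ci$-module. Concretely, $A_\ci \otimes_R A_\jj$ splits as $A_\ci$ tensored with finitely many cells indexed by the $x_k^{j_k}$, and the transition maps on the extra cells are multiplication by powers of $x_k$, which are nilpotent on $A_\ci$.

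\emph{Ordinary limit.} I consider the adjunction $\Mod_R \adj \lim \Mod_{A_\ci}$, whose left adjoint is base change and whose right adjoint is $(N_\ci) \mapsto \lim N_\ci$. The unit is $M \to M^\wedge_I$, so the adjunction restricts to a fully faithful functor on $\cMod_R$. For essential surjectivity, take a compatible family $(N_\ci)$ and compute $A_\ci \otimes_R \lim_\jj N_\jj \simeq \lim_\jj A_\ci \otimes_R N_\jj$, which uses that $A_\ci$ is perfect. Rewriting $A_\ci \otimes_R N_\jj \simeq (A_\ci\otimes_R A_\jj)\otimes_{A_\jj} N_\jj$ and applying the pro-constancy above, this limit is $N_\ci$. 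Finally, $\lim N_\jj$ is complete, being a limit of $I$-torsion-bounded modules.

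\emph{Compactly generated limit.} Here $\limw \Mod_{A_\ci} \simeq \Ind(\lim \Perf(A_\ci))$. I identify $\lim\Perf(A_\ci)$, inside $\cMod_R$, with the complete modules $M$ such that $A_\ci\otimes_R M$ is perfect for all $\ci$. By a Nakayama-type devissage along the cofiber sequences defining $A_\ci$, it suffices that $A_{\underline 1}\otimes_R M$ is perfect. I then check that this class is exactly $\cMod_R^\dbl$, the dualizable objects for the completed tensor product. The dual is $\lim_\ci \hom_R(A_\ci\otimes M, A_\ci)$, and dualizability can be tested after base change to each $A_\ci$.

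\emph{Embeddings.} $\cMod_R$ is compactly generated by the Koszul objects $A_\ci$, which lie in $\cMod_R^\dbl$. Hence $\cMod_R = \Ind(\langle A_\ci\rangle) \hookrightarrow \Ind(\cMod_R^\dbl)$ is fully faithful, and it is strongly continuous because it preserves compacts. The functor $\Ind(\cMod_R^\dbl) \to \NucR$ comes from the universal property of $\limdual$, since compactly generated categories and strongly continuous functors are dualizable. For full faithfulness I would use Efimov's description of $\limdual$ as a full subcategory of $\lim \Ind(\Mod_{A_\ci}^\omega)$ cut out by a colimit of the $\hat{\yon}$-type functors. On compact objects of $\Ind(\cMod_R^\dbl)$, mapping spectra in the nuclear category are $\lim_\ci$ of mapping spectra over $A_\ci$, and these agree with mapping spectra in $\cMod_R^\dbl$ by the ordinary-limit step. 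For the Verdier quotients, I base change along $R\to A_{\underline 1}$. All three categories tensored with $\Mod_{A_{\underline 1}}$ over $\Mod_R$ become $\Mod_{A_{\underline 1}}$, because the limits are pro-constant after tensoring with $A_{\underline 1}$ by the key computation. Since the sequences are exact in $\PrLstdbl$ and tensoring is exact, the quotients vanish after $A_{\underline 1}\otimes_R(-)$, i.e., they are $R/(x_1,\dotsc,x_r)$-acyclic.

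I expect the main obstacle to be full faithfulness of $\Ind(\cMod_R^\dbl)\hookrightarrow\NucR$. It requires controlling Efimov's $\limdual$ in terms of the $\hat{\yon}$ functor and checking that the pro-systems of compact objects are essentially constant, which is where the $\Eth$-structure and the graded quotient structure enter essentially.
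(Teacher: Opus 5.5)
There is a genuine gap, and it starts with your key computation. You prove pro-constancy of $\{A_\ii\otimes_R A_\jj\}_\jj$ only as left $A_\ii$-modules, using a splitting and nilpotence of the transition maps. None of your applications can run on a statement of that strength.

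\begin{itemize}
\item In the ordinary-limit step you tensor $A_\ii\otimes_R A_\jj$ over $A_\jj$ with a varying $N_\jj$. That requires a pro-isomorphism compatible with the right $A_\jj$-actions, which a module-level computation does not supply. What actually does the job is the Mittag-Leffler form $c(A_\ii\otimes_R A_\kk)\iso\prolim[\jj\geq\ii,\kk](A_\ii\otimes_{A_\jj}A_\kk)$ in $\BMod{A_\ii}{A_\kk}$ for fixed $\ii,\kk$. You tensor it with $N_\kk$ and then pass to the diagonal via \cref{pro-pro}. This is a different statement from yours.
\item In the Verdier-quotient step you need $\{\Ax\otimes_R A_\jj\}$ to be pro-constant as \emph{algebras} before you can say anything about their module categories. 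A pro-isomorphism of underlying modules does not upgrade, because a conservative functor need not induce a conservative functor on pro-categories.
\end{itemize}

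The paper gets all of this from a single algebra-level statement, \cref{double-quotient-pro-sphere}: $c(\SS)\iso\prolim[i](\SS\otimes_{\SS[x]}\SS[x]/x^i)$ in augmented $\Eo$-algebras over $\SS\otimes_{\SS[x]}\SS$. It is proved by weight truncation: since $\SS\otimes_{\SS[x]}\SS[x]/x^i\simeq\SS\oplus\Sigma^{i|x|+1}\SS(i)$, the relevant mapping spaces are contractible. The bimodule version \cref{higher-quotient-base-Sx} is then derived by tensoring over it.

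Second, your Verdier-quotient argument assumes that $\Mod_{A_{\underline 1}}\otimes_{\Mod_R}(-)$ commutes with $\limdual$ and $\limw$. You do not justify this, and it does not follow formally from pro-constancy.

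The missing idea is to avoid computing $\Mod_{\Ax}\otimes_{\Mod_R}\NucR$ at all:
\begin{itemize}
\item The map of diagrams $(f_{\ii!})\colon(\Mod_{A_\ii})\to(\Mod_{\Ax\otimes_R A_\ii})$ is a left adjoint in $(\PrLstdbl)^{\Ng^r}$, because $\Ax$ is dualizable.
\item Its monad is levelwise $(-)\otimes\Ax$. This is where $\Eth$ enters: the braiding identifies extension-then-restriction, which tensors from the left, with the right $\Mod_R$-action that the limits carry.
\item Algebra-level pro-constancy identifies both the ordinary and the dualizable limit of the target diagram with $\Mod_\Ax$.
\item Applying the limit $2$-functors then gives $(-)\otimes\Ax\simeq f^{*\Nuc}\,\widehat{f}_!\,q$, hence $\ker q\subseteq\ker(-\otimes\Ax)$.
\end{itemize}
For $j$ the paper uses a simpler fact: for $M\in\cMod_R^\dbl$, $\Ax\otimes M$ is compact, by \cref{lim-dbl} and \cref{quotient-compact}.

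Conversely, the step you flag as the main obstacle is formal. $\PrLstw\hookrightarrow\PrLstdbl$ has right adjoint $\Ind((-)^\omega)$, so $\limw\Mod_{A_\ii}\simeq\Ind(\NucR^\omega)$. Then $\varphi$ is just the inclusion of the subcategory generated by compact objects, and neither $\Eth$ nor the grading is needed there.
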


We use this result to compare their K-theories after chromatic localization.
Recall that since $R$ is even, it is complex orientable, and thus has a height $n$ ideal $(p,v_1,\dotsc,v_{n-1}) \subset \pi_*(R)$.
Assuming that the elements $x_1,\dotsc,x_r$ are in (the radical of) this ideal, we deduce from \cref{three-lims-intro} that the Verdier quotients are $\Lnmf$-local.
Then, using the redshift bound proved by Clausen--Mathew--Naumann--Noel \cite{DescVan} (see \cite{redshiftQL} for an alternative proof by the author) and the purity theorem of Land--Mathew--Meier--Tamme \cite{purity}, combined with \cref{k-nuc-intro}, we get the following main result.

\begin{theorem}[{\cref{main-thm-rcg}}]\label{main-thm-rcg-intro}
	Let $R$ be an even $\Eth$-ring spectrum and let $x_1,\dotsc,x_r$ be in the radical of $(p,v_1,\dotsc,v_{n-1}) \subset \pi_*(R)$.
	The following maps are $\Tm$-local isomorphisms for $m \geq n{+}1$
	% https://q.uiver.app/#q=WzAsNixbMCwwLCJcXEtLKFIpIl0sWzEsMCwiXFxLSyhcXHhje1J9KSJdLFsyLDAsIlxcS0soXFxjTW9kX1JeXFxkYmwpIl0sWzIsMSwiXFxLSyhcXGNNb2RfUl5cXG9tZWdhKSJdLFs0LDAsIlxcbGltIFxcS0soUi8oeF8xXntpXzF9LFxcZG90c2MseF9yXntpX3J9KSkiXSxbMywwLCJcXEtjb250KFxcTnVjUikiXSxbMCwxXSxbMSwyXSxbMywyXSxbMiw1XSxbNSw0LCJcXHNpbSJdXQ==&macro_url=https%3A%2F%2Fgist.githubusercontent.com%2Fshaybenmoshe%2F301102e6fdd6d215848c519c149abcab%2Fraw%2Fquiver
	\[\begin{tikzcd}
		{\KK(R)} & {\KK(\xc{R})} & {\KK(\cMod_R^\dbl)} & {\Kcont(\NucR)} & {\lim \KK(R/(x_1^{i_1},\dotsc,x_r^{i_r}))} \\
		&& {\KK(\cMod_R^\omega)}
		\arrow[from=1-1, to=1-2]
		\arrow[from=1-2, to=1-3]
		\arrow[from=1-3, to=1-4]
		\arrow["\sim", from=1-4, to=1-5]
		\arrow[from=2-3, to=1-3]
	\end{tikzcd}\]
\end{theorem}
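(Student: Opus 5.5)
The strategy is to treat each arrow separately, using three inputs. The first is localizing invariance of $\Tm$-local K-theory under Verdier sequences. The second is the vanishing result of Clausen--Mathew--Naumann--Noel: $\Lnmf$-local (more generally $\Tm$-acyclic-type) categories have $\Tm$-locally vanishing K-theory for $m\geq n{+}1$. The third is purity of Land--Mathew--Meier--Tamme. The last arrow $\Kcont(\NucR)\iso\lim\KK(R/(x_1^{i_1},\dotsc,x_r^{i_r}))$ is already an integral isomorphism by \cref{k-nuc-intro}, so nothing needs to be done there.

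For the middle arrows I would start from \cref{three-lims-intro}. The strongly continuous fully faithful functors
\[
\cMod_R\hoook\Ind(\cMod_R^\dbl)\hoook\NucR
\]
give Verdier sequences in $\PrLstdbl$, so continuous K-theory turns them into fiber sequences.

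\begin{itemize}
\item The Verdier quotients are $R/(x_1,\dotsc,x_r)$-acyclic $R$-linear categories. Since the $x_j$ lie in the radical of $(p,v_1,\dotsc,v_{n-1})$, $R/(x_1,\dotsc,x_r)$-acyclicity implies acyclicity for a type $n$ complex tensored with $R$. Concretely, a type $n$ complex $F$ with $R\otimes F$ built from $R/(x_1^{k},\dotsc,x_r^{k})$ via the thick subcategory theorem, or conversely. So the quotients are $\Lnmf$-linear, meaning modules over $\Lnmf\SS$.
\item Their continuous K-theory is then $\Tm$-acyclic for $m\geq n{+}1$. This follows from the CMNN redshift bound, extended to dualizable categories via Efimov's description of $\Kcont$ as a K-theory of a small category (the Calkin construction), which remains $\Lnmf\SS$-linear.
\end{itemize}

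Consequently
\[
\KK(\cMod_R^\omega)\simeq\Kcont(\cMod_R)\to\Kcont(\Ind(\cMod_R^\dbl))=\KK(\cMod_R^\dbl)\to\Kcont(\NucR)
\]
are $\Tm$-local isomorphisms. Here I use that $\cMod_R$ is compactly generated with compacts $\cMod_R^\omega$, so the vertical map also comes out.

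For the first two arrows $\KK(R)\to\KK(\xc{R})\to\KK(\cMod_R^\dbl)$ I would use purity. $\Tm$-local K-theory for $m\geq n{+}1$ is insensitive to passing from $R$ to $R\otimes L_{\Tm}^f$-type localizations, or more precisely:

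\begin{itemize}
\item $\LTm\KK(\CC)\simeq\LTm\KK(L^f_{n\text{-}\ldots}\CC)$ for the relevant range.
\item The cofibers of $\Perf(R)\to\Perf(\xc R)$ and $\Perf(\xc R)\to\cMod_R^\dbl$ become trivial after inverting something killing height $\leq n{-}1$ phenomena. Concretely, $\Perf(R)\to\cMod_R^\omega$ is the composite I can instead analyze: the functor $M\mapsto\xc M$ is a localization of $\Mod_R$ whose kernel consists of $R/(x_1,\dots,x_r)$-acyclic modules. Its restriction to compacts is a Verdier sequence up to idempotent completion, with kernel K-theory again $\Tm$-acyclic by the same CMNN argument.
\end{itemize}

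This gives $\KK(R)\to\KK(\cMod_R^\omega)$ as a $\Tm$-equivalence. Together with the previous paragraph, the composite $\KK(R)\to\KK(\cMod_R^\dbl)$ is one too. For $\xc R$, the map $\Perf(\xc R)\to\cMod_R^\dbl$ and the map $\Perf(R)\to\Perf(\xc R)$ need separate handling. I would apply purity: $\LTm\KK(A)\simeq\LTm\KK(L_{n}^{p,f}A)$-style statements reduce both to comparing $\Lnf$-localizations, where $R\to\xc R$ induces an equivalence after $\Tn$-, $\Tnp$-, \dots{} localizations relevant to $\Tm$, since $\xc R$ agrees with $R$ after $(x_1,\dots,x_r)$-completion and $\Tm$-localization only sees the completion. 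Two-out-of-three then finishes.

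The main obstacle is this comparison of $\KK(R)$ and $\KK(\xc R)$. Purity must be applied carefully so that the map of rings becomes an equivalence on the relevant monochromatic pieces. I would first try to show that $\Perf(R)\to\Perf(\xc R)$ has cofiber $\Lnmf$-linear after applying $L_n^{p,f}$ and purity, mirroring the Verdier-quotient argument.
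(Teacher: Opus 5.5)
You handle the last arrow, and $\KK(\cMod_R^\omega)\to\KK(\cMod_R^\dbl)\to\Kcont(\NucR)$, essentially as the paper does: the Verdier quotients of $\cMod_R\hookrightarrow\Ind(\cMod_R^\dbl)\hookrightarrow\NucR$ are $\Ax$-acyclic, hence $\Lnmf$-local, and CMNN vanishing applies (here $\Ax=R/(x_1,\dotsc,x_r)$). The gap is in the first two arrows.

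\textbf{Completion does not give the functors you need.}
\begin{itemize}
\item Completion does not restrict to a functor $\Perf(R)\to\cMod_R^\omega$. The module $\xc R$ is dualizable but typically not compact in $\cMod_R$; for $R=\ZZ$, $x=p$, the module $\Zp$ is not compact among $p$-complete complexes.
\item The localization $\Mod_R\to\cMod_R$ is not strongly continuous, since its right adjoint (the inclusion) does not preserve filtered colimits. So it induces no localization sequence on $\Kcont$.
\item Nor does $\Perf(R)\to\cMod_R^\dbl$ induce a fully faithful functor out of $\Perf(R)/\ker$. For $R=\ZZ$, the quotient of $\Perf(\ZZ)$ by its $\ZZ/p$-acyclic objects is $\Perf(\ZZ_{(p)})$ up to idempotents. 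There $\ZZ$ has endomorphisms $\ZZ_{(p)}$, while its image $\Zp$ has endomorphisms $\Zp$.
\end{itemize}
So CMNN vanishing alone cannot relate $R$ to anything complete, and your two-out-of-three route to $\KK(\xc R)\to\KK(\cMod_R^\dbl)$ has no input. Your purity sketch for $\KK(R)\to\KK(\xc R)$ also needs repair. The map $R\to\xc R$ is generally not an $L_n^{p,f}$-equivalence (for $R=\ZZ$ it is rationally $\QQ\to\Qp$), so reducing to $L_n^{p,f}$-localizations fails.

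\textbf{How the paper handles these two arrows.}

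For $\KK(R)\to\KK(\xc R)$, apply purity in its refined $(\Tmn\oplus\Tm)$ form to $\Perf(R)\to\Perf(\xc R)$, whose image generates the target.
\begin{itemize}
\item For perfect $M$, the map $\End_R(M)\to\End_{\xc R}(\xc R\otimes_R M)\simeq\hom_R(M,\xc R\otimes_R M)$ is an $F$-equivalence for a type $n$ generalized Moore spectrum $F$. This is because $R\otimes F\to\xc R\otimes F$ is an equivalence.
\item That equivalence holds because $R\to\xc R$ is an $\Ax$-equivalence and $R\otimes F\in\Thick(\Ax)$. The latter holds because $R\otimes F$ is a retract of $R/(x_1^i,\dotsc,x_r^i)\otimes F$ once high powers of the $x_j$ act by zero on $R\otimes F$. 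This input comes from that nilpotence argument, not from the thick subcategory theorem alone.
\item An $F$-equivalence is in particular a $(\Tmn\oplus\Tm)$-equivalence once $m-1\geq n$. This is where $m\geq n{+}1$ enters.
\end{itemize}

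For $\KK(\xc R)\to\KK(\cMod_R^\dbl)$, use that $\Perf(\xc R)\subset\cMod_R^\dbl$ fully faithfully. For compact $C$ one has $\xc R\otimes_R C\simeq\xc C$, which is dualizable because completion is monoidal. This inclusion has $\Ax$-acyclic Verdier quotient:
\begin{itemize}
\item for $M\in\cMod_R^\dbl$, the module $\Ax\otimes_R M$ is compact over $\Ax$;
\item since $\Ax$ is dualizable over $\xc R$, restriction along $\xc R\to\Ax$ keeps it compact over $\xc R$.
\end{itemize}
The CMNN argument then applies to (the Ind of) this inclusion exactly as it does to your middle arrows.
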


It is easy to see that the transition maps in the quotient tower are nilpotent extensions, thus the Dundas--Goodwillie--McCarthy theorem \cite{DGM} together with \cite{purity} give the following consequence.

\begin{cor}[{\cref{ring-dgm}}]\label{ring-dgm-intro}
	Assume moreover that $R$ is connective, then the following map is a $\Tm$-local isomorphism for $m \geq \max\{ n{+}1, 2\}$
	\[
		\KK(R) \too \lim \TC(R/(x_1^{i_1},\dotsc,x_r^{i_r})).
	\]
\end{cor}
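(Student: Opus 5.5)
We combine \cref{main-thm-rcg-intro} with the Dundas--Goodwillie--McCarthy theorem, applied levelwise in the quotient tower. By \cref{main-thm-rcg-intro}, the composite $\KK(R) \to \lim \KK(R/(x_1^{i_1},\dotsc,x_r^{i_r}))$ is a $\Tm$-local isomorphism for $m \geq n{+}1$. It therefore suffices to show that the map of towers $\KK(R/(x_1^{i_1},\dotsc,x_r^{i_r})) \to \TC(R/(x_1^{i_1},\dotsc,x_r^{i_r}))$, given by the cyclotomic trace, becomes a $\Tm$-local isomorphism after passing to the limit, for $m \geq \max\{n{+}1,2\}$. Since $\Tm$-localization is smashing on neither side in general, we argue at the level of fibers. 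Let $F_{\ii}$ denote the fiber of the trace for the quotient indexed by $\ii = (i_1,\dotsc,i_r)$. Then $\lim F_{\ii}$ is the fiber of the limit map, and it suffices to show $\lim F_{\ii}$ is $\Tm$-acyclic.

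Step one is to compare each $F_{\ii}$ with $F_{\one}$, the fiber for $R/(x_1,\dotsc,x_r)$. All quotients are connective because $R$ is connective and the $x_j$ have non-negative degrees. Each map $R/(x_1^{i_1},\dotsc,x_r^{i_r}) \to R/(x_1,\dotsc,x_r)$ is a map of connective $\Eo$-rings that is surjective on $\pi_0$ with nilpotent kernel. This uses that the quotient by $x_j^{i_j}$ is built by iterated extensions from the quotient by $x_j$, via the graded structure of $\SS[x]/x^i$ from \cref{subsec-gr-poly}. When $x_j$ has positive degree the map is even an isomorphism on $\pi_0$. By Dundas--Goodwillie--McCarthy, $F_{\ii} \to F_{\one}$ is an isomorphism for all $\ii$. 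So the tower $F_{\ii}$ is equivalent to a constant tower, and $\lim F_{\ii} \simeq F_{\one}$.

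Step two is to show that $F_{\one}$, the fiber of the trace for the single connective ring $S := R/(x_1,\dotsc,x_r)$, is $\Tm$-acyclic for $m \geq 2$. Dundas--Goodwillie--McCarthy applied to $S \to \tau_{\leq 0} S = \pi_0 S$ shows that $F_{\one}$ agrees with the fiber of the trace for the discrete ring $\pi_0 S$. For a discrete ring, the purity theorem of \cite{purity} shows that $\KK$ and $\TC$ are $\Tm$-acyclic, or have $\Tm$-local values depending only on low heights, for $m \geq 2$. Concretely, $\Tm$-localizations of $\KK$ and $\TC$ of a discrete ring vanish for $m \geq 2$, since $\KK(\ZZ)$-algebras have height $\leq 1$. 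Hence $F_{\one}$ is $\Tm$-acyclic for $m\ge 2$.

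Combining the two steps, $\lim \KK(R/\cdots) \to \lim \TC(R/\cdots)$ is a $\Tm$-local isomorphism for $m \geq 2$. Composing with \cref{main-thm-rcg-intro} gives the claim for $m \geq \max\{n{+}1,2\}$. The main obstacle is step one: verifying that the transition maps are nilpotent extensions of connective $\Eo$-rings in the precise sense DGM requires. This means identifying the kernel on $\pi_0$ when some $x_j$ sit in degree $0$, and checking the $\Eo$-structures are compatible with the chosen $\Et$-map $\SS[x_1]\otimes\cdots\otimes\SS[x_r]\to R$. I would handle it by reducing to the universal case $\SS[x]/x^i \to \SS[x]/x$ and base changing.
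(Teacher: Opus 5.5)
Your proposal is correct and follows essentially the paper's route. The paper's \cref{nuc-dgm} applies Dundas--Goodwillie--McCarthy levelwise to $R/\xx^\ii \to \Ax$ (with the $\pi_0$ surjectivity and nilpotent kernel supplied by \cref{pi0}), passes to the limit, and combines this with \cref{main-thm-rcg}, which is your fiber argument; the one difference is that you reprove the $\Tm$-acyclicity of the trace fiber for the connective ring $\Ax$ (via DGM to $\pi_0$ and $\KK(\ZZ)$-module structures), whereas the paper simply cites \cite[Corollary 4.30]{purity}.
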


The following is the main example of \cref{main-thm-rcg-intro} the author had in mind.

\begin{example}\label{main-example}
	Take $R$ to be any of the variants of Morava E-theory $\En$, $\JWn$, $\cJWn$ or $\BPn$ (which indeed have $\Eth$- or even $\bbE_\infty$-ring spectra structure by \cite{GH04,GH05,cJWEinf,Ell2,HW}), and take the elements $p,\dotsc,v_{n-1}$.
	\cref{main-thm-rcg-intro} applies at height $m=n{+}1$,\footnote{Note that all higher heights vanish by the redshift bound.} giving a limit over nilpotent extensions of ($2$-periodic or connective) Morava K-theory $\Ktn$, $\Kn$ or $\kn$.

	We remark that the algebraic K-theory of Morava K-theory is, at least in some sense, smaller and more accessible than that of Morava E-theory, see for example \cite[Remark 1.2.5]{syntomicKn} and \cite{K-Kn}.
\end{example}

In a different direction, as in \cref{k-nuc-intro}, \cref{main-thm-rcg-intro} holds more generally after tensoring with any dualizable category $\DD \in \PrLstdbl$.
Taking $\DD$ to be the category of sheaves or local systems, we get the following examples (in which one can replace $\Mod_R$ by any of the other module categories considered above).

\begin{example}[{\cref{sh-comp-R}}]
	For a locally compact Hausdorff space $Y$, the following map is a $\Tm$-local isomorphism
	\[
		\Kcont(\Shv(Y, \Mod_R)) \too \lim \Kcont(\Shv(Y, \Mod_{R/(x_1^{i_1},\dotsc,x_r^{i_r})})).
	\]
	By \cite[Theorem 0.2]{EfimovK}, this map identifies with
	\[
		\Gammac(Y, \KK(R)) \too \lim \Gammac(Y, \KK(R/(x_1^{i_1},\dotsc,x_r^{i_r}))).
	\]
\end{example}

\begin{example}[{\cref{fun-comp-R}}]
	For a category $\CC$, the following map is a $\Tm$-local isomorphism
	\[
		\Kcont(\Fun(\CC, \Mod_R)) \too \lim \Kcont(\Fun(\CC, \Mod_{R/(x_1^{i_1},\dotsc,x_r^{i_r})})).
	\]
\end{example}

\subsectionnotoc{Outline of the proof}\label{outline}

We now outline the proof of the main results.
We wish to highlight ahead that a single pro-constancy result is used three times, once in the proof of \cref{k-nuc-intro} and twice in the proof of \cref{three-lims-intro}.
This makes the paper somewhat non-linear, and we hope that this outline will help the reader navigate it.

\paragraph{Efimov's argument.}
Before explaining the present paper, we give some more details on Efimov's proof of \cref{k-nuc-intro} for ordinary commutative rings from \cite{EfimovLimit}.
Efimov in fact proves a much more general continuity theorem for the K-theory of dualizable categories.
He shows that for a pretower of dualizable categories $(\CC_\bullet) = (\cdots \to \CC_1 \to \CC_0)$, the assembly map
\[
	\Kcont(\limdual \CC_i) \too \lim \Kcont(\CC_i)
\]
is an isomorphism if the transition maps in the pretower satisfy certain pro-constancy and adjointability assumptions, called the strong Mittag-Leffler condition.
When the $\CC_i$ are categories of modules over rings, this translates, by Morita theory, to pro-constancy and dualizability conditions on the associated bimodules.
Efimov then proves these conditions for the universal quotient pretower $(\ZZ[x]/x^\bullet)$ by a direct computation, which implies the case of general quotients of ordinary rings by base-change.

\paragraph{Pro-constancy for $(\SS[x]/x^i)$.}
With Efimov's argument in mind, we study the universal quotient pretower $(\SS[x]/x^\bullet)$.
Rather than proving the condition required for Mittag-Leffler directly, we first prove that
\[
	\SS \simeq \SS \otimes_{\SS[x]} \SS[x] \too (\SS \otimes_{\SS[x]} \SS[x]/x^i)
\]
exhibits the pretower in the target as pro-constant (\cref{double-quotient-pro-sphere}).
Importantly, we prove this in $\Eo$-algebras.\footnote{A point that we find easy to miss is that given a conservative functor, the induced functor on pro-categories is not necessarily conservative. Therefore, we need to prove the pro-isomorphism with all the structure needed for the subsequent arguments.}
This rests on the observation that the target is isomorphic, as a graded spectrum, to $\SS \oplus \Sigma^{i|x|+1} \SS(i)$, which forces the transition maps to be trivial by weight considerations.
Next, we explain the three applications of this pro-constancy result.

\paragraph{Mittag-Leffler and \cref{k-nuc-intro}.}
Recall that the pro-constancy above is of $\Eo$-algebras.
By tensoring \emph{over} it (and a weight filtration induction), we deduce the pro-constancy of the bimodules (\cref{higher-quotient-base-Sx}) required for the Mittag-Leffler condition (\cref{ML}).
As in Efimov's case, this implies the Mittag-Leffler condition for general quotients of even $\Et$-algebras by base-change (\cref{cat-ML-quotient} and \cref{ML-quotient-Sp}).
This in turn proves \cref{k-nuc-intro} (\cref{k-nuc}) by Efimov's general continuity theorem.

\paragraph{Mittag-Leffler and the ordinary limit.}
We introduce a variant of the Mittag-Leffler condition for a tower, i.e.\ a pretower with a cone $\CC \to (\CC_\bullet)$ (\cref{bML-cond}, in fact, we observe that both Mittag-Leffler conditions make sense in a general $2$-category, which we find both conceptually clarifying and technically convenient).
We prove that under this assumption, the functor to the ordinary limit $\CC \to \lim \CC_i$ is a reflective localization (\cref{bML-ordinary-limit}).
The pro-constancy argument above proves this variant of Mittag-Leffler, exhibiting the ordinary limit of $\Mod_{R/(x_1^{i_1},\dotsc,x_r^{i_r})}$ as the subcategory of $\Mod_R$ on the complete modules (\cref{ordinary-limit}).

\paragraph{The ordinary and dualizable limits.}
The Mittag-Leffler condition gives a fully faithful embedding of the ordinary limit into the dualizable limit \cite[Remark 5.15(i)]{EfimovLimit}, and we consider its Verdier quotient.
Observe that tensoring with $\Ax := R/(x_1,\dotsc,x_r)$ is the same as extending-then-restricting-scalars along
\[
	(R/(x_1^{i_1},\dotsc,x_r^{i_r})) \ \too\ (\Ax \otimes_R R/(x_1^{i_1},\dotsc,x_r^{i_r})).
\]
The pro-constancy result above (not in the Mittag-Leffler form) says that the right-hand diagram is pro-constant on $\Ax$.
Thus, the ordinary and dualizable limits of its module categories are both $\Mod_\Ax$, and in particular their Verdier quotient vanishes.
This shows that the tensor by $\Ax$ on the Verdier quotient of the original diagram factors through zero, i.e.\ it is $\Ax$-acyclic (\cref{comp-nuc}).

We highlight a subtlety in this argument.
When $R$ is only assumed to be an $\Et$-ring spectrum, the category of \emph{left} modules over an $\Eo$-$R$-algebra is a \emph{right} module over $\Mod_R$, hence the same is true for the ordinary and dualizable limits.
On the other hand, extending-then-restricting-scalars tensors from the left at each level, which is not a priori related to the right $\Mod_R$-action.
To overcome this, we assume that $R$ is an $\Eth$-ring spectrum.
We do not know if this is necessary.

\paragraph{The compactly generated limit.}
The computation of the compactly generated limit follows from the ordinary limit (\cref{lim-w}), by noticing that modules over quotients $R/(x_1^{i_1},\dotsc,x_r^{i_r})$ are automatically complete (\cref{auto-complete}, and if $R$ is an $\Eth$-ring spectrum the same is true for right modules), which also means that dualizable and compact coincide.
The remaining Verdier quotient is then a standard compact-tensor-dualizable argument (\cref{verdier-complete-compact-dualizable}).

\paragraph{Redshift and \cref{main-thm-rcg-intro}.}
Assuming $x_1,\dotsc,x_r$ are in the radical of the height $n$ ideal, we see that $\SS/(p^{e_0},v_1^{e_1},\dotsc,v_{n-1}^{e_{n-1}}) \otimes R$ is in the thick subcategory generated by $R/(x_1,\dotsc,x_r)$ (\cref{moore-Ax}).
This means that the Verdier quotients from \cref{three-lims-intro} are $\Lnmf$-local.
\cref{main-thm-rcg-intro} then follows from the redshift bound \cite{DescVan} and purity \cite{purity} (which we slightly extend to dualizable categories in \cref{subsec-redshift}).

\subsectionnotoc{Relation to other work and further directions}\label{related-work}

\paragraph{Burklund's quotients.}
In his remarkable paper \cite{BurklundMoore}, Burklund constructs multiplicative structures on quotients of a much broader class of ring spectra (and algebras in more general stable categories), whose K-theory would be interesting to explore.
In particular, Burklund constructs $\bbE_m$-algebra structures on type $n$ generalized Moore spectra, which were subsequently refined to towers by Li--Zhang \cite[Proposition 2.1.4]{LiZhang} and Meyer--Wagner \cite[Example 2.34]{refinedTC}.
It seems very likely that the results of this paper hold for these quotients as well.
Indeed, Meyer--Wagner's \cite[2.20(V)]{refinedTC} is very close to our pro-constancy result, and Li--Zhang computed the ordinary limit, which we now discuss.

\paragraph{Modules over formal completions.}
For ordinary commutative rings, and more generally for animated rings, the statement that the ordinary limit is equivalent to complete modules is well-known, see for example \cite[\href{https://stacks.math.columbia.edu/tag/0H0L}{Tag 0H0L}]{stacks-project} and \cite[Lemma 2.2.2]{HLP}.
In the setting of adic connective $\bbE_\infty$-ring spectra (and more general formal spectral stacks), a closely related statement is proved in \cite[Theorem 8.3.4.4]{SAG} (and earlier in \cite[Theorem 5.1.9]{DAG-XII}), but only for the (almost) connective modules rather than all modules (see \cite[Remark 8.3.4.5]{SAG}).
Nonetheless, Li--Zhang prove that for a $\Kn$-local $\bbE_\infty$-ring spectrum, the limit of modules over the tensor with the tower of generalized Moore spectra is the $\Kn$-local $R$-modules \cite[Main Theorem A]{LiZhang}.

In a different direction, \cite[Corollary 11.4]{sep-gal} shows that in the setting of adic connective $\bbE_\infty$-ring spectra, the equivalence does hold for dualizable modules (with no connectivity assumptions).
Upon taking Ind, this proves the analogous result for the compactly generated limit.

\paragraph{Picard groups of quotients.}
Li--Zhang \cite{LiZhang} and Levy--Li--Zhang \cite{picquot} study the Picard groups of quotients of ring spectra, both in the context of even ring spectra as in this paper and for the generalized Moore spectra mentioned above.
They prove several structural results, as well as carry out explicit computations, especially at height $1$.
In light of the map from Picard to the units of K-theory, it would be interesting to explore whether their results can shed light on the K-theory of quotients.
See also \cite{levy-k1,lee-levy} for the state of the art on the algebraic K-theory of the $\Ko$-local sphere.

\paragraph{Quotients in other stable categories.}
In the body of the paper we work in a slightly more general context, allowing a general presentably monoidal stable category $\EE$ (the case considered in the introduction being $\Mod_R$), see \assref{setup}.
One could, for example, consider genuine equivariant spectra, which may be of interest especially in light of recent developments \cite{DescVan,EH,HilmanMotivic,CHLL,HR}, though the author does not have any applications in mind.

\paragraph{Crystallinity of algebraic K-theory.}
A striking result of Hahn--Levy--Senger \cite{crystallinity} shows that for $R$ satisfying height $n$ Lichtenbaum--Quillen, such as $\BPn$ \cite{HW}, the algebraic K-theory $\KK(R)$ modulo powers of $p,\dotsc,v_{n+1}$ depends only on the reduction of $R$ modulo sufficiently high powers of $p,\dotsc,v_n$.
The relation to our result is somewhat indirect.
Beyond the Lichtenbaum--Quillen hypothesis, their result is about dependence on a finite stage, whereas ours is about isomorphism at the limit.
More importantly, both their input and output quotients are shifted up by one generator relative to ours (see \cref{main-example}).

\paragraph{Ordinary to dualizable quotient.}
\cref{three-lims-intro} gives a qualitative statement about the Verdier quotient of the ordinary limit into the dualizable limit, and it is natural to ask for a more precise description.
For ordinary commutative rings, or schemes more generally, such results are given for the Clausen--Scholze version of nuclear modules in \cite[\S10.3, see in particular Equation (10.13)]{rigidity}, and announced for the dualizable limit version in the subsequent subsection of \emph{loc.\ cit.}

\subsectionnotoc{Notation and conventions}

\begin{enumerate}
	\item The space or spectrum of maps between $X, Y \in \CC$ is denoted $\hom_\CC(X,Y)$ or simply $\hom(X,Y)$.
	\item By default, a module means a \emph{left} module, and their category is denoted $\Mod_A(\CC)$ or simply $\Mod_A$.
	\item By default, a dualizable module means a \emph{left} dualizable \emph{left} module in the sense of \cite[\S4.6.2]{HA} (and we remind the reader that a left module is \emph{right} dualizable if and only if its underlying object is right dualizable by \cite[Proposition 4.6.2.13]{HA}).
	\item For $\EE \in \Alg(\PrL)$, we denote by $\PrLE := \RMod_\EE(\PrL)$ the \emph{right} modules over $\EE$, taking the opposite convention to \cite{EfimovLimit} (this is motivated by the fact that $\Mod_R(\EE)$ is a right $\EE$-module).
	\item We denote by $\PrLstw \subset \PrLstdbl \subset \PrL$ the (non-full) subcategories on the compactly generated or dualizable presentable stable categories and strongly continuous functors between them.
	\item We denote the natural numbers with decreasing morphisms by $\Ng := \{ \cdots \to 1 \to 0 \}$.
	\item We use the term \emph{pretower} for a diagram $(X_\bullet) = (\cdots \to X_1 \to X_0)$, and the term \emph{tower} for a pretower with a cone $X \to (X_\bullet)$.
	\item We denote by $c\colon \EE \to \Pro(\EE)$ the constant pro-object functor.
\end{enumerate}

\subsectionnotoc{Acknowledgements}

I thank Qingyuan Bai and Wyatt Reeves for helpful conversations and suggestions, and Ishan Levy for useful email correspondence.
I also thank the speakers Alexander Efimov and Akhil Mathew, and the organizers Tobias Barthel, Kaif Hilman, Dominik Kirstein, and Jonas McCandless, for the 2024 workshop on ``Dualisable Categories \& Continuous K-theory'' in the Max Planck Institute for Mathematics.
This work was supported by the Max Planck--Weizmann joint postdoctoral program.

% I thank the Herbert Foundation for kindly granting permission to reproduce the image of Sol LeWitt's \emph{Incomplete Open Cubes} on the first page of this manuscript.

I used AI tools, primarily ChatGPT 5.5 and 5.6, frequently throughout the course of this project.
It was used as a research assistant on various tasks, ranging from developing and extending arguments together, through locating references and explaining papers, to checking the completeness of proofs.
The manuscript was written by myself.

	\section{Mittag-Leffler conditions and limits}

\subsection{Tower and pretower Mittag-Leffler conditions}\label{subsec-ml}

To start, we recall the definition of the strong Mittag-Leffler condition from \cite[Definition 5.1]{EfimovLimit}.
In fact, we make the small observation that the condition (aside from the dualizability assumption) can be formulated in a general $2$-category.

\begin{defn}\label{ML-cond}
	Let $\bbA$ be a $2$-category.
    Let $(\CC_\bullet)$ be a pretower in $\bbA$ with left adjoint transition maps $F_{ij}\colon \CC_i \to \CC_j$.
    We say that $(\CC_\bullet)$ satisfies the \tdef{pretower Mittag-Leffler condition} if for every $j$ and $k$
	\begin{enumerate}
		\item the following pro-object is pro-constant
		\[
			\prolim[i \geq j,k] F_{ij} F_{ik}^\RR
			\qin \Pro(\hom_{\bbA}(\CC_k, \CC_j)),
		\]
		\item and its limit,\footnote{This limit exists since the pro-object corresponding to the diagram is, by assumption, pro-constant.} that is the morphism below, admits both a left and a right adjoint in $\bbA$
		\[
			(\lim_{i \geq j,k} F_{ij} F_{ik}^\RR)\colon \CC_k \too \CC_j.
		\]
	\end{enumerate}
\end{defn}

\begin{remark}\label{enough-j-k}
	It suffices to check the first condition for $j=k$.
	Indeed, if $j > k$, then $F_{ik} \simeq F_{jk} F_{ij}$ so that the $j,k$ case is obtained from the $j,j$ case by pre-composing with $F_{jk}^\RR$, and similarly for $k > j$.
\end{remark}

We now verify that this is indeed equivalent to the strong Mittag-Leffler condition from \cite[Definition 5.1]{EfimovLimit}.

\begin{prop}\label{ML-equiv}
	Let $\EE \in \Alg(\PrLst)$ be rigid.
	A pretower $(\CC_\bullet) \in \PrLE$ of \emph{dualizable} $\EE$-linear categories satisfies the strong Mittag-Leffler condition over $\EE$, if and only if it satisfies the pretower Mittag-Leffler condition in the $2$-category $\PrLE$.
\end{prop}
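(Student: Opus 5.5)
The plan is to unwind Efimov's \cite[Definition 5.1]{EfimovLimit} and compare it clause by clause with \cref{ML-cond} in $\bbA = \PrLE$. Recall that Efimov's strong Mittag-Leffler condition for a pretower of dualizable $\EE$-linear categories with strongly continuous transition functors $F_{ij}$ has two parts. First, for each $j$ (and, in the general form, each pair $j,k$), the pro-system $\prolim_{i} F_{ij}F_{ik}^\RR$ of $\EE$-linear continuous functors $\CC_k \to \CC_j$ is pro-constant. Second, the resulting limit functor is strongly continuous, or equivalently has a right adjoint which is itself continuous and $\EE$-linear. Efimov phrases the first part in terms of the corresponding objects of $\CC_k^\vee \otimes_\EE \CC_j$. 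So the first step is to identify, for $\CC_k$ dualizable over $\EE$, the category of morphisms
\[
	\hom_{\PrLE}(\CC_k, \CC_j) \simeq \FunLE(\CC_k,\CC_j) \simeq \CC_k^\vee \otimes_\EE \CC_j .
\]
This identification is compatible with composition. Hence pro-constancy of the pro-object in $\Pro(\hom_{\PrLE}(\CC_k,\CC_j))$ is literally the same as pro-constancy of the pro-object of kernels. I will also use \cref{enough-j-k} to reduce both conditions to $j=k$ whenever Efimov's formulation only uses that case.

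The second step handles adjoints. In $\PrLE$, a morphism is an $\EE$-linear colimit-preserving functor, so it always has a right adjoint as a plain functor. Having a right adjoint \emph{in the $2$-category} $\PrLE$ means this plain right adjoint is continuous and $\EE$-linear, not merely lax linear. Rigidity of $\EE$ is exactly what is needed here: over rigid $\EE$, any continuous right adjoint of an $\EE$-linear functor is automatically strictly $\EE$-linear. This is the standard fact that lax linear structures over rigid bases are strict. Thus ``admits a right adjoint in $\PrLE$'' is equivalent to ``strongly continuous''.

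The left adjoint clause needs separate treatment. I will argue that the limit functor $\Phi = \lim_i F_{ij}F_{ik}^\RR$ automatically has an $\EE$-linear left adjoint in Efimov's setting. The reason is that each $F_{ik}^\RR$ is itself strongly continuous, or at least continuous, since the $F_{ik}$ are strongly continuous. If Efimov's definition includes the left adjoint condition explicitly, I will match it directly. Otherwise I will derive it from dualizability: a functor between dualizable categories whose right adjoint is continuous corresponds under duality to $\Phi^\vee$, and then I apply the first-step identification to both $\Phi$ and its dual.

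The main obstacle will be this last bookkeeping. It consists of matching exactly which adjointability Efimov requires with the two-sided adjoint condition of \cref{ML-cond}, and checking that adjoints computed in $\PrLst$ are adjoints in $\PrLE$ with the rigid-$\EE$ linearity argument. The pro-constancy clauses are formal once the equivalence of mapping categories is in place.
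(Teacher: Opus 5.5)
Your main line is the paper's proof. Efimov's pro-constancy condition is the diagonal case $j=k$ of the first condition of \cref{ML-cond}, and this suffices by \cref{enough-j-k}. As the paper notes, Efimov's second condition asks for exactly the same left and right adjoints of $\lim_i F_{ij}F_{ik}^\RR$, only in $\PrLst$ rather than in $\PrLE$. Rigidity removes that difference via \cite[Lemma 9.3.6]{GR}. That lemma must be applied to the left adjoint as well, not only to the right adjoint as you spelled out. The identification $\FunLE(\CC_k,\CC_j)\simeq\CC_k^\vee\otimes_\EE\CC_j$ is harmless but not needed. In particular, the left adjoint is never \emph{derived}: it is part of Efimov's hypothesis.

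The fallbacks you build in for the case that Efimov's definition is weaker are both false, so you should drop them.

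First, strong continuity plus dualizability does not produce a left adjoint. The functor $\QQ\otimes_\ZZ(-)\colon\Mod_\ZZ\to\Mod_\QQ$ is strongly continuous between compactly generated categories, but it does not preserve products. Dualizing $\Phi\dashv\Phi^\RR$ gives $(\Phi^\RR)^\vee\dashv\Phi^\vee$. So strong continuity of $\Phi$ is equivalent to $\Phi^\vee$ having a left adjoint, whereas $\Phi$ having a left adjoint is equivalent to $\Phi^\vee$ being strongly continuous, which is an independent condition. Continuity of $F_{ik}^\RR$ is also beside the point, since it is $F_{ij}$ that would need a left adjoint.

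Second, \cref{enough-j-k} reduces only the pro-constancy condition to $j=k$, not the adjointability condition. Write $\Phi_j=\lim_{i\geq j}F_{ij}F_{ij}^\RR$. The off-diagonal limits are $\Phi_jF_{jk}^\RR$ for $j>k$ and $F_{kj}\Phi_k$ for $k>j$. Here $F_{jk}^\RR$ need not have a continuous right adjoint, and $F_{kj}$ need not have a left adjoint. Concretely, take the pretower $\Mod_\QQ\leftarrow\Mod_\ZZ\leftarrow\Mod_\ZZ\leftarrow\cdots$ over $\EE=\Sp$, with first map $\QQ\otimes_\ZZ(-)$ and identities afterwards. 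It has $\Phi_0=\Id$ and $\Phi_1=\Id$, so it satisfies both conditions on the diagonal. But for $(j,k)=(0,1)$ the limit is $\QQ\otimes_\ZZ(-)$, which has no left adjoint. For $(1,0)$ it is restriction of scalars, whose right adjoint $\hom_\ZZ(\QQ,-)$ is not continuous.

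Hence, if Efimov's second condition were only the diagonal strong continuity you recalled, the proposition would be false. The argument goes through precisely because Efimov already requires both adjoints.
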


\begin{proof}
	We need to check that the two conditions in \cref{ML-cond} are equivalent to the two conditions in \cite[Definition 5.1]{EfimovLimit}.
	The first condition in \emph{loc.\ cit.} is the special case of the first condition in \cref{ML-cond} for $j=k$, and the two are equivalent by \cref{enough-j-k}.
	The second condition in \cref{ML-cond} requires their existence in $\PrLE$, while the second condition in \emph{loc.\ cit.} requires their existence only in $\PrLst$.
	Since $\EE$ is rigid, $\EE$-linearity is automatic by \cite[Lemma 9.3.6]{GR}.
\end{proof}

We now introduce a stronger condition, done relative to a cone over the pretower.
In the next subsection, we will see that this variant is useful for computing the limit of the pretower, see \cref{bML-ordinary-limit}.

\begin{defn}\label{bML-cond}
	Let $\bbA$ be a $2$-category.
    Let $\CC \to (\CC_\bullet)$ be a tower in $\bbA$ such that the maps $u_i\colon \CC \to \CC_i$ and $F_{ij}\colon \CC_i \to \CC_j$ are left adjoints.
    We say that $(\CC_\bullet)$ satisfies the \tdef{tower Mittag-Leffler condition} if
	\begin{enumerate}
		\item for every $j$ and $k$, the counits of the $u_i \dashv u_i^\RR$ adjunctions induce a pro-isomorphism
		\[
            c(u_j u_k^\RR)
            \simeq \prolim[i \geq j,k] F_{ij} u_i u_i^\RR F_{ik}^\RR
            \too \prolim[i \geq j,k] F_{ij} F_{ik}^\RR
			\qin \Pro(\hom_{\bbA}(\CC_k, \CC_j)),
		\]
		\item and each $u_i$ admits a left adjoint, and its right adjoint $u_i^\RR$ admits a further right adjoint.
	\end{enumerate}
\end{defn}

We observe that this indeed implies the pretower Mittag-Leffler.

\begin{prop}\label{bML-cML}
    Let $\bbA$ be a $2$-category, and let $\CC \to (\CC_\bullet)$ be a Mittag-Leffler tower, then $(\CC_\bullet)$ is Mittag-Leffler pretower.
\end{prop}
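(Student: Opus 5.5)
We verify the two conditions of \cref{ML-cond} directly from those of \cref{bML-cond}.

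\emph{Pro-constancy.} Fix $j,k$. By the first condition of \cref{bML-cond}, the pro-object $\prolim[i \geq j,k] F_{ij} F_{ik}^\RR$ is the target of a pro-isomorphism with source the constant pro-object $c(u_j u_k^\RR)$. It is therefore pro-constant, which is the first condition of \cref{ML-cond}. The one point to check is that the comparison map is indeed induced by the counits $u_i u_i^\RR \to \Id$. Here we use $F_{ij} u_i \simeq u_j$, which is part of the cone structure, and $u_i^\RR F_{ik}^\RR \simeq (F_{ik} u_i)^\RR \simeq u_k^\RR$. Hence the source diagram is constant in $i$ with value $u_j u_k^\RR$, and its limit computes $u_j u_k^\RR$. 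It follows that
\[
\lim_{i \geq j,k} F_{ij} F_{ik}^\RR \simeq u_j u_k^\RR \colon \CC_k \too \CC_j .
\]

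\emph{Adjoints.} We must show that $u_j u_k^\RR$ admits both a left and a right adjoint in $\bbA$.
\begin{itemize}
\item \textbf{Right adjoint.} By assumption $u_k^\RR$ has a right adjoint $(u_k^\RR)^\RR$, and $u_j$ has the right adjoint $u_j^\RR$. The composite $u_j u_k^\RR$ is therefore right adjoint to nothing yet, but it does have the right adjoint $(u_k^\RR)^\RR u_j^\RR$, since adjunctions compose in a $2$-category.
\item \textbf{Left adjoint.} By assumption $u_j$ has a left adjoint $u_j^\LL$, and $u_k^\RR$ has the left adjoint $u_k$. The composite then has the left adjoint $u_k u_j^\LL$.
\end{itemize}
This gives the second condition of \cref{ML-cond}, completing the argument.

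\emph{Expected obstacle.} The only potentially subtle step is making sure that the limit of the pro-constant pro-object is canonically identified with $u_j u_k^\RR$. This holds because a pro-isomorphism from a constant pro-object $c(X)$ exhibits $X$ as the limit: the limit is computed via the corepresented functor on $\hom_\bbA(\CC_k,\CC_j)$. Everything else is formal manipulation of adjunctions.
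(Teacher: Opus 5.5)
Your proposal is correct and follows the paper's proof. The first tower condition exhibits $\prolim[i \geq j,k] F_{ij} F_{ik}^\RR$ as the constant pro-object on $u_j u_k^\RR$, so its limit is $u_j u_k^\RR$, and this morphism has left adjoint $u_k u_j^\LL$ and right adjoint $(u_k^\RR)^\RR u_j^\RR$ by composing the adjunctions supplied by the second tower condition; the phrase ``right adjoint to nothing yet'' is only a wording slip and does not affect the argument.
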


\begin{proof}
    The first condition in \cref{bML-cond} identifies the pro-object in the first condition of \cref{ML-cond} as the constant pro-object on $u_j u_k^\RR$.
    The limit from the second condition of \cref{ML-cond} is then $u_j u_k^\RR$, which admits both a left and a right adjoint by the second condition of \cref{bML-cond}.
\end{proof}

One advantage of the $2$-categorical formulation is that it is manifestly preserved by $2$-functors.

\begin{prop}\label{ML-2}
	Any $2$-functor $\bbA \to \bbB$ preserves Mittag-Leffler towers and pretowers.
\end{prop}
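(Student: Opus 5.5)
The statement is formal: every ingredient of \cref{ML-cond} and \cref{bML-cond} is defined in terms of $2$-categorical structure, namely adjunctions, composition of $1$-morphisms, $2$-morphisms, and pro-objects in hom-categories. The plan is to check that each ingredient is carried along by a $2$-functor $\Phi\colon \bbA \to \bbB$.

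The first step is to recall that $\Phi$ preserves adjunctions. An adjunction $F \dashv F^\RR$ consists of a unit, a counit and the triangle identities, and a $2$-functor sends these to a unit, counit and triangle identities for $\Phi(F) \dashv \Phi(F^\RR)$, so that $\Phi(F^\RR) \simeq \Phi(F)^\RR$ canonically. Hence a pretower or tower with left adjoint transition maps is sent to one with the same property, and $\Phi(F_{ij}F_{ik}^\RR) \simeq \Phi(F_{ij})\Phi(F_{ik})^\RR$, compatibly in $i$ because $\Phi$ is functorial on composites. For the same reason the existence of further left and right adjoints is preserved. This settles the second condition in both definitions, once we know what the relevant morphism in $\bbB$ is.

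The second step handles the pro-objects. On hom-categories $\Phi$ gives a functor $\Phi_{k,j}\colon \hom_\bbA(\CC_k,\CC_j) \to \hom_\bbB(\Phi\CC_k,\Phi\CC_j)$, which extends to $\Pro(\Phi_{k,j})$ by applying it levelwise to cofiltered diagrams. This extension commutes with the constant pro-object functor $c$ and preserves pro-isomorphisms, being a functor. By the first step, $\Pro(\Phi_{k,j})$ sends $\prolim F_{ij}F_{ik}^\RR$ to $\prolim \Phi(F_{ij})\Phi(F_{ik})^\RR$.

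Two consequences follow. In the pretower case, pro-constancy is preserved. Moreover, the limit of a pro-constant object is its underlying constant value, so this limit is carried by $\Phi$ to the limit of the image, even though $\Phi$ need not preserve limits in general. Consequently the second condition of \cref{ML-cond} transfers via the first step. In the tower case, the comparison map in \cref{bML-cond} is built from counits of $u_i \dashv u_i^\RR$. These counits are sent to the counits of $\Phi(u_i) \dashv \Phi(u_i)^\RR$, so the image of the pro-isomorphism is exactly the comparison map for the image tower, and it is again a pro-isomorphism.

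The only point needing care, and the main obstacle, is coherence in the second step: the identifications $\Phi(F_{ij}F_{ik}^\RR) \simeq \Phi(F_{ij})\Phi(F_{ik})^\RR$ must assemble into an equivalence of $\Ng$-indexed diagrams rather than just objectwise equivalences. I would handle this by noting that both diagrams are obtained by applying the functor $\Phi_{k,j}$, respectively composition, to the same diagram in the arrow/adjunction data. Uniqueness of adjoints is a contractible choice, and the passage to right adjoints is functorial, so the identification is natural in $i$.
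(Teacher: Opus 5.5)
Your proposal is correct and follows the same route as the paper's proof. A $2$-functor preserves adjunctions, and with them the counits defining the comparison map in the tower condition; it also preserves pro-objects and pro-constancy. The limit in the second condition of \cref{ML-cond} is just the value of a pro-constant object, so it is carried along, and your remark that the identifications $\Phi(F_{ij}F_{ik}^\RR) \simeq \Phi(F_{ij})\Phi(F_{ik})^\RR$ are natural in $i$ spells out a point the paper leaves implicit.
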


\begin{proof}
	A $2$-functor preserves adjunctions and pro-objects, and in particular pro-constancy.
    For the second condition of \cref{ML-cond}, we note that the limit in question is the value of a pro-constant object, hence is preserved by the $2$-functor.
\end{proof}

A particularly useful example for this (which was used in \cite{EfimovLimit}) is the following passage from the Morita $2$-category.
Recall that for $\EE \in \Alg(\PrL)$, the \tdef{Morita $2$-category} $\mdef{\MorE}$, is the $2$-category whose objects are $\Eo$-algebras in $\EE$, and the hom category from $R$ to $R'$ is the category $\BMod{R'}{R}(\EE)$, as defined in \cite[Definition 4.40]{morita} (see also \cite[Remark 4.8.4.9]{HA}, \cite[Proposition 5.13]{morita2}).
Note that it receives a functor $\Alg(\EE) \to \MorE$ sending $R$ to $R$ and $f\colon R \to R'$ to the $1$-morphism given by the $R'$-$R$-bimodule $R'$, which has right adjoint given by the $R$-$R'$-bimodule $R'$ (corresponding to extension and restriction of scalars along $f$ on the categories of modules \cite[Theorem 4.8.4.1]{HA}).
With this in place, we now state the pretower Mittag-Leffler condition in the Morita $2$-category.

\begin{prop}\label{cat-ML-morita}
	Let $\EE \in \Alg(\PrL)$ and let $(R_\bullet)$ be a pretower in $\Alg(\EE)$.
    Its image in $\MorE$ satisfies the pretower Mittag-Leffler condition if and only if for every $j$ and $k$
	\begin{enumerate}
		\item the following pro-bimodule is pro-constant
		\[
			\prolim[i \geq j,k] (R_j \otimes_{R_i} R_k)
			\qin \Pro(\BMod{R_j}{R_k}(\EE)),
		\]
		\item and its limit, i.e.\ the $R_j$-$R_k$-bimodule $\lim_{i \geq j,k} (R_j \otimes_{R_i} R_k)$, is left dualizable as a left $R_j$-module and right dualizable as a right $R_k$-module.
	\end{enumerate}
    Similarly, the tower $\one \to (R_\bullet)$ satisfies the tower Mittag-Leffler condition in $\MorE$ if and only if
    \begin{enumerate}
        \item for every $j$ and $k$ the following map of pro-bimodules is a pro-isomorphism
        \[
            c(R_j \otimes R_k)
            \too \prolim[i \geq j,k] (R_j \otimes_{R_i} R_k)
            \qin \Pro(\BMod{R_j}{R_k}(\EE)),
        \] 
        \item and each $R_i \in \EE$ is left and right dualizable.
    \end{enumerate}
\end{prop}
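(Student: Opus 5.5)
The plan is to translate each clause of \cref{ML-cond} and \cref{bML-cond} through the dictionary of the Morita $2$-category. First I fix the conventions. The transition map $R_i \to R_j$ for $i \geq j$ gives the $1$-morphism $F_{ij} = R_j \in \BMod{R_j}{R_i}(\EE)$. Its right adjoint is $F_{ij}^\RR = R_j \in \BMod{R_i}{R_j}(\EE)$. Composition in $\MorE$ is the relative tensor product, so a $1$-morphism $R\to R'$ followed by $R'\to R''$ is $M'' \otimes_{R'} M'$. This gives
\[
	F_{ij} F_{ik}^\RR \simeq R_j \otimes_{R_i} R_k \qin \BMod{R_j}{R_k}(\EE).
\]
Next I check naturality in $i$. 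The transition maps of the diagram $i \mapsto F_{ij}F_{ik}^\RR$ are built from unit maps $F_{i'i}F_{i'i}^\RR \to \Id$, which is the multiplication $R_i\otimes_{R_{i'}}R_i \to R_i$. So they agree with the evident maps $R_j \otimes_{R_{i'}} R_k \to R_j \otimes_{R_i} R_k$. Hence condition (1) of \cref{ML-cond} is literally condition (1) of the statement, as pro-objects in $\hom_{\MorE}(R_k,R_j) = \BMod{R_j}{R_k}(\EE)$.

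For condition (2), I use the standard fact that a bimodule $M \in \BMod{R'}{R}(\EE)$, viewed as a $1$-morphism $R \to R'$ in $\MorE$, has a right adjoint iff it is left dualizable as a left $R'$-module. Symmetrically, it has a left adjoint iff it is right dualizable as a right $R$-module. I would cite \cite[\S4.6.2]{HA} for this, in the form that duality data for bimodules are exactly adjunction data in the Morita $2$-category, e.g.\ \cite[Proposition 4.6.2.12]{HA} and its bimodule variant. Care is needed to match the side conventions with the paper's Notation, namely left modules and left dualizability. Applied to $M=\lim_i (R_j\otimes_{R_i}R_k)$, this gives exactly condition (2).

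For the tower, the cone $\one \to R_i$ gives $u_i = R_i \in \BMod{R_i}{\one}$ and $u_i^\RR = R_i \in \BMod{\one}{R_i}$. Then $u_ju_k^\RR \simeq R_j \otimes_{\one} R_k = R_j\otimes R_k$. The map $F_{ij}u_iu_i^\RR F_{ik}^\RR \to F_{ij}F_{ik}^\RR$ induced by the counit $u_iu_i^\RR\to \Id$ is the multiplication $R_i \otimes R_i \to R_i$ tensored with $R_j$ and $R_k$. It therefore identifies with the canonical map $R_j\otimes R_k \to R_j\otimes_{R_i}R_k$, compatibly in $i$. This proves the equivalence of the two conditions (1).

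For condition (2) of \cref{bML-cond}, the dictionary above gives the following:
\begin{enumerate}
	\item $u_i$ has a left adjoint iff $R_i$ is right dualizable as a right $\one$-module, i.e.\ right dualizable in $\EE$;
	\item $u_i^\RR$, as a $1$-morphism $R_i\to\one$, has a right adjoint iff $R_i$ is left dualizable as a left $\one$-module, i.e.\ left dualizable in $\EE$.
\end{enumerate}
This yields the claim.

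The main obstacle is bookkeeping rather than mathematics. I need to pin down the composition and adjoint conventions in $\MorE$ against \cite{morita}, and check that the unit and counit maps in the $2$-category really are the multiplication maps, with the correct left/right dualizability sides. I would verify these on the underlying functors between module categories via \cite[Theorem 4.8.4.1]{HA}, where extension and restriction of scalars make the identifications transparent.
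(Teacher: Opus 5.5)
Your proposal is correct and takes the same route as the paper. You identify $F_{ij}F_{ik}^\RR$ with $R_j \otimes_{R_i} R_k$ via the description of $\Alg(\EE) \to \MorE$, and you translate adjointability of $1$-morphisms into left/right dualizability of bimodules via \cite[\S4.6.2]{HA; the paper cites Propositions 4.6.2.10 and 4.6.2.13}, with the tower case handled the same way. One small slip: the map $F_{i'i}F_{i'i}^\RR \to \Id$ you use for the transition maps is the counit, not the unit, although you correctly identify it as the multiplication $R_i \otimes_{R_{i'}} R_i \to R_i$.
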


\begin{proof}
    We prove the first statement, the second is essentially the same.
	The description of the image of $\Alg(\EE)$ in $\MorE$ shows that in this case $F_{ij} F_{ik}^\RR$ is indeed the $R_j$-$R_k$-bimodule $R_j \otimes_{R_i} R_k$, which identifies the first condition.
	For the second condition, adjointability of $1$-morphisms in $\MorE$ is indeed equivalent to dualizability of the corresponding bimodule by \cite[Proposition 4.6.2.10 together with Proposition 4.6.2.13]{HA}.
\end{proof}

This gives a description of the Mittag-Leffler conditions for module categories.

\begin{cor}\label{ML-ring-to-mod}
	Let $\EE \in \Alg(\PrL)$.
    If $(R_\bullet) \in \Alg(\EE)$ satisfies the pretower Mittag-Leffler condition in $\MorE$, then $(\Mod_{R_\bullet}(\EE))$ satisfies the pretower Mittag-Leffler condition in $\PrLE$, and similarly for towers.

    If moreover $\EE \in \Alg(\PrLst)$ is rigid, then $(\Mod_{R_\bullet}(\EE))$ satisfies the strong Mittag-Leffler condition over $\EE$.
\end{cor}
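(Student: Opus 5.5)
The plan is to apply \cref{ML-2} to a suitable $2$-functor out of the Morita $2$-category. By \cite[Theorem 4.8.4.1]{HA} (and its bimodule refinement \cite[Remark 4.8.4.9]{HA}), there is a $2$-functor
\[
	\Mod_{(-)}(\EE)\colon \MorE \too \PrLE,
\]
which sends an algebra $R$ to $\Mod_R(\EE)$, regarded as a right $\EE$-module. On hom categories, it sends an $R'$-$R$-bimodule $M$ to the $\EE$-linear functor $M \otimes_R (-)\colon \Mod_R(\EE) \to \Mod_{R'}(\EE)$. This functor is compatible with the functor $\Alg(\EE) \to \MorE$: an algebra map $f\colon R \to R'$ goes to extension of scalars along $f$, and the bimodule $R'$ viewed as an $R$-$R'$-bimodule goes to restriction of scalars. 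The unit $\one$ goes to $\Mod_\one(\EE) \simeq \EE$. Consequently, the image of $(R_\bullet)$, respectively of the tower $\one \to (R_\bullet)$, in $\MorE$ is sent to $(\Mod_{R_\bullet}(\EE))$, respectively to $\EE \to (\Mod_{R_\bullet}(\EE))$, with extension of scalars as the transition maps.

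Given this, the first assertion is immediate from \cref{ML-2}. A $2$-functor carries Mittag-Leffler pretowers and towers to Mittag-Leffler pretowers and towers. So the pretower Mittag-Leffler condition for $(R_\bullet)$ in $\MorE$ gives the pretower Mittag-Leffler condition for $(\Mod_{R_\bullet}(\EE))$ in $\PrLE$, and the same argument applies verbatim to towers.

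For the second assertion, assume that $\EE \in \Alg(\PrLst)$ is rigid. By \cref{ML-equiv}, it is enough to check that each $\Mod_{R_i}(\EE)$ is dualizable in $\PrLE$. The strong Mittag-Leffler condition will then follow from the pretower Mittag-Leffler condition just established.

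This dualizability is the only genuinely new input, and it is where I expect the main obstacle. The approach is as follows. $\Mod_{R_i}(\EE)$ is the category of modules over an algebra in $\EE$, and $\EE$ is rigid and in particular dualizable. Hence $\Mod_{R_i}(\EE) \simeq \EE \otimes_{\EE} \Mod_{R_i}(\EE)$ is a module over a rigid category, and it is generated under colimits by the free modules $R_i \otimes E$. Restriction of scalars is $\EE$-linear and colimit-preserving, so $\Mod_{R_i}(\EE)$ is a retract, in $\PrLE$, of $\EE$ tensored with the dualizable category $\Mod_{R_i}(\Sp)$-type presentation. More concretely, I would invoke the standard fact (as in \cite{EfimovK}) that for rigid $\EE$ and any $A \in \Alg(\EE)$, the category $\Mod_A(\EE)$ is dualizable over $\EE$, with dual $\RMod_A(\EE)$. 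The evaluation is given by the relative tensor product over $A$, and the coevaluation by the bimodule $A$. The triangle identities reduce to $A \otimes_A M \simeq M$. Dualizability over $\EE$ is equivalent to dualizability in $\PrLst$ when $\EE$ is rigid, and this completes the argument.
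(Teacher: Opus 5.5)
Your proposal is correct and follows the paper's proof. The first part is \cref{ML-2} applied to the $2$-functor $\MorE \to \PrLE$, $R \mapsto \Mod_R(\EE)$, and the second part combines the dualizability of each $\Mod_{R_i}(\EE)$ in $\PrLE$ with \cref{ML-equiv}. The concrete duality you describe (dual $\RMod_{R_i}(\EE)$, evaluation by $(-)\otimes_{R_i}(-)$, coevaluation by the bimodule $R_i$) is exactly \cite[Remark 4.8.4.8]{HA} and holds without rigidity, so it is not a real obstacle. You should drop the garbled ``retract of a $\Mod_{R_i}(\Sp)$-type presentation'' sentence, which makes no sense since $R_i$ is an algebra in $\EE$.
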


\begin{proof}
	By \cref{ML-2}, the $2$-functor $\MorE \to \PrLE$ sending $R$ to $\Mod_R(\EE)$ preserves the tower and pretower Mittag-Leffler conditions, which proves the first part.
    For the second part, note that each $\Mod_{R_i}(\EE)$ is dualizable in $\PrLE$ by \cite[Remark 4.8.4.8]{HA}, and the result follows using \cref{ML-equiv}.
\end{proof}

\subsection{Ordinary limit of Mittag-Leffler towers and completion}\label{subsec-ml-limit}

In this subsection we identify the ordinary limit.
Let $\EE \in \Alg(\PrL)$.
Let $\CC \to (\CC_\bullet)$ be a Mittag-Leffler tower in $\PrLE$.
Being a cone, the functors $u_i$ assemble to a map into the ordinary limit
\[
    u\colon \CC \too \lim_i \CC_i,
    \qquad u(M) = (u_\bullet(M)).
\]
By assumption each $u_i$ has a right adjoint $u_i^\RR$, therefore, by \cite[Theorem B]{adjdesc}, the functor $u$ has a right adjoint given by their limit
\[
    u^\RR\colon \lim_i \CC_i \too \CC,
    \qquad
    u^\RR((M_\bullet)) = \lim_i u_i^\RR(M_i).
\]
We introduce the following definition.

\begin{defn}
    Let $\CC \to (\CC_\bullet)$ be a Mittag-Leffler tower in $\PrLE$.
    We say that $M \in \CC$ is \tdef{complete} if the unit of the $u \dashv u^\RR$ adjunction is an isomorphism on it, namely if
    \[
        M \iso u^\RR(u(M)) \simeq \lim_i u_i^\RR(u_i(M))
        \qin \CC,
    \]
    and we denote their full subcategory by $\mdef{\CCc} \subset \CC$.
\end{defn}

\begin{thm}\label{bML-ordinary-limit}
    Let $\CC \to (\CC_\bullet)$ be a Mittag-Leffler tower in $\PrLE$.
    The right adjoint $u^\RR$ is fully faithful and $u$ restricts to an equivalence
    \[
        u\colon \CCc \iso \lim_i \CC_i.
    \]
    In particular, $\CCc$ is presentable and a reflective localization of $\CC$.
\end{thm}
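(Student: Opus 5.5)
The plan is to show that the counit $u u^\RR \to \Id$ is an isomorphism, which is equivalent to full faithfulness of $u^\RR$. Everything else then follows formally. The essential image of $u^\RR$ consists of objects of the form $u^\RR(X)$, and on these the unit is an isomorphism by the triangle identities. So the image is exactly $\CCc$, and $u$ restricts to an inverse equivalence $\CCc \iso \lim_i \CC_i$. Presentability of $\CCc$ follows because $\lim_i \CC_i$ is a limit in $\PrL$. Reflectivity is then the statement that $u^\RR u$ is a localization with image $\CCc$.

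To compute the counit, fix $(M_\bullet) \in \lim_i \CC_i$, so $M_i \simeq F_{ij}^\RR$-compatibly, i.e.\ $F_{ij}(M_i) \simeq M_j$. The $j$-th component of $u u^\RR(M_\bullet)$ is
\[
	u_j \lim_i u_i^\RR(M_i).
\]
The first step is to commute $u_j$ past this limit. By the second condition of \cref{bML-cond}, $u_j$ admits a left adjoint, hence preserves limits. This gives $\lim_{i} u_j u_i^\RR(M_i)$, and we may restrict to $i \geq j$ by cofinality.

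The second step uses the tower Mittag-Leffler condition with $k=j$. Write $M_i \simeq F_{ij}^{\RR}$-style as $M_i$ and observe that $u_i^\RR(M_i)$ is naturally compared with $u_j^\RR$-data through the transition maps. More precisely, I would rewrite the tower of objects $u_j u_i^\RR(M_i)$ as the value on $M_\bullet$ of the pro-system $\prolim[i] u_j u_i^\RR$. I would then compare this with $\prolim[i \geq j] F_{ij} F_{ii}^\RR$ evaluated appropriately. The cleanest route is to use the unit isomorphisms $M_i \to F_{ii'}^{\RR}$? The compatibility actually goes the other way. I would therefore first reduce, via $u_i^\RR(M_i) \simeq \lim_{i'\ge i} u_{i'}^\RR(M_{i'})$-type cofinality, to evaluating the pro-object $\prolim_{i} F_{ij}\, u_i u_i^\RR \, F_{ij}^{\RR}$ on $M_j$-compatible data.

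Here condition (1) of \cref{bML-cond}, applied for $j=k$, identifies the pro-object $\prolim F_{ij} u_i u_i^\RR F_{ij}^\RR \to \prolim F_{ij}F_{ij}^\RR$ as a pro-isomorphism whose source is constant on $u_ju_j^\RR$. A pro-isomorphism induces an isomorphism on limits after evaluation, because evaluation at an object and then taking $\lim$ is a functor out of $\Pro(\hom(\CC_j,\CC_j))$. This shows that the map from $\lim_i u_ju_i^\RR(M_i)$ to $\lim_i F_{ij}F_{ij}^\RR$ applied to the compatible system is an isomorphism. The latter collapses to $M_j$ via the counits of $F_{ij}\dashv F_{ij}^\RR$, using the compatibility $F_{ij}M_i\simeq M_j$ and a cofinality/triangle-identity argument. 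Naturality in $j$ then gives the counit isomorphism in the limit category.

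I expect the main obstacle to be the bookkeeping of this middle step. It requires expressing the diagram $i \mapsto u_j u_i^\RR(M_i)$ honestly as the evaluation of the pro-system from \cref{bML-cond}, and checking that the resulting composite agrees with the counit of $u \dashv u^\RR$ rather than some other map. I would handle this by working with the $2$-categorical pasting of the counits $u_iu_i^\RR \to \Id$ and $F_{ij}F_{ij}^\RR \to \Id$. The fallback is the standard trick of embedding $\CC_i$-data into a single $\CC_j$ through the right adjoints.
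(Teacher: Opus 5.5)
Your outer framework matches the paper's. You reduce full faithfulness of $u^\RR$ to the counit $u u^\RR \to \Id$ being an isomorphism, test this after each projection to $\CC_j$, and move $u_j$ inside $\lim_i u_i^\RR(M_i)$ because $u_j$ is a right adjoint. The formal consequences you draw are also fine.

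The gap is the middle step, which as written does not work.

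\begin{itemize}
\item The diagram $i \mapsto u_j u_i^\RR(M_i)$ has an input object that changes with $i$. So it is not the evaluation of the $(j,j)$ pro-object $\prolim[i \geq j] F_{ij}F_{ij}^\RR$ at any single object of $\CC_j$.
\item Evaluating the $(j,j)$ case of condition (1) at $M_j$ only gives $u_j u_j^\RR(M_j) \simeq \lim_{i} F_{ij}F_{ij}^\RR(M_j)$. This is not $M_j$ in general; for the tower of \cref{ML} it is $\SS[x]/x^j \otimes_{\SS[x]} M_j$. So there is no ``collapse to $M_j$ via the counits'' of $F_{ij} \dashv F_{ij}^\RR$.
\item Your proposed reduction $u_i^\RR(M_i) \simeq \lim_{i' \geq i} u_{i'}^\RR(M_{i'})$ is false. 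By initiality, the right-hand side is $u^\RR((M_\bullet))$, not $u_i^\RR(M_i)$.
\end{itemize}

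The missing idea is to use condition (1) for all pairs $(j,k)$ with $k$ varying, each evaluated at $M_k$.

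\begin{itemize}
\item This yields maps $u_j u_k^\RR(M_k) \to F_{ij}F_{ik}^\RR(M_k)$. They form a single natural transformation of diagrams over the two-variable poset $A = \{(k,i) \mid i \geq j,k\}$, and for each fixed $k$ it is a pro-isomorphism in $i$.
\item The paper assembles these into a pro-isomorphism over all of $A$ via \cref{pro-pro}, then takes limits. You could instead take limits over $i$ first and then over $k$. This works because $(k,i) \mapsto k$ is a cartesian fibration $A \to \Ng$, with cartesian lifts $(k', \max(i,k')) \to (k,i)$.
\item Finally, the diagonal $\{(i,i) \mid i \geq j\}$ is initial in $A$. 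This turns the source into $\lim_{i \geq j} u_j u_i^\RR(M_i)$. It turns the target into $\lim_{i \geq j} F_{ij}F_{ii}^\RR(M_i) = \lim_{i \geq j} F_{ij}(M_i) \simeq M_j$, a constant diagram.
\end{itemize}

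The last step is where the compatibility $F_{ij}(M_i) \simeq M_j$ actually does its work. Without this two-index argument, or an equivalent one, your proof does not show that the counit is an isomorphism.
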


Before proving this, we need the following observation about pro-objects.

\begin{lem}\label{pro-pro}
	Let $I_\bullet\colon K \to \Cat$ be a diagram, denote by $I \to K$ its unstraightening, and assume that each $I_k$ is cofiltered and so is $I$.
	Let $\DD$ be a category, and let $X_\bullet \to Y_\bullet$ be a morphism in $\Fun(I, \DD)$.
	Assume that for any fixed $k \in K$, the map of pro-objects restricted to $I_k$
	\[
		\prolim[i \in I_k] X_{k,i} \iso \prolim[i \in I_k] Y_{k,i}
		\qin \Pro(\DD)
	\]
	is an isomorphism.
	Then the map of pro-objects
	\[
		\prolim[(k,i) \in I] X_{k,i} \too \prolim[(k,i) \in I] Y_{k,i}
		\qin \Pro(\DD)
	\]
	is an isomorphism.
\end{lem}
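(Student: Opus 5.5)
The plan is to reduce to the universal property of pro-objects, computing mapping spaces out of a pro-object as colimits. For any $Z \in \DD$ (it suffices to test against constant pro-objects $c(Z)$, since pro-objects are limits of constant ones and $\hom_{\Pro(\DD)}(-, \prolim_j Z_j) = \lim_j \hom(-, c(Z_j))$), we have
\[
	\hom_{\Pro(\DD)}\bigl(\prolim[(k,i)\in I] Y_{k,i}, c(Z)\bigr) \simeq \colim_{(k,i) \in I^\op} \hom_\DD(Y_{k,i}, Z),
\]
and similarly for $X$. A map of pro-objects is an isomorphism if and only if it induces an isomorphism on these spaces for every $Z$, by Yoneda. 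So the goal is to show that
\[
	\colim_{(k,i)\in I^\op} \hom(Y_{k,i},Z) \too \colim_{(k,i)\in I^\op}\hom(X_{k,i},Z)
\]
is an isomorphism.

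Since $I \to K$ is the unstraightening of $I_\bullet$, I will apply the standard decomposition of colimits over a (co)cartesian fibration, i.e.\ left Kan extension along $I^\op \to K^\op$. The fibers are $I_k^\op$, and because $I \to K$ is a cocartesian fibration (for a functor $K \to \Cat$), its opposite $I^\op \to K^\op$ is again cocartesian with fibers $I_k^\op$. Left Kan extension along a cocartesian fibration is computed fiberwise, giving
\[
	\colim_{(k,i)\in I^\op} \hom(Y_{k,i},Z) \simeq \colim_{k \in K^\op} \colim_{i \in I_k^\op} \hom(Y_{k,i}, Z),
\]
naturally in the map $X_\bullet \to Y_\bullet$. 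The step I expect to need the most care is this bookkeeping of variance: checking that the fibration is of the correct type (cartesian versus cocartesian after taking $\op$) so that the fiberwise formula for left Kan extension applies. If it turns out to have the wrong variance, I will instead use cofinality of the fiber inclusions, which holds because each $I_k$ is cofiltered.

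By hypothesis, for each fixed $k$ the inner map $\colim_{I_k^\op}\hom(Y_{k,i},Z) \to \colim_{I_k^\op}\hom(X_{k,i},Z)$ is an isomorphism, since it is $\hom(-,c(Z))$ applied to the assumed pro-isomorphism. Hence the natural transformation of $K^\op$-diagrams is a levelwise isomorphism, so its colimit over $K^\op$ is an isomorphism, and we are done.

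The assumption that $I$ is cofiltered is used only to ensure that $\prolim_{I}$ is a legitimate pro-object indexed by a cofiltered category. The argument itself holds at the level of formal colimits.
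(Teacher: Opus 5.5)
Your outline is the same as the paper's proof: test against $c(Z)$, rewrite $\colim_{I^\op}\hom(Y_{k,i},Z)$ as $\colim_{k\in K^\op}\colim_{i\in I_k^\op}\hom(Y_{k,i},Z)$, apply the hypothesis fibrewise, and conclude by Yoneda. The paper states the middle identification without comment, and your justification of it does not work.

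The opposite of a cocartesian fibration is a \emph{cartesian} fibration. So $I^\op\to K^\op$ is cartesian, classified by $k\mapsto I_k^\op$ as a functor out of $K$, not out of $K^\op$. Left Kan extensions are computed fibrewise along cocartesian fibrations, and right Kan extensions along cartesian ones, so your main argument does not apply.

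Your fallback has the right shape but the wrong reason. The left Kan extension along $I^\op\to K^\op$ at $k$ is a colimit over $(I\times_K K_{k/})^\op$. The fibre $I_k^\op$ is cofinal there if and only if every comma category $I_k\times_{I_{k'}}(I_{k'})_{/i'}$ (for $\alpha\colon k\to k'$ and $i'\in I_{k'}$) is weakly contractible. Equivalently, each transition functor $I_\alpha\colon I_k\to I_{k'}$ must be initial. Cofilteredness of the fibres does not give this; cofilteredness of $I$ does:
\begin{enumerate}
\item Given $i'$ and some $i_0\in I_k$, choose $b\in I_{k'}$ mapping to both $i'$ and $I_\alpha(i_0)$.
\item Choose a common source $(k'',c)$ of $(k,i_0)$ and $(k',b)$ in $I$.
\item Choose an object equalizing the two induced maps $(k'',c)\rightrightarrows(k',I_\alpha(i_0))$.
\item After this precomposition, the composite to $(k',b)\to(k',i')$ lies over a morphism of the form $\alpha\circ\beta$.
\item Factoring it through the cocartesian lift of $\beta$ gives an object of the comma category.
\end{enumerate}
The same equalizing trick shows that the comma category is cofiltered. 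Alternatively, you can avoid Kan extensions altogether. Since $I^\op$ is filtered, every class in $\pi_n\colim_{I^\op}$ is represented at some $(k,i)$, and every relation is witnessed at a further object, so you can chase through the fibrewise isomorphisms.

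Consequently your closing remark is false: cofilteredness of $I$ is exactly what the decomposition needs, not a formality. Take $K=[1]$, $I_0=\pt$, $I_1=\Ng$, with $I_0\to I_1$ picking $0$; the fibres are cofiltered but $I$ is not. In sets, let $X_{1,0}=\{p,q\}$, with $X_{0,*}=\pt\mapsto p$ and $X_{1,1}=\pt\mapsto q$, and let all other $X_{k,i}$ and all $Y_{k,i}$ be points. The fibrewise maps are pro-isomorphisms. However, $\colim_{I^\op}\hom(X,Z)$ is the pushout of the two projections $Z\leftarrow Z\times Z\to Z$. This differs from $\colim_{I^\op}\hom(Y,Z)\simeq Z$ as soon as $Z$ has two points.
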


\begin{proof}
	Let $Z$ be an object of $\DD$.
	Using the formula for the hom space in pro-categories, we get
	\begin{align*}
		\hom(\prolim[i \in I] Y_i, c(Z))
		&\simeq \colim_{i \in I^\op} \hom(Y_i, Z)\\
		&\simeq \colim_{k \in K^\op} \colim_{i \in I_k^\op} \hom(Y_i, Z)\\
		&\simeq \colim_{k \in K^\op} \colim_{i \in I_k^\op} \hom(X_i, Z)\\
		&\simeq \colim_{i \in I^\op} \hom(X_i, Z)\\
		&\simeq \hom(\prolim[i \in I] X_i, c(Z)),
	\end{align*}
	where the third isomorphism follows from the assumption that the map of pro-objects restricted to $I_k$ is an isomorphism for any fixed $k \in K$.
	The formula for the hom space in pro-categories shows this is also true for any pro-object in place of $c(Z)$, and so we conclude by the Yoneda lemma.
\end{proof}

We now return to the proof of the identification of the ordinary limit.

\begin{proof}[{Proof of \cref{bML-ordinary-limit}}]
    We need to show that the right adjoint
    \[
        u^\RR\colon \lim_i \CC_i \too \CC
    \]
    is fully faithful.
    Equivalently, we need to show that the counit
    \[
        u u^\RR \too \Id_{\lim_i \CC_i}
    \]
    is an isomorphism.
    Since this is a natural transformation, it suffices to check after evaluating at every object $(M_\bullet) \in \lim_i \CC_i$.
    Since the projections $\lim_i \CC_i \to \CC_j$ are jointly conservative, it suffices to check after each projection.
    That is, we need to show that the map
    \[
        u_j(\lim_i u_i^\RR(M_i)) \simeq u_j(u^\RR((M_\bullet))) \too M_j
        \qin \CC_j
    \]
    is an isomorphism for every $j$ and $(M_\bullet)$.

    Consider the maps
    \[
        u_j(u_k^\RR(M_k)) \too F_{ij}(F_{ik}^\RR(M_k))
        \qin \CC_j,
    \]
    which assembles into a diagram indexed by $A = \{ (k, i) \mid i \geq j,k \}$.
    By the first part of the tower Mittag-Leffler condition, for every fixed $k$, the associate map of pro-objects over $i \geq j,k$ is a pro-isomorphism
    \[
        \prolim[i \geq j,k] u_j(u_k^\RR(M_k))
        \iso \prolim[i \geq j,k] F_{ij}(F_{ik}^\RR(M_k))
        \qin \Pro(\CC_j).
    \]
    Since this holds for every fixed $k$, by \cref{pro-pro} the original diagram induces a pro-isomorphism
    \[
        \prolim[(k,i) \in A] u_j(u_k^\RR(M_k))
        \iso \prolim[(k,i) \in A] F_{ij}(F_{ik}^\RR(M_k))
        \qin \Pro(\CC_j).
    \]
    Taking the limit, we get an isomorphism
    \[
        \lim_{(k,i) \in A} u_j(u_k^\RR(M_k))
        \iso \lim_{(k,i) \in A} F_{ij}(F_{ik}^\RR(M_k))
        \qin \CC_j.
    \]
    Observe that the diagonal $\{ (i,i) \mid i \geq j \} \hookrightarrow A$ is initial, so we get an isomorphism
    \[
        \lim_{i \geq j} u_j(u_i^\RR(M_i))
        \iso \lim_{i \geq j} F_{ij}(F_{ii}^\RR(M_i))
        \qin \CC_j.
    \]
    Now, we note that $F_{ii}^\RR$ is the identity, and $F_{ij}(M_i) \simeq M_j$, so the right hand side is the constant limit of $M_j$.
    For the source, the second part of the tower Mittag-Leffler condition says that $u_j$ is a right adjoint, and so it commutes with limits.
    Together, we get the required isomorphism
    \[
        u_j(u^\RR((M_\bullet))) \simeq u_j(\lim_i u_i^\RR(M_i)) \iso M_j
        \qin \CC_j.
    \]
\end{proof}

\begin{defn}\label{u-acyclic-local}
    Let $\EE \in \Alg(\PrLst)$, and let $\CC \to (\CC_\bullet)$ be a tower in $\PrLE$.
    We define the \tdef{$u$-acyclics} to be $\ker(u) \subset \CC$, that is, the objects $N \in \CC$ such that $u(N) = 0$.
    We define the \tdef{$u$-local} to be the right orthogonal $(\ker(u))^\perp \subset \CC$, that is, the objects $M \in \CC$ such that $\hom(N,M) = 0$ for every $N \in \ker(u)$.
\end{defn}

\begin{prop}\label{complete-local}
    Let $\EE \in \Alg(\PrLst)$, and let $\CC \to (\CC_\bullet)$ be a Mittag-Leffler tower in $\PrLE$.
    Then $X \in \CC$ is complete if and only if it is $u$-local, that is
    \[
        \CCc \ =\ (\ker(u))^\perp.
    \]
\end{prop}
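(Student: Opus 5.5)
This is the standard identification of the local objects of a reflective localization with the right orthogonal of its kernel, applied to the adjunction $u \dashv u^\RR$. By \cref{bML-ordinary-limit}, $u^\RR$ is fully faithful. Hence $u^\RR u$ is a localization functor on $\CC$ whose local objects are exactly the complete objects $\CCc$, namely the essential image of $u^\RR$. The plan is to prove the two inclusions separately, using only this fact and stability.

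\emph{Complete implies $u$-local.} Let $M$ be complete and $N \in \ker(u)$. Then
\[
	\hom_\CC(N, M) \simeq \hom_\CC(N, u^\RR u M) \simeq \hom_{\lim_i \CC_i}(uN, uM) \simeq \hom(0, uM) \simeq 0.
\]
Here $\hom$ denotes the mapping spectrum; since $\CC$ is stable, its vanishing is the same as the vanishing of all the mapping spaces.

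\emph{$u$-local implies complete.} Let $M \in (\ker(u))^\perp$ and consider the unit $\eta\colon M \to u^\RR u M$, with fiber $F$. By the triangle identity, $u(\eta)$ is a section of the counit $u u^\RR u M \to uM$. The counit is an isomorphism because $u^\RR$ is fully faithful, so $u(\eta)$ is an isomorphism. Since $u$ is exact (it is a map in $\PrLE$ between stable categories, as $\EE$ is stable), $u(F) = 0$, that is, $F \in \ker(u)$.

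Both $M$ and $u^\RR u M$ lie in $(\ker(u))^\perp$: the first by assumption, and the second by the previous paragraph, since it lies in the image of $u^\RR$. This right orthogonal is closed under fibers, so $F$ is also $u$-local. An object that is both $u$-acyclic and $u$-local has $\hom(F,F)=0$, hence $F = 0$. Therefore $\eta$ is an isomorphism and $M$ is complete.

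There is no real obstacle; everything rests on \cref{bML-ordinary-limit}. The only points to check are that $u$ is exact, so that the fiber of the unit lands in $\ker(u)$, and that the right orthogonal is closed under fibers. Both follow from stability of the categories in $\PrLE$.
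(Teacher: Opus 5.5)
Your proposal is correct and follows the paper's route: the paper likewise deduces from \cref{bML-ordinary-limit} that $u \dashv u^\RR$ is a reflective localization and then invokes the standard stable-category fact (citing \cite[Lemma 4.1.8]{AmbiHeight}) that its local objects are the right orthogonal of $\ker(u)$. The only difference is that you prove that standard fact directly, using the triangle identity, exactness of $u$, and closure of $(\ker(u))^\perp$ under fibers, and that argument is sound.
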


\begin{proof}
    \cref{bML-ordinary-limit} shows that $u\colon \CC \to \CCc$ is a reflective localization.
    This is then a standard consequence when the categories are stable, c.f.\ \cite[Lemma 4.1.8]{AmbiHeight}.
\end{proof}

\begin{prop}\label{complete-local-conservative}
    Let $\EE \in \Alg(\PrLst)$, and let $\CC \to (\CC_\bullet)$ be a Mittag-Leffler tower in $\PrLE$.
    If all $F_{i0}$ are conservative, then $N \in \CC$ is $u$-acyclic if and only if $u_0(N) = 0$, hence
    \[
        \CCc \ =\ (\ker(u_0))^\perp.
    \]
\end{prop}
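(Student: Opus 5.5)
The plan is to reduce the statement to \cref{complete-local}, which already gives $\CCc = (\ker(u))^\perp$. So everything comes down to the equality $\ker(u) = \ker(u_0)$ of full subcategories of $\CC$. Since $u(N) = (u_\bullet(N))$ and the projections out of $\lim_i \CC_i$ are jointly conservative, and since all categories here are stable (so a functor reflecting isomorphisms also detects zero objects), we have $N \in \ker(u)$ if and only if $u_i(N) = 0$ for every $i$.

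One inclusion is immediate. If $u(N) = 0$, then in particular $u_0(N) = 0$.

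For the converse, suppose $u_0(N) = 0$ and fix $i$. Because $\CC \to (\CC_\bullet)$ is a cone, we have $u_0 \simeq F_{i0} u_i$. Hence $F_{i0}(u_i(N)) \simeq u_0(N) = 0$. The functor $F_{i0}$ is exact, being a morphism in $\PrLE$ with $\EE$ stable, and by assumption it is conservative. Apply it to the zero map $u_i(N) \to 0$: its image is a map $0 \to 0$, which is an isomorphism. Conservativity then says $u_i(N) \to 0$ is already an isomorphism, so $u_i(N) = 0$. This holds for every $i$, so $u(N) = 0$.

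Combining the two inclusions, $\ker(u) = \ker(u_0)$, and therefore $\CCc = (\ker(u))^\perp = (\ker(u_0))^\perp$ by \cref{complete-local}. There is no real obstacle. The only point needing care is the identification $F_{i0} u_i \simeq u_0$ coming from the cone structure, together with the observation that joint conservativity of the limit projections lets us test vanishing of $u(N)$ levelwise.
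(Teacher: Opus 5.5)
Your proposal is correct and follows essentially the same route as the paper: reduce to \cref{complete-local} and show $\ker(u) = \ker(u_0)$ using $u_0 \simeq F_{i0} u_i$ together with the conservativity of $F_{i0}$. You also spell out the two small points the paper leaves implicit, namely that vanishing of $u(N)$ can be tested levelwise and that an exact conservative functor detects zero objects.
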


\begin{proof}
    This is an immediate consequence of \cref{complete-local}, since $F_{i0} u_i \simeq u_0$ and $F_{i0}$ is conservative.
\end{proof}

	\section{The graded quotient tower}\label{sec-gr}
\subsection{Graded polynomial algebras and their quotients}\label{subsec-gr-poly}

We begin with a quick survey of graded spectra, the graded polynomial algebra, and its quotients.
We primarily follow the weight truncation formalism of \cite[Appendix B]{HW}, and refer the reader to \cite{rotation,E2quot,picquot,ABM,RSS} for related constructions and results.

\begin{defn}
	We denote the presentably symmetric monoidal categories of \tdef{graded spectra}, endowed with Day convolution, by
	\[
		\mdef{\GrZ} := \Fun(\ZZ, \Sp)
		\qin \CAlg(\PrLst).
	\]
	For a graded spectrum $X \in \GrZ$, we denote its weight $m$ component by $X_m \in \Sp$.
\end{defn}

\begin{defn}
	For $i \in \ZZ$, we denote by $\mdef{\SS(i)}$ the sphere spectrum placed in weight $i$.
	For a graded spectrum $X \in \GrZ$ we define its \tdef{$i$-fold weight shift} to be
	\[
		\mdef{X(i)} := \SS(i) \otimes X \qin \GrZ.
	\]
\end{defn}

\begin{remark}\label{shift-dual}
	The name weight shift is justified by the following standard observation
	\[
		(X(i))_m
		= (\SS(i) \otimes X)_m
		\simeq \bigoplus_{a+b=m} \SS(i)_a \otimes X_b
		\simeq X_{m-i}.
	\]
	This also shows that $\SS(i)$ is dualizable, hence $X(i)$ is dualizable if (and only if) $X$ is dualizable.
\end{remark}

Later in the paper we shall be interested in realizing a graded spectrum into a spectrum.
Observe that the functor $\ZZ \to \pt$ is symmetric monoidal, hence left Kan extension along it is symmetric monoidal for Day convolution (c.f.\ \cite[Proposition 3.6]{BMS}), which we make into the following definition.

\begin{defn}\label{realization}
	We define the symmetric monoidal \tdef{realization} functor to be the left Kan extension
	\[
		\GrZ \too \Sp,
		\qquad X \mapstoo \bigoplus_{m \in \ZZ} X_m.
	\]
\end{defn}

We shall also be interested in subcategories of graded spectra supported on a collection of weights.
While the inclusion has an adjoint from both sides, which are in fact the same, given by projecting onto the corresponding weights, these are not generally compatible with the symmetric monoidal structure.
However, in the following special cases, we do have compatibility with the symmetric monoidal structure.

\begin{defn}
	We denote the full subcategory of graded spectra supported on non-negative weights, and the further full subcategory on those supported on weights $\{0, \dots, i{-}1\}$, by
	\[
		\mdef{\Gri} \quad\subset\quad \mdef{\GrN} \quad\subset\quad \GrZ.
	\]
	We define the \tdef{weight truncation} functor to be the projection
	\[
		\mdef{w_{<i}}\colon \GrN \too \Gri.
	\]
\end{defn}

The subcategory $\GrN \subset \GrZ$ is a symmetric monoidal subcategory since it is closed under the tensor product and contains the unit (see for example \cite[Lemma B.0.6(iii)]{HW}).
More importantly for our purposes, weight truncation is symmetric monoidal.

\begin{prop}[{\cite[Lemma B.0.6(iv)]{HW}}]\label{weight-trunc-tower}
	The weight truncation functor
	\[
		w_{<i}\colon \GrN \too \Gri
	\]
	is compatible with the symmetric monoidal structure.
	That is, there is a symmetric monoidal structure on $\Gri$ such that $w_{<i}$ is symmetric monoidal, hence the inclusion is lax symmetric monoidal.

	These assemble into the following tower in $\CAlg(\PrLst)$
	% https://q.uiver.app/#q=WzAsOSxbMSwwLCJcXGNkb3RzIl0sWzAsMCwiXFxHciJdLFsyLDAsIlxcR3JfezxpKzF9Il0sWzMsMCwiXFxHcl97PGl9Il0sWzQsMCwiXFxjZG90cyJdLFs1LDAsIlxcR3JfezwxfSJdLFs2LDAsIlxcR3JfezwwfSJdLFs2LDEsIjAiXSxbNSwxLCJcXFNwIl0sWzAsMiwid197PGkrMX0iXSxbMSwwXSxbMiwzLCJ3X3s8aX0iXSxbMyw0XSxbNCw1LCJ3X3s8MX0iXSxbNSw2LCJ3X3s8MH0iXSxbNSw4LCJcXHJvdGF0ZWJveHs5MH17JFxcdGV4dHN0eWxlXFxzaW1lcSR9IiwxLHsic3R5bGUiOnsiYm9keSI6eyJuYW1lIjoibm9uZSJ9LCJoZWFkIjp7Im5hbWUiOiJub25lIn19fV0sWzYsNywiXFxyb3RhdGVib3h7OTB9eyRcXHRleHRzdHlsZVxcc2ltZXEkfSIsMSx7InN0eWxlIjp7ImJvZHkiOnsibmFtZSI6Im5vbmUifSwiaGVhZCI6eyJuYW1lIjoibm9uZSJ9fX1dXQ==&macro_url=https%3A%2F%2Fgist.githubusercontent.com%2Fshaybenmoshe%2F301102e6fdd6d215848c519c149abcab%2Fraw%2Fquiver
	\[\begin{tikzcd}[row sep=tiny]
		\Gr & \cdots & {\Gr_{<i+1}} & {\Gr_{<i}} & \cdots & {\Gr_{<1}} & {\Gr_{<0}} \\
		&&&&& \Sp & 0
		\arrow[from=1-1, to=1-2]
		\arrow["{w_{<i+1}}", from=1-2, to=1-3]
		\arrow["{w_{<i}}", from=1-3, to=1-4]
		\arrow[from=1-4, to=1-5]
		\arrow["{w_{<1}}", from=1-5, to=1-6]
		\arrow["{w_{<0}}", from=1-6, to=1-7]
		\arrow["{\rotatebox{90}{$\textstyle\simeq$}}"{description}, draw=none, from=1-6, to=2-6]
		\arrow["{\rotatebox{90}{$\textstyle\simeq$}}"{description}, draw=none, from=1-7, to=2-7]
	\end{tikzcd}\]
\end{prop}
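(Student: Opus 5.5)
The plan is to realize $w_{<i}$ as a symmetric monoidal localization in the sense of \cite[\S2.2.1]{HA}. Then the monoidal structure on $\Gri$ is the one transferred along the localization, and both $w_{<i}$ and the lax structure on the inclusion come for free.

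\textbf{Step 1: $w_{<i}$ is a localization.} Regard $w_{<i}$ as a left adjoint to the inclusion $\Gri \hookrightarrow \GrN$. This holds because $w_{<i}$ is the projection onto weights $\{0,\dots,i{-}1\}$. A map $f$ in $\GrN$ is a $w_{<i}$-equivalence precisely when $f_m$ is an isomorphism for all $m<i$.

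\textbf{Step 2: compatibility with the tensor product.} By \cite[Proposition 2.2.1.9]{HA}, it suffices to check that $w_{<i}$-equivalences are stable under tensoring with any $Z \in \GrN$. Since $\GrN$ is stable and the tensor product is exact, it is enough to show that $w_{<i}(X)=0$ implies $w_{<i}(X\otimes Z)=0$. For this, use the Day convolution formula
\[
(X\otimes Z)_m \simeq \bigoplus_{a+b=m} X_a\otimes Z_b .
\]
Here $b\ge 0$ because $Z$ is non-negatively graded. So for $m<i$ every summand has $a\le m<i$, hence $X_a=0$, and the whole sum vanishes. This is the only real input, and the key point is non-negativity of the weights. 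The same check fails on all of $\GrZ$, which is why the statement is restricted to $\GrN$. The closedness of $\GrN$ under $\otimes$ and its containing the unit was already noted above.

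It follows that $\Gri$ inherits a presentably symmetric monoidal structure with $w_{<i}$ symmetric monoidal and the inclusion lax symmetric monoidal. Concretely, the structure is given by $X\otimes_{<i} Y = w_{<i}(X\otimes Y)$. Presentability and stability are clear, since $\Gri$ is a product of copies of $\Sp$. Colimit preservation in each variable follows from the formula.

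\textbf{Step 3: assembling the tower.} For $j\le i$, the inclusion $\Gr_{[0,j)} \subset \Gri$ is again a localization of the same kind. The projection $w_{<j}\colon \Gri \to \Gr_{[0,j)}$ is symmetric monoidal, because $w_{<j}\circ w_{<i} \simeq w_{<j}$ on $\GrN$ and both outer functors are symmetric monoidal localizations. The universal property of symmetric monoidal localization then gives a unique symmetric monoidal factorization.

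To obtain a coherent diagram $\Ng^{\op}\to \CAlg(\PrLst)$ rather than a compatible family of pairs, I would proceed as follows. Consider the diagram of reflective subcategories $\cdots\subset\Gri\subset\Gr_{<i+1}\subset\cdots\subset \GrN$. Use that symmetric monoidal localizations of a fixed $\GrN$ form a full subcategory of $\CAlg(\PrLst)_{\GrN/}$, in which the mapping spaces are contractible or empty. That makes the coherence automatic, and this is the step I expect to require the most care. Finally, the identifications are direct. $\Gr_{<1}$ is spectra in weight $0$, with the induced structure being the usual one on $\Sp$ since $\SS(0)$ is the unit. $\Gr_{<0}=0$.
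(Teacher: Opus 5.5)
Your proof is correct and follows the same argument as the paper, which simply cites \cite[Lemma B.0.6(iv)]{HW}; the remark right after the statement isolates exactly your key input, namely that the $w_{<i}$-acyclics (objects concentrated in weights $\geq i$) are closed under tensoring with non-negatively graded objects, so \cite[Proposition 2.2.1.9]{HA} applies. Your treatment of the tower's coherence, using the fact that symmetric monoidal localizations of $\GrN$ form a poset inside $\CAlg(\PrLst)_{\GrN/}$, validly fills in a detail the paper leaves implicit, and you are right to read the head of the tower as $\GrN$ rather than all of $\GrZ$.
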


\begin{remark}
	This fails if one does not restrict to non-negative weights, which is the reason we consider the non-negative weight subcategory in this subsection.
	Indeed, the acyclics, namely, graded spectra supported on weights $\geq i$ are closed under tensoring with graded spectra supported on weights $\geq 0$, but not, for example, with $\SS(-1)$.
	Later, we pass back to all graded spectra, primarily since $\SS(1)$, and related objects, are not dualizable in $\GrN$.
\end{remark}

Consider $\Sigma^d\SS(1)$, the graded spectrum having $\Sigma^d \SS$ in weight $1$ and degree $d \in \ZZ$, and the free $\Eo$-algebra on it $\free_{\Eo}(\Sigma^d\SS(1))$.
Remarkably, if $d$ is even, this free $\Eo$-algebra has a canonical $\Et$-algebra structure via the J-homomorphism.
This was first observed in \cite{rotation}, and we refer the reader to \cite[Construction 4.4]{RSS} for another useful recent discussion.

\begin{prop}[{\cite{rotation}}]
	If $d$ is \emph{even} then $\free_{\Eo}(\Sigma^d\SS(1))$ has a canonical $\Et$-algebra structure in $\Gr$.
\end{prop}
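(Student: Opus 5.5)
The plan is to realize $\free_{\Eo}(\Sigma^d\SS(1))$ as a Thom spectrum, so that the $\Et$-structure comes from an $\Et$-map into the Picard space of $\GrZ$.

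First, identify the free $\Eo$-algebra on an invertible object $L := \Sigma^d\SS(1) \in \GrZ$. As a graded spectrum it is $\bigoplus_{m \geq 0} L^{\otimes m}$. Equivalently, it is the colimit of the composite
\[
\NN \to \ZZ \to \mathrm{Pic}(\GrZ), \qquad 1 \mapsto L,
\]
where $\NN$ is viewed as a discrete $\Eo$-monoid. The point is that the free $\Eo$-space on a point is $\coprod_m B\Sigma_m$ only in the $\bbE_\infty$ case; in the $\Eo$ case it is the discrete monoid $\NN$. Since Thom spectra of $\Eo$-maps into $\mathrm{Pic}$ are $\Eo$-algebras, this colimit is the free $\Eo$-algebra. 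The universal property is checked by mapping out of the free $\Eo$-space on one point.

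Second, upgrade the map $\NN \to \mathrm{Pic}(\GrZ)$ to an $\Et$-map. The $\Eo$-monoid $\NN$ is the free $\Eo$-monoid on a point and is the group-completion input. By group completion and Dunn additivity, $\Et$-maps out of the free $\Et$-space on a point into the grouplike $\Et$-space $\mathrm{Pic}$ correspond to maps $S^1 \to B\mathrm{Pic}$, up to a suitable loop. Concretely, we want an $\Et$-map $\ZZ \to \mathrm{Pic}(\GrZ)$ hitting $L$. Such a map is the same as a map of spaces $\Omega^{-2}$-delooped, namely a pointed map $\Sigma^2\ZZ$-ish. More precisely, an $\Et$-map $\ZZ \to \mathrm{Pic}$ is a pointed map $B^2\ZZ = \CC P^\infty \to B^2\mathrm{Pic}$. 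Since $\ZZ$ is free as an $\bbE_\infty$-group only up to symmetry, the obstruction to extending the map $S^2 \to B^2\mathrm{Pic}$ (which picks out $L$) along $S^2 \to \CC P^\infty$ lies in the braiding and symmetry data. Instead I will use the J-homomorphism. The map $\ZZ \to \mathrm{Pic}(\Sp)$, $1 \mapsto \Sigma^d\SS$ with $d$ even, factors as
\[
\ZZ \to \ZZ \times BU \to \mathrm{Pic}(\Sp)
\]
via the complex J-homomorphism $\ZZ\times BU \to \mathrm{Pic}(\SS)$, $n \mapsto \Sigma^{2n}$, precomposed with multiplication by $d/2$. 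This J-map is an $\bbE_\infty$-map. The inclusion $\ZZ \to \ZZ \times BU$ is $\Et$, because it is $\Omega^2$ of a pointed map $\CC P^\infty = BU(1) \to BU$-level; more cleanly, $\ZZ = \Omega^2 B^2\ZZ$ and $\ZZ \times BU = \Omega^2 BU$ by Bott periodicity, and $BU(1) \to BU$ is a pointed map, hence its double loop map is $\Et$.

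This is the key step and the main obstacle: the canonical $\Et$ (not $\bbE_3$) map $\ZZ \to \ZZ\times BU$ via Bott periodicity. It is where evenness enters, because odd $d$ would require a sign twist incompatible with the braiding. Tensoring with the symmetric monoidal (hence $\bbE_\infty$ on Picard) map $\mathrm{Pic}(\Sp)\times \ZZ \to \mathrm{Pic}(\GrZ)$, $(X,m)\mapsto X(m)$, gives an $\Et$-map $\ZZ \to \mathrm{Pic}(\GrZ)$ with $1 \mapsto L$.

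Finally, restrict to $\NN \subset \ZZ$, which is an $\Et$-submonoid, since any submonoid of a discrete $\Et$-space is one. Take the Thom spectrum, i.e.\ the colimit in $\GrZ$, which is symmetric monoidal and colimit-preserving. Thom spectra of $\bbE_k$-maps are $\bbE_k$-algebras, so this gives an $\Et$-algebra whose underlying $\Eo$-algebra is the one from the first step, namely $\free_{\Eo}(L)$. Canonicity follows because all choices, namely Bott periodicity and J, are canonical. The result lies in $\GrN$ as well, since the weights are nonnegative.
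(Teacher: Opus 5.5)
Your proposal is correct and is essentially the construction the paper relies on (the paper gives no proof of its own beyond citing the source and saying ``via the J-homomorphism''): you take the Thom spectrum of the $\Et$-map $\NN\subset\ZZ\to\mathrm{Pic}(\GrZ)$, built from $\Omega^2(BU(1)\to BU)$, Bott periodicity, the complex J-homomorphism and the weight twist, and its underlying $\Eo$-algebra is free because $\NN$ is the free $\Eo$-monoid on a point. The two points you leave implicit are that the Bott equivalence $\ZZ\times BU\simeq\Omega^2 BU$ deloops to $\Sigma^2\mathrm{ku}\simeq\tau_{\geq 2}\mathrm{ku}$, so the double-loop structure agrees with the direct-sum structure used by J, and that $1$ maps to $\pm1$, which you can correct by choosing the sign of the multiplication by $d/2$.
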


\begin{remark}[{\cite[Remark 3.11]{disk}, \cite[Remark 4.5]{RSS}}]
	This does \emph{not} further lift to an $\Eth$-algebra.
\end{remark}

We use the following notation.

\begin{defn}
	For an even integer $|x| \in 2\ZZ$, we define the (graded) \tdef{polynomial algebra} to be the free $\Eo$-algebra on $\Sigma^{|x|}\SS(1)$, with the $\Et$-algebra structure discussed above
	\[
		\mdef{\SS[x]} := \free_{\Eo}(\Sigma^{|x|}\SS(1))
		\qin \Alg_{\Et}(\GrZ).
	\]
\end{defn}

Since the weight truncation $w_{<i}$ is symmetric monoidal and the inclusion is lax symmetric monoidal (\cref{weight-trunc-tower}), they take $\Et$-algebras to $\Et$-algebras, and we introduce the following definition.

\begin{defn}\label{trunc-poly}
	We denote the \tdef{$i$-truncated polynomial algebra}, together with the unit of the adjunction which we call the quotient map, by
	\[
		\mdef{\SS[x]/x^i} := w_{<i}(\SS[x]) \qin \Alg_{\Et}(\Gri),
		\qquad\qquad
		\SS[x] \too \SS[x]/x^i
		\qin \Alg_{\Et}(\GrZ).
	\]
	These assemble into a tower
	\[
		\SS[x] \too (\SS[x]/x^i)_{i \in \Ng}
		\qin \Alg_{\Et}(\GrZ).
	\]
\end{defn}

As the notation suggests, we have for example $\SS[x]/x^0 \simeq 0$ and $\SS[x]/x \simeq \SS$.
We now wish to show that $\SS[x]/x^i$ is, as the notation suggests, the cofiber of $\SS[x]$ by multiplication by $x^i$, as an $\SS[x]$-$\SS[x]$-bimodule, and we open with the following.

\begin{warn}
	Recall that for an $\Eo$-algebra $R$ and an element $y$, multiplication from the left and from the right give two a priori different maps $y \cdot -$ and $- \cdot y$.
	Associativity makes multiplication from the left into a right $R$-module map, and, similarly, multiplication from the right is a left $R$-module map.
	If $R$ is an $\Et$-algebra, we can identify the two maps on underlying objects.
	Moreover, we can make them into an $R$-$R$-bimodule map.
	We warn the reader however that there is no canonical choice of such a lift (while it can be chosen uniformly).
	To exemplify this, the identification
	\[
		y\cdot(r \cdot -) \simeq r \cdot (y \cdot -)
	\]
	requires a choice of braiding, which can be taken to be $\beta_{y,r}$, the inverse of $\beta_{r,y}$, or a composition thereof.
	For an extended discussion of such phenomena we refer the reader to \cite[\S2.1, Proposition 3.8]{inv-braid} and \cite[Proposition 2.35]{dual-braid}.
\end{warn}

With this in mind, we prove the following.

\begin{prop}\label{quotient-cofiber}
	For any $i \geq 0$ and any choice of $\SS[x]$-$\SS[x]$-bimodule multiplication-by-$x^i$ map, we have an exact sequence
	\[
		\Sigma^{i|x|} \SS[x](i) \too[x^i] \SS[x] \too \SS[x]/x^i
		\qin \BMod{\SS[x]}{\SS[x]}(\GrZ).
	\]
\end{prop}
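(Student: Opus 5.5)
We argue weight by weight, using that every map in sight respects the grading, and reduce to an underlying statement about graded spectra. The key input is the standard computation of the free $\Eo$-algebra: as a graded spectrum,
\[
	\SS[x] \simeq \bigoplus_{m \geq 0} (\Sigma^{|x|}\SS(1))^{\otimes m} \simeq \bigoplus_{m\geq 0} \Sigma^{m|x|}\SS(m),
\]
so $\SS[x]$ is $\Sigma^{m|x|}\SS$ in each weight $m \geq 0$ and zero in negative weights. Consequently $\SS[x]/x^i = w_{<i}(\SS[x])$ is $\Sigma^{m|x|}\SS$ in weights $0 \le m < i$ and zero otherwise. Moreover, the quotient map $\SS[x] \to \SS[x]/x^i$, being the unit of the truncation adjunction, is the identity in weights $<i$.

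The plan has three steps.

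\begin{enumerate}
	\item \emph{The composite is null.} The bimodule map $x^i\colon \Sigma^{i|x|}\SS[x](i) \to \SS[x]$ followed by the quotient map $\SS[x] \to \SS[x]/x^i$ is a bimodule map out of an object supported in weights $\geq i$ into an object supported in weights $<i$. To see that it is null as a bimodule map, I would compute the mapping spectrum via the free--forgetful adjunction for bimodules. Here $\Sigma^{i|x|}\SS[x](i) \simeq \SS[x] \otimes \Sigma^{i|x|}\SS(i) \otimes \SS[x]$ is only free as a \emph{one-sided} module, so the clean route is to use the bar resolution. Each term of that resolution has the form $\SS[x]\otimes \SS[x]^{\otimes k}(i)\otimes\SS[x]$, which is free on an object concentrated in weights $\geq i$. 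Maps from such a free bimodule to $\SS[x]/x^i$ are maps of graded spectra from weights $\geq i$ into weights $<i$, hence zero. So the whole mapping spectrum vanishes, the composite is canonically null, and we obtain an induced bimodule map $\operatorname{cofib}(x^i) \to \SS[x]/x^i$.
	\item \emph{Reduce to underlying graded spectra.} The forgetful functor from bimodules to $\GrZ$ is conservative and exact, so it suffices to check that the induced map is an equivalence on underlying graded spectra, weight by weight.
	\item \emph{Check weight by weight.} In weights $m<i$ the source of $x^i$ vanishes and the quotient map is the identity, so the claim holds. In weights $m \geq i$ the target $\SS[x]/x^i$ vanishes, so it remains to show that $x^i$ is an equivalence $\Sigma^{m|x|}\SS \to \Sigma^{m|x|}\SS$ in weight $m$. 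The underlying map of any bimodule multiplication-by-$x^i$ is, after forgetting, left multiplication by $x^i$, possibly up to the braiding choices in the Warning. Under the identification $\SS[x]_m \simeq (\Sigma^{|x|}\SS)^{\otimes m}$ coming from the free algebra, left multiplication by $x^i$ is the concatenation equivalence $(\Sigma^{|x|}\SS)^{\otimes i}\otimes(\Sigma^{|x|}\SS)^{\otimes (m-i)} \simeq (\Sigma^{|x|}\SS)^{\otimes m}$. Braidings only alter the map by automorphisms, so we get an equivalence regardless of the choice.
\end{enumerate}

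I expect the main obstacle to be the first step: making the nullhomotopy, and hence the factorization, canonical as bimodules rather than merely on underlying objects. The vanishing of the mapping spectrum via the weight-filtered bar resolution handles it, since every term is generated in weights $\geq i$. The statement's "any choice" clause is absorbed by noting that the underlying map is always an equivalence in weights $\geq i$.
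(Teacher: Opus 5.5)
Your proof is correct and follows the same outline as the paper's. You show the composite $\Sigma^{i|x|}\SS[x](i) \to \SS[x] \to \SS[x]/x^i$ is null in bimodules, then check on underlying graded spectra, weight by weight, that the resulting square is cocartesian; your concatenation argument also justifies the paper's unstated claim that $x^i$ is an isomorphism in weights $\geq i$. The only difference is the nullhomotopy: the paper gets it from naturality of the unit of the symmetric monoidal adjunction $w_{<i} \dashv$ (inclusion), which lifts to bimodules and sends $\Sigma^{i|x|}\SS[x](i)$ to $0$, while your bar-resolution vanishing of the mapping spectrum is that same adjunction done by hand. Your displayed identification of $\Sigma^{i|x|}\SS[x](i)$ with the free bimodule $\SS[x] \otimes \Sigma^{i|x|}\SS(i) \otimes \SS[x]$ is false as written, but nothing in the argument depends on it.
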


\begin{proof}
	Recall from \cref{weight-trunc-tower} that weight truncation adjunction is symmetric monoidal, hence its unit preserves the $\SS[x]$-$\SS[x]$-bimodule structure.
	Observe that $\Sigma^{i|x|} \SS[x](i)$ is concentrated in weights $\geq i$, and so is sent to zero by $w_{<i}$.
	Therefore, applying the unit of the adjunction to the multiplication-by-$x^i$ map, we get the following commutative square
	% https://q.uiver.app/#q=WzAsNSxbMCwwLCJcXFNpZ21hXntpfHh8fSBcXFNTW3hdKGkpIl0sWzAsMiwiMCJdLFsxLDIsIlxcU1NbeF0veF5pIl0sWzEsMCwiXFxTU1t4XSJdLFsyLDEsIlxcQk1vZHtcXFNTW3hdfXtcXFNTW3hdfShcXEdyWikiXSxbMCwzLCJ4XmkiXSxbMCwxXSxbMywyXSxbMSwyXSxbNyw0LCJ7XFx0ZXh0c3R5bGVcXGlufSIsMSx7ImxhYmVsX3Bvc2l0aW9uIjo3MCwic2hvcnRlbiI6eyJzb3VyY2UiOjIwfSwic3R5bGUiOnsiYm9keSI6eyJuYW1lIjoibm9uZSJ9LCJoZWFkIjp7Im5hbWUiOiJub25lIn19fV1d
	\[\begin{tikzcd}[row sep=tiny]
		{\Sigma^{i|x|} \SS[x](i)} & {\SS[x]} & \\
		&& {\BMod{\SS[x]}{\SS[x]}(\GrZ)} \\
		0 & {\SS[x]/x^i}
		\arrow["{x^i}", from=1-1, to=1-2]
		\arrow[from=1-1, to=3-1]
		\arrow[""{name=0, anchor=center, inner sep=0}, from=1-2, to=3-2]
		\arrow[from=3-1, to=3-2]
		\arrow["{{\textstyle\in}}"{description, pos=0.7}, draw=none, from=0, to=2-3]
	\end{tikzcd}\]
	and it remains to show that this square is (co)cartesian.
	Since the forgetful from bimodules is conservative and exact, it suffices to check this on underlying graded spectra.
	Since (co)limits in graded spectra are computed weight-wise, we can check this in each weight separately.
	Finally, in weights $\geq i$, the multiplication map is an isomorphism while $\SS[x]/x^i$ vanishes, and in weights $< i$, the quotient map is an isomorphism while $\Sigma^{i|x|} \SS[x](i)$ vanishes, concluding the proof.
\end{proof}

\begin{prop}\label{quotient-induct}
	For any $i \geq 0$, we have an exact sequence
	\[
		\Sigma^{i|x|} \SS(i) \too \SS[x]/x^{i+1} \too \SS[x]/x^i
		\qin \BMod{\SS[x]/x^{i+1}}{\SS[x]/x^{i+1}}(\GrZ).
	\]
\end{prop}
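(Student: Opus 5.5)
The plan is to mirror the proof of \cref{quotient-cofiber}, now working with the tower of weight truncations instead of the unit map from $\SS[x]$. The map $\SS[x]/x^{i+1} \to \SS[x]/x^i$ is obtained by applying $w_{<i}$ to $\SS[x]/x^{i+1}$. Concretely, it is the unit of the truncation adjunction $\Gr_{[0,i+1)} \adj \Gri$ evaluated at $\SS[x]/x^{i+1}$, using $w_{<i} w_{<i+1} \simeq w_{<i}$ from \cref{weight-trunc-tower}. Since that truncation is symmetric monoidal, its unit is lax monoidal. Hence the map is a map of $\SS[x]/x^{i+1}$-$\SS[x]/x^{i+1}$-bimodules, where the target is regarded as a bimodule by restriction along this map. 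This map is the second arrow of the claimed sequence.

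Next, I would define the first term as the fiber $F$ of this map, taken in $\BMod{\SS[x]/x^{i+1}}{\SS[x]/x^{i+1}}(\GrZ)$. The fiber is computed on underlying graded spectra because the forgetful functor is exact and conservative, and it is computed weight-wise. In weights $<i$ the map is an isomorphism. In weight $i$ the target vanishes, while the source is $(\SS[x])_i \simeq \Sigma^{i|x|}\SS$, since $\free_{\Eo}(\Sigma^{|x|}\SS(1))$ has $(\Sigma^{|x|}\SS)^{\otimes i}$ in weight $i$. So the underlying graded spectrum of $F$ is $\Sigma^{i|x|}\SS(i)$.

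The remaining point is to identify the bimodule structure on $F$, so that it is justified to write $\Sigma^{i|x|}\SS(i)$ with no further decoration. I expect this to be the main, though mild, obstacle, because a priori $F$ carries some bimodule structure concentrated in weight $i$. The approach is to show the actions factor through the augmentation $\SS[x]/x^{i+1} \to \SS[x]/x = \SS$. For the left action, the map $\SS[x]/x^{i+1} \otimes F \to F$ lands in weight $i$. The source agrees in weight $i$ with $(\SS[x]/x^{i+1})_0 \otimes F_i$, because positive-weight parts of the algebra push $F$ into weights $>i$, where $F$ vanishes. The same holds on the right.

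To make this factoring argument rigorous rather than just a weight count, I would use that $F$ lies in the full subcategory of graded spectra concentrated in weight $i$. On this subcategory, an $\SS[x]/x^{i+1}$-module structure is equivalently an $\SS[x]/x$-module structure. The reason is that $\Sigma^{-i|x|}F(-i) \in \Gr_{[0,1)}$, and modules there are computed through the symmetric monoidal $w_{<1}$, under which $\SS[x]/x^{i+1} \mapsto \SS$. Since $\SS$-bimodules in $\Sp$ are just spectra, this identifies $F$ with $\Sigma^{i|x|}\SS(i)$ carrying the bimodule structure restricted along the augmentation. If the restriction-along-the-augmentation step proves awkward, I would weaken the claim to an underlying identification together with the bimodule fiber sequence, which is all the subsequent weight-filtration inductions need.
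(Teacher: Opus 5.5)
Your proof is correct, but it takes a different route from the paper. The paper's proof is one line. It applies the exact symmetric monoidal functor $w_{<i+1}$ to the cofiber sequence of \cref{quotient-cofiber}. This turns $\SS[x]$-$\SS[x]$-bimodules into $\SS[x]/x^{i+1}$-bimodules, fixes $\SS[x]/x^i$, and sends $\Sigma^{i|x|}\SS[x](i)$ to its weight-$i$ part $\Sigma^{i|x|}\SS(i)$.

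You instead take the fiber of the unit map $\SS[x]/x^{i+1}\to\SS[x]/x^i$ directly and compute it weight-wise. In effect you redo the weight count of \cref{quotient-cofiber} one level down. A side benefit is that your route never involves a choice of bimodule multiplication-by-$x^i$ map.

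The real gain is your last step. You shift to weight $0$ and use that, for the compatible localization $w_{<1}$, bimodules over $A$ with local underlying object coincide with bimodules over $w_{<1}A\simeq\SS$ in $\Gr_{[0,1)}\simeq\Sp$. This pins down the bimodule structure on the first term as restriction along the augmentation $\SS[x]/x^{i+1}\to\SS$. The paper leaves that structure implicit here, yet uses exactly this fact in the proof of \cref{higher-quotient-base-Sx}. Your argument justifies it, and it applies equally to the paper's description of the first term. The paper's route, by contrast, is shorter and reuses an existing result.
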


\begin{proof}
	This follows from \cref{quotient-cofiber} by applying $w_{<i+1}$, which is exact and symmetric monoidal by \cref{weight-trunc-tower}.
\end{proof}

\begin{prop}\label{quotient-dualizable}
	For any $i \geq 0$, the left $\SS[x]$-module $\SS[x]/x^i \in \Mod_{\SS[x]}(\GrZ)$ is both left dualizable and right dualizable.
\end{prop}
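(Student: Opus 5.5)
We will use \cref{quotient-cofiber}, which exhibits $\SS[x]/x^i$ as a cofiber of left $\SS[x]$-modules
\[
	\Sigma^{i|x|} \SS[x](i) \too[x^i] \SS[x] \too \SS[x]/x^i
	\qin \Mod_{\SS[x]}(\GrZ).
\]
Here we only need the left module structure, obtained by forgetting the bimodule structure. Since left (resp.\ right) dualizable left modules form a stable subcategory, it suffices to show that the two other terms are left and right dualizable.

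For $\SS[x]$ itself, the free module on the unit is dualizable on both sides. Indeed, its left dual is the right $\SS[x]$-module $\SS[x]$, with evaluation and coevaluation given by multiplication and unit. Equivalently, by \cite[Proposition 4.6.2.13]{HA}, right dualizability of a left module is just right dualizability of the underlying graded spectrum, and the right dual of $\SS[x]$ is again computed in this way.

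For $\Sigma^{i|x|}\SS[x](i) \simeq \Sigma^{i|x|}\SS(i) \otimes \SS[x]$, we argue that it is the free left module on the invertible, hence dualizable, graded spectrum $\Sigma^{i|x|}\SS(i)$ (\cref{shift-dual}). Tensoring a free module with an invertible object of $\GrZ$ preserves both kinds of dualizability: the dual is $\SS[x]\otimes \Sigma^{-i|x|}\SS(-i)$, with the duality data obtained from those of $\SS[x]$ and of the invertible object. Since $\GrZ$ is symmetric monoidal, the placement of the invertible factor does not matter.

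The only point requiring care is keeping the sidedness conventions consistent, and in particular confirming that the left-multiplication structure on $\Sigma^{i|x|}\SS[x](i)$ in \cref{quotient-cofiber} is indeed the free one. This holds by construction. Exactness of the two dualizability classes, namely closure under cofibers and shifts, is standard: the dual of a cofiber is the fiber of the duals, because $\hom$ out of the module then commutes with colimits and with the relevant tensor.
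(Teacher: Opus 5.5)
Your left-dualizability argument is correct and matches the paper's: $\SS[x]/x^i$ is the cofiber of a map of free left $\SS[x]$-modules on the invertible graded spectra $\Sigma^{i|x|}\SS(i)$ and $\SS$, and left dualizable left modules form a thick subcategory. The right-dualizability half, however, rests on a false claim.

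By \cite[Proposition 4.6.2.13]{HA}, which you invoke, the free left module $\SS[x]$ is right dualizable if and only if its underlying graded spectrum
\[
	\SS[x] \simeq \bigoplus_{k \geq 0} \Sigma^{k|x|}\SS(k)
\]
is dualizable in $\GrZ$. It is not. The category $\GrZ$ is rigid, so dualizable objects are compact, and this object is nonzero in infinitely many weights. The unit and multiplication data you write down witness only the free--forgetful duality, i.e.\ left dualizability. They say nothing about the other side, and your ``equivalently'' conflates the two notions. The same failure applies to $\Sigma^{i|x|}\SS[x](i)$.

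So neither outer term of the cofiber sequence is right dualizable, and the sequence tells you nothing about right dualizability of $\SS[x]/x^i$. The finiteness of $\SS[x]/x^i$ comes precisely from the weight truncation, which the free terms do not see. Your argument does show that $\SS[x]/x^i$ is dualizable as an object of the monoidal category $(\Mod_{\SS[x]}(\GrZ),\otimes_{\SS[x]})$, where $\SS[x]$ is the unit. That is a different notion from right dualizability of a left module, and $\SS[x]$ itself shows the former does not imply dualizability of the underlying graded spectrum.

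The missing step is to work with the underlying graded spectrum directly, as the paper does. Iterating \cref{quotient-induct}, or simply computing $w_{<i}\SS[x]$ weightwise, shows that $\SS[x]/x^i$ is, as a graded spectrum, built by finitely many extensions from $\Sigma^{k|x|}\SS(k)$ for $0 \le k < i$. Each of these is invertible, hence dualizable by \cref{shift-dual}. Thus $\SS[x]/x^i$ is dualizable in $\GrZ$, and \cite[Proposition 4.6.2.13]{HA} then gives right dualizability.
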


\begin{proof}
	For dualizability from the left, \cref{quotient-cofiber} shows that $\SS[x]/x^i$ is the cofiber of $\SS[x]$ and $\Sigma^{i|x|} \SS[x](i)$.
	Note that $\SS$ and $\Sigma^{i|x|} \SS(i)$ are both dualizable in $\GrZ$ (see \cref{shift-dual}), hence, the free left $\SS[x]$-modules $\SS[x]$ and $\Sigma^{i|x|} \SS[x](i)$ are indeed left dualizable.

	For dualizability from the right, we recall from (the dual version of) \cite[Proposition 4.6.2.13]{HA} that it is equivalent to right dualizability of underlying graded spectrum.
	Applying \cref{quotient-induct} inductively, we see that as a graded spectrum, $\SS[x]/x^i$ is a finite direct sum of suspensions and weight shifts of the unit, each of them is dualizable (see \cref{shift-dual}), so it is right dualizable.
\end{proof}

In the remainder of this section we shall study modules over $\SS[x]$, and we record the following.

\begin{prop}\label{mod-sx-rigid}
	$\Mod_{\SS[x]}(\GrZ)$ is rigid.
\end{prop}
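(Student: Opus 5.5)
The plan is to use the standard criterion for rigidity of module categories: if $\EE$ is a rigid presentably monoidal stable category and $A \in \Alg(\EE)$ is an $\Et$-algebra (so that $\Mod_A(\EE)$ is monoidal), then $\Mod_A(\EE)$ is rigid provided it is compactly generated by objects that are dualizable and the unit is compact. Equivalently, one can invoke the general fact that $\Mod_A(\EE)$ is rigid whenever $\EE$ is rigid and $A$ is an $\Et$-algebra in $\EE$ (cf.\ \cite[\S9.2]{GR} for the commutative case; for $\Et$ the same argument applies because the multiplication on $\Mod_A$ is given by relative tensor product over $A$, and $A\otimes A \to A$ admits a colimit-preserving right adjoint that is $A$-bilinear).

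First, I will verify that $\GrZ = \Fun(\ZZ,\Sp)$ is rigid. It is compactly generated by the objects $\Sigma^d\SS(i)$ for $d,i\in\ZZ$. These objects are dualizable by \cref{shift-dual}, and the unit $\SS(0)$ is compact. Since $\GrZ$ is compactly generated and every compact object is dualizable (the thick closure of the dualizable generators), it is rigid.

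Next, I will pass to $\Mod_{\SS[x]}(\GrZ)$, using the $\Et$-structure on $\SS[x]$ to make the category monoidal via $-\otimes_{\SS[x]}-$. Its unit is $\SS[x]$. The category is compactly generated by the free modules $\SS[x]\otimes \Sigma^d\SS(i) \simeq \Sigma^d\SS[x](i)$, since the forgetful functor is conservative and preserves colimits. Each generator is compact, because the forgetful functor preserves filtered colimits. Each generator is also dualizable in $\Mod_{\SS[x]}(\GrZ)$, with dual $\Sigma^{-d}\SS[x](-i)$, because the free functor $\GrZ\to\Mod_{\SS[x]}(\GrZ)$ is monoidal and therefore preserves dualizable objects. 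In particular the unit $\SS[x]$ is compact. Hence compacts and dualizables agree and generate, so the category is rigid.

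The main obstacle is justifying that the free functor $\SS[x]\otimes-\colon \GrZ\to\Mod_{\SS[x]}(\GrZ)$ is monoidal when $\SS[x]$ is only $\Et$ rather than commutative. I would handle this by appealing to the fact that for an $\Et$-algebra $A$ in a presentably symmetric monoidal $\EE$, $\Mod_A(\EE)$ is an $\Eo$-monoidal category and the free functor is $\Eo$-monoidal (\cite[\S4.8.5]{HA}, applied via $\Mod_A(\EE)\simeq \Mod_{\Mod_A}$ viewing $A$ as an $\Eo$-algebra in $\Alg_{\Eo}$). Rigidity itself only requires $\Eo$-monoidality, since left and right duals coincide for invertible-like free objects. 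If needed, I would directly exhibit $\Sigma^{-d}\SS[x](-i)$ as a two-sided dual using the bimodule structures of \cref{quotient-cofiber}.
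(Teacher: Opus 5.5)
Your proof is correct and has the same two-step structure as the paper's: first show $\GrZ$ is rigid (the paper phrases this as every object of $\ZZ$ being invertible, which is your observation that the compact generators $\Sigma^d\SS(i)$ are invertible), then pass rigidity to $\Mod_{\SS[x]}(\GrZ)$. The one difference is that the paper cites Efimov's general result that modules over an $\Et$-algebra in a rigid category are rigid, whereas you verify this directly in this compactly generated case, using that the monoidal free functor sends the generators to invertible compact generators $\Sigma^d\SS[x](i)$ and that the unit $\SS[x]$ is compact.
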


\begin{proof}
	Note that $\GrZ$ is rigid since all objects in $\ZZ$ are dualizable (that is, it is an abelian group), c.f.\ \cite[Example 4.39]{locrig}.
	The rigidity of $\Mod_{\SS[x]}(\GrZ)$ then follows from \cite[Proposition 1.9]{EfimovLimit} (see also the argument \cite[Example 4.38]{locrig}, which works in the $\bbE_2$-case as well).
\end{proof}

For our applications in the later sections, we shall be interested in the realization of the quotients in spectra via \cref{realization}, and furthermore, in the multi-variable case, and we introduce the following.

\begin{notn}\label{multi-quot}
	For a set of variables $\xx = (x_1,\dotsc,x_r)$ of even degrees $|x_1|,\dotsc,|x_r| \in 2\ZZ$, we denote the polynomial algebra on $\xx$ by
	\[
	    \mdef{\SS[\xx]} = \SS[x_1] \otimes \cdots \otimes \SS[x_r]
	    \qin \Alg_{\Et}(\Sp).
	\]
	For a tuple of non-negative integers $\ii = (i_1,\dotsc,i_r) \in \Ng^r$, we denote
	\[
		\mdef{\SS[\xx]/\xx^\ii} := \SS[x_1]/x_1^{i_1} \otimes \cdots \otimes \SS[x_r]/x_r^{i_r}
		\qin \Alg_{\Et}(\Sp),
	\]
	assembling into a diagram
	\[
		\SS[\xx] \too (\SS[\xx]/\xx^\ii)_{\ii \in \Ng^r}
		\qin \Alg_{\Et}(\Sp).
	\]
\end{notn}

\subsection{Pro-constancy of quotients}\label{subsec-pro-const}

In this subsection we prove the core result which powers the computation of the dualizable limit (that is, the strong Mittag-Leffler condition, see \cref{ML}) and the ordinary limit (see \cref{ordinary-limit}), as well as their comparison (see \cref{comp-nuc}).
This takes two forms, \cref{double-quotient-pro-sphere-Sx} and \cref{higher-quotient-base-Sx}, both of which follow from the more fundamental \cref{double-quotient-pro-sphere}.
To state it, we begin with the following.

\begin{const}\label{double-quotient-const}
	We construct a tower of augmented $\Eo$-algebras over $\SS \otimes_{\SS[x]} \SS$
	\[
		\SS \simeq \SS \otimes_{\SS[x]} \SS[x]
		\too (\SS \otimes_{\SS[x]} \SS[x]/x^i)_{i \geq 1}
		\qin \Algaug(\GrZ)_{/\SS \otimes_{\SS[x]} \SS}.
	\]
	Consider the tower of $\Et$-algebras from \cref{trunc-poly}
	\[
		\SS[x] \too (\SS[x]/x^i)
		\qin \Alg_{\Et}(\GrZ),
	\]
	and forget to $\Eo$-$\SS[x]$-algebras.
	The tower of $\Eo$-algebras is obtained by extension of scalars along the $\Et$-algebra map $\SS[x] \to \SS$, which takes $\Eo$-$\SS[x]$-algebras to $\Eo$-algebras.
	The augmentation comes from the observation that the weight truncation $w_{<1}$ is symmetric monoidal (\cref{weight-trunc-tower}), and takes all involved algebras to $\SS$.
	Finally, the bottom of the tower is $\SS \otimes_{\SS[x]} \SS$, so the tower is indeed over it.
\end{const}

\begin{remark}
	As we see in the construction, the augmentation and the map to $\SS \otimes_{\SS[x]} \SS$ are both encoded in the tower of $\Eo$-algebras.
	Nevertheless, both for the proof and the applications, it will be convenient to remember them.
\end{remark}

Before proving the pro-constancy result, we compute the terms in the tower, as graded spectra, and the space of maps between them.

\begin{lem}\label{double-quotient-identification}
	For every $i \geq 1$ we have
	\[
		\SS \otimes_{\SS[x]} \SS[x]/x^i \simeq \SS \oplus \Sigma^{i|x|+1} \SS(i)
		\qin \GrZ.
	\]
\end{lem}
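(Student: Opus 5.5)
We compute $\SS \otimes_{\SS[x]} \SS[x]/x^i$ by tensoring the cofiber sequence of \cref{quotient-cofiber} with $\SS$ over $\SS[x]$. Regard $\SS[x]/x^i$ as a left $\SS[x]$-module. Since $\SS \otimes_{\SS[x]} -$ is exact, we obtain a cofiber sequence of graded spectra
\[
	\SS \otimes_{\SS[x]} \Sigma^{i|x|}\SS[x](i) \too[x^i] \SS \otimes_{\SS[x]} \SS[x] \too \SS \otimes_{\SS[x]} \SS[x]/x^i.
\]
Here we only need the left module structure, so any choice of multiplication-by-$x^i$ map will do. The first two terms are free modules base-changed, so they are $\Sigma^{i|x|}\SS(i)$ and $\SS$ respectively. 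The induced map between them is $\SS \otimes_{\SS[x]} (x^i)$, a map $\Sigma^{i|x|}\SS(i) \to \SS$.

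The key step is to show that this map is zero, and we do this by weight considerations. The source is concentrated in weight $i \geq 1$, while the target $\SS = \SS(0)$ is concentrated in weight $0$. Maps in $\GrZ = \Fun(\ZZ,\Sp)$ between objects supported in disjoint weights vanish, because the mapping spectrum is a product over weights, $\hom_{\GrZ}(X,Y) \simeq \prod_m \hom_{\Sp}(X_m,Y_m)$. Hence the map is null.

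It follows that the cofiber splits:
\[
	\SS \otimes_{\SS[x]} \SS[x]/x^i \simeq \SS \oplus \Sigma\,\Sigma^{i|x|}\SS(i) = \SS \oplus \Sigma^{i|x|+1}\SS(i).
\]
Explicitly, the cofiber of a zero map $A \to B$ is $B \oplus \Sigma A$, and this is a splitting in the stable category $\GrZ$. We can also see the splitting weight by weight: in weight $0$ the cofiber is $\SS$, in weight $i$ it is $\Sigma^{i|x|+1}\SS$, and it vanishes in all other weights. Since a graded spectrum is the direct sum of its weight components, this weightwise description already gives the isomorphism.

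There is only one subtle point, namely identifying $\SS \otimes_{\SS[x]} \Sigma^{i|x|}\SS[x](i)$ with $\Sigma^{i|x|}\SS(i)$. This holds because $\Sigma^{i|x|}\SS[x](i) \simeq \SS[x] \otimes \Sigma^{i|x|}\SS(i)$ is the free left $\SS[x]$-module on $\Sigma^{i|x|}\SS(i)$, and $\SS \otimes_{\SS[x]} \SS[x] \simeq \SS$ is right-module base change. Apart from this, the argument is routine, and the weight vanishing does all the work. The case $i \geq 1$ is needed exactly so that the weights are disjoint. For $i=0$ the map would be the identity on $\SS$ and the cofiber would be zero, which is consistent with $\SS[x]/x^0 \simeq 0$.
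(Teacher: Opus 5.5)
Your proposal is correct and follows the paper's proof: base-change the cofiber sequence of \cref{quotient-cofiber} along $\SS[x] \to \SS$ to get $\Sigma^{i|x|}\SS(i) \to \SS \to \SS \otimes_{\SS[x]} \SS[x]/x^i$. The first map is null because its source and target sit in different weights, so the cofiber splits as $\SS \oplus \Sigma^{i|x|+1}\SS(i)$.
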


\begin{proof}
	By \cref{quotient-cofiber}, we have an exact sequence
	\[
		\Sigma^{i|x|} \SS[x](i) \too[x^i] \SS[x] \too \SS[x]/x^i
		\qin \Mod_{\SS[x]}(\GrZ).
	\]
	Extending scalars along $\SS[x] \to \SS$, we get an exact sequence
	\[
		\Sigma^{i|x|} \SS(i) \too \SS \too \SS \otimes_{\SS[x]} \SS[x]/x^i
		\qin \GrZ.
	\]
	Observe that the first map is between graded spectra concentrated in different weights, and hence is null, and the result follows.
\end{proof}

\begin{lem}\label{contractible-homs}
	For $i > j \geq 1$ the following hom space is contractible
	\[
		\hom_{\Algaug(\GrZ)}(\SS \otimes_{\SS[x]} \SS[x]/x^i,\ \SS \otimes_{\SS[x]} \SS[x]/x^j)
		\quad\simeq\quad \pt.
	\]
	The same is true in the over category $\Algaug(\GrZ)_{/\SS \otimes_{\SS[x]} \SS}$.
\end{lem}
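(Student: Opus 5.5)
The plan is to reduce to a statement about weights, using the weight truncation adjunction of \cref{weight-trunc-tower} lifted to algebras. Write $A_i := \SS \otimes_{\SS[x]} \SS[x]/x^i$. By \cref{double-quotient-identification}, $A_i \simeq \SS \oplus \Sigma^{i|x|+1}\SS(i)$ as a graded spectrum. So $A_i$ lies in $\GrN$, and it is supported in weights $\{0,i\}$. Since $\SS[x]/x \simeq \SS$, the case $i=1$ gives $\SS \otimes_{\SS[x]} \SS \simeq A_1 \simeq \SS \oplus \Sigma^{|x|+1}\SS(1)$, which is supported in weights $\{0,1\}$.

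\emph{Step 1: the adjunction on algebras.} The inclusion $\Gri \subset \GrN$ is right adjoint to $w_{<i}$. By \cref{weight-trunc-tower}, $w_{<i}$ is symmetric monoidal and the inclusion is lax symmetric monoidal. Hence the adjunction lifts to an adjunction between $\Alg(\GrN)$ and $\Alg(\Gri)$, with the left adjoint computed by $w_{<i}$ on underlying objects; this is standard for a monoidal adjunction. Moreover, $\GrN \subset \GrZ$ is a full monoidal subcategory, so $\Alg(\GrN) \subset \Alg(\GrZ)$ is full. Therefore, for any $B \in \Alg(\GrN)$ whose underlying graded spectrum is supported in weights $<i$,
\[
\hom_{\Alg(\GrZ)}(A_i, B) \simeq \hom_{\Alg(\Gri)}(w_{<i}A_i, B).
\]
Because $A_i$ is supported in weights $\{0,i\}$, we have $w_{<i}A_i \simeq \SS$, the unit of $\Gri$. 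The unit is the initial algebra, so the right-hand side is contractible.

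\emph{Step 2: apply this to $A_j$, $\SS$, and $A_1$.} Since $i>j\geq 1$, we have $i \geq 2$. Hence each of $A_j$ (weights $\{0,j\}$), $\SS$ (weight $0$) and $A_1 = \SS \otimes_{\SS[x]} \SS$ (weights $\{0,1\}$) is supported in weights $<i$. Step 1 then shows that
\[
\hom_{\Alg(\GrZ)}(A_i, A_j),\qquad \hom_{\Alg(\GrZ)}(A_i, \SS),\qquad \hom_{\Alg(\GrZ)}(A_i, A_1)
\]
are all contractible.

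\emph{Step 3: pass to augmented and over-categories.} The hom space in $\Algaug(\GrZ)$ is the fiber of $\hom(A_i,A_j) \to \hom(A_i,\SS)$ (post-composition with the augmentation of $A_j$) over the augmentation of $A_i$. This is a fiber of a map between contractible spaces, so it is contractible. For the over-category $\Algaug(\GrZ)_{/\SS \otimes_{\SS[x]} \SS}$, the hom space is again a fiber of a map between augmented hom spaces. Its target is $\hom_{\Algaug}(A_i, A_1)$, which is contractible by the same argument as in Step 3 applied to the case $j=1$. Hence this fiber is contractible as well.

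The main obstacle is Step 1: one must justify that the monoidal adjunction $w_{<i} \dashv \mathrm{incl}$ lifts to $\Eo$-algebras with left adjoint given by applying $w_{<i}$. I would get this from the general fact that a symmetric monoidal left adjoint has a lax monoidal right adjoint and induces an adjunction on algebras (\cite[\S7.3.2]{HA}). One also has to check that the inclusion is fully faithful on algebras, so that $A_j$ really is the image of an object of $\Alg(\Gri)$. Everything else is weight bookkeeping.
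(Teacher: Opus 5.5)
Your proof is correct and is essentially the paper's argument: it uses the weight truncation adjunction lifted to algebras, the identification $w_{<i}(\SS \otimes_{\SS[x]} \SS[x]/x^i) \simeq \SS$, the fact that the targets are supported in weights $<i$, and the fiber formula for hom spaces in over categories, applied with $i>j$ and $i>1$. The only difference is cosmetic. You compute in unaugmented algebras via initiality of $\SS$ and then take fibers to pass to $\Algaug$, whereas the paper lifts the adjunction directly to augmented algebras and uses that $\SS$ is the zero object there.
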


\begin{proof}
	By \cref{double-quotient-identification}, both objects lie in $\GrN$.
	Moreover, the target is supported on weights $0$ and $j$, which are smaller than $i$, so it is in fact in $\Gri \hookrightarrow \GrN$, while for the source we have
	\[
		w_{<i}(\SS \otimes_{\SS[x]} \SS[x]/x^i) \simeq \SS.
	\]
	Recall from \cref{weight-trunc-tower} that the weight truncation-inclusion adjunction is symmetric monoidal, so it induces an adjunction on augmented algebras.
	This, together with the fact that $\SS$ is the zero object in augmented algebras, proves the first part
	\begin{align*}
		\hom_{\Algaug(\GrN)}(\SS \otimesu_{\SS[x]} \SS[x]/x^i,\ \SS \otimesu_{\SS[x]} \SS[x]/x^j)
		&\simeq \hom_{\Algaug(\Gri)}(w_{<i}(\SS \otimesu_{\SS[x]} \SS[x]/x^i),\ \SS \otimesu_{\SS[x]} \SS[x]/x^j)\\
		&\simeq \hom_{\Algaug(\Gri)}(\SS,\ \SS \otimesu_{\SS[x]} \SS[x]/x^j)\\
		&\simeq \pt,
	\end{align*}
	Now, recall that the hom space in a general over category $\CC_{/B}$ is given by
	\[
		\hom_{/B}(X, Y) \ \simeq\ \fib(\hom(X, Y) \to \hom(X, B)).
	\]
	Thus the second part follows from the first part applied to $i>j$ and $i>1$.
\end{proof}

With this in place, we have the following key pro-constancy result.

\begin{prop}\label{double-quotient-pro-sphere}
	The map from \cref{double-quotient-const} induces an isomorphism of pro-objects
	\[
		c(\SS) \iso \prolim[i] (\SS \otimes_{\SS[x]} \SS[x]/x^i)
		\qin \Pro(\Algaug(\GrZ)_{/\SS \otimes_{\SS[x]} \SS}).
	\]
\end{prop}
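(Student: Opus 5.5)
The strategy is to show directly, via the formula for morphisms in a pro-category, that the canonical map $c(\SS) \to \prolim[i] A_i$ is an isomorphism. Write $A_i := \SS \otimes_{\SS[x]} \SS[x]/x^i$ for $i \geq 1$, and work in $\mathcal{A} := \Algaug(\GrZ)_{/\SS \otimes_{\SS[x]} \SS}$. Note that $A_1 \simeq \SS \otimes_{\SS[x]} \SS$ is the terminal object of $\mathcal{A}$, and that $\SS$ (with its unique augmentation and its structure map) is initial in $\mathcal{A}$. Indeed, $\SS$ is initial in augmented algebras, and the over-category hom formula from the proof of \cref{contractible-homs} then shows it is initial in the over category too.

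By Yoneda, it suffices to show that for every $Z \in \mathcal{A}$ the map
\[
	\colim_i \hom_{\mathcal{A}}(A_i, Z) \too \hom_{\mathcal{A}}(\SS, Z) \simeq \pt
\]
is an isomorphism. In other words, the colimit on the left must be contractible. This handles only constant targets, but as in the proof of \cref{pro-pro}, the hom formula in pro-categories then gives the same statement for arbitrary pro-object targets.

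The key step is to show that the pretower $(A_i)$ is essentially trivial as a pro-object, using \cref{contractible-homs}. Consider the transition map $A_{2i} \to A_i$ (any $A_{i'} \to A_i$ with $i' > i$ would do). By \cref{contractible-homs}, the space $\hom_{\mathcal{A}}(A_{2i}, A_i)$ is contractible. The transition map must therefore agree with the composite $A_{2i} \to A_1 \to \SS \to A_i$, or more simply with any map that factors through the initial object $\SS$: such a map exists by initiality, and all maps are homotopic. Hence each transition map factors through the map $\SS \to A_i$, compatibly up to the coherence given by contractibility.

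So for a map $f\colon A_i \to Z$, its image in the colimit at stage $2i$ is $f$ precomposed with a map through $\SS$, which is the unique point of $\hom(\SS, Z)$ precomposed with $A_{2i} \to \SS$. This does not yet give contractibility of the colimit, but it shows that the colimit is a retract of a colimit of constant points.

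To make this coherent rather than merely a statement on $\pi_0$, I would argue more cleanly as follows. The subsequence $(A_{2^k})$ is cofinal, so it computes the same pro-object. Each of its transition maps factors through $\SS$, and so there is a map of pretowers $(A_{2^{k+1}}) \to c(\SS) \to (A_{2^k})$. This composite is the shift, which is pro-inverse to the identity, and every map is determined up to contractible choice because the relevant hom spaces are contractible by \cref{contractible-homs} and initiality. Therefore $\prolim A_i$ is a retract of $c(\SS)$ in $\Pro(\mathcal{A})$, compatibly with the given map $c(\SS) \to \prolim A_i$. Since $c(\SS)$ is initial in $\Pro(\mathcal{A})$ (the constant functor preserves initial objects, as the hom into any pro-object is a limit of contractibles), a retract of an initial object is initial. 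Both $c(\SS)$ and $\prolim A_i$ are then initial, and the given map between them is an isomorphism.

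The main obstacle is the coherence: promoting ``each transition map is homotopic to one factoring through $\SS$'' to an actual isomorphism in $\Pro$, rather than an objectwise statement. I expect to handle this by only needing the maps of pro-objects above, whose required homotopies all live in the contractible hom spaces of \cref{contractible-homs}. That lemma needs $i > j \geq 1$ in the over category, which holds for the shifted indices.
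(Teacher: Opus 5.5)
Your overall strategy is the paper's: both arguments rest only on \cref{contractible-homs} and the pro-hom formula. Your computation that $\hom_{\Pro(\mathcal{A})}(\prolim[k] A_{2^{k+1}}, \prolim[k] A_{2^k})\simeq\pt$, together with the initiality of $c(\SS)$, is the paper's check that $\End(\prolim[i] A_i)$ and $\End(c(\SS))$ are contractible, phrased as a retract argument. The passage to the subsequence $(A_{2^k})$ and the shift is unnecessary: cofinality of $i>j$ in the pro-hom formula already gives what you need.

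There is, however, a misjustified step, and it is the one step the paper's proof singles out. You need a map $A_i\to\SS$ in $\mathcal{A}$, and you say it exists ``by initiality''. Initiality of $\SS$ only gives maps out of $\SS$. A map $A_i\to\SS$ in $\mathcal{A}$ consists of an augmentation $A_i\to\SS$ together with an identification of $A_i\to\SS\to B$ with the structure map $A_i\to B$, where $B=\SS\otimes_{\SS[x]}\SS$. This identification is not automatic, and it genuinely fails for $i=1$. A map $A_1=B\to\SS$ over $B$ would force $\Id_B$ to factor through $\SS$, which is impossible because $B\simeq\SS\oplus\Sigma^{|x|+1}\SS(1)$ has a nonzero weight-$1$ part. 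So the composite $A_{2i}\to A_1\to\SS\to A_i$ you propose does not exist in $\mathcal{A}$.

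The correct justification is that for $i>1$
\[
\hom_{\mathcal{A}}(A_i,\SS)\simeq\fib\bigl(\hom_{\Algaug(\GrZ)}(A_i,\SS)\to\hom_{\Algaug(\GrZ)}(A_i,B)\bigr)\simeq\pt .
\]
This holds because $\SS$ is a zero object, in particular \emph{terminal}, in $\Algaug(\GrZ)$, and because $\hom_{\Algaug(\GrZ)}(A_i,B)\simeq\pt$ by \cref{contractible-homs} with $j=1$. Since all your indices $2^{k+1}$ are $>1$, this gives levelwise maps $A_{2^{k+1}}\to\SS$ that exist and are unique. With that fix, the rest of your argument goes through.
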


\begin{proof}
	We begin by constructing a map in the other direction.
	Observe that the augmentation gives a map
	\[
		(\SS \otimes_{\SS[x]} \SS[x]/x^i)_i \too \SS
		\qin \Algaug(\GrZ).
	\]
	After restricting to $i > 1$ (which does not affect the pro-object), we claim that this is over $\SS \otimes_{\SS[x]} \SS$.
	Indeed, \cref{contractible-homs} applied to $i > 1$ shows that the space of maps from the $i$-th term to $\SS \otimes_{\SS[x]} \SS$ is contractible, so the maps to the bottom of the tower agree with the composite through the augmentation.
	Considering these as pro-objects, we get the required map in the other direction.

	We now show that both composites are homotopic to the identity, which will show that the map in question is indeed an isomorphism.
	In fact, we show that the endomorphism spaces of both the source and the target are contractible, and in particular both composites are (uniquely) homotopic to the identity.

	For the source, note that $\SS$ is the zero object in $\Algaug(\GrZ)$, hence its endomorphism space is contractible.
	This implies that its endomorphism space in $\Algaug(\GrZ)_{/\SS \otimes_{\SS[x]} \SS}$ is contractible as well, and hence so is the endomorphism space of $c(\SS)$ in the pro-category.

	For the target, by the formula for the hom space in a pro-category, the endomorphism space is
	\[
		\lim_{j}\ \colim_{i}\ \hom_{\Algaug(\GrZ)_{/\SS \otimes_{\SS[x]} \SS}}(\SS \otimes_{\SS[x]} \SS[x]/x^i,\ \SS \otimes_{\SS[x]} \SS[x]/x^j).
	\]
	By cofinality, we can restrict the colimit to be taken over $i > j$.
	\cref{contractible-homs} shows that all hom spaces involved are contractible, and hence the limit of the colimit is contractible as well, since $|\Ng| \simeq \pt$.
\end{proof}

We now draw two variants, which will be useful for our applications.
The first, looking at the same diagram, but taking place in a less structured category.

\begin{cor}\label{double-quotient-pro-sphere-Sx}
	The following map is an isomorphism
	\[
		c(\SS) \iso \prolim[i] (\SS \otimes_{\SS[x]} \SS[x]/x^i)
		\qin \Pro(\Alg(\Mod_{\SS[x]}(\GrZ))).
	\]
\end{cor}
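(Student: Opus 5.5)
We deduce the corollary from \cref{double-quotient-pro-sphere} by transporting the pro-isomorphism along a functor. A functor $\Phi\colon \clA \to \clB$ induces a functor $\Pro(\clA) \to \Pro(\clB)$ compatible with $c$ and with formal cofiltered limits, so it preserves pro-isomorphisms. It therefore suffices to produce a functor
\[
	\Phi\colon \Algaug(\GrZ)_{/\SS \otimes_{\SS[x]} \SS} \too \Alg(\Mod_{\SS[x]}(\GrZ))
\]
with two properties: it sends the tower of \cref{double-quotient-const}, with its cone from $\SS$, to the tower $\SS \to (\SS \otimes_{\SS[x]} \SS[x]/x^i)$ regarded as $\Eo$-$\SS[x]$-algebras, and it sends the object $\SS$ to $\SS$ with its $\SS[x]$-algebra structure through the augmentation $\SS[x] \to \SS$.

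The natural choice is pullback of $\SS[x]$-algebra structure along the composite $\Et$-map
\[
	\SS[x] \too \SS \otimes_{\SS[x]} \SS \qquad \text{(as $\SS \otimes_{\SS[x]} \SS[x] \to \SS \otimes_{\SS[x]} \SS$)}.
\]
An algebra $A$ over $\SS \otimes_{\SS[x]} \SS$ then becomes an $\SS[x]$-algebra. The precise meaning is the structure of an $\Eo$-algebra in $\Mod_{\SS[x]}(\GrZ)$ with central action, which requires the relevant map to be $\Et$ or at least to land in the appropriate center. To avoid this issue, I would instead directly use the construction in \cref{double-quotient-const}: there the tower is obtained by base change along $\SS[x] \to \SS$ of the $\Eo$-$\SS[x]$-algebras $\SS[x]/x^i$. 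Base change $\SS \otimes_{\SS[x]} (-)$ from $\Alg(\Mod_{\SS[x]})$, followed by restriction back along the $\Et$-map $\SS[x] \to \SS$, is a functor
\[
	\Alg(\Mod_{\SS[x]}(\GrZ)) \too \Alg(\Mod_{\SS[x]}(\GrZ)),
\]
namely extend-then-restrict for $\Eo$-algebras over an $\Et$-algebra. Applied to the tower $\SS[x] \to (\SS[x]/x^i)$, it yields exactly the diagram in the corollary.

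The plan is then to factor this functor through the category in \cref{double-quotient-pro-sphere}. Extension of scalars lands in $\Eo$-algebras receiving a map from $\SS$ in $\Mod_{\SS}$, i.e.\ in $\Alg(\GrZ)$. The restriction along $\SS[x] \to \SS$ then defines a functor $\Alg(\GrZ) \to \Alg(\Mod_{\SS[x]}(\GrZ))$, regarding $\SS$-algebras as $\SS[x]$-algebras via the $\Et$-map. Precomposing with the forgetful functor
\[
	\Algaug(\GrZ)_{/\SS \otimes_{\SS[x]} \SS} \too \Alg(\GrZ)
\]
gives the desired $\Phi$. By construction, the diagram of \cref{double-quotient-const} forgets to the base-changed tower in $\Alg(\GrZ)$, and its image under $\Phi$ is the tower in the statement. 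This is because restriction along $\SS[x] \to \SS$ after extension along it recovers the $\SS[x]$-algebra structure on $\SS \otimes_{\SS[x]} \SS[x]/x^i$. Applying $\Pro(\Phi)$ to the isomorphism of \cref{double-quotient-pro-sphere} then concludes.

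The main obstacle is this last identification: checking that the $\SS[x]$-algebra structure on $\SS \otimes_{\SS[x]} \SS[x]/x^i$ intended in the statement coincides with the one obtained by restricting along $\SS[x] \to \SS$. I expect the paper defines it exactly this way, but if the intended structure is instead the left action through the $\SS$ factor via $\SS[x] \to \SS$, one must check that the restriction functor along an $\Et$-map of $\Eo$-algebras is well defined. This should follow from the fact that restriction of scalars along $\SS[x] \to \SS$ is lax monoidal, and hence preserves $\Eo$-algebras, using that $\SS[x] \to \SS$ is an $\Et$-map so that $\Mod_{\SS} \to \Mod_{\SS[x]}$ is monoidal.
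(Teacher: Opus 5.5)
Your proposal is correct and is essentially the paper's proof: the paper notes that the $\SS[x]$-$\Eo$-algebra structure on the whole diagram comes from restriction of scalars along the $\Et$-map $\SS[x] \to \SS$, and transports the pro-isomorphism of \cref{double-quotient-pro-sphere} along exactly your functor $\Phi$ (forget the augmentation and the over-structure, then restrict). The identification you flag as the main obstacle holds by construction; the only nit is that restriction $\Mod_{\SS}(\GrZ) \to \Mod_{\SS[x]}(\GrZ)$ is lax monoidal (as the right adjoint of the monoidal extension of scalars) rather than monoidal, which is all your argument needs.
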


\begin{proof}
	Notice that the $\SS[x]$-$\Eo$-algebra structure on this entire diagram is obtained by restriction of scalars along the $\Et$-map $\SS[x] \to \SS$, so the result follows from \cref{double-quotient-pro-sphere}.
\end{proof}

Second, we have the following base-changed variant.

\begin{cor}\label{quotient-base-Sx}
	The following map is an isomorphism
	\[
		c(\SS \otimes_{\SS[x]} \SS) \iso \prolim[i] (\SS \otimes_{\SS[x]/x^i} \SS)
		\qin \Pro(\BMod{\SS}{\SS}(\Mod_{\SS[x]}(\GrZ))).
	\]
\end{cor}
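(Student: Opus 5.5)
The plan is to deduce this from \cref{double-quotient-pro-sphere-Sx} by applying a functor that sends an $\Eo$-algebra $A$ in $\Mod_{\SS[x]}(\GrZ)$ to the bimodule $\SS \otimes_A \SS$. Such a functor preserves pro-isomorphisms, because any functor induces a functor on pro-categories. The subtlety is that $\SS \otimes_{\SS[x]/x^i} \SS$ is a relative tensor product over $\SS[x]/x^i$, not over $\SS \otimes_{\SS[x]} \SS[x]/x^i$. So the first step is to rewrite it. Regard $\SS$ as a right and as a left $\SS[x]/x^i$-module through the augmentation; this makes sense since $\SS$ is an $\Eo$-$\SS[x]/x^i$-algebra via $w_{<1}$. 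Then $\SS \otimes_{\SS[x]/x^i} \SS \simeq \SS \otimes_{B_i} \SS$, where
\[
	B_i := \SS \otimes_{\SS[x]} \SS[x]/x^i \qin \Alg(\Mod_{\SS[x]}(\GrZ)).
\]
Indeed, the right $\SS[x]/x^i$-module $\SS$ is extended along $\SS[x] \to \SS$. Write it as $\SS \otimes_{\SS[x]} \SS[x]/x^i \otimes_{\SS[x]/x^i} \SS$ and use the base-change formula $(\SS \otimes_{\SS[x]} \SS[x]/x^i) \otimes_{\SS[x]/x^i} \SS \simeq \SS \otimes_{\SS[x]} \SS$, which one reorganizes as $\SS \otimes_{B_i} \SS$ with $B_i$ acting through its augmentation.

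More precisely, the functor I would use is
\[
	\Phi\colon \Algaug(\Mod_{\SS[x]}(\GrZ))_{/\SS \otimes_{\SS[x]} \SS} \too \BMod{\SS}{\SS}(\Mod_{\SS[x]}(\GrZ)),\qquad A \mapstoo \SS \otimes_A \SS,
\]
with $\SS$ made into an $A$-module via the augmentation $A \to \SS$. This is functorial in augmented algebras: a map of augmented algebras gives compatible module maps, hence a map on bar constructions. On the bottom object $A = \SS$ it gives $\SS \otimes_\SS \SS \simeq \SS$. That is wrong: the left-hand side we want is $\SS \otimes_{\SS[x]} \SS$. So I would change the convention. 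Take $B_i$ as an algebra under $\SS$, where the first factor of $\SS \otimes_{\SS[x]} -$ is the base, and form $B_i \otimes_{\SS[x]/x^i}$-type expressions. Concretely, use the identification
\[
	\SS \otimes_{\SS[x]/x^i} \SS \simeq (\SS \otimes_{\SS[x]} \SS[x]/x^i) \otimes_{\SS[x]/x^i} \SS \simeq \SS \otimes_{\SS[x]} \SS \otimes_{\SS \otimes_{\SS[x]}\SS}\cdots,
\]
which is awkward. Instead I would use the formula $\SS \otimes_{\SS[x]/x^i} \SS \simeq (\SS \otimes_{\SS[x]} \SS) \otimes_{B_i} \SS$. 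This is the cleanest form, and it is where the structure over $\SS \otimes_{\SS[x]} \SS$ from \cref{double-quotient-const} is used. Here $\SS \otimes_{\SS[x]} \SS$ is a right $B_i$-module via the map $B_i \to \SS \otimes_{\SS[x]} \SS$, and $\SS$ is a left $B_i$-module via the augmentation. So the functor is
\[
	\Psi(A \to \SS \otimes_{\SS[x]} \SS) = (\SS \otimes_{\SS[x]} \SS) \otimes_A \SS.
\]
It takes $c(\SS) \mapsto \SS \otimes_{\SS[x]} \SS$ and $B_i \mapsto \SS \otimes_{\SS[x]/x^i} \SS$.

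The steps, in order, are:
\begin{enumerate}
	\item Verify this identification naturally in $i$, via associativity of relative tensor products together with $\SS \otimes_{\SS[x]} \SS[x]/x^i \otimes_{\SS[x]/x^i} M \simeq \SS \otimes_{\SS[x]} M$.
	\item Check that $\Psi$ is a functor from $\Algaug(\GrZ)_{/\SS \otimes_{\SS[x]} \SS}$, taken $\SS[x]$-linearly, to $\SS$-$\SS$-bimodules in $\Mod_{\SS[x]}(\GrZ)$.
	\item Apply $\Pro(\Psi)$ to \cref{double-quotient-pro-sphere}, or rather to its $\SS[x]$-linear refinement as in \cref{double-quotient-pro-sphere-Sx}, whose proof works verbatim in the over category.
\end{enumerate}

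I expect the main obstacle to be the coherence of the bimodule structures. The $\SS[x]$-module structure and the two $\SS$-actions must be natural in $i$, and this is where the $\Et$ structure of $\SS[x] \to \SS$ is needed to regard $\SS$ as an algebra over an $\Eo$-$\SS[x]$-algebra. I would handle this by working throughout in $\Mod_{\SS[x]}(\GrZ)$, which is monoidal because $\SS[x]$ is $\Et$. In that setting everything is a relative tensor product of $\Eo$-algebras and modules, and functoriality is standard.
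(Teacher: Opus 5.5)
Your final plan, after you discard $\Phi$, is the paper's argument with left and right swapped. The paper applies $B \mapsto \SS \otimes_{B} (\SS \otimes_{\SS[x]} \SS)$ to \cref{double-quotient-pro-sphere} and uses $\SS \otimes_A \SS \simeq \SS \otimes_{\SS \otimes_{\SS[x]} A} (\SS \otimes_{\SS[x]} \SS)$, whereas you put $\SS \otimes_{\SS[x]} \SS$ on the left and the augmentation module on the right. One small point: the formula you cite, $(\SS \otimes_{\SS[x]} A) \otimes_A M \simeq \SS \otimes_{\SS[x]} M$ for left $A$-modules $M$, is the one the paper's convention needs. Your convention instead needs its right-module mirror $N \otimes_A (\SS \otimes_{\SS[x]} A) \simeq \SS \otimes_{\SS[x]} N$. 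That mirror also holds, for example by the projection formula checked on the invertible generators of $\Mod_{\SS[x]}(\GrZ)$; alternatively, just switch sides to match the paper.
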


\begin{proof}
	Applying the functor
    \[
        \SS \otimes_{(-)} (\SS \otimes_{\SS[x]} \SS)\colon \Algaug(\GrZ)_{/\SS \otimes_{\SS[x]} \SS} \too \BMod{\SS}{\SS}(\Mod_{\SS[x]}(\GrZ)).
    \]
	to the isomorphism from \cref{double-quotient-pro-sphere}, we get an isomorphism
	\[
		c(\SS \otimesu_{\SS} (\SS \otimes_{\SS[x]} \SS))
		\too \prolim[i] (\SS \otimesu_{\SS \otimes_{\SS[x]} \SS[x]/x^i} (\SS \otimes_{\SS[x]} \SS))
		\qin \Pro(\BMod{\SS}{\SS}(\Mod_{\SS[x]}(\GrZ))).
	\]
	Noting that for an $\SS[x]$-algebra $A$ with an augmentation $A \to \SS$, we have a natural isomorphism
    \[
        \SS \otimes_{A} \SS
        \simeq \SS \otimesu_{\SS \otimes_{\SS[x]} A} (\SS \otimes_{\SS[x]} A) \otimes_{A} \SS
        \simeq \SS \otimesu_{\SS \otimes_{\SS[x]} A} (\SS \otimes_{\SS[x]} \SS),
    \]
	and applying this to $A = \SS[x]$ and $A = \SS[x]/x^i$, we get the desired isomorphism.
\end{proof}

In fact, this has the following strengthened version.

\begin{cor}\label{higher-quotient-base-Sx}
    For fixed $j$ and $k$, the following map is an isomorphism
	\[
		c(\SS[x]/x^j \otimesu_{\SS[x]} \SS[x]/x^k) \iso \prolim[i \geq j,k] (\SS[x]/x^j \otimesu_{\SS[x]/x^i} \SS[x]/x^k)
		\qin \Pro(\BMod{\SS[x]/x^j}{\SS[x]/x^k}(\Mod_{\SS[x]}(\GrZ))).
	\]
\end{cor}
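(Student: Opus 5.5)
The plan is to deduce the statement from \cref{quotient-base-Sx} (the case $j=k=1$) by a double induction on $j$ and $k$, using the weight filtration exact sequences of \cref{quotient-induct}. The comparison map
\[
	c(M \otimes_{\SS[x]} N) \too \prolim[i] (M \otimes_{\SS[x]/x^i} N)
\]
is natural in a left $\SS[x]/x^{i_0}$-module $N$ and a right $\SS[x]/x^{i_0}$-module $M$, restricting to $i \geq i_0$. Each term is exact in $M$ and in $N$ separately. Cofiber sequences in $M$ or $N$ therefore give levelwise cofiber sequences of pretowers. These pass to cofiber sequences in the stable category $\Pro$ of the relevant stable bimodule category, in which limits of pretowers of cofiber sequences are computed levelwise. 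In a map of cofiber sequences in a stable category, if two of the three comparison maps are isomorphisms then so is the third. So pro-isomorphisms satisfy two-out-of-three along such sequences.

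\textbf{Base case.} I start with $j=1$ and induct on $k$. For $k=1$ this is \cref{quotient-base-Sx}. For the step from $k$ to $k+1$, take the exact sequence of \cref{quotient-induct}
\[
	\Sigma^{k|x|}\SS(k) \too \SS[x]/x^{k+1} \too \SS[x]/x^k,
\]
regarded as left $\SS[x]/x^{k+1}$-modules, hence as left $\SS[x]/x^i$-modules for $i \geq k+1$. Apply $\SS \otimes_{\SS[x]/x^i}(-)$ and $\SS \otimes_{\SS[x]}(-)$ to it. The right $\SS[x]/x^{k+1}$-module structures survive, and all three comparison maps are maps of pro-objects in $\BMod{\SS}{\SS[x]/x^{k+1}}(\Mod_{\SS[x]}(\GrZ))$, the outer terms being restricted along $\SS[x]/x^{k+1} \to \SS[x]/x^k$ and $\SS[x]/x^{k+1} \to \SS$. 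The outer comparison maps are pro-isomorphisms: the right-hand one by induction (restriction is a functor, so it preserves pro-isomorphisms), the left-hand one by the case $k=1$ twisted by the invertible object $\Sigma^{k|x|}\SS(k)$ (\cref{shift-dual}). Two-out-of-three gives the claim for $k+1$.

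\textbf{Inductive step in $j$.} Having all $k$ for a fixed $j$, I run the same argument in the left variable. I use the sequence of \cref{quotient-induct} as right $\SS[x]/x^{j+1}$-modules, and work in $\Pro(\BMod{\SS[x]/x^{j+1}}{\SS[x]/x^k}(\Mod_{\SS[x]}(\GrZ)))$. The outer terms are the already established cases for $j$ and for $1$ (weight-shifted), restricted along the quotient maps.

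\textbf{Main obstacle.} I expect the main difficulty to be keeping every comparison in the correct structured category. As the introduction warns, forgetful functors need not be conservative on pro-categories, so I cannot prove the pro-isomorphism for underlying objects and lift. This is why the induction must be set up so that the middle term of each sequence already carries the full bimodule structure and the outer terms are \emph{restrictions} of known pro-isomorphisms, rather than forgetting structure from the target statement. A related point needing care is that the maps in \cref{quotient-induct} are genuinely bimodule maps over $\SS[x]/x^{j+1}$, and compatible with the $\Mod_{\SS[x]}(\GrZ)$-linearity coming from the $\Et$-structure. \cref{quotient-induct} supplies this, since it arises by applying the symmetric monoidal functor $w_{<j+1}$. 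Finally, the comparison maps must be checked to be natural with respect to these sequences, which follows from functoriality of relative tensor products along the tower $\SS[x] \to \SS[x]/x^i$.
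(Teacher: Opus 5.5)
Your proposal is correct and follows the paper's own argument. The paper also filters $\SS[x]/x^j$ and $\SS[x]/x^k$ by the finite weight filtrations coming from \cref{quotient-induct}, reduces by exactness to the (shifted, suspended) associated graded pieces, and deduces those from \cref{quotient-base-Sx} restricted along the augmentations $\SS[x]/x^j \to \SS$ and $\SS[x]/x^k \to \SS$; the only difference is cosmetic, in that the paper runs one double-filtration induction from the trivial case $j=k=0$, whereas you induct in $k$ at $j=1$ and then in $j$.
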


\begin{proof}
	This follows by filtering $\SS[x]/x^j$ and $\SS[x]/x^k$ along their weight truncations.
	More precisely, consider the filtration
	\[
		\SS[x]/x^j \too \SS[x]/x^{j-1} \too \cdots \too \SS[x]/x^1 \too 0
		\qin \Mod_{\SS[x]/x^j}(\Mod_{\SS[x]}(\GrZ)).
	\]
	and similarly for $\SS[x]/x^k$.
	This yields a double filtration on the map in question.
	Since this (double) filtration has finite length, it suffices to show that the map is an isomorphism on the (double) associated graded, and then conclude by induction starting from the base case of $j=k=0$ where everything vanishes.

	Observe that the terms in the associated graded of the weight filtration on $\SS[x]/x^j$ are $\SS$, up to suspending and shifting weight (which commute with all relevant operations since they are given by tensoring with tensor invertible objects), and its $\SS[x]/x^j$-module structure is restricted along the map $\SS[x]/x^j \to \SS$.
	The same holds for $\SS[x]/x^k$.
	Thus, the conclusion follows from \cref{quotient-base-Sx} by restricting the module structure along the maps $\SS[x]/x^j \to \SS$ and $\SS[x]/x^k \to \SS$.
\end{proof}

This immediately implies the Mittag-Leffler condition for modules over the quotient tower.

\begin{thm}\label{ML}
	The tower
	\[
		\Mod_{\SS[x]}(\GrZ) \too (\Mod_{\SS[x]/x^\bullet}(\GrZ))
	\]
	satisfies the tower Mittag-Leffler condition in $\PrL_{\Mod_{\SS[x]}(\GrZ)}$, and the strong Mittag-Leffler condition over $\Mod_{\SS[x]}(\GrZ)$.
\end{thm}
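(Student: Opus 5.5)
The plan is to work in the Morita $2$-category $\Mor(\EE)$ with $\EE = \Mod_{\SS[x]}(\GrZ)$, verify the tower Mittag-Leffler condition there via \cref{cat-ML-morita}, and then transport everything to module categories via \cref{ML-ring-to-mod}. Concretely, I would take the tower $\one \to (R_\bullet)$ with $\one = \SS[x]$ (the unit of $\EE$) and $R_i = \SS[x]/x^i$, regarded as $\Eo$-algebras in $\EE$. The $\Et$-structure on $\SS[x] \to \SS[x]/x^i$ from \cref{trunc-poly} makes these $\Eo$-$\SS[x]$-algebras. Then $\Mod_{R_i}(\EE) \simeq \Mod_{\SS[x]/x^i}(\GrZ)$ and $\Mod_\one(\EE) = \Mod_{\SS[x]}(\GrZ)$, so the image of this tower in $\PrL_\EE$ is exactly the tower in the statement.

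Next I check the two conditions of the tower version of \cref{cat-ML-morita}.

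\textbf{Pro-isomorphism.} For fixed $j,k$ we need
\[
c(R_j \otimes_{\SS[x]} R_k) \to \prolim[i \geq j,k] (R_j \otimes_{R_i} R_k)
\]
to be an isomorphism in $\Pro(\BMod{R_j}{R_k}(\EE))$. The tensor over the unit of $\EE$ is $\otimes_{\SS[x]}$, so this is precisely \cref{higher-quotient-base-Sx}. The one point to confirm is that the map there is the one induced by the counits of the $u_i \dashv u_i^\RR$ adjunctions, i.e.\ by $R_j \otimes_{\SS[x]} R_k \to R_j \otimes_{R_i} R_k$ coming from $\SS[x] \to R_i$. This is the natural comparison map, so it matches.

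\textbf{Dualizability.} Each $R_i$ must be left and right dualizable in $\EE$. This is \cref{quotient-dualizable}: it gives left and right dualizability of $\SS[x]/x^i$ as a left $\SS[x]$-module, which is the same as dualizability as an object of $\EE$ under the relevant monoidal structure.

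With both conditions in hand, \cref{cat-ML-morita} gives the tower Mittag-Leffler condition in $\Mor(\EE)$, and the first part of \cref{ML-ring-to-mod} transports it to $\PrL_{\EE}$. For the strong Mittag-Leffler condition, \cref{mod-sx-rigid} shows $\EE$ is rigid. \cref{bML-cML} then gives the pretower condition, and the second part of \cref{ML-ring-to-mod}, together with \cref{ML-equiv}, gives the strong condition over $\EE$.

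The main obstacle I expect is bookkeeping about sidedness. Since $\EE$ is only $\Et$-monoidal, left versus right modules and bimodules have to match the conventions of \cref{cat-ML-morita}. In particular, ``left and right dualizable in $\EE$'' must be matched with the statement of \cref{quotient-dualizable}, via \cite[Proposition 4.6.2.13]{HA}, which identifies right dualizability of a left module with right dualizability of its underlying object.
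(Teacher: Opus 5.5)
Your proposal is correct and takes the same approach as the paper. You verify the tower Mittag-Leffler condition in $\Mor(\Mod_{\SS[x]}(\GrZ))$ via \cref{cat-ML-morita}, using \cref{higher-quotient-base-Sx} and \cref{quotient-dualizable}, pass to the pretower condition by \cref{bML-cML}, and transport to module categories with \cref{ML-ring-to-mod}, using rigidity from \cref{mod-sx-rigid}.
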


\begin{proof}
	We first note that the tower
	\[
		\SS[x] \too (\SS[x]/x^\bullet)
	\]
	satisfies the tower (hence, by \cref{bML-cML}, the pretower) Mittag-Leffler condition in $\Mor(\Mod_{\SS[x]}(\GrZ))$.
	Indeed, the two conditions of \cref{cat-ML-morita} are satisfied by \cref{higher-quotient-base-Sx} and \cref{quotient-dualizable}.
	The result follows by \cref{ML-ring-to-mod}, noting that $\Mod_{\SS[x]}(\GrZ)$ is rigid by \cref{mod-sx-rigid}.
\end{proof}

	\section{Quotient towers and their module categories}\label{sec-quot}

In this subsection we study quotients of $\Et$-algebras and their module categories, building on the universal case from \cref{sec-gr}.
To start, we fix the setup, recalling that in \cref{multi-quot} we have introduced the multi-variable polynomial algebra $\SS[\xx]$.
We note that the different subsections will require increasing additional assumptions which we declare at their beginning (see \assref{dual-assum}, \assref{comp-assum} and \assref{assum-rigid-cg}).

\begin{namedassum}{Setup}\label{setup}
    We fix a presentably monoidal stable category $\EE \in \Alg(\PrLst)$ and an $\Et$-map
    \[
        \SS[\xx] := \SS[x_1] \otimes \cdots \otimes \SS[x_r] \too \End(\AA)
        \qin \Alg_{\Et}(\Sp)
    \]
    to the underlying $\Et$-ring spectrum of its unit, with $|x_1|,\dotsc,|x_r| \in 2\ZZ$ of even degrees.
\end{namedassum}

We now recall the main example we have in mind.
As explained in the introduction, this setup arises naturally for \emph{even} ring spectra, by the following remarkable result from \cite[Corollary 4.8]{RSS}, previously appearing for one variable in \cite[Proposition 3.13]{ABM}, with prior closely related results in \cite[\S5.4]{rotation}, \cite[Proposition 4.2.1]{HW} and \cite{E2quot}.

\begin{prop}[{\cite[Corollary 4.8]{RSS}}]\label{R-even}
	Let $R \in \Alg_{\Et}(\Sp)$ be an \emph{even} $\Et$-ring spectrum, and let $x_1,\dotsc,x_r \in \pi_*(R)$ be homogeneous elements of even non-negative degrees $|x_1|,\dotsc,|x_r| \in 2\NN$.
    Then there exists an $\Et$-map sending the polynomial generators to the given elements
    \[
        \SS[\xx] \too R
        \qin \Alg_{\Et}(\Sp).
    \]
\end{prop}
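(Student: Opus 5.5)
The plan is to reduce to the one-variable case and then take tensor products. I first construct, for each $j$, an $\Et$-map $\SS[x_j] \to R$ sending the generator to $x_j$. Since $R$ is an $\Et$-ring, the tensor product $\SS[x_1] \otimes \cdots \otimes \SS[x_r]$ is the coproduct in $\Et$-algebras only up to the braided structure, so I will instead build the multivariable map one variable at a time. At each stage I target the centralizer of the maps already constructed, which is again an $\Et$-ring: the $\Et$-centralizer of an $\Et$-map into an $\Et$-ring. Alternatively, I can use that a map out of a tensor product of $\Et$-algebras into an $\Et$-algebra amounts to a family of maps with commuting data, and handle that data by the same obstruction theory.

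For the one-variable step I use that $\SS[x] = \free_{\Eo}(\Sigma^{|x|}\SS(1))$ realizes to the Thom spectrum of the $\Et$-map $\Omega^2 S^{|x|+2} \to \mathrm{BGL}_1(\SS)$. This map is obtained by double looping $S^{|x|+2} \to B^3\mathrm{GL}_1(\SS)$, which is the nontrivial rotation-by-J construction of \cite{rotation}. By the Thom spectrum universal property of Antol\'in-Camarena--Barthel, $\Et$-maps $\SS[x] \to R$ are identified with $\Et$-maps $\Omega^2 S^{|x|+2} \to \mathrm{GL}_1 R$ compatible with the orientation data. Equivalently, they are nullhomotopies of the composite $S^{|x|+2} \to B^3\mathrm{GL}_1(\SS) \to B^3\mathrm{GL}_1(R)$, together with a compatible choice hitting $x$. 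Because $R$ is complex orientable (being even), the composite $\Omega^2 S^{|x|+2} \to \mathrm{BU} \to \mathrm{BGL}_1(\SS)$ factors through $\mathrm{BU}$ when $|x|$ is even. It does so via the $\Et$-map $\Omega^2 S^{|x|+2} \to \Omega^2 \mathrm{BSU}$ or $\mathrm{BU}$ coming from $S^{|x|+2} \to B^2\mathrm{U}$. An $\Et$-complex orientation of $R$ would then trivialize the Thom twist.

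The delicate part is that $R$ is only $\Et$, so I do not have an $\Et$-$\MU$-orientation for free. I would therefore run obstruction theory directly instead. The obstructions to extending an $\Et$-map cell by cell along the $\Et$-cell structure of $\SS[x]$ live in homotopy groups $\pi_{\text{odd}}(R)$. This is because $\SS[x]$ has an $\Et$-cell structure whose cells beyond the bottom generator are in odd relative dimension. The cells correspond to the even Dyer--Lashof-type cells of $\Omega^2 S^{|x|+2}$ shifted by one, since the $\Et$-cotangent complex of $\SS[x]$ relative to $\free_{\Et}$ has homology in odd degrees. As $R$ is even, all obstructions vanish, and the bottom cell can be sent to $x$.

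The main obstacle will be making the cell structure precise, that is, showing that $\SS[x]$ is built from $\free_{\Et}(\Sigma^{|x|}\SS)$ by attaching $\Et$-cells in dimensions where the obstruction groups are odd homotopy of $R$. I would try first to compute the $\Et$-topological André--Quillen homology of $\SS[x]$ (in the graded setting, weight by weight) and check that it is concentrated in odd degrees above the generator, using the Thom spectrum description and the known homology of $\Omega^2 S^{|x|+2}$. The multivariable case then follows by the same obstruction argument applied to the tensor product, whose $\Et$-cells are again even or odd in the same way. The non-negativity of degrees enters in making the cell filtration bounded below.
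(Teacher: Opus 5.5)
\textbf{Verdict: genuine gap.} Your operative mechanism is the same one behind the cited \cite[Corollary 4.8]{RSS}, as sketched in the remark after the statement: extend an $\Et$-map over an $\Et$-cell structure on $\SS[\xx]$ relative to $\free_{\Et}(\bigoplus_s \Sigma^{|x_s|}\SS)$, with every obstruction in $\pi_{\mathrm{odd}}(R)=0$. However, you leave unproven exactly the input this needs, and you state its parity backwards.

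\textbf{Parity.} Extending over a $d$-cell attached along $S^{d-1}$ is obstructed by a class in $\pi_{d-1}(R)$. So you need \emph{even}-dimensional cells, and the relative $\Et$-cotangent complex then has homology in even degrees, not odd. For example, when $|x|=0$ the weight-$2$ part of $\free_{\Et}(\SS(1))$ is $\Sigma^\infty_+ S^1$, and its degree-$1$ class is killed by a $2$-cell. With cells of ``odd relative dimension'' and a cotangent complex in odd degrees, as you write, the obstructions would lie in $\pi_{\mathrm{even}}(R)$, which does not vanish. The check you propose would also come out the other way.

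\textbf{Thom-spectrum input.} The planned computation rests on the Thom-spectrum paragraph, which is wrong. For $|x|>0$, $\Sigma^\infty_+\Omega^2 S^{|x|+2}$ is $\free_{\Et}(\Sigma^{|x|}\SS)$ itself. No Thom spectrum over $\Omega^2 S^{|x|+2}$ has the mod $2$ homology of $\SS[x]$, so it cannot be used to predict the cells.

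\textbf{Multivariable step.} This is also only asserted. Since $\otimes$ is not the coproduct in $\Alg_{\Et}(\Sp)$, passing from $\free_{\Et}(\bigoplus_s\Sigma^{|x_s|}\SS)$ to $\SS[x_1]\otimes\cdots\otimes\SS[x_r]$ requires more than the one-variable cells. You also need cells in mixed weights that make different variables commute, killing the bracket classes in degree $|x_s|+|x_t|+1$, and their evenness has to be shown.

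\textbf{Centralizer route.} This does not sidestep the issue. Applying the one-variable statement to a map $\SS[x_2]\to Z(f_1)$ requires $Z(f_1)$ to be even and $\pi_*Z(f_1)\to\pi_*R$ to hit $x_2$. Neither is formal (it fails for $\Eo$-centralizers), and you do not address it. For $\Et$-centralizers it can be extracted from Francis's fiber sequence $\Sigma^{-2}\mathrm{Der}(\SS[x_1],R)\to Z(f_1)\to R$, but only once the even cell structure on $\SS[x_1]$ is already known.

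\textbf{The missing idea.} The argument the paper relies on obtains all of this at once. The proof of \cite[Proposition 4.6]{RSS} shows that, in degree $0$ with the multi-weight grading in $\Gr^{\otimes r}$, $\SS[y_1,\dotsc,y_r]$ is obtained from $\free_{\Et}(\SS\{y_1\}\oplus\cdots\oplus\SS\{y_r\})$ by attaching even $\Et$-cells. The $\Et$-shearing equivalence $\mathrm{sh}^{|x_1|,\dotsc,|x_r|}$ then transports this to arbitrary even degrees. This reduction to degree zero, treating all variables jointly, is what your plan lacks. It also shows that non-negativity of the degrees plays no role: mapping out of a transfinite cell complex needs no boundedness.
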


\begin{remark}
	In fact, the statement holds for any even degrees $|x_1|,\dotsc,|x_r| \in 2\ZZ$, by shearing the cellular structure from the case of degree zero, as in the one-variable proof of \cite[Proposition 3.13]{ABM}.

	More specifically, consider the $\Et$-map $\free_{\Et}(\SS\{y_1\}\oplus\cdots\oplus\SS\{y_r\}) \to \SS[y_1,\dotsc,y_r]$ in $\Gr^{\otimes_r}$, where $y_s$ has degree $0$ and multi-weight $e_s$.
	The proof of \cite[Proposition 4.6]{RSS} shows that the target is obtained from the source by attaching even $\Et$-cells.
	Applying the $\Et$-shearing equivalence $\mrm{sh}^{|x_1|,\dotsc,|x_r|}\colon \Gr^{\otimes r} \iso \Gr^{\otimes r}$ to this presentation, we get that $\SS[\xx]$ is obtained from $\free_{\Et}(\Sigma^{|x_1|}\SS\{x_1\}\oplus\cdots\oplus\Sigma^{|x_r|}\SS\{x_r\})$ by attaching $\Et$-cells in (possibly negative) even degrees, and we conclude as in \cite[Corollary 4.8]{RSS}.
\end{remark}

We thus have the following example.
The reader may wish to mentally replace $\EE$ and $\AA$ by $\Mod_R$ and $R$ in the rest of the paper.

\begin{example}\label{ex-of-setup}
    Let $R$ be an even $\Et$-ring spectrum, and let $x_1,\dotsc,x_r \in \pi_*(R)$ be elements of even non-negative degrees $|x_1|,\dotsc,|x_r| \in 2\NN$.
    The presentably monoidal category $\EE = \Mod_R$, together with any $\Et$-map $\SS[\xx] \to R$ from \cref{R-even}, gives an example for \assref{setup}.

    Note that $\Mod_R$ is rigid and compactly generated by $R$, and in particular dualizable, so satisfies \assref{dual-assum}.
    If $R$ moreover has an $\Eth$-ring spectrum structure, then $\Mod_R$ satisfies \assref{comp-assum} and \assref{assum-rigid-cg} as well.
\end{example}

\begin{remark}
    While this is essentially the only example the author had in mind, the author nevertheless found it useful to work in the generality of \assref{setup} for two reasons.
    First, it clarifies which results depend on which properties of the example.
    Second, in earlier stages of this project, the author was at times confused why certain constructions had an $R$-module structure, or, more often, why multiple $R$-module structures coincided, which are manifest when working internally to $\EE$.
\end{remark}

\subsection{Quotients of \texorpdfstring{$\Et$}{E2}-algebras}\label{subsec-e2-quot}

We now turn our attention to constructing quotients of general $\Et$-algebras in our \assref{setup}.
Extension of scalars along the $\Et$-map $\SS[\xx] \to \AA$ takes $\Eo$-$\SS[\xx]$-algebras to $\Eo$-algebras in $\EE$, namely, gives a functor
\[
	\AA \otimes_{\SS[\xx]} (-)\colon \Alg(\Mod_{\SS[\xx]}(\Sp)) \too \Alg(\EE),
\]
and we give the following definition.

\begin{defn}\label{quotient-def}
	For a tuple $\ii \in \Ng^r$, we define the quotient of $\AA$ by $\xx^\ii$ to be the image of $\SS[\xx]/\xx^\ii$ from \cref{multi-quot} under the functor above
	\[
		\mdef{\AA/\xx^\ii} := \AA \otimes_{\SS[\xx]} \SS[\xx]/\xx^\ii
		\qin \Alg(\EE).
	\]
	As in \cref{multi-quot}, these assemble into a diagram
	\[
		\AA \too (\AA/\xx^\ii)_{\ii \in \Ng^r}
		\qin \Alg(\EE).
	\]
	We also use the following notation
	\[
		\mdef{\Ax} := \AA/\xx
		\qin \Alg(\EE).
	\]
\end{defn}

One difference from the graded case is that the variables no longer carry weight.
This manifests in the following result.

\begin{prop}\label{thick}
	For any $\ii \in \Ng^r$, the quotient $\AA/\xx^\ii$ is in the thick subcategory of $\BMod{\AA/\xx^\ii}{\AA/\xx^\ii}$ generated by $\Ax$.
\end{prop}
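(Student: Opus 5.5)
The plan is to reduce to the universal one-variable graded case, filter there by weight, and then base-change along $\SS[\xx] \to \AA$, where the weights are forgotten.

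First I treat a single variable, working in $\BMod{\SS[x]/x^i}{\SS[x]/x^i}(\Mod_{\SS[x]}(\GrZ))$. By \cref{quotient-induct}, iterated for $i' \leq i$ and restricted along $\SS[x]/x^i \to \SS[x]/x^{i'+1}$, there is a finite filtration
\[
	\SS[x]/x^i \too \SS[x]/x^{i-1} \too \cdots \too \SS[x]/x \too 0 .
\]
Its associated graded pieces are $\Sigma^{i'|x|}\SS(i')$ for $0 \leq i' < i$. Each piece carries the bimodule structure restricted along the augmentation $\SS[x]/x^i \to \SS$, which is the same as the bimodule structure of $\SS[x]/x = \SS$. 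So $\SS[x]/x^i$ lies in the thick subcategory generated by the weight shifts and suspensions $\Sigma^{a}\SS[x]/x\,(b)$. I will check the identification of the pieces' bimodule structure with that of $\SS[x]/x$ by applying the symmetric monoidal functor $w_{<i'+1}$, exactly as in the proof of \cref{quotient-induct}.

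For several variables, I take the tensor product of these filtrations in $\Mod_{\SS[\xx]}$-bimodules over $\SS[\xx]/\xx^\ii = \bigotimes_s \SS[x_s]/x_s^{i_s}$. Tensor products of thick subcategories land in the thick subcategory generated by the tensor products of generators. This shows that $\SS[\xx]/\xx^\ii$ lies in the thick subcategory of $\SS[\xx]/\xx^\ii$-bimodules generated by the weight shifts of $\SS[\xx]/\xx$. To lose the weights, I pass to spectra via the symmetric monoidal realization of \cref{realization}. This sends every weight shift $\SS(b)$ to $\SS$, so the generators become suspensions of $\SS[\xx]/\xx$.

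Finally, I base-change along $\SS[\xx] \to \AA$ using the functor $\AA \otimes_{\SS[\xx]} (-)$. This is exact, and it takes $\SS[\xx]/\xx^\ii$-bimodules in $\Mod_{\SS[\xx]}(\Sp)$ to $\AA/\xx^\ii$-bimodules in $\EE$. It sends $\SS[\xx]/\xx$, with its bimodule structure restricted along $\SS[\xx]/\xx^\ii \to \SS[\xx]/\xx$, to $\Ax$ with its restricted bimodule structure. Hence $\AA/\xx^\ii$ lies in the thick subcategory generated by $\Ax$.

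The main obstacle is making this last step precise. One must show that extension of scalars along an $\Et$-map induces a functor on bimodule categories, $\BMod{B}{B}(\Mod_{\SS[\xx]}) \to \BMod{\AA\otimes_{\SS[\xx]} B}{\AA\otimes_{\SS[\xx]} B}(\EE)$, compatibly with restriction along algebra maps. I would obtain this from the fact that $\AA \otimes_{\SS[\xx]}(-)\colon \Mod_{\SS[\xx]}(\Sp) \to \EE$ is monoidal (as $\SS[\xx] \to \End(\AA)$ is $\Et$), and that monoidal functors preserve algebras, bimodules and restrictions.
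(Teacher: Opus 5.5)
Your proposal is correct and follows the same route as the paper. It filters $\SS[x]/x^i$ via \cref{quotient-induct} into suspensions and weight shifts of $\SS[x]/x \simeq \SS$, kills the weight shifts by realization, tensors up to several variables, and base-changes along $\SS[\xx] \to \AA$. The only differences are harmless: you tensor the variables together before realizing rather than after, and you make explicit some care the paper leaves implicit, such as working with bimodules internal to $\Mod_{\SS[x]}$ so that base change is defined on bimodule categories.
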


\begin{proof}
	Applying \cref{quotient-induct} inductively, we see that $\SS[x]/x^i$ is built from suspensions and weight shifts of $\SS[x]/x \simeq \SS$ in $\BMod{\SS[x]/x^i}{\SS[x]/x^i}(\GrZ)$.
	After realizing to spectra, the weight shifts become trivial, so $\SS[x]/x^i$ is in the thick subcategory generated by $\SS[x]/x \simeq \SS$ in $\Mod{\SS[x]/x^i}{\SS[x]/x^i}(\Sp)$.
	The general case follows by tensoring to the multi-variable case and base-changing to $\AA$.
\end{proof}

We then have the following useful consequence.

\begin{cor}\label{conservative}
	The extension of scalars functor
	\[
		\Ax \otimes_{\AA/\xx^\ii} (-) \colon \Mod_{\AA/\xx^\ii} \too \Mod_{\Ax}
	\]
	is conservative.
\end{cor}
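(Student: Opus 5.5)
Since the functor $\Ax \otimes_{\AA/\xx^\ii} (-)$ is exact between stable categories, conservativity reduces to showing that it detects zero objects. Given $M \in \Mod_{\AA/\xx^\ii}$ with $\Ax \otimes_{\AA/\xx^\ii} M \simeq 0$, we want $M \simeq 0$. Applied to the cofiber of a morphism, this gives conservativity.

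The key step is to regard tensoring as a functor out of bimodules. Fix $M$ and consider the exact functor
\[
	(-) \otimes_{\AA/\xx^\ii} M \colon \BMod{\AA/\xx^\ii}{\AA/\xx^\ii} \too \Mod_{\AA/\xx^\ii}.
\]
Its kernel is a thick subcategory: it is closed under cofibers, shifts, and retracts because the functor is exact. By hypothesis this kernel contains the bimodule $\Ax$. Here I would note that $\Ax = \AA \otimes_{\SS[\xx]} \SS[\xx]/\xx$ carries the $\AA/\xx^\ii$-bimodule structure restricted along the algebra map $\AA/\xx^\ii \to \Ax$. Its left structure is the one through which $\Ax \otimes_{\AA/\xx^\ii} M$ is formed. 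This is the same bimodule $\Ax$ appearing in \cref{thick}, so the kernel contains the thick subcategory generated by $\Ax$.

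By \cref{thick}, this thick subcategory contains the unit bimodule $\AA/\xx^\ii$. Hence
\[
	M \simeq \AA/\xx^\ii \otimes_{\AA/\xx^\ii} M \simeq 0,
\]
as desired.

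The only point needing care is the one just flagged: checking that the bimodule structure on $\Ax$ used in \cref{thick} matches the one defining the extension of scalars. This should follow because both are obtained by base change to $\AA$ of the restriction of the bimodule $\SS[\xx]/\xx$ along the algebra map $\SS[\xx]/\xx^\ii \to \SS[\xx]/\xx$. I expect this to be routine rather than a real obstacle.
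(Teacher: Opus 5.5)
Your proposal is correct and is essentially the paper's own proof. The paper likewise fixes $M$ and notes that the bimodules $N$ with $N \otimes_{\AA/\xx^\ii} M \simeq 0$ form a thick subcategory of $\BMod{\AA/\xx^\ii}{\AA/\xx^\ii}$ containing $\Ax$, so by \cref{thick} it contains $\AA/\xx^\ii$ and $M \simeq 0$; the paper leaves implicit both your reduction from conservativity to detecting zero objects and the bimodule-compatibility point you flag.
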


\begin{proof}
	Fix a left $\AA/\xx^\ii$-module $M$.
	The collection of right $\AA/\xx^\ii$-modules $N$ such that $N \otimes_{\AA/\xx^\ii} M \simeq 0$ is a thick subcategory of $\BMod{\AA/\xx^\ii}{\AA/\xx^\ii}$.
	Thus, if $\Ax \otimes_{\AA/\xx^\ii} M \simeq 0$, then $M \simeq \AA/\xx^\ii \otimes_{\AA/\xx^\ii} M \simeq 0$ by \cref{thick}.
\end{proof}

We record the following elementary computation.

\begin{prop}\label{pi0}
	Let $R \in \Alg_{\Et}(\Spcn)$ be equipped with an $\Et$-map $\SS[\xx] \to R$ with non-negative even degrees $|x_1|,\dotsc,|x_r| \in 2\NN$.
	Then $R/\xx^\ii$ is connective, and its $0$-th homotopy group is the quotient by the degree-zero elements
	\[
		\pi_0(R/\xx^\ii) \:\simeq\: \pi_0(R)\,/\,(x_s^{i_s}\,\mid\,|x_s|=0).
	\]
\end{prop}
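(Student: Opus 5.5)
We reduce to an iterated single-variable computation. By \cref{multi-quot} and \cref{quotient-def}, $R/\xx^\ii \simeq R \otimes_{\SS[\xx]} (\SS[x_1]/x_1^{i_1} \otimes \cdots \otimes \SS[x_r]/x_r^{i_r})$. On underlying $R$-modules this identifies with an iterated cofiber: using \cref{quotient-cofiber} for each variable and realizing to spectra, $\SS[x_s]/x_s^{i_s}$ is the cofiber of the $\SS[x_s]$-module map $x_s^{i_s}\colon \Sigma^{i_s|x_s|}\SS[x_s] \to \SS[x_s]$. Tensoring these cofiber sequences together over $\SS[\xx]$ and then base changing to $R$, we obtain that, as a left $R$-module, $R/\xx^\ii$ is obtained from $R$ by successively taking cofibers of right multiplication by $x_s^{i_s}$:
\[
	R/\xx^\ii \simeq R/x_1^{i_1}/x_2^{i_2}/\cdots/x_r^{i_r},
	\qquad
	M/x_s^{i_s} := \mathrm{cofib}\bigl(\Sigma^{i_s|x_s|} M \too[x_s^{i_s}] M\bigr).
\]
The only care needed is to check that the map $\Sigma^{i_s|x_s|}M \to M$ induced on homotopy is multiplication by the image of $x_s^{i_s}$ in $\pi_*(R)$, up to a unit. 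This follows because the $\Et$-map $\SS[\xx]\to R$ sends the generator $x_s$ to the given element.

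Connectivity then follows by induction on $s$. If $M$ is connective and $|x_s| \geq 0$, then $\Sigma^{i_s|x_s|}M$ is connective, so the cofiber is connective.

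For $\pi_0$, we use the long exact sequence
\[
	\pi_0(\Sigma^{i_s|x_s|}M) \too \pi_0(M) \too \pi_0(M/x_s^{i_s}) \too \pi_{-1}(\Sigma^{i_s|x_s|}M) = 0.
\]
There are two cases.
\begin{itemize}
	\item If $|x_s| > 0$, then $|x_s| \geq 2$, so $\pi_0(\Sigma^{i_s|x_s|}M) = 0$ and $\pi_0$ is unchanged.
	\item If $|x_s| = 0$, the first map is multiplication by $x_s^{i_s}$ on $\pi_0(M)$, so $\pi_0(M/x_s^{i_s}) \simeq \pi_0(M)/x_s^{i_s}\pi_0(M)$.
\end{itemize}
We also use that $\pi_0(M)$ at each stage is the corresponding quotient of $\pi_0(R)$, inductively as a cyclic $\pi_0(R)$-module. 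Iterating, $\pi_0(R/\xx^\ii) \simeq \pi_0(R)/(x_s^{i_s} \mid |x_s|=0)$ as $\pi_0(R)$-modules.

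The expected main obstacle is upgrading this to a ring isomorphism. For that, note that the unit map $R \to R/\xx^\ii$ is a map of $\Eo$-algebras, so $\pi_0(R) \to \pi_0(R/\xx^\ii)$ is a ring map. By the above it is surjective, and its kernel is the ideal generated by the degree-zero $x_s^{i_s}$. This gives the claimed identification of rings.
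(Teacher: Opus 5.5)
Your proposal is correct and is essentially the paper's proof: induct over the variables using the cofiber sequences $\Sigma^{i_s|x_s|}R_{s-1} \to R_{s-1} \to R_s$ coming from \cref{quotient-cofiber}, and read off connectivity and $\pi_0$ from the long exact sequence, splitting into the cases $|x_s|>0$ and $|x_s|=0$. You add the observation that the unit $R \to R/\xx^\ii$ is an $\Eo$-map, so the module-level computation upgrades to a ring isomorphism; the paper leaves this point implicit (and, like the paper, your case $|x_s|>0$ tacitly uses $i_s \geq 1$, since $R/x_s^0 \simeq 0$).
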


\begin{proof}
	Recall that
	\[
		R/\xx^\ii \simeq R/x_1^{i_1} \otimes_R \cdots \otimes_R R/x_r^{i_r}.
	\]
	We argue by induction on the variables.
	Denote by $R_s$ the tensor product of the first $s$ terms, and note that the claim holds for the base-case $R_0 = R$.
	\cref{quotient-cofiber} shows that we have an exact sequence
	\[
		\Sigma^{i_s |x_s|} R \too[x_s^{i_s}] R \too R/x_s^{i_s},
	\]
	so tensoring with $R_{s-1}$ over $R$ gives an exact sequence
	\[
		\Sigma^{i_s |x_s|} R_{s-1} \too[x_s^{i_s}] R_{s-1} \too R_s.
	\]
	Inspecting the long exact sequence of homotopy groups, we get that $R_s$ is connective and
	\[
		\pi_0(R_s) \simeq
		\begin{cases}
			\pi_0(R_{s-1}) & \quad|x_s|>0, \\
			\pi_0(R_{s-1})/(x_s^{i_s}) & \quad|x_s|=0,
		\end{cases}
	\]
	and the conclusion follows.
\end{proof}

We now deduce the Mittag-Leffler condition.
Recall that this is a condition on (pre)towers, rather than on general diagrams, so we will restrict to the diagonal $\ci = (i,\dotsc,i)$, which is cofinal in the poset of indices $\Ng^r$.

\begin{prop}\label{cat-ML-quotient}
	$\EE \to (\Mod_{\AA/\xx^{\ci}})$ is a Mittag-Leffler tower in $\PrLE$ and in $\PrLst$.
\end{prop}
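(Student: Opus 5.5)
The plan is to reduce to the universal graded case, \cref{ML}, and transport it along $2$-functors using \cref{ML-2}. First I would establish the multi-variable universal statement: the tower $\SS[\xx] \to (\SS[\xx]/\xx^{\ci})$ satisfies the tower Mittag-Leffler condition in $\Mor(\Mod_{\SS[\xx]}(\Sp))$. By \cref{cat-ML-morita} this amounts to two things. The first is the pro-isomorphism
\[
c(\SS[\xx]/\xx^{\underline{j}} \otimes_{\SS[\xx]} \SS[\xx]/\xx^{\underline{k}}) \too \prolim[i \geq j,k] (\SS[\xx]/\xx^{\underline{j}} \otimes_{\SS[\xx]/\xx^{\ci}} \SS[\xx]/\xx^{\underline{k}}).
\]
The second is dualizability of each $\SS[\xx]/\xx^{\ci}$ on both sides.

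For one variable, I would take \cref{higher-quotient-base-Sx} and \cref{quotient-dualizable} and push them forward along the symmetric monoidal realization $\GrZ \to \Sp$ of \cref{realization}. This functor induces a $2$-functor $\Mor(\Mod_{\SS[x]}(\GrZ)) \to \Mor(\Mod_{\SS[x]}(\Sp))$, so \cref{ML-2} applies. For several variables, the relative tensor product over $\SS[\xx] = \bigotimes_s \SS[x_s]$ of the quotients factors as a tensor product of the one-variable terms. The pro-objects are indexed diagonally by $i$, and an external tensor product of pro-isomorphisms indexed by the same cofiltered category is again a pro-isomorphism. Concretely, I would realize this as a $2$-functor $\prod_s \Mor(\Mod_{\SS[x_s]}) \to \Mor(\Mod_{\SS[\xx]})$ given by tensoring, and check that the tower maps to the multi-variable tower. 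Dualizability is preserved by tensor products. I expect this bookkeeping step to be the main obstacle: one has to make sure that the diagonal indexing commutes with the external product, and that the $\Et$-structures are compatible so that the tensor product is a genuine $2$-functor on Morita categories.

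Next I would base-change along $\SS[\xx] \to \End(\AA)$. Extension of scalars $\AA \otimes_{\SS[\xx]} (-)$ is a monoidal left adjoint $\Mod_{\SS[\xx]}(\Sp) \to \EE$: it is the action of $\Mod_{\SS[\xx]}$ on $\EE$ through the $\Et$-map, made monoidal via the $\Et$-structure. It therefore induces a $2$-functor on Morita $2$-categories. This functor sends the tower $\SS[\xx] \to (\SS[\xx]/\xx^{\ci})$ to $\AA \to (\AA/\xx^{\ci})$ by \cref{quotient-def}, so by \cref{ML-2} the latter is a Mittag-Leffler tower in $\MorE$.

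Finally, \cref{ML-ring-to-mod} gives the tower Mittag-Leffler condition for $\EE \to (\Mod_{\AA/\xx^{\ci}})$ in $\PrLE$. To get the statement in $\PrLst$, I would apply \cref{ML-2} once more to the forgetful $2$-functor $\PrLE \to \PrLst$.
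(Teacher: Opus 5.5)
Your proposal is correct and follows the paper's strategy. You transport the one-variable universal Mittag-Leffler tower along realization, then the external tensor product over the variables, then base change to $\EE$, each time via \cref{ML-2}, and finally forget to $\PrLst$. The only difference is that you carry out the transport in Morita $2$-categories and pass to module categories at the end via \cref{ML-ring-to-mod}; the paper instead applies the composite $2$-functor $\prod_s \PrL_{\Mod_{\SS[x_s]}(\GrZ)} \to \PrL_{\EE}$ directly to the categorical towers of \cref{ML}, which avoids the bookkeeping you anticipate for external tensor products of Morita $2$-categories.
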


\begin{proof}
	By \cref{ML} we know that
	\[
		\Mod_{\SS[x_s]}(\GrZ) \too (\Mod_{\SS[x_s]/x_s^\bullet}(\GrZ))
		\qin \PrL_{\Mod_{\SS[x_s]}(\GrZ)}
	\]
	is a Mittag-Leffler tower for every $1 \leq s \leq r$.
	By \cref{ML-2}, we get that the image under the $2$-functor
	\[
		\prod_{s=1}^r \PrL_{\Mod_{\SS[x_s]}(\GrZ)}
		\too \prod_{s=1}^r \PrL_{\Mod_{\SS[x_s]}(\Sp)}
		\too \PrL_{\Mod_{\SS[\xx]}(\Sp)}
		\too \PrL_{\EE},
	\]
	given by extension of scalars and tensor product, is indeed a Mittag-Leffler tower in $\PrLE$.
	Similarly, further applying the $2$-functor $\PrLE \to \PrLst$ gives the result in $\PrLst$.
\end{proof}

\subsection{Completion and the ordinary limit}\label{subsec-comp}

In this subsection we study the ordinary limit, showing that it is equivalent to complete modules.
We refer the reader to \cite{BHV} for an extended study of completion and local duality in higher algebra, noting that \emph{loc.\ cit.} is written in the symmetric monoidal context, though many of its results apply under weaker monoidality assumptions.
We begin by recalling \cref{u-acyclic-local} for the present tower.

\begin{defn}\label{Ax-equiv-acyc-loc-defs}
	We say that $N \in \EE$ is \tdef{$\Ax$-acyclic} if $\Ax \otimesA N = 0$.
	We say that $M \in \EE$ is \tdef{$\xx$-complete} if it is \tdef{$\Ax$-local}, that is, if
	\[
		\hom(N, M) = 0
		\qquad \text{for all $\Ax$-acyclic $N \in \EE$},
	\]
	and we denote their full subcategory by $\mdef{\EEc} \subset \EE$.
\end{defn}

We immediately deduce the computation of the ordinary limit using the Mittag-Leffler condition.
We refer the reader to \cref{related-work} for a discussion of related previous results.

\begin{prop}\label{ordinary-limit}
	The ordinary limit is the $\xx$-complete objects
	\[
		\EEc \iso \lim_\ii \Mod_{\AA/\xx^\ii}
		\qin \PrLE.
	\]
	In particular, $\EEc$ is a reflective localization of $\EE$, and the localization functor is given by
	\[
		\mdef{\xc{(-)}}\colon \EE \too \EEc,
		\qquad
		M \mapstoo \lim_\ii (\AA/\xx^\ii \otimesA M).
	\]
\end{prop}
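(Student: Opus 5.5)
By \cref{cat-ML-quotient}, $\EE \to (\Mod_{\AA/\xx^{\ci}})$ is a Mittag-Leffler tower in $\PrLE$. We therefore apply \cref{bML-ordinary-limit} along the diagonal $\ci = (i,\dotsc,i)$. Since the diagonal is cofinal in $\Ng^r$, the limit over $\ii \in \Ng^r$ agrees with the limit over the diagonal, both for the categories $\Mod_{\AA/\xx^\ii}$ and for the objects $\AA/\xx^\ii \otimes M$. The theorem then gives the following. The right adjoint
\[
u^\RR(M_\bullet) = \lim_i u_i^\RR(M_i)
\]
is fully faithful, $\EE \to \lim_i \Mod_{\AA/\xx^\ci}$ is a reflective localization, and it restricts to an equivalence from the tower-complete objects onto the limit. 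Here $u_i = \AA/\xx^\ci \otimes (-)$ is extension of scalars and $u_i^\RR$ is the forgetful functor. The localization functor $u^\RR u$ is therefore $M \mapsto \lim_i (\AA/\xx^\ci \otimes M)$, which is the claimed formula for $\xc{(-)}$. The equivalence is $\EE$-linear because \cref{bML-ordinary-limit} is stated in $\PrLE$.

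It remains to check that the tower-complete objects of \cref{bML-ordinary-limit} are exactly the $\xx$-complete objects of \cref{Ax-equiv-acyc-loc-defs}. \cref{complete-local} identifies the tower-complete objects with $(\ker u)^\perp$, so we must show that $\ker(u)$ is the class of $\Ax$-acyclic objects. We would use \cref{complete-local-conservative}, taking $u_0$ to be the first nontrivial stage $i=1$, i.e.\ $\AA/\xx^{\underline{1}} = \Ax$, and restricting the tower to $i \ge 1$ (still cofinal). This requires each transition functor $\Mod_{\AA/\xx^\ci} \to \Mod_{\Ax}$, namely $\Ax \otimes_{\AA/\xx^\ci}(-)$, to be conservative, which is \cref{conservative}. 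Then
\[
u(N) = 0 \iff \Ax \otimes_{\AA/\xx^\ci} (\AA/\xx^\ci \otimes N) \simeq \Ax \otimes N = 0,
\]
so $\ker(u)$ is the $\Ax$-acyclics and $\EEc = (\ker u)^\perp$.

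I expect the main obstacle to be bookkeeping rather than substance. First, the indexing: the diagram is over $\Ng^r$ while \cref{bML-ordinary-limit} is for $\Ng$-towers, and the tower also contains the zero stage $\AA/\xx^{\underline 0}=0$, which must be discarded before invoking conservativity. Second, the left/right conventions: $\Mod_{\AA/\xx^\ii}$ consists of left modules in $\EE$, so it is a right $\EE$-module, and $u_i$ must be the left tensor $\AA/\xx^\ii \otimes (-)$ so that it matches the convention of \cref{Ax-equiv-acyc-loc-defs}. Once these are aligned, the statement is a direct combination of the cited results.
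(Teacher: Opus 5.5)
Your proposal is correct and follows the paper's proof: you apply \cref{bML-ordinary-limit} to the Mittag-Leffler tower of \cref{cat-ML-quotient}, then use \cref{conservative} with \cref{complete-local-conservative} to identify the tower-complete objects with the $\Ax$-local ones. The two bookkeeping points you spell out are left implicit in the paper. One is passing to the cofinal diagonal. The other is discarding the zero stage $\AA/\xx^{\underline{0}} = 0$, so that the bottom of the tower is $\Ax$ and the transition maps to it are conservative.
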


\begin{proof}
	By \cref{bML-ordinary-limit} applied to \cref{cat-ML-quotient}, the functor
	\[
		\EE \too \lim_\ii \Mod_{\AA/\xx^\ii}
	\]
	is a reflective localization.
	By \cref{conservative}, the maps in the diagram are conservative, so \cref{complete-local-conservative} shows that the essential image of the right adjoint is the $\xx$-complete objects, concluding the proof.
\end{proof}

We record the following consequence for a monoidal structure on the complete objects, in the case $\EE$ is $\Et$-monoidal.
We remark that if $\EE$ is only assumed to be $\Eo$-monoidal, the argument below for closure from the left does not work, and we do not know if $\EEc$ acquires a monoidal structure in that case.

\begin{prop}\label{Ec-E2-monoidal}
	If $\EE$ is $\Et$-monoidal, then $\EEc$ has a presentably $\Et$-monoidal structure, and $\xx$-completion is an $\Et$-monoidal functor.
\end{prop}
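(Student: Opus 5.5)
The plan is to invoke the standard criterion for a reflective localization of a presentably monoidal category to inherit a monoidal structure: if $L\colon \EE \to \EEc$ is a reflective localization and the class of $L$-equivalences (equivalently, in the stable setting, the kernel $\ker(L)$, i.e.\ the $\Ax$-acyclics) is closed under tensoring with arbitrary objects from both sides, then $\EEc$ acquires a unique $\mathcal{O}$-monoidal structure making $L$ $\mathcal{O}$-monoidal, by \cite[Proposition 2.2.1.9]{HA} applied with $\mathcal{O} = \Et$. Presentability of $\EEc$ and cocontinuity of the tensor in each variable then follow from $\EEc$ being a presentable reflective localization (\cref{ordinary-limit}) and $L$ preserving colimits. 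So the whole proof reduces to showing that the $\Ax$-acyclic objects form a two-sided tensor ideal in $\EE$.

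By \cref{complete-local-conservative} and \cref{ordinary-limit}, $\ker(\xc{(-)})$ consists exactly of the $N$ with $\Ax \otimes N = 0$. Closure under tensoring from the \emph{right} is immediate and uses only the $\Eo$-structure: if $\Ax \otimes N = 0$ then $\Ax \otimes (N \otimes M) \simeq (\Ax \otimes N) \otimes M = 0$. Closure under tensoring from the \emph{left} is the real step, and where the $\Et$-hypothesis enters: we need $\Ax \otimes M \otimes N = 0$ for arbitrary $M$. The idea is to commute $\Ax$ past $M$. Since $\EE$ is $\Et$-monoidal, the unit $\AA$ is an $\Eth$-algebra-like object whose endomorphisms act centrally, and more concretely the braiding gives $\Ax \otimes M \simeq M \otimes \Ax$, provided $\Ax$ is realized as an object obtained from $\AA$ via the action of $\End(\AA)$. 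Indeed, $\Ax = \AA \otimes_{\SS[\xx]} \SS[\xx]/\xx$ is built from $\AA$ by finitely many cofibers of multiplication by the $x_s$ (\cref{quotient-cofiber}, tensored over the variables), i.e.\ $\Ax \simeq \AA \otimes_{\SS[\xx]} \SS[\xx]/\xx$ as an object is the image of the finite spectrum-level construction $\SS[\xx]/\xx$ under the action of $\Mod_{\SS[\xx]}(\Sp)$ on $\EE$ through $\SS[\xx] \to \End(\AA)$. The $\Et$-monoidal structure makes $\EE$ a $\Mod_{\End(\AA)}$-module category such that the action commutes with tensoring on either side: $(P \cdot \AA) \otimes M \simeq P \cdot M \simeq M \otimes (P\cdot \AA)$ for $P \in \Mod_{\End(\AA)}$, using that the unit is central in an $\Et$-monoidal category (Dunn additivity identifies $\End(\AA)$ acting from the left and right). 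Hence $\Ax \otimes M \simeq M \otimes \Ax$, and so $\Ax \otimes M \otimes N \simeq M \otimes \Ax \otimes N = 0$.

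The main obstacle is making this centrality precise: one must check that the left action of $\End(\AA)$ on $M$ (via $\AA \otimes M \simeq M$) and the right action (via $M \otimes \AA \simeq M$) agree as actions of the $\Eo$-algebra $\SS[\xx]$, at least enough to identify the two cofibers of $x_s$ up to equivalence of underlying objects. I would do this by noting that for $\Et$-monoidal $\EE$, the functor $\Mod_{\End(\AA)} \to \EE$, $P \mapsto P \otimes_{\End(\AA)} \AA$, is $\Eo$-monoidal into the Drinfeld center $\mathcal{Z}(\EE)$; vanishing is a property, so only an equivalence of underlying objects $\Ax \otimes M \simeq M \otimes \Ax$ is needed, and no coherence beyond that is required. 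Alternatively, one can argue by thick subcategories: the objects $M$ with $\Ax\otimes M\otimes N=0$ for all acyclic $N$ form a localizing subcategory containing everything after commuting the finite-cell $\SS[\xx]$-module $\SS[\xx]/\xx$ across $M$ one cofiber at a time.
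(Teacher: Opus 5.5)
Your argument is correct and is essentially the paper's proof. Both reduce, via \cite[Proposition 2.2.1.9]{HA}, to showing that $\Ax$-equivalences (equivalently, in the stable setting, $\Ax$-acyclics) are closed under tensoring with arbitrary objects on both sides; the right side is immediate and the left side follows from the braiding, and the paper spells out the passage from arbitrary $\Et$-operations to ordered tensor products using connectedness of $\Et(n)$.

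The ``main obstacle'' you identify does not exist. In an $\Et$-monoidal category the braiding gives $\Ax\otimes M\simeq M\otimes\Ax$ for \emph{any} two objects, and this is all the paper uses. It does not require $\Ax$ to come from the $\End(\AA)$-action, so the centrality-of-the-unit, Drinfeld center and cell-by-cell arguments can simply be dropped.
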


\begin{proof}
	We start by proving that $\xx$-completion is compatible with the $\Et$-monoidal structure in the sense of \cite[Definition 2.2.1.6]{HA}, which implies that $\EEc$ is $\Et$-monoidal and $\xx$-completion is $\Et$-monoidal by \cite[Proposition 2.2.1.9]{HA}.
	Thus, we need to show that for any $n$-tuple of $\Ax$-equivalences
	\[
		f_1\colon M_1 \too M_1',\ \dotsc,\ f_n\colon M_n \too M_n'
	\]
	and $n$-ary operation $\mu \in \Et(n)$, the tensor product $\bigotimes_\mu(f_1,\dotsc,f_n)$ is a $\Ax$-equivalence as well.
	Since $\Et(n)$ is connected, it suffices to prove this for the ordered tensor product $f_1 \otimesA \cdots \otimesA f_n$.
	Consider the maps
	\[
		\widetilde{f}_i: M_1' \otimesA \cdots \otimesA M_{i-1}' \otimesA M_i \otimesA M_{i+1} \otimesA \cdots \otimesA M_n
		\too M_1' \otimesA \cdots \otimesA M_{i-1}' \otimesA M_i' \otimesA M_{i+1} \otimesA \cdots \otimesA M_n
	\]
	defined as $\widetilde{f}_i = \Id \otimes f_i \otimes \Id$.
	By the distributivity of the tensor product over composition we have
	\[
		f_1 \otimesA \cdots \otimesA f_n
		\simeq \widetilde{f}_n \circ \cdots \circ \widetilde{f}_1,
	\]
	so it suffices to show that each $\widetilde{f}_i$ is a $\Ax$-equivalence.
	In other words, we need to show that $\Ax$-equivalences are closed under tensoring with arbitrary objects from both sides.
	Closure from the right side follows from the fact that a map $g$ is a $\Ax$-equivalence if and only if $\Ax \otimesA g$ is an equivalence.
	Closure from the left side follows in the same way using the braiding of $\EE$.
	This concludes the proof of compatibility of $\xx$-completion with the $\Et$-monoidal structure, making it $\Et$-monoidal.
	Finally, the fact that the localized $\Et$-monoidal structure on $\EEc$ is presentable, i.e.\ commutes with colimits in each coordinate, follows from the fact that the localization is a left adjoint.
\end{proof}

\begin{defn}
	Let $\BB \in \Alg(\EE)$.
	We define the category of \tdef{$\xx$-complete $\BB$-modules} to be those $\BB$-modules whose underlying object is $\xx$-complete, and we denote their full subcategory by
	\[
		\mdef{\cMod_{\BB}} \quad\subseteq\quad \Mod_{\BB}.
	\]
\end{defn}

\begin{remark}
	If $\EE$ is $\Et$-monoidal, then by \cref{Ec-E2-monoidal} we also have the following identifications
	\[
		\cMod_{\BB} \ \simeq\ \Mod_{\BB} \otimes_\EE \EEc \ \simeq\ \Mod_{\BB}(\EEc) \ \simeq\ \Mod_{\xc{\BB}}(\EEc).
	\]
\end{remark}

\begin{prop}\label{auto-complete}
	The underlying object of any left $\AA/\xx^\ii$-module is $\xx$-complete.
	If $\EE$ is $\Et$-monoidal, then the same is true for right $\AA/\xx^\ii$-modules.
\end{prop}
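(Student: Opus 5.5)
The plan is to reduce to the case of $\AA/\xx^\ii$ itself and then use that complete objects form a limit-closed (indeed thick and closed under limits) subcategory together with the identification of the ordinary limit in \cref{ordinary-limit}.

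First, for a left $\AA/\xx^\ii$-module $M$, we show $M$ is $\Ax$-local directly. Let $N \in \EE$ be $\Ax$-acyclic. Maps $N \to M$ in $\EE$ correspond, by the extension/restriction adjunction along $\AA \to \AA/\xx^\ii$, to maps $\AA/\xx^\ii \otimes N \to M$ of left $\AA/\xx^\ii$-modules. So it suffices to show that $\AA/\xx^\ii \otimes N \simeq 0$. By \cref{thick}, $\AA/\xx^\ii$ lies in the thick subcategory of right $\AA$-modules (restricting the bimodule structure) generated by $\Ax$, and the collection of right $\AA$-modules $P$ with $P \otimes N \simeq 0$ is thick and contains $\Ax$ by assumption. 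Hence $\AA/\xx^\ii \otimes N = 0$, so $\hom(N,M) = 0$. Alternatively one could invoke \cref{ordinary-limit}: $M$ defines an object of the limit via its images $\AA/\xx^{\jj}\otimes_{\AA/\xx^\ii} M$, whose limit is eventually constant at $M$; but the direct argument above is cleaner and I will use it.

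For right $\AA/\xx^\ii$-modules when $\EE$ is $\Et$-monoidal, the same adjunction gives $\hom_\EE(N, M) \simeq \hom_{\RMod_{\AA/\xx^\ii}}(N \otimes \AA/\xx^\ii, M)$, so we need $N \otimes \AA/\xx^\ii \simeq 0$. Using the braiding of $\EE$, $N \otimes \AA/\xx^\ii \simeq \AA/\xx^\ii \otimes N$ as underlying objects, which vanishes by the previous paragraph. (Equivalently, $\Ax$-acyclics are closed under tensoring from both sides, as used in the proof of \cref{Ec-E2-monoidal}.)

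The only delicate step is the thick subcategory claim: one must be sure \cref{thick} gives the generation in a category of modules where the vanishing of $-\otimes N$ is a thick condition, i.e. as right $\AA$-modules (or bimodules) with tensoring over $\AA$ replaced by tensoring in $\EE$. Since \cref{thick} is stated in bimodules over $\AA/\xx^\ii$, I will restrict along $\AA \to \AA/\xx^\ii$ on the right, which is exact and preserves thick generation, and then note $P \otimes N \simeq P \otimes_{\AA} (\AA \otimes N)$, so thickness of the vanishing locus is clear.
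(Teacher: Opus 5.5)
Your direct argument is correct and is essentially the paper's proof: by \cref{thick} an $\Ax$-acyclic $N$ is also $\AA/\xx^\ii$-acyclic, so the free--forgetful adjunction gives $\hom(N,M) \simeq \hom_{\AA/\xx^\ii}(\AA/\xx^\ii \otimes N, M) = 0$, and the braiding of $\EE$ handles right modules. You can drop the opening plan and the aside about \cref{ordinary-limit}, since they are not needed; the aside also would not work as written, because extension of scalars along $\AA/\xx^\ii \to \AA/\xx^\jj$ only exists for $\jj \leq \ii$, so it does not define an object of the limit.
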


\begin{proof}
	Let $M$ be a left $\AA/\xx^\ii$-module, and let $N \in \EE$ be $\Ax$-acyclic.
	By \cref{thick}, $\AA/\xx^\ii$ is in the thick subcategory of $\Ax$, hence $N$ is also $\AA/\xx^\ii$-acyclic, and we conclude
	\[
		\hom(N, M) \simeq \hom_{\AA/\xx^\ii}(\AA/\xx^\ii \otimesA N, M) = 0.
	\]
	The case of right modules follows from the left module case via the braiding of $\EE$.
\end{proof}

Observe that the completion of the unit inherits an algebra structure, from the description of the completion functor as a limit
\[
	\xc{\AA} \simeq \lim_\ii \AA/\xx^\ii
	\qin \Alg(\EE).
\]
We will attend to this algebra in the next proposition, but we first make the following comment.

\begin{warn}\label{non-complete-modules}
	We remind the reader that, as is often the case for completion, there is a distinction between $\xx$-complete objects and modules over the $\xx$-completion of the unit which are not necessarily $\xx$-complete.
	Namely, there is a (usually proper) inclusion
	\[
		\EEc = \cMod_{\xc{\AA}} \quad\subset\quad \Mod_{\xc{\AA}}.
	\]
\end{warn}

We now consider the restriction of scalars along $\xc{\AA} \to \AA/\xx^\ii$ for both variants from \cref{non-complete-modules}.

\begin{prop}\label{quotient-compact}
	The functors of restriction of modules and $\xx$-complete modules along $\xc{\AA} \to \AA/\xx^\ii$
	\[
		\Mod_{\AA/\xx^\ii} \too \Mod_{\xc{\AA}},
		\qquad
		\cMod_{\AA/\xx^\ii} \too \EEc
	\]
	preserve compact objects.
\end{prop}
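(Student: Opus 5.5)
A functor between compactly generated (or presentable) stable categories preserves compact objects exactly when its right adjoint preserves filtered colimits. So for each restriction functor I will write down its right adjoint explicitly and show that this adjoint is colimit-preserving.

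\textbf{First functor.} The left adjoint of extension of scalars is restriction $\Mod_{\AA/\xx^\ii} \to \Mod_{\xc{\AA}}$. Its right adjoint is coextension
\[
M \mapsto \hom_{\xc{\AA}}(\AA/\xx^\ii, M).
\]
This preserves colimits as soon as $\AA/\xx^\ii$ is compact as a left $\xc{\AA}$-module. It suffices that $\AA/\xx^\ii$ is in the thick subcategory of $\Mod_{\xc{\AA}}$ generated by $\xc{\AA}$.

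To get this, recall from \cref{quotient-cofiber}, tensored over the variables and base-changed, that $\AA/\xx^\ii \simeq \AA \otimes_{\SS[\xx]} \SS[\xx]/\xx^\ii$. Here $\SS[\xx]/\xx^\ii$ is built by finitely many cofiber sequences from free $\SS[\xx]$-modules on the dualizable objects $\Sigma^{d}\SS$, after realizing weights. Now use the $\Et$-map $\SS[\xx] \to \AA \to \xc{\AA}$, the latter an algebra map. Then
\[
\xc{\AA} \otimes_{\SS[\xx]} \SS[\xx]/\xx^\ii
\]
is an iterated cofiber of shifts of $\xc{\AA}$. This is a finite Koszul-type complex built from $\xc{\AA}$, hence compact in $\Mod_{\xc{\AA}}$.

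It remains to identify this Koszul quotient of $\xc{\AA}$ with $\AA/\xx^\ii$ as left $\xc{\AA}$-modules. Equivalently, the map $\AA \to \xc{\AA}$ should become an equivalence after applying $(-)\otimes_{\SS[\xx]} \SS[\xx]/\xx^\ii$. Its fiber is $\Ax$-acyclic, since completion is an $\Ax$-equivalence by \cref{ordinary-limit}. So the question is whether $\Ax$-acyclic objects are killed by tensoring with $\SS[\xx]/\xx^\ii$. This holds because $\AA/\xx^\ii \otimesA N$ lies in the thick subcategory generated by $\Ax \otimesA N = 0$, by \cref{thick}.

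The main obstacle is bookkeeping of sides. $\AA/\xx^\ii$ must be exhibited as the cofiber of right multiplication-type bimodule maps, so that the Koszul description holds as left $\xc{\AA}$-modules compatibly with the algebra map $\xc{\AA} \to \AA/\xx^\ii$. I would handle this by working in $\Mod_{\SS[\xx]}$-linear terms: write
\[
\AA/\xx^\ii \simeq \xc{\AA} \otimes_{\SS[\xx]} \SS[\xx]/\xx^\ii
\]
as $\xc{\AA}$-$\SS[\xx]/\xx^\ii$-objects, using the exact sequence of \cref{quotient-cofiber} as $\SS[\xx]$-bimodules.

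\textbf{Second functor.} Consider $\cMod_{\AA/\xx^\ii} \to \EEc$. By \cref{auto-complete}, $\cMod_{\AA/\xx^\ii} = \Mod_{\AA/\xx^\ii}$. Its right adjoint is the completion of the coextension,
\[
M \mapsto \hom_{\EE}(\AA/\xx^\ii, M) \quad \text{for } M \in \EEc,
\]
which is automatically complete by \cref{auto-complete}.

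Filtered colimits in $\EEc$ are computed by completing the colimit taken in $\EE$. So I need $\hom_\EE(\AA/\xx^\ii, -)$ to commute with filtered colimits and to send $\Ax$-equivalences to equivalences. The first holds because $\AA/\xx^\ii$ is built from finitely many shifts of $\AA$ in $\EE$, by the same Koszul argument over $\AA$. The second holds because $\hom(\AA/\xx^\ii, N) = 0$ for $\Ax$-acyclic $N$: the source is built from $\Sigma^{d}\AA$, and $\AA/\xx^\ii \otimesA N = 0$ by \cref{thick}, so the tensor–hom adjunction forces the dual hom to vanish. Combining the two gives the claimed preservation of compact objects.
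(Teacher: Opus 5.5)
Your strategy is essentially the paper's. You identify $\AA/\xx^\ii$ with the free module $\xc{\AA}\otimesA\AA/\xx^\ii$ and transport its finiteness over $\AA$ to finiteness over $\xc{\AA}$; you use $\AA/\xx^\ii\in\Thick(\AA)$, where the paper uses dualizability from \cref{quotient-dualizable}. Your treatment of the second functor, checking that the right adjoint $\hom(\AA/\xx^\ii,-)$ also kills $\Ax$-acyclics, is fine. There the sides do match, since this hom is right adjoint to $\AA/\xx^\ii\otimesA(-)$.

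The gap is in the identification step. With $N=\fib(\AA\to\xc{\AA})$, you need $N\otimesA\AA/\xx^\ii=0$, with $\AA/\xx^\ii$ on the \emph{right}. That is what $(-)\otimes_{\SS[\xx]}\SS[\xx]/\xx^\ii$ does, and it must be on the right for $\xc{\AA}\otimesA\AA/\xx^\ii$ to be a left $\xc{\AA}$-module. But $\Ax$-acyclicity means $\Ax\otimesA N=0$, and \cref{thick} then only gives $\AA/\xx^\ii\otimesA N=0$. The proposition sits in \assref{setup}, where $\EE$ is merely $\Eo$-monoidal. There the left and right actions of $\End(\AA)$ on an object need not agree: for $R$-bimodules, an element can act invertibly from one side only. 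So ``$\Ax$-acyclics are killed by tensoring with $\AA/\xx^\ii$'' holds only on the left.

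Your closing paragraph on sides does not address this. It worries about presenting $\AA/\xx^\ii$ via right multiplication maps, which is unnecessary. The statement $\AA/\xx^\ii\in\Thick(\AA)$ is object-level and $\xc{\AA}\otimesA(-)$ is exact. Moreover, $\xc{\AA}$-linearity of the identification comes for free from the counit $\xc{\AA}\otimesA\AA/\xx^\ii\to\AA/\xx^\ii$, whose restriction along $\AA\otimesA\AA/\xx^\ii\to\xc{\AA}\otimesA\AA/\xx^\ii$ is the identity.

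The repair is short. First, $N\otimesA\AA/\xx^\ii$ is $\Ax$-acyclic for a different reason: left acyclics form a right ideal, since $\Ax\otimesA(N\otimesA\AA/\xx^\ii)\simeq(\Ax\otimesA N)\otimesA\AA/\xx^\ii=0$. Second, it is $\xx$-complete, being the fiber of $\AA/\xx^\ii\to\xc{\AA}\otimesA\AA/\xx^\ii$. The source is complete by \cref{auto-complete}, the target lies in $\Thick(\xc{\AA})$ as you observed, and complete objects form a thick subcategory. Being both acyclic and local, it vanishes.

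Alternatively, argue as the paper does. Since $\AA/\xx^\ii$ is dualizable, $(-)\otimesA\AA/\xx^\ii$ commutes with limits, so $\xc{\AA}\otimesA\AA/\xx^\ii\simeq\lim_\jj(\AA/\xx^\jj\otimesA\AA/\xx^\ii)\simeq\xc{(\AA/\xx^\ii)}\simeq\AA/\xx^\ii$. This only uses completion via left tensors, so there is no side issue. With either fix, the rest of your argument goes through.
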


\begin{proof}
	It suffices to prove that $\AA/\xx^\ii$ is a left dualizable $\xc{\AA}$-module, since then both restriction of scalars are strongly continuous, hence compact object preserving.
	
	We first note that $\AA/\xx^\ii$ is dualizable in $\EE$ by \cref{quotient-dualizable}.
	Since it is dualizable, tensoring with it commutes with limits.
	Using the formula for $\xx$-completion from \cref{ordinary-limit}, together with the fact that $\AA/\xx^\ii$ is $\xx$-complete by \cref{auto-complete}, we get
	\[
		\xc{\AA} \otimesA \AA/\xx^\ii
		\simeq (\lim_\jj \AA/\xx^\jj) \otimesA \AA/\xx^\ii
		\simeq \lim_\jj (\AA/\xx^\jj \otimesA \AA/\xx^\ii)
		\simeq \xc{\AA/\xx^\ii}
		\simeq \AA/\xx^\ii.
	\]
	Now, extension of scalars along $\AA \to \xc{\AA}$ sends dualizable modules to dualizable modules, so using the fact that $\AA/\xx^\ii$ is dualizable in $\EE$ again, we conclude that it is also dualizable as a $\xc{\AA}$-module.
\end{proof}

As another consequence, we have the following compact generation statement.

\begin{cor}\label{compact-gen}
	If $\EE$ is compactly generated then so is $\EEc$.
\end{cor}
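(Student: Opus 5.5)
If $\EE$ is compactly generated by a set of compact objects $\{G_\alpha\}$, I would show that $\EEc$ is compactly generated by the objects
\[
	\Ax \otimesA G_\alpha
	\qin \EEc,
\]
or equivalently by the objects $\AA/\xx^\ii \otimesA G_\alpha$. These lie in $\EEc$ by \cref{auto-complete}.

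\textbf{Compactness.} First, $\Ax \otimesA G_\alpha$ is compact in $\Mod_\Ax$, since extension of scalars along $\AA \to \Ax$ has a colimit-preserving right adjoint. Next, restriction of scalars $\Mod_\Ax \to \EE$ lands in complete objects by \cref{auto-complete}, so it is the functor $\cMod_\Ax = \Mod_\Ax \to \EEc$. By \cref{quotient-compact} this functor preserves compact objects. Hence the underlying object of $\Ax \otimesA G_\alpha$ is compact in $\EEc$. Here I note that colimits in $\EEc$ are the completions of colimits in $\EE$, and compactness is with respect to these.

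\textbf{Generation.} Suppose $M \in \EEc$ satisfies
\[
	\hom_{\EE}(\Ax \otimesA G_\alpha, M) = 0 \quad \text{for all } \alpha.
\]
I want to conclude $M = 0$. The $G_\alpha$ generate $\EE$, so every object lies in the localizing subcategory they generate. Tensoring on the left with $\Ax$ preserves colimits, so every object of the form $\Ax \otimesA N$ lies in the localizing subcategory generated by the $\Ax \otimesA G_\alpha$. The hom condition is closed under colimits and shifts in the first variable, so $\hom(\Ax \otimesA N, M) = 0$ for all $N \in \EE$.

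\textbf{Main obstacle.} The remaining step, which I expect to be the crux, is passing from this to $M = 0$ using only $\Ax$-locality. Taking $N = M$ is not enough on its own. Instead I would use the fiber sequence of \cref{quotient-cofiber}, base-changed to $\AA$ and tensored with $N$:
\[
	\Sigma^{|x|} N \too[x] N \too \AA/x \otimesA N.
\]
It shows that $x$ acts invertibly on $\hom(N, M)$ for every $N$, in the one-variable case. For several variables, I would iterate this using the tensor factorization $\Ax = \AA/x_1 \otimesA \cdots \otimesA \AA/x_r$. Then $\hom(N,M) \simeq \hom(N[x^{-1}], M)$, where $N[x^{-1}]$ is the colimit along multiplication by $x$. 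This telescope is $\Ax$-acyclic, because $\Ax \otimesA N[x^{-1}]$ is the colimit of $\Ax \otimesA N$ along maps that factor through multiplication by $x$ on $\Ax$, and \cref{quotient-induct} shows $x$ is nilpotent there. So $\hom(N[x^{-1}], M) = 0$ by completeness of $M$. Taking $N = M$ gives $\hom(M,M) = 0$, hence $M = 0$, completing the proof.

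The care needed here is in making the left/right multiplication-by-$x$ maps act compatibly on $\hom(-,M)$. I would handle this by working with the left $\AA$-action through $\End(\AA)$, so that everything is a map of objects of $\EE$.
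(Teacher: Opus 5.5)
Your argument is correct, but you prove generation by a different route than the paper.

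\textbf{The paper's route.} The paper never names generators. It observes that the projection $\EEc \to \Mod_{\Ax}$, $M \mapsto \Ax \otimesA M$, is conservative, because complete means $\Ax$-local. It also has a left adjoint: first apply the left adjoint of $\EE \to \Mod_{\Ax}$ supplied by the tower Mittag-Leffler condition of \cref{cat-ML-quotient}, then complete. Since the projection preserves colimits, this left adjoint preserves compact objects. Conservativity alone then makes the images of the generators $\Ax \otimesA C_\alpha$ of $\Mod_{\Ax}$ generate $\EEc$.

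\textbf{Your route.} You push $\Ax \otimesA G_\alpha$ forward along restriction of scalars instead, so compactness comes from \cref{quotient-compact}. Generation then has to be proved by hand, via the Koszul sequences and the telescopes $N[x^{-1}]$. This amounts to a local-duality statement: an $\Ax$-local object right orthogonal to all $\Ax \otimesA N$ vanishes. Your route produces explicit torsion generators. The paper's route is shorter and avoids all left/right multiplication bookkeeping.

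\textbf{The two are closer than they look.} $\Ax$ is dualizable, and its dual is a shift of $\Ax$; for one variable it is $\Sigma^{-|x|-1}\AA/x$. Consequently the paper's left adjoint sends $\Ax \otimesA C_\alpha$ to a shift of $\Ax \otimesA C_\alpha$. Likewise, your hypothesis that $\hom(\Ax \otimesA N, M) = 0$ for all $N$ is equivalent to $\Ax \otimesA M = 0$. Using this would let you bypass the telescope entirely.

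\textbf{A point hidden in ``iterate''.} If you keep the telescope, the word ``iterate'' conceals one point when there are several variables. You need right multiplication by each $x_s$ on all of $\Ax = \AA/x_1 \otimesA \cdots \otimesA \AA/x_r$ to be nilpotent. When $\EE$ is merely monoidal, this does not follow from the factor $\AA/x_s$ alone, because $x_s$ cannot be commuted past the other factors. It does hold, by a weight argument. In multigraded $\SS[\xx]$-modules, $\SS[\xx]/\xx$ is $\SS$ in multi-weight $0$. So multiplication by $x_s$ on it is a map from multi-weight $e_s$ to multi-weight $0$, hence zero. This vanishing survives realization and the monoidal functor $\AA \otimes_{\SS[\xx]} (-)$.
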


\begin{proof}
	First, we note that the projection map
	\[
		\EEc \too \Mod_{\Ax}
	\]
	is conservative and has a left adjoint.
	The fact that it is conservative follows from the identification of $\xx$-complete objects with $\Ax$-local objects.
	To see that it has a left adjoint note that it is the composite of the inclusion $\EEc \hookrightarrow \EE$ and the projection $\EE \to \Mod_{\Ax}$.
	The former has a left adjoint since it is a reflective localization (\cref{ordinary-limit}).
	For the latter, recall that the tower Mittag-Leffler condition is satisfied by \cref{cat-ML-quotient}, which in particular says that $\EE \to \Mod_{\Ax}$ has a left adjoint.

	Now, $\Mod_{\Ax}$ is compactly generated by the free modules on the compact generators of $\EE$.
	The left adjoint to the projection $\EEc \to \Mod_{\Ax}$ preserves compact objects since it is strongly continuous, and since the projection is conservative, the image of compact objects generates $\EEc$, concluding the proof.
\end{proof}

\subsection{Nuclear modules and their K-theory}\label{subsec-nuc}

In this section we study the dualizable limit and the strong Mittag-Leffler condition for modules over the quotients.
In particular, we need the following additional assumption on our \assref{setup}.

\begin{namedassum}{Dualizability Assumption}\label{dual-assum}
	The category $\EE$ is dualizable in $\PrLst$.
\end{namedassum}

\begin{prop}\label{dual-dual}
	$\Mod_{\AA/\xx^{\ii}} \in \PrLst$ is dualizable for all $\ii$.
\end{prop}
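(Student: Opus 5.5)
The plan is to write $\Mod_{\AA/\xx^\ii}$ as a base change of $\EE$ and use that dualizable objects of $\PrLst$ are closed under tensor products. Concretely, I would use the standard identification
\[
	\Mod_{\AA/\xx^\ii}(\EE) \simeq \EE \otimes_{\EE} \Mod_{\AA/\xx^\ii}(\EE),
\]
and instead argue through the Morita $2$-category. Since $\AA/\xx^\ii$ is an $\Eo$-algebra in $\EE$, the category $\Mod_{\AA/\xx^\ii}(\EE)$ is a retract in $\PrLst$ of $\EE$ itself. This follows once we know that $\AA/\xx^\ii$ is dualizable in $\EE$ (as a left and right module over $\AA$, i.e.\ as an object of $\EE$). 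Dualizable objects of $\PrLst$ are closed under retracts, so this gives the claim from \assref{dual-assum}.

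The key steps, in order, are as follows.

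First, I record that $\AA/\xx^\ii \in \EE$ is left and right dualizable. In the graded case, \cref{quotient-dualizable} shows that $\SS[x]/x^i$ is dualizable as a graded spectrum, being a finite iterated extension of shifted units by \cref{quotient-induct}. Realization and tensoring over $s$ give that $\SS[\xx]/\xx^\ii$ is a finite spectrum, hence dualizable. The base change $\AA/\xx^\ii = \AA \otimes_{\SS[\xx]} \SS[\xx]/\xx^\ii$ is then dualizable in $\EE$, because it is built from $\AA$ by finitely many cofibers and shifts via \cref{quotient-cofiber}. This is also implicitly used in \cref{quotient-compact}.

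Second, I exhibit $\Mod_{\AA/\xx^\ii}$ as a retract of $\EE$ in $\PrLst$. The free functor $\AA/\xx^\ii \otimesA (-)\colon \EE \to \Mod_{\AA/\xx^\ii}$ is a left adjoint. Its right adjoint, the forgetful functor, preserves colimits and is itself a left adjoint, since $\AA/\xx^\ii$ is right dualizable and so the forgetful functor has a further right adjoint. Both functors are therefore morphisms in $\PrLst$. However, the composite forget $\circ$ free is $\AA/\xx^\ii \otimes (-)$, not the identity, so this alone gives no retraction. The cleaner route is the bar construction: $\Mod_{\AA/\xx^\ii}$ is the colimit in $\PrLst$ of the simplicial diagram $\EE \otimes_{\Mod_\EE}\cdots$. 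Alternatively, by \cite[Remark 4.8.4.8]{HA}, $\Mod_{A}(\EE) \simeq \RMod_A(\Sp)\otimes \cdots$.

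I would therefore instead use the following. $\Mod_{\AA/\xx^\ii}(\EE)$ is dualizable as a right $\EE$-module by \cite[Remark 4.8.4.8]{HA}, as already used in \cref{ML-ring-to-mod}. Its underlying category is then $\Mod_{\AA/\xx^\ii}(\EE)\simeq \Mod_{\AA/\xx^\ii}(\EE)\otimes_\EE \EE$. A dualizable right $\EE$-module tensored over $\EE$ with a left $\EE$-module dualizable in $\PrLst$ (namely $\EE$ itself, by \assref{dual-assum}) is dualizable in $\PrLst$. This uses the general fact that relative tensor products of dualizable bimodules in a Morita-type $2$-category compose to dualizable morphisms: compose the duality data $\Sp \to \EE\otimes\EE^\vee$ with the $\EE$-relative duality data of $\Mod_{\AA/\xx^\ii}$.

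The main obstacle is this last composition-of-dualities step, specifically keeping track of left versus right $\EE$-module conventions (the paper uses right modules in $\PrLE$) and making sure the evaluation and coevaluation assemble correctly. I would first try to phrase it in the Morita $2$-category of $\PrLst$. There, $\EE$ dualizable in $\PrLst$ means the $1$-morphism $\Sp \to \EE$ (as an $\Sp$-$\EE$ bimodule $\EE$) has adjoints, and $\Mod_{\AA/\xx^\ii}$ dualizable over $\EE$ means the $1$-morphism it defines has adjoints. The composite of $1$-morphisms with adjoints has adjoints, which is exactly dualizability of the relative tensor product in $\PrLst$.
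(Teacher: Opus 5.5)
Your final argument is correct, but it takes a different route from the paper. You write $\Mod_{\AA/\xx^\ii}(\EE)\simeq\Mod_{\AA/\xx^\ii}(\EE)\otimes_{\EE}\EE$ and compose, in the Morita $2$-category of $\PrLst$, the relative duality of \cite[Remark 4.8.4.8]{HA} with the absolute duality of $\EE$. The paper instead cites \cite[Corollary 1.55]{DualCat} (modules over a monad on a dualizable category form a dualizable category), applied to the monad $\AA/\xx^\ii\otimes(-)$ on $\EE$.

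The sidedness issue you flag resolves as follows; write $A=\AA/\xx^\ii$.
\begin{itemize}
\item The datum from \cite[Remark 4.8.4.8]{HA} has coevaluation $\Sp\to\Mod_{A}(\EE)\otimes_{\EE}\RMod_{A}(\EE)$ and evaluation $\RMod_{A}(\EE)\otimes\Mod_{A}(\EE)\to\EE$.
\item So you need a datum of the same type for $\EE$, viewed as an $\EE$-$\Sp$-bimodule: a coevaluation $\EE\to\EE\otimes\EE^\vee$ of $\EE$-bimodules and an evaluation $\EE^\vee\otimes_{\EE}\EE\to\Sp$.
\item This is what (the appropriate version of) \cite[Proposition 4.6.2.13]{HA} gives from dualizability of $\EE$ in $\PrLst$, with the $\PrLst$-dual $\EE^\vee$ inheriting a right $\EE$-action.
\item The duality that always exists, via the unit object and multiplication, is of the opposite type and would not compose.
\end{itemize}
The composite exhibits $\EE^\vee\otimes_{\EE}\RMod_{A}(\EE)$ as the dual. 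The paper's citation is one line and applies to general monads. Your route is self-contained given HA and reuses the relative dualizability already invoked in \cref{ML-ring-to-mod}. It also gives an explicit dual and shows that nothing about $\AA/\xx^\ii$ is used: it works for every $\Eo$-algebra in $\EE$.

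Because of that, you should delete the opening of the proposal rather than merely set it aside. The first key step (dualizability of $\AA/\xx^\ii$ in $\EE$) is true but plays no role. The retract claim is false in general, not just unproved via the free--forgetful adjunction. For instance, $\SS\times\SS$ is dualizable in $\Sp$, but $\Mod_{\SS\times\SS}(\Sp)\simeq\Sp\times\Sp$ is not a retract of $\Sp$ in $\PrLst$. A colimit-preserving $p\colon\Sp\to\Sp\times\Sp$ has the form $X\mapsto(X\otimes P,X\otimes Q)$. Evaluating $p\circ i\simeq\Id$ at $(\SS,0)$ forces $P$ to be invertible and $Q\simeq 0$, so $p$ lands in $\Sp\times 0$ and cannot return $(0,\SS)$. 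The bar-construction fragment and the incomplete ``$\Mod_A(\EE)\simeq\RMod_A(\Sp)\otimes\cdots$'' line should go as well.
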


\begin{proof}
	$\EE$ is dualizable by the \assref{dual-assum}, so this follows from \cite[Corollary 1.55]{DualCat} applied to $\mcl{V} = \Sp$ and the monad $T = \AA/\xx^{\ii} \otimesA -$.
\end{proof}

\begin{defn}\label{nuc-def}
	We define the category of \tdef{$\xx$-nuclear objects of $\EE$} to be the dualizable limit
	\[
		\mdef{\NucE} := \limdual[\ii] \Mod_{\AA/\xx^\ii}
		\qin \PrLstdbl.
	\]
\end{defn}

\begin{prop}\label{ML-quotient-Sp}
	The pretower $(\Mod_{\AA/\xx^{\cbullet}})$ satisfies the strong Mittag-Leffler condition over $\Sp$.
	
	More generally, for any dualizable $\DD \in \PrLstdbl$, the pretower $(\Mod_{\AA/\xx^{\cbullet}} \otimes \DD)$ satisfies the strong Mittag-Leffler condition over $\Sp$ as well.
\end{prop}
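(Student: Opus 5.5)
The plan is to deduce the statement from the tower Mittag-Leffler condition in \cref{cat-ML-quotient} by means of the $2$-functoriality in \cref{ML-2}, and then to apply \cref{ML-equiv} with $\EE$ replaced by $\Sp$, which is rigid.

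\cref{cat-ML-quotient} shows that $\EE \to (\Mod_{\AA/\xx^{\ci}})$ is a Mittag-Leffler tower in $\PrLst$. By \cref{bML-cML}, the pretower $(\Mod_{\AA/\xx^{\cbullet}})$ therefore satisfies the pretower Mittag-Leffler condition in the $2$-category $\PrLst = \PrL_{\Sp}$.

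For the general statement, we use the $2$-functor $-\otimes \DD\colon \PrLst \to \PrLst$. By \cref{ML-2} it preserves Mittag-Leffler towers, so $\EE \otimes \DD \to (\Mod_{\AA/\xx^{\ci}} \otimes \DD)$ is again a Mittag-Leffler tower. Applying \cref{bML-cML} once more, the pretower $(\Mod_{\AA/\xx^{\cbullet}} \otimes \DD)$ satisfies the pretower Mittag-Leffler condition in $\PrLst$. The first statement is the special case $\DD = \Sp$.

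To invoke \cref{ML-equiv} over the rigid base $\Sp$, each term of the pretower must be dualizable. By \cref{dual-dual}, each $\Mod_{\AA/\xx^{\ci}}$ is dualizable in $\PrLst$; this is where \assref{dual-assum} is used. Since $\DD$ is dualizable by assumption, and tensor products of dualizable objects are dualizable, each $\Mod_{\AA/\xx^{\ci}} \otimes \DD$ is dualizable as well. \cref{ML-equiv} then turns the pretower Mittag-Leffler condition in $\PrLst$ into the strong Mittag-Leffler condition over $\Sp$.

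I expect no serious obstacle. The one point that needs care is that the strong Mittag-Leffler condition of \cite{EfimovLimit} asks only for the existence of adjoints in $\PrLst$, and the tower condition supplies exactly those. The one input to check is that $-\otimes\DD$ is a genuine $2$-functor on $\PrLst$ (so that \cref{ML-2} applies); this holds because the tensor product on $\PrLst$ is functorial in natural transformations.
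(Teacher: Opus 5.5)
Your proposal is correct. For the first statement it is exactly the paper's argument: \cref{cat-ML-quotient}, \cref{bML-cML}, \cref{dual-dual}, and \cref{ML-equiv} over the rigid base $\Sp$.

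For the general statement the routes differ. The paper simply cites \cite[Proposition 5.3(iii)]{EfimovLimit}, i.e.\ that the strong Mittag-Leffler condition is stable under tensoring with a dualizable category. You instead derive what is needed internally. You push the tower through the $2$-functor $-\otimes\DD$ on $\PrLst$ using \cref{ML-2}, then use that $\Mod_{\AA/\xx^{\ci}}\otimes\DD$ is dualizable to apply \cref{ML-equiv}.

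The citation is shorter and applies to an arbitrary strong Mittag-Leffler pretower. Your argument is self-contained and uses exactly the $2$-functoriality the paper advertises (and uses elsewhere, e.g.\ in \cref{cat-ML-quotient} and \cref{extension-restriction-to-Ax}). It also yields slightly more. First, $\EE\otimes\DD\to(\Mod_{\AA/\xx^{\ci}}\otimes\DD)$ is itself a Mittag-Leffler tower in $\PrLst$, so \cref{bML-ordinary-limit} applies to it. Second, it shows that dualizability of $\DD$ is only needed to place the pretower in the setting where \cref{ML-equiv} and the strong Mittag-Leffler condition make sense; the $2$-categorical conditions hold after tensoring with any $\DD\in\PrLst$. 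Applying $-\otimes\DD$ directly to the pretower rather than to the tower would work equally well.
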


\begin{proof}
	For the first part, the pretower is dualizable in $\PrLst$ by \cref{dual-dual}, and so the conclusion follows from \cref{cat-ML-quotient} and \cref{ML-equiv} (where we take the base to be $\Sp$).
	The second part follows from the first using \cite[Proposition 5.3(iii)]{EfimovLimit}.
\end{proof}

Although we will not use it in this paper, we note that if $\EE$ is rigid, so that the strong Mittag-Leffler condition over it is defined, we immediately get the following.

\begin{prop}\label{ML-quotient-rigid}
	If $\EE$ is rigid in $\PrLst$ then $(\Mod_{\AA/\xx^{\cbullet}})$ satisfies the strong Mittag-Leffler condition over $\EE$.
\end{prop}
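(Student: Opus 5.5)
The plan is to combine \cref{cat-ML-quotient} with \cref{ML-equiv} directly, taking the rigid base to be $\EE$ itself.

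First, \cref{cat-ML-quotient} says that $\EE \to (\Mod_{\AA/\xx^{\ci}})$ is a tower Mittag-Leffler tower in $\PrLE$. By \cref{bML-cML}, the underlying pretower $(\Mod_{\AA/\xx^{\cbullet}})$ therefore satisfies the pretower Mittag-Leffler condition in the $2$-category $\PrLE$.

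Second, to invoke \cref{ML-equiv} with base $\EE$, each $\Mod_{\AA/\xx^\ii}$ must be dualizable as an $\EE$-linear category, i.e.\ as an object of $\PrLE = \RMod_\EE(\PrL)$. Here I would not use \cref{dual-dual}, which only gives dualizability over $\Sp$. Instead I would use \cite[Remark 4.8.4.8]{HA}, exactly as in the proof of \cref{ML-ring-to-mod}: for any $\BB \in \Alg(\EE)$, the category $\Mod_\BB(\EE)$ is dualizable in $\PrLE$, with dual the category of right $\BB$-modules.

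Third, since $\EE$ is assumed rigid, \cref{ML-equiv} applies. It identifies the pretower Mittag-Leffler condition in $\PrLE$ with the strong Mittag-Leffler condition over $\EE$, which is the claim.

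An alternative route avoids re-checking dualizability. \cref{cat-ML-quotient} is itself obtained as the image of a Mittag-Leffler tower in the Morita $2$-category $\MorE$ of the tower $\AA \to (\AA/\xx^{\ci})$; this tower is the base change of the towers from the proof of \cref{ML} via \cref{ML-2}. One can then appeal to the second part of \cref{ML-ring-to-mod} verbatim.

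The main (mild) obstacle is matching conventions. Efimov's strong Mittag-Leffler condition over $\EE$ is phrased for left $\EE$-modules, while here $\PrLE$ denotes right modules. One should check that the rigidity input from \cite[Lemma 9.3.6]{GR}, namely that adjoints of $\EE$-linear functors are automatically $\EE$-linear, holds for right modules as well. It does by symmetry, since rigidity of $\EE$ is equivalent to rigidity of $\EE^{\mathrm{rev}}$.
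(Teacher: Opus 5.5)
Your proposal is correct and is essentially the paper's proof: the paper also obtains dualizability of each $\Mod_{\AA/\xx^\ii}$ in $\PrLE$ from \cite[Remark 4.8.4.8]{HA} and then concludes from \cref{cat-ML-quotient} and \cref{ML-equiv}, and your appeal to \cref{bML-cML} is the step it leaves implicit. One small caveat on your aside: the paper proves \cref{cat-ML-quotient} by base change through $\PrL_{\Mod_{\SS[x_s]}(\GrZ)}$ rather than in $\MorE$, so your ``alternative route'' would need that base change redone at the Morita level, but your main argument does not depend on it.
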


\begin{proof}
	Note that each $\Mod_{\AA/\xx^\ii}$ is dualizable in $\PrLE$ by \cite[Remark 4.8.4.8]{HA}, so this follows from \cref{cat-ML-quotient} and \cref{ML-equiv}.
\end{proof}

Using Efimov's work on inverse limits \cite{EfimovLimit} we deduce the following main result.
We remind the reader that we are working in our \assref{setup} and under the \assref{dual-assum}.

\begin{thm}[{\cref{k-nuc-intro}}]\label{k-nuc}
	The assembly map induces an isomorphism
	\[
		\Kcont(\NucE) \iso \lim_\ii \Kcont(\Mod_{\AA/\xx^\ii}).
	\]
	More generally, for any dualizable $\DD \in \PrLstdbl$, the assembly map induces an isomorphism
	\[
		\Kcont(\NucE \otimes \DD) \iso \lim_\ii \Kcont(\Mod_{\AA/\xx^\ii} \otimes \DD).
	\]
\end{thm}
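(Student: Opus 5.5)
The plan is to deduce this directly from Efimov's continuity theorem for inverse limits of dualizable categories satisfying the strong Mittag-Leffler condition \cite{EfimovLimit}. That theorem says the following. Let $(\CC_\bullet)$ be a pretower in $\PrLstdbl$ satisfying the strong Mittag-Leffler condition over $\Sp$. Then the assembly map
\[
	\Kcont(\limdual \CC_i) \too \lim_i \Kcont(\CC_i)
\]
is an isomorphism, and more generally this holds for any localizing invariant commuting with the relevant countable limits. So the task reduces to verifying the hypothesis for our pretower and then matching the indexing.

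First, I would pass from $\Ng^r$ to the diagonal. The diagonal $\ci = (i,\dotsc,i)$ is cofinal in $\Ng^r$, so both the dualizable limit defining $\NucE$ in \cref{nuc-def} and the limit $\lim_\ii \Kcont(\Mod_{\AA/\xx^\ii})$ can be computed over the pretower $(\Mod_{\AA/\xx^{\cbullet}})$. For the dualizable limit I would note that this is a formal statement about limits in $\PrLstdbl$: restricting a diagram along an initial functor does not change its limit in any category.

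Second, I would invoke \cref{ML-quotient-Sp}. It says that $(\Mod_{\AA/\xx^{\cbullet}})$ is a pretower of dualizable categories, by \cref{dual-dual} under the \assref{dual-assum}, satisfying the strong Mittag-Leffler condition over $\Sp$. Efimov's theorem then gives the first isomorphism.

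For the generalization, I would use the second half of \cref{ML-quotient-Sp}: the pretower $(\Mod_{\AA/\xx^{\cbullet}} \otimes \DD)$ is also strong Mittag-Leffler. I would also need to identify
\[
	\NucE \otimes \DD \simeq \limdual[i] (\Mod_{\AA/\xx^{\ci}} \otimes \DD).
\]
I expect this to follow from the fact that $-\otimes\DD$ preserves dualizable limits of strong Mittag-Leffler pretowers \cite[Proposition 5.3]{EfimovLimit}. Alternatively, tensoring with a dualizable $\DD$ is a $2$-functor on $\PrLst$, and the dualizable limit of a strong Mittag-Leffler pretower admits Efimov's explicit description, which is compatible with this $2$-functor. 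Applying Efimov's theorem to this pretower then concludes.

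The only real content, and the step I expect to need the most care, is matching the precise form of Efimov's hypotheses with what \cref{ML-quotient-Sp} provides. Specifically, I need that ``strong Mittag-Leffler over $\Sp$'' in \cite[Definition 5.1]{EfimovLimit} is exactly the input of his K-theory continuity theorem, and that the assembly map is the canonical one induced by the projections $\NucE \to \Mod_{\AA/\xx^{\ci}}$, which are strongly continuous. The compatibility of $\otimes\DD$ with $\limdual$ is the second point to check against \emph{loc.\ cit.}; everything else is formal.
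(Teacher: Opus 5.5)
Your proposal is correct and is essentially the paper's proof: restrict to the cofinal diagonal $\ci$, then apply Efimov's continuity result \cite[Corollary 6.2]{EfimovLimit} to the strong Mittag-Leffler pretowers $(\Mod_{\AA/\xx^{\cbullet}})$ and $(\Mod_{\AA/\xx^{\cbullet}} \otimes \DD)$ supplied by \cref{ML-quotient-Sp}. You flag the identification $\NucE \otimes \DD \simeq \limdual(\Mod_{\AA/\xx^{\ci}} \otimes \DD)$ as needing a check; the paper leaves it implicit as part of the input from \cite{EfimovLimit}, so this is extra care rather than a different route.
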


\begin{proof}
	By cofinality, it suffices to restrict to the diagonal $\ci = (i,\dotsc,i)$, and the result follows from \cite[Corollary 6.2]{EfimovLimit} applied to \cref{ML-quotient-Sp}.
\end{proof}

Restricting to the case of connective ring spectra, we get the following consequence by applying the Dundas--Goodwillie--McCarthy theorem.

\begin{cor}\label{nuc-dgm}
	Let $R \in \Alg_{\Et}(\Spcn)$ be equipped with an $\Et$-map $\SS[\xx] \to R$ with non-negative even degrees $|x_1|,\dotsc,|x_r| \in 2\NN$.
	Then, there is a pullback square
	% https://q.uiver.app/#q=WzAsNCxbMCwwLCJcXEtjb250KFxcTnVjUikiXSxbMCwxLCJcXEtLKFxcQXgpIl0sWzEsMCwiXFxsaW1fXFxpaSBcXFRDKFIvXFx4eF5cXGlpKSJdLFsxLDEsIlxcVEMoXFxBeCkiXSxbMSwzXSxbMiwzXSxbMCwxXSxbMCwyXSxbMCwzLCIiLDEseyJzdHlsZSI6eyJuYW1lIjoiY29ybmVyIn19XV0=
	\[\begin{tikzcd}
		{\Kcont(\NucR)} & {\lim_\ii \TC(R/\xx^\ii)} \\
		{\KK(\Ax)} & {\TC(\Ax)}
		\arrow[from=1-1, to=1-2]
		\arrow[from=1-1, to=2-1]
		\arrow["\lrcorner"{anchor=center, pos=0.125}, draw=none, from=1-1, to=2-2]
		\arrow[from=1-2, to=2-2]
		\arrow[from=2-1, to=2-2]
	\end{tikzcd}\]
\end{cor}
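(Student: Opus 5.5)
The plan is to combine \cref{k-nuc} with the Dundas--Goodwillie--McCarthy theorem applied to each quotient, and then pass to the limit. We may take $\EE = \Mod_R$, which is dualizable, so \cref{k-nuc} gives $\Kcont(\NucR) \simeq \lim_\ii \KK(R/\xx^\ii)$. It then suffices to produce a pullback square
\[
	\KK(R/\xx^\ii) \simeq \KK(\Ax) \times_{\TC(\Ax)} \TC(R/\xx^\ii)
\]
for each $\ii$, naturally in $\ii$. Limits commute with pullbacks, and the bottom row is constant in $\ii$, so taking $\lim_\ii$ yields the claimed square.

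\textbf{Step 1: the maps $R/\xx^\ii \to \Ax$.} By \cref{pi0}, $R/\xx^\ii$ is connective. Moreover, $\pi_0(R/\xx^\ii) = \pi_0(R)/(x_s^{i_s} \mid |x_s| = 0)$ surjects onto $\pi_0(\Ax) = \pi_0(R)/(x_s \mid |x_s|=0)$. The kernel is generated by the finitely many degree-zero elements $x_s$ (for those $s$ with $i_s \geq 1$) modulo $x_s^{i_s}$. These generators are nilpotent, and since finitely many nilpotent elements generate a nilpotent ideal in the (commutative) ring $\pi_0$, the kernel is nilpotent.

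To be safe, I would restrict to the cofinal diagonal $\ii = \ci$ with $i \geq 1$, where the map $R/\xx^\ci \to \Ax$ is the $\Eo$-map of the tower in \cref{quotient-def}. These are maps of connective $\Eo$-rings that are surjective on $\pi_0$ with nilpotent kernel.

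\textbf{Step 2: apply DGM.} The Dundas--Goodwillie--McCarthy theorem \cite{DGM} says that for such a map the square formed by $\KK$ and $\TC$ via the cyclotomic trace is cartesian. Naturality of the trace in the tower then gives a compatible system of pullback squares indexed by $i$. Taking limits and composing with the isomorphism of \cref{k-nuc} (compatible with the assembly map, hence with the maps to $\lim \KK(R/\xx^\ci) \to \KK(\Ax)$) gives the corner square of the statement. The top map is $\Kcont(\NucR) \to \lim \KK \to \lim \TC$.

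\textbf{Main obstacle.} The subtle point is Step 1: checking that the kernel on $\pi_0$ is nilpotent rather than merely consisting of nilpotents. This reduces to the commutative ring $\pi_0(R)$ (which is commutative since $R$ is $\Et$), and with finitely many generators the claim is immediate. A minor secondary point is identifying the vertical map $\Kcont(\NucR) \to \KK(\Ax)$ with the projection from the limit, which holds by construction of the assembly map.
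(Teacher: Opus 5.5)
Your proposal is correct and follows the paper's proof: use \cref{pi0} to get connectivity and a $\pi_0$-surjection with nilpotent kernel for each $R/\xx^\ii \to \Ax$, apply Dundas--Goodwillie--McCarthy levelwise, then pass to the limit and identify the top-left corner via \cref{k-nuc}. Your extra care spells out what the paper leaves implicit. Restricting to the cofinal diagonal with $i \geq 1$ ensures the map to $\Ax$ actually exists. You also justify that the kernel is a nilpotent ideal, not merely a nil one.
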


\begin{proof}
	For a fixed $\ii$, consider the map $R/\xx^\ii \to \Ax$.
	By \cref{pi0}, the induced map on $\pi_0$ is
	\[
		\pi_0(R/\xx^\ii)
		\:\simeq\: \pi_0(R)\,/\,(x_s^{i_s}\,\mid\,|x_s|=0)
		\too \pi_0(R)\,/\,(x_s\,\mid\,|x_s|=0)
		\:\simeq\: \pi_0(\Ax).
	\]
	We see that this map is a surjection on $\pi_0$ with nilpotent kernel, and so by \cite[Theorem 7.2.2.1]{DGM} the following is a pullback square
	% https://q.uiver.app/#q=WzAsNCxbMCwwLCJcXEtLKFIvXFx4eF5cXGlpKSJdLFswLDEsIlxcS0soXFxBeCkiXSxbMSwwLCJcXFRDKFIvXFx4eF5cXGlpKSJdLFsxLDEsIlxcVEMoXFxBeCkiXSxbMSwzXSxbMiwzXSxbMCwxXSxbMCwyXSxbMCwzLCIiLDIseyJzdHlsZSI6eyJuYW1lIjoiY29ybmVyIn19XV0=
	\[\begin{tikzcd}
		{\KK(R/\xx^\ii)} & {\TC(R/\xx^\ii)} \\
		{\KK(\Ax)} & {\TC(\Ax)}
		\arrow[from=1-1, to=1-2]
		\arrow[from=1-1, to=2-1]
		\arrow["\lrcorner"{anchor=center, pos=0.125}, draw=none, from=1-1, to=2-2]
		\arrow[from=1-2, to=2-2]
		\arrow[from=2-1, to=2-2]
	\end{tikzcd}\]
	The result now follows by passing to the limit over $\ii$ and applying \cref{k-nuc}.
\end{proof}

Using the flexibility of \cref{k-nuc} to tensoring with $\DD$, we give the following two examples.

\begin{example}[Functor categories]\label{fun-nuc}
	For a (small) category $\CC$, the functor category $\DD = \Fun(\CC, \Sp)$ is a compactly generated, hence dualizable, presentable stable category, and $\Fun(\CC, \EE) \simeq \DD \otimes \EE$.
	Therefore, by \cref{k-nuc} we have an isomorphism
	\[
		\Kcont(\Fun(\CC, \NucE)) \iso \lim_\ii \Kcont(\Fun(\CC, \Mod_{\AA/\xx^\ii})).
	\]
\end{example}

\begin{example}[Sheaves]\label{sh-nuc}
	For a locally compact Hausdorff space $Y$, the category of sheaves $\DD = \Shv(Y, \Sp)$ is a dualizable presentable stable category, and $\Shv(Y, \EE) \simeq \Shv(Y, \Sp) \otimes \EE$ (see for example \cite[Proposition 2.54]{DualCat}, \cite[Example 1.28]{EfimovK} or \cite[Theorem 5.15]{sixtop}).
	Therefore, by \cref{k-nuc} we have an isomorphism
	\[
		\Kcont(\Shv(Y, \NucE)) \iso \lim_\ii \Kcont(\Shv(Y, \Mod_{\AA/\xx^\ii})),
	\]
	which \cite[Theorem 0.2]{EfimovK} identifies with
	\[
		\Gammac(Y, \Kcont(\NucE)) \iso \lim_\ii \Gammac(Y, \Kcont(\Mod_{\AA/\xx^\ii})).
	\]
\end{example}

\subsection{Comparison of complete and nuclear modules}\label{subsec-comp-nuc}

Our next goal is to compare the ordinary limit and the dualizable limit studied in the previous two subsections.
To make sense of that, we must work under the \assref{dual-assum} on our \assref{setup}.
Moreover, our proof also requires $\EE$ to be $\Et$-monoidal, as was highlighted in \cref{outline}.

\begin{namedassum}{Comparison Assumption}\label{comp-assum}
	The category $\EE$ is $\Et$-monoidal and dualizable in $\PrLst$.
\end{namedassum}

Consider the inclusion
\[
	\PrLstdbl \quad\subset\quad \PrLst.
\]
By \cref{dual-dual}, our diagram of interest $(\Mod_{\AA/\xx^\ii})_\ii$ is in $\PrLstdbl$.
Recall that the dualizable limit is $\NucE$ by \cref{nuc-def} and the ordinary limit is $\EEc$ by \cref{ordinary-limit}.

\begin{defn}\label{q-def}
	We denote the corresponding assembly maps by
	\[
		\mdef{q}\colon \NucE \too \EEc
		\qin \PrLst.
	\]
\end{defn}

\begin{prop}\label{iota}
	$\EEc \in \PrLst$ is dualizable, and the functor $q$ admits a fully faithful left adjoint
	\[
		\mdef{\iota}\colon \EEc \too \NucE.
	\]
\end{prop}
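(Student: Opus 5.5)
The plan is to deduce both claims from Efimov's general results on dualizable limits of Mittag-Leffler pretowers, fed by the tower version established in \cref{cat-ML-quotient}. The one non-formal step is the fully faithful left adjoint. Efimov \cite[Remark 5.15(i)]{EfimovLimit} records that under the strong Mittag-Leffler condition the ordinary limit embeds fully faithfully into the dualizable limit. Since the pretower $(\Mod_{\AA/\xx^{\ci}})$ is strong Mittag-Leffler over $\Sp$ by \cref{ML-quotient-Sp}, this should give $\iota$. I would nevertheless make the construction explicit, so that it visibly uses the cone $\EE \to (\Mod_{\AA/\xx^{\ci}})$.

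First I need a map $\iota$. The cone functors $u_i\colon \EE \to \Mod_{\AA/\xx^{\ci}}$ are strongly continuous by the second part of the tower Mittag-Leffler condition, since $u_i^\RR$ has a further right adjoint. The category $\EE$ is dualizable by the \assref{dual-assum}. Hence the universal property of $\limdual$ gives a strongly continuous functor $\tilde u\colon \EE \to \NucE$, and its composite with $q$ is $u\colon \EE \to \EEc$. I would set $\iota := \tilde u|_{\EEc}$, the restriction along the inclusion $\EEc \hookrightarrow \EE$.

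Next I would identify $\NucE$ with Efimov's explicit model. For a strong Mittag-Leffler pretower, $\limdual$ is the subcategory of $\lim_i \Ind(\Mod^\omega)$-type objects, described in \cite[\S5]{EfimovLimit}. The comparison to $\EEc$ is given by the right adjoint $q$, which is computed levelwise by the colimit (realization) functor. On this model, full faithfulness of $\iota$ amounts to the unit $M \to q\iota M$ being an isomorphism for $M \in \EEc$. Levelwise this unit is the comparison
\[
	u_j\Bigl(\lim_{i} u_i^\RR u_i M\Bigr) \too \lim_{i \geq j} F_{ij}F_{ij}^\RR u_j M,
\]
which is exactly what the pro-isomorphism $c(u_j u_k^\RR) \iso \prolim F_{ij}F_{ik}^\RR$ in \cref{bML-cond} controls. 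The argument then runs as in the proof of \cref{bML-ordinary-limit}, using \cref{pro-pro} and the initiality of the diagonal. Adjointness $\iota \dashv q$ would follow from checking the triangle identities on the explicit model, or more cleanly by showing that $\hom(\iota M, N) \simeq \hom(M, qN)$ by the same pro-computation.

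Finally, dualizability of $\EEc$ follows because it is a retract in $\PrLst$ of the dualizable category $\NucE$. The maps are $\iota$, which is strongly continuous because $q$ is continuous (being a colimit-preserving assembly map between limits of presentable categories along left adjoints), and $q$ itself; the composite $q\iota$ is the identity by full faithfulness. Retracts of dualizable objects are dualizable.

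The main obstacle is the middle step: matching Efimov's model of $\limdual$ precisely enough to see that $q$ is the ordinary assembly map and that the unit of $\iota \dashv q$ is the pro-comparison above. I would first try to simply cite \cite[Remark 5.15(i)]{EfimovLimit} together with \cref{bML-cML}, and only unwind the construction if the hypotheses there turn out to differ.
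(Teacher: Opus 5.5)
Your fallback is exactly the paper's proof: cite \cite[Remark 5.15(i)]{EfimovLimit} for the strong Mittag-Leffler pretower of \cref{ML-quotient-Sp}. For dualizability the paper cites \cite[Proposition 5.5(ii)]{EfimovLimit}. Your retract argument is a valid substitute, but only if $\iota$ is Efimov's left adjoint.

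The explicit construction you propose is wrong. The functor $\tilde u|_{\EEc}$ is not left adjoint to $q$; it does not even preserve colimits. The reason is that $\tilde u$ preserves colimits while the inclusion $\EEc\hookrightarrow\EE$ does not, and $\tilde u$ does not kill $\Ax$-acyclic objects. Only $q\tilde u\simeq u$ kills them, and $q$ is not conservative. Concretely, take $\EE=\Mod_{\ZZ}$ and $x=p$:
\begin{enumerate}
\item Since $\tilde u$ is strongly continuous out of a compactly generated category, it is base change $\Zp\otimes_{\ZZ}(-)$ into $\Ind(\EEc^\dbl)\simeq\Mod_{\Zp}\subset\NucE$.
\item The colimit in $\EEc$ of $\Zp\xrightarrow{p}\Zp\xrightarrow{p}\cdots$ is $0$.
\item The colimit of the images is $\tilde u(\Qp)$, which corresponds to $\Zp\otimes_\ZZ\Qp\neq0$.
\end{enumerate}
In this example the genuine $\iota$ sends $\Zp$ to $\Qp/\Zp[-1]\in\Mod_{\Zp}$, not to $\Zp\otimes_\ZZ\Zp$.

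The underlying confusion is that $\tilde u|_{\EEc}$ and $\iota$ have the same components, namely the projections $\pi_i\colon\EEc\to\Mod_{\AA/\xx^{\ci}}$, and both satisfy $q(-)\simeq\mathrm{id}$. However, the universal property of $\limdual$ only determines \emph{strongly continuous} functors out of \emph{dualizable} categories, and your functor is not one.

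For the same reason, your check that the unit $M\to q\iota M$ is an isomorphism is vacuous: $q\tilde u|_{\EEc}\simeq u|_{\EEc}\simeq\mathrm{id}$ holds trivially. The argument of \cref{bML-ordinary-limit} concerns only the ordinary limit and says nothing about $\NucE$. All the content lies in the adjunction, and that is what fails for your functor.

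The correct $\iota$ is obtained by applying the universal property of $\limdual$ to $\EEc$ itself, with the cone $(\pi_i)$; this is how it reappears in the proof of \cref{three-lims}. That route needs, \emph{before} $\iota$, that $\EEc$ is dualizable and that the $\pi_i$ are strongly continuous. That is \cite[Proposition 5.5(ii)]{EfimovLimit}, so the logical order is the reverse of your retract argument. The existence of the left adjoint $\iota \dashv q$ is then the non-formal input supplied by Efimov (\cite[Remark 5.15(i), Proposition 5.5(i)]{EfimovLimit}).
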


\begin{remark}
	This does not use the $\Et$-monoidality.
\end{remark}

\begin{proof}
	The pretower $(\Mod_{\AA/\xx^\ii})_\ii$ satisfies the strong Mittag-Leffler condition over $\Sp$ by \cref{ML-quotient-Sp}.
	The dualizability is the content of \cite[Proposition 5.5(ii)]{EfimovLimit}, and the existence of the fully faithful left adjoint $\iota$ follows from \cite[Remark 5.15(i)]{EfimovLimit} (see also \cite[Proposition 5.5(i)]{EfimovLimit}).
\end{proof}

Our next goal is to show that the Verdier quotient of $\iota$, or, equivalently, the kernel of $q$, is $\Ax$-acyclic.
To that end we shall need the following comparison maps.
We recall that by \assref{comp-assum}, $\EE$ is $\Et$-monoidal, which in particular means that $\Alg(\EE)$ is monoidal, allowing us to define the following.

\begin{notn}
	Tensoring the diagram $(\AA/\xx^\ii)_\ii$ with the map $f\colon \AA \to \Ax$, we obtain maps
	\[
		(\mdef{f_\ii}) = (f \otimes \Id_{\AA/\xx^\ii})\colon (\AA/\xx^\ii) \too (\Ax \otimesA \AA/\xx^\ii)
		\qin \Alg(\EE)^{\Ng^r}.
	\]
\end{notn}

Taking modules, we get an induced map of diagrams
\[
	(f_{\ii!})\colon (\Mod_{\AA/\xx^\ii}) \too (\Mod_{\Ax \otimesA \AA/\xx^\ii})
	\qin (\PrLEdbl)^{\Ng^r}.
\]
We claim that this has a right adjoint, in the $2$-category of diagrams, and we can identify the associated monad.
We prove the following preliminary result.

\begin{prop}\label{extension-restriction-to-Ax}
	The $1$-morphism $(f_{\ii!}) \in (\PrLEdbl)^{\Ng^r}$ has a right adjoint $(f_\ii^*)$.
	Moreover, the associated monad is given by the right action of $\Ax$, namely
	\[
		(f_\ii^* f_{\ii!}) \simeq (-) \otimes \Ax.
	\]
\end{prop}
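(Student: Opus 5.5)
Throughout I write $B_\ii := \AA/\xx^\ii$ and $C_\ii := \Ax \otimesA B_\ii$. The plan is to build the adjunction levelwise from Morita theory and then check that it is compatible with the transition maps.

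\emph{Levelwise adjoint.} For each $\ii$, extension of scalars $f_{\ii!} = C_\ii \otimes_{B_\ii} (-)$ has right adjoint $f_\ii^*$, restriction of scalars. By \cite[Theorem 4.8.4.1]{HA}, $f_\ii^*$ is given by tensoring with the $B_\ii$-$C_\ii$-bimodule $C_\ii$. To place this in $\PrLEdbl$ we need two things.
\begin{enumerate}
	\item $f_\ii^*$ must be $\EE$-linear, i.e.\ commute with the right $\EE$-action. This holds because both module categories carry the right $\EE$-action on underlying objects, and restriction does not change the underlying object.
	\item $f_\ii^*$ must be strongly continuous, i.e.\ itself admit a colimit-preserving right adjoint. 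It suffices that $C_\ii$ is left dualizable as a left $B_\ii$-module. Since $C_\ii \simeq \Ax \otimesA B_\ii$ is free over $B_\ii$ on $\Ax$, this reduces to $\Ax$ being dualizable in $\EE$. That follows from \cref{quotient-dualizable}, after tensoring the variables together and base-changing along $\SS[\xx] \to \AA$.
\end{enumerate}

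\emph{Identifying the monad.} Levelwise, $f_\ii^* f_{\ii!}(M) \simeq (\Ax \otimesA B_\ii) \otimes_{B_\ii} M \simeq \Ax \otimesA M$ on underlying objects. This is the left action of $\Ax$ on the underlying object, whereas the statement asks for the right action. We pass between them using the braiding of $\EE$ (\assref{comp-assum}). Concretely, $f$ is tensored on the left of $B_\ii$, so $C_\ii$ receives $B_\ii$ on the right factor. Via the braiding, the $B_\ii$-module structure on $\Ax \otimesA M$ should correspond to the module structure on $M \otimes \Ax$, coming from the left action of $B_\ii$ on $M$ and the right $\EE$-action. I would make this precise by writing the monad as the bimodule $C_\ii$ viewed as a $B_\ii$-$B_\ii$-bimodule, and identifying it with $B_\ii \otimesA \Ax$, which represents $(-) \otimes \Ax$ in the right $\EE$-module category $\Mod_{B_\ii}$. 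The braiding supplies an algebra-compatible isomorphism $\Ax \otimesA B_\ii \simeq B_\ii \otimesA \Ax$ of $B_\ii$-bimodules.

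\emph{Naturality in $\ii$.} To obtain an adjunction in the diagram $2$-category $(\PrLEdbl)^{\Ng^r}$, the levelwise right adjoints must assemble with the transition maps, and the resulting lax squares must be invertible. For a transition $B_\ii \to B_\jj$ this is the Beck--Chevalley condition: the mate $F_{\ii\jj}\, f_\ii^* \to f_\jj^*\, F'_{\ii\jj}$ must be an isomorphism. This reduces to the base-change identity
\[
	B_\jj \otimes_{B_\ii} C_\ii \;\simeq\; \Ax \otimesA B_\jj \;=\; C_\jj
\]
as $B_\jj$-$C_\ii$-bimodules, which holds by associativity of relative tensor products: $C_\ii = \Ax \otimesA B_\ii$ with $B_\ii$ acting on the right factor. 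Once the right adjoints commute with transitions, the identification $(-) \otimes \Ax$ of the monad is natural in $\ii$, because the right $\EE$-action commutes with the $\EE$-linear transition functors.

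I expect the monad identification to be the main obstacle. One must choose the braiding consistently so that the resulting $\Ax$-action matches the right $\EE$-module structure on each $\Mod_{B_\ii}$, compatibly with the algebra structure on $\Ax \otimesA B_\ii$ and uniformly in $\ii$. This is exactly where the $\Et$ hypothesis in \assref{comp-assum} enters. I would attempt it by viewing $(f_\ii)$ as the image of the single map $f$ under the monoidal functor $\Ax \otimes (-)$ on $\Alg(\EE)$, and applying the braiding once, globally, before taking modules.
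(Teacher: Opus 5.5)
Your argument is sound in outline, but it is organized differently from the paper's. You work levelwise throughout.
\begin{enumerate}
	\item Morita theory puts $f_\ii^*$ in $\PrLEdbl$, because $C_\ii$ is left dualizable over $B_\ii$: after braiding, it is the free left $B_\ii$-module on the dualizable object $\Ax$.
	\item You check Beck--Chevalley through the base change $B_\jj\otimes_{B_\ii}C_\ii\simeq C_\jj$.
	\item You identify the monad at each level through the bimodule $C_\ii\simeq B_\ii\otimesA\Ax$.
\end{enumerate}

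The paper instead does everything once, for the single $1$-morphism $f_!\colon\EE\to\Mod_{\Ax}$. Its right adjoint lies in $\PrLEdbl$ because $\Ax$ is dualizable in $\EE$, and $f^*f_!\simeq\Ax\otimesA(-)\simeq(-)\otimesA\Ax$ by the braiding. The paper then notes that $(f_{\ii!})$ equals $f_!\otimes\Id_{(\Mod_{B_\ii})}$ for the monoidal structure on $\PrLEdbl$, which exists because $\EE$ is $\Et$. In other words, $(f_{\ii!})$ is the image of $(f_!,\Id)$ under the $2$-functor $\bbA\times\bbA^I\to\bbA^I$ (diagonal followed by pointwise tensor), and $2$-functors preserve adjunctions and composites. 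This gives the adjoint in $(\PrLEdbl)^{\Ng^r}$ and the identification of the monad as an endomorphism of the diagram, coherently in $\ii$. No Beck--Chevalley computation is needed.

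What each route buys: yours makes explicit why the right adjoints commute with the transitions, but it needs two inputs the paper avoids.
\begin{itemize}
	\item Passing from componentwise adjoints with invertible mates to an adjunction in the functor $(\infty,2)$-category is a nontrivial result, and you would have to cite it.
	\item More seriously, levelwise equivalences $f_\ii^*f_{\ii!}\simeq(-)\otimesA\Ax$, plus the remark that the $\EE$-action commutes with the transitions, do not by themselves give an equivalence of endo-transformations of the diagram. You still need compatibility with the structure $2$-cells at each transition square (the Beck--Chevalley isomorphisms on one side, the $\EE$-linearity constraints on the other), and higher coherences when $r\geq 2$. That coherent identification is exactly what is used when one passes to the dualizable limit in \cref{comp-nuc}.
\end{itemize}

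Your closing suggestion to apply the braiding once, globally, is the right fix. Note, however, that $(f_\ii)$ is the image of $f$ under $(-)\otimesA B_\ii$, that is, $f\otimes\Id$ of the diagram $(B_\ii)$, not its image under $\Ax\otimesA(-)$. Carrying this out at the level of module categories is precisely the paper's proof.

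A small caution: the braiding $\Ax\otimesA B_\ii\to B_\ii\otimesA\Ax$ is not an algebra map between the two tensor-product algebras built with the same braiding. It intertwines one of them with the tensor product built using the inverse braiding. You only need it as an isomorphism of $B_\ii$-bimodules, and that is fine.
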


\begin{remark}
	To unpack the first part, we already know that each $f_{\ii!}$ has a right adjoint $f_\ii^*$ in $\PrLE$, and we are claiming that (a) $f_\ii^* \in \PrLEdbl$, that is, its further right adjoint $f_{\ii*}$ is colimit preserving and strongly $\EE$-linear, and (b) $(f_{\ii!})$ satisfy the Beck--Chevalley condition, that is $(f_\ii^*)$ assembles into a map of diagrams.
\end{remark}

\begin{proof}
	We start by noting that
	\[
		f_!\colon \EE \too \Mod_{\Ax}
		\qin \PrLEdbl
	\]
	is a left adjoint $1$-morphism, that is, its right adjoint $f^*$ is a morphism in $\PrLEdbl$.
	This follows from the fact that the corresponding $\AA$-$\Ax$-bimodule $\Ax$ is left dualizable by \cref{quotient-dualizable}, so by Morita theory (see \cite[Proposition 4.6.2.10]{HA}) the right adjoint $f^*$ is a morphism in $\PrLEdbl$ as well.
	Moreover, using the braiding from the \assref{comp-assum}, we get
	\[
		f^* f_! \simeq \Ax \otimesA (-) \simeq (-) \otimesA \Ax.
	\]
	This implies the result by tensoring.
	In more detail, note that for any monoidal $2$-category $\bbA$ and indexing category $I$, consider the $2$-category $\bbA^I$ endowed with the pointwise monoidal structure.
	The binary tensor product of $\bbA^I$ is a $2$-functor, and so is the diagonal $\bbA \to \bbA^I$, so together we get a $2$-functor
	\[
		\bbA \times \bbA^I \too \bbA^I \times \bbA^I \too \bbA^I,
	\]
	which in particular sends adjunctions to adjunctions.
	Applying this to $\bbA = \PrLEdbl$, $I = \Ng^r$, and the left adjoint $1$-morphisms $f_!$ and the identity of $(\Mod_{\AA/\xx^\ii})$, we get that $(f_{\ii!})$ is a left adjoint $1$-morphism in $(\PrLEdbl)^{\Ng^r}$, and, similarly, identify the monad.
\end{proof}

\begin{thm}\label{comp-nuc}
	The Verdier quotient of $\iota\colon \EEc \hookrightarrow \NucE$ is $\Ax$-acyclic.
	That is, for every $M \in \NucE$, the tensor product $M \otimesA \Ax$ belongs to the essential image of $\iota$.
\end{thm}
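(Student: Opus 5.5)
The plan follows the outline in the introduction. I would use \cref{extension-restriction-to-Ax} to compare the pretower $(\Mod_{\AA/\xx^\ii})$ with the pretower $(\Mod_{\Ax \otimesA \AA/\xx^\ii})$. Then I would show that for the second pretower the ordinary and dualizable limits agree, both being $\Mod_\Ax$.

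\textbf{Step 1.} The map of diagrams $(f_{\ii!})$ is a left adjoint $1$-morphism in $(\PrLEdbl)^{\Ng^r}$ by \cref{extension-restriction-to-Ax}. Passing to limits is a $2$-functor, both for $\limdual$ on $(\PrLEdbl)^{\Ng^r}$ and for $\lim$ on $(\PrLE)^{\Ng^r}$. We therefore get adjunctions
\[
	F^{\dbl}\colon \NucE \adj \limdual \Mod_{\Ax \otimesA \AA/\xx^\ii} \colon G^{\dbl},
	\qquad
	F\colon \EEc \adj \lim \Mod_{\Ax \otimesA \AA/\xx^\ii} \colon G.
\]
The monads $G^{\dbl}F^{\dbl}$ and $GF$ are the limits of the levelwise monads, so both are $(-)\otimesA\Ax$ with respect to the right $\EE$-action. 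These data are compatible with the assembly maps $q$ and $q'$ and with their left adjoints $\iota$ and $\iota'$ (which exist by \cref{iota} for the first pretower). I need to check this compatibility: that $q$ intertwines the right $\EE$-actions, and that the Beck--Chevalley squares commute. This should follow from naturality of the comparison between the dualizable and ordinary limits in $(\PrLEdbl)^{\Ng^r}$.

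\textbf{Step 2.} I would identify the second pretower. By the $\Et$-monoidality, $\Ax \otimesA \AA/\xx^\ii \simeq \Ax \otimes_{\SS[\xx]} \SS[\xx]/\xx^\ii$. This is the base change along $\SS[\xx]\to\Ax$ of the tensor product over the variables of $(\SS\otimes_{\SS[x_s]}\SS[x_s]/x_s^{i})_i$. By \cref{double-quotient-pro-sphere-Sx}, applied in each variable and pushed forward along these base changes, the pretower of algebras $(\Ax \otimesA \AA/\xx^\ii)$ is pro-isomorphic to the constant pro-object $c(\Ax)$, with the isomorphism given by the augmentation maps. A pro-constant pretower has the same ordinary and dualizable limits. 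Indeed, after restricting to a cofinal subpretower, the pro-isomorphism makes the pretower of module categories pro-equivalent to the constant one on $\Mod_\Ax$. Both limits only see the pro-object, so both limits are $\Mod_\Ax$, and the assembly map $q'$ is an equivalence. I expect this step to be the main obstacle. The delicate point is making precise that the pro-isomorphism of $\Eo$-algebras over $\Ax$ induces a pro-equivalence of module-category pretowers in $\PrLEdbl$, so that both $\lim$ and $\limdual$ are unchanged. I would first try to derive it from the fact that ``$\Mod_{(-)}$'' is a functor $\Alg(\EE)\to\PrLEdbl$ (landing in dualizables by \cref{dual-dual}), and hence extends to pro-objects.

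\textbf{Step 3.} Let $M\in\NucE$. Then
\[
M\otimesA\Ax \simeq G^{\dbl}F^{\dbl}M \simeq G^{\dbl}\iota' q' F^{\dbl}M \simeq \iota\, G\, q' F^{\dbl} M,
\]
where the last isomorphism uses the compatibility $G^{\dbl}\iota'\simeq\iota G$ from Step 1. Hence $M\otimesA\Ax$ lies in the essential image of $\iota$. Equivalently, if $q(M)=0$ then $M\otimesA\Ax$ lies in the image of $\iota$ and in $\ker q$, so it vanishes. This shows that the Verdier quotient is $\Ax$-acyclic.
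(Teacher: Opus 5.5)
Your Steps 1 and 2 match the paper's argument. You compare with the pretower $(\Mod_{\Ax\otimesA\AA/\xx^\ii})$. You push the pro-isomorphism of \cref{double-quotient-pro-sphere-Sx} through $\Mod_{(-)}(\EE)$, so both its ordinary and dualizable limits are $\Mod_{\Ax}$ and $q'$ is an equivalence. The paper settles your Step 2 worry exactly this way: any functor preserves pro-isomorphisms and constant pro-objects. You then use \cref{extension-restriction-to-Ax} to get $F^{\dbl}\dashv G^{\dbl}$ with $G^{\dbl}F^{\dbl}\simeq(-)\otimesA\Ax$.

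The weak point is Step 3, which rests on the identity $G^{\dbl}\iota'\simeq\iota G$ that you leave unverified. It does not follow from naturality of the assembly maps. Naturality gives $q'F^{\dbl}\simeq Fq$ and $qG^{\dbl}\simeq Gq'$. The mate of the latter is only a comparison map $\iota G\to G^{\dbl}\iota'$, built from the counit $\iota q\to\Id$. That map is invertible exactly when $G^{\dbl}\iota'$ lands in the essential image of $\iota$. Evaluated at $q'F^{\dbl}M$, this is literally the statement you are trying to prove, so Step 1 hides the content of the theorem.

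What is formal comes from passing to right adjoints in $q'F^{\dbl}\simeq Fq$. Since $\iota'=q'^{-1}$ is also the right adjoint of $q'$, one gets $G^{\dbl}\iota'\simeq q^{\RR}G$. Here $q^{\RR}$ is the right adjoint of $q$, not its left adjoint $\iota$. Upgrading this to $\iota G$ via the universal property of $\limdual$ would need a separate check that the restriction $G\colon\Mod_{\Ax}\to\EEc$ is strongly continuous.

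None of this is needed. The paper never uses $G$ or any compatibility of $\iota$ with the adjunctions. Since $q'$ is an equivalence, $F^{\dbl}\simeq q'^{-1}Fq$, hence
\[
	(-)\otimesA\Ax\simeq G^{\dbl}F^{\dbl}\simeq G^{\dbl}q'^{-1}Fq.
\]
So $q(M)=0$ forces $M\otimesA\Ax=0$, i.e.\ $\ker(q)\subseteq\ker((-)\otimesA\Ax)$. This is the Verdier-quotient formulation, since the Verdier quotient of $\iota$ is $\ker(q)$. Alternatively, your Step 3 chain works verbatim with $q^{\RR}$ in place of $\iota$: $M\otimesA\Ax\simeq G^{\dbl}\iota'q'F^{\dbl}M\simeq q^{\RR}GFq(M)$, which vanishes when $q(M)=0$.
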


\begin{proof}
	Note that the Verdier quotient of $\iota$ is equivalent to $\ker(q) \subset \NucE$.
	Thus, equivalently, we show that if $M \in \NucE$ is in $\ker(q)$, then $M \otimesA \Ax = 0$.

	We shall consider the dualizable and ordinary limits, and the assembly map between them, of the map of diagrams
	\[
		(f_{\ii!})\colon (\Mod_{\AA/\xx^\ii}) \too (\Mod_{\Ax \otimesA \AA/\xx^\ii})
		\qin (\PrLstdbl)^{\Ng^r}.
	\]
	Observe that the source has dualizable limit $\NucE$ by \cref{nuc-def}, and ordinary limit $\EEc$ by \cref{ordinary-limit}.
	Consider now the target.
	\cref{double-quotient-pro-sphere-Sx} shows that the map of pro-objects
	\[
		c(\Ax) \too \prolim[i] (\Ax \otimesA \AA/\xx^\ii)
		\qin \Pro(\Alg(\EE))
	\]
	is an isomorphism, thus, applying
	\[
		\Mod_{(-)}(\EE)\colon \Alg(\EE) \too \PrLstdbl,
	\]
	we conclude that both the dualizable and the ordinary limit of the target of $(f_{\ii!})$ are equivalent to $\Mod_{\Ax}$.
	Combining these observations, we get a commutative square
	% https://q.uiver.app/#q=WzAsNCxbMCwwLCJcXE51Y1IiXSxbMSwwLCJcXGNNb2RfUiJdLFswLDEsIlxcTW9kX3tcXFJ4fSJdLFsxLDEsIlxcTW9kX3tcXFJ4fSJdLFsyLDMsIiIsMCx7ImxldmVsIjoyLCJzdHlsZSI6eyJoZWFkIjp7Im5hbWUiOiJub25lIn19fV0sWzAsMSwicSJdLFswLDIsImZfIV5cXE51YyIsMl0sWzEsMywiXFx3aWRlaGF0e2Z9XyEiXV0=
	\[\begin{tikzcd}[column sep=large]
		\NucE & {\EEc} \\
		{\Mod_{\Ax}} & {\Mod_{\Ax}}
		\arrow["q", from=1-1, to=1-2]
		\arrow["{f_!^\Nuc}"', from=1-1, to=2-1]
		\arrow["{\widehat{f}_!}", from=1-2, to=2-2]
		\arrow[equals, from=2-1, to=2-2]
	\end{tikzcd}\]
	Now, \cref{extension-restriction-to-Ax} shows that $(f_{\ii!})$ has a right adjoint $(f_\ii^*)$ in the $2$-category $(\PrLstdbl)^{\Ng^r}$, and the (dualizable) limit is a $2$-functor, so we get that $f_!^\Nuc$ has a right adjoint $f^{*\Nuc}$.
	Moreover, it shows that the associated monad is
	\[
		f^{*\Nuc} f_!^\Nuc \simeq (-) \otimesA \Ax.
	\]
	Combining this with the commutative square, we get that
	\[
		f^{*\Nuc} \widehat{f}_! q \simeq (-) \otimesA \Ax,
	\]
	hence $\ker(q) \subset \ker((-) \otimesA \Ax)$, concluding the proof.
\end{proof}

\subsection{The compactly generated limit}\label{subsec-cg}

In this subsection we study the compactly generated limit, so we additionally assume that $\EE$ is compactly generated, and our proofs also require rigidity, on top of our \assref{setup}.

\begin{namedassum}{Rigidity Assumption}\label{assum-rigid-cg}
	The category $\EE$ is $\Et$-monoidal, rigidly compactly generated.
	We denote by $\{C_\alpha\}$ a set of compact generators.
\end{namedassum}

Note that this says that for an object $M \in \EE$, the conditions of being compact, left dualizable and right dualizable are equivalent (see for example \cite[Proposition 1.2]{EfimovLimit}).
We also remind the reader that the $\Et$-monoidal structure on $\EE$ induces an $\Et$-monoidal structure on $\EEc$ by \cref{Ec-E2-monoidal}.

\begin{prop}\label{mod-like-rigid}
	Let $\BB \in \Alg(\EE)$, then a left $\BB$-module $M$ is compact if and only if it is left dualizable.
\end{prop}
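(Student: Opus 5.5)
We prove both implications by comparing the category $\Mod_\BB$ with its generators. Throughout we use that $\EE$ is rigidly compactly generated (\assref{assum-rigid-cg}), so compactness, left dualizability and right dualizability coincide for objects of $\EE$. We also use that $\Mod_\BB$ is compactly generated by the free modules $\BB \otimesA C_\alpha$.

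For the direction ``left dualizable implies compact'', let $M$ be a left dualizable left $\BB$-module, with left dual the right $\BB$-module $M^\vee$. Dualizability gives a natural equivalence
\[
	\hom_\BB(M, N) \simeq \hom_\EE(\AA, M^\vee \otimes_\BB N).
\]
The functor $M^\vee \otimes_\BB (-)$ preserves colimits, and $\AA$ is compact in $\EE$ because $\EE$ is rigidly compactly generated. So $\hom_\BB(M,-)$ commutes with filtered colimits, and $M$ is compact.

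For the converse, left dualizable modules form a thick subcategory of $\Mod_\BB$. Indeed, by \cite[Proposition 4.6.2.10]{HA} they are exactly the modules whose associated functor admits a colimit-preserving, $\EE$-linear right adjoint, and this condition is stable under cofibers, shifts and retracts. Since the compact objects of $\Mod_\BB$ form the thick subcategory generated by the free modules $\BB \otimesA C_\alpha$, it suffices to show each $\BB \otimesA C_\alpha$ is left dualizable. This follows because each $C_\alpha$ is dualizable in $\EE$: the free left $\BB$-module on it has left dual $C_\alpha^\vee \otimesA \BB$, and the evaluation and coevaluation maps are induced from those of $C_\alpha$. This is the same argument already used in \cref{quotient-dualizable} for $\SS[x](i)$.

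The main obstacle is the bookkeeping of sidedness. One must use the correct notion of ``left dualizable'' in \cite[\S4.6.2]{HA}, and in the first direction one must check that $\hom_\BB(M,-)$ is genuinely corepresented through $\AA$. If the side conventions turn out reversed, we use right dualizability instead; rigidity makes the two equivalent in $\EE$. The $\Et$-structure is not needed for this argument.
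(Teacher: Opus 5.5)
Your proposal is correct and is essentially the paper's proof. The forward direction goes through $\hom_\BB(M,-) \simeq \hom_\EE(\AA, \ldual{M} \otimes_\BB -)$ together with compactness of $\AA$. The converse uses left dualizability of the free modules $\BB \otimesA C_\alpha$ and the fact that left dualizable objects form a thick subcategory; your added justification of that thickness and your remark that the $\Et$-structure is not needed are both accurate.
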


\begin{proof}
	Assume that $M$ is left dualizable.
	Then,
	\[
		\hom^{\Sp}_{\BB}(M, -) \simeq \hom^{\Sp}_{\EE}(\AA, \hom^{\EE}_{\BB}(M, -)) \simeq \hom^{\Sp}_{\EE}(\AA, \ldual{M} \otimes_{\BB} -),
	\]
	and $\AA$ is compact in $\EE$ by rigidity, hence the right hand side preserves filtered colimits, i.e.\ $M$ is compact.

	For the other direction, note that since $C_\alpha$ is dualizable in $\EE$, the free $\BB$-module $\BB \otimesA C_\alpha$ is left dualizable in $\Mod_{\BB}(\EE)$.
	The conclusion then follows from the fact that these are compact generators, and left dualizable objects form a thick subcategory.
\end{proof}

As a consequence, we deduce the following two propositions.

\begin{prop}\label{lim-dbl}
	The equivalence from \cref{ordinary-limit} restricts to an equivalence
	\[
		\EEc^\dbl \iso \lim_\ii \Mod_{\AA/\xx^\ii}^\omega.
	\]
\end{prop}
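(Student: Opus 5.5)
The plan is to show that under the equivalence $u\colon \EEc \iso \lim_\ii \Mod_{\AA/\xx^\ii}$ of \cref{ordinary-limit}, an object $M \in \EEc$ is dualizable in $\EEc$ exactly when each component $\AA/\xx^\ii \otimesA M$ is compact in $\Mod_{\AA/\xx^\ii}$. Since $\lim_\ii \Mod^\omega_{\AA/\xx^\ii}$ is the full subcategory of the ordinary limit on compatible families of compact objects, this gives the restricted equivalence. By \cref{mod-like-rigid}, compact and left dualizable coincide for $\AA/\xx^\ii$-modules. By \cref{Ec-E2-monoidal}, $\EEc$ is $\Et$-monoidal with unit $\xc{\AA}$, so we may speak of dualizable objects there. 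The proof therefore reduces to two implications.

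For the forward direction, take $M$ dualizable in $\EEc$. The projection $\EEc \to \Mod_{\AA/\xx^\ii}$ is $M \mapsto \AA/\xx^\ii \otimesA M$. Because $\AA/\xx^\ii$-modules are automatically complete (\cref{auto-complete}), this is extension of scalars along the algebra map $\xc{\AA} \to \AA/\xx^\ii$ in $\EEc$. Extension of scalars is monoidal enough to carry dualizable objects to left dualizable modules. Hence $\AA/\xx^\ii \otimesA M$ is left dualizable, and so compact by \cref{mod-like-rigid}.

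For the converse, take $M \in \EEc$ with each $\AA/\xx^\ii \otimesA M$ compact; it suffices to use the single quotient $\Ax$. The subcategory of objects $N \in \EEc$ for which $\hom_{\EEc}(M, N) \simeq \hom(\Ax\otimes M,\ -)$-type comparisons hold is awkward to handle directly, so I would use the following cleaner route.
\begin{enumerate}
\item First show $M$ is compact in $\EEc$. Via the adjunction from the proof of \cref{compact-gen}, a filtered colimit in $\EEc$ is detected by the conservative projection $\EEc \to \Mod_{\Ax}$. Then write $\hom_{\EEc}(M,-) \simeq \lim_\ii \hom_{\AA/\xx^\ii}(\AA/\xx^\ii\otimes M, \AA/\xx^\ii\otimes -)$.
\item Since this limit does not commute with filtered colimits, instead realize $M$ as a retract of a finite colimit built from the compact generators of $\EEc$ constructed in \cref{compact-gen}. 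Those generators are images of $\Ax\otimes C_\alpha$, which are dualizable because $\Ax$ is dualizable (\cref{quotient-dualizable}) and $\AA/\xx^\ii$ is dualizable over $\xc{\AA}$ (\cref{quotient-compact}).
\end{enumerate}
Concretely, the plan is to show that compact objects of $\EEc$ are exactly the $M$ with $\Ax\otimes M$ compact, using a nilpotence/thick-subcategory argument via \cref{thick}. Then compact equals dualizable in $\EEc$, since $\EEc$ is compactly generated by dualizable objects and its unit $\xc{\AA}$ is compact, being the image of $\AA$.

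The main obstacle is exactly this converse: recognizing an object of $\EEc$ as dualizable from the compactness of its reductions. The same difficulty already appears for ordinary rings, where one uses a derived Nakayama argument. I would first try to prove ``$\Ax\otimes M$ compact $\Rightarrow$ $M$ dualizable'' by exhibiting the candidate dual $\lim_\ii \ldual(\AA/\xx^\ii\otimes M)$, which is complete. I would then check the evaluation and coevaluation maps componentwise: the maps $(\AA/\xx^\ii\otimes -)$ are jointly conservative on $\EEc$, and on each component the duality data hold by compactness. Compatibility with transition maps is automatic because duals of dualizable modules are functorial under extension of scalars.
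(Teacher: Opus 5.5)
Your forward direction is fine. The converse, as you concretely plan it, rests on false claims.

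The unit $\xc{\AA}$ is in general \emph{not} compact in $\EEc$. Completion is left adjoint to the inclusion $\EEc \hookrightarrow \EE$, and that inclusion does not preserve filtered colimits, since colimits in $\EEc$ are completed colimits. So completion need not preserve compact objects. For example, take $\EE = \Mod_{\ZZ}$ and $x = p$. Then $\hom_{\EEc}(\xc{\AA}, -)$ is the forgetful functor, and it sends the coproduct in $\EEc$ of countably many copies of $\Zp$ to the $p$-completion of $\bigoplus_{\NN} \Zp$. The canonical map from $\bigoplus_{\NN} \Zp$ to this completion is not an isomorphism.

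Hence neither ``compact equals dualizable in $\EEc$'' nor ``the compact objects of $\EEc$ are exactly the $M$ with $\Ax \otimesA M$ compact'' holds. $M = \xc{\AA}$ is a counterexample to both: it is dualizable, and $\Ax \otimesA \xc{\AA} \simeq \Ax$ is compact. In fact $\EEc^\omega \subsetneq \EEc^\dbl$ is the typical situation. This is exactly why the paper separates $\EEc$ from $\Ind(\EEc^\dbl)$, and only shows in \cref{verdier-complete-compact-dualizable} that the quotient is $\Ax$-acyclic. So your steps 1--2 aim at a false intermediate statement and should be dropped.

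The reduction to the single quotient $\Ax$ is also an unproven strengthening. \cref{thick} does not transfer compactness from $\Ax \otimesA M$ to $\AA/\xx^\ii \otimesA M$, because restriction along $\AA/\xx^\ii \to \Ax$ does not preserve compact objects. You also do not need this reduction, since the hypothesis gives compactness of all components.

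The candidate-dual argument in your last paragraph is the right idea, but it is missing its key input. Knowing that $N := \lim_\ii \ldual(\AA/\xx^\ii \otimesA M)$ is complete is not enough. You want to define a coevaluation into $\xc{N \otimesA M} \simeq \lim_\ii (N \otimesA \AA/\xx^\ii \otimesA M)$ and check the triangle identities after applying $\AA/\xx^\ii \otimesA (-)$. For that you need $N \otimesA \AA/\xx^\ii \simeq \ldual(\AA/\xx^\ii \otimesA M)$, i.e.\ the right-module version $\EEc \simeq \lim_\ii \RMod_{\AA/\xx^\ii}$ of \cref{ordinary-limit}.

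This is where the $\Et$-structure on $\EE$ is used. Via the braiding, right $\AA/\xx^\ii$-modules are complete (\cref{auto-complete}), and right $\Ax$-acyclics agree with $\Ax$-acyclics.

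The paper's proof is exactly this route. Since the ordinary-limit equivalence holds for left and right modules simultaneously, it restricts to left dualizable objects, giving $\EEc^\dbl \simeq \lim_\ii \cMod^\dbl_{\AA/\xx^\ii}$. Then \cref{auto-complete} and \cref{mod-like-rigid} identify these with compact $\AA/\xx^\ii$-modules.
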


\begin{proof}
	Recall from \cref{auto-complete} that any $\AA/\xx^\ii$-module is $\xx$-complete, so the equivalence from \cref{ordinary-limit} can be written as
	\[
		\EEc \iso \lim_\ii \cMod_{\AA/\xx^\ii}.
	\]
	Since $\EE$ is assumed to be $\Et$-monoidal, the same is true for right modules.
	Thus, we see that it restricts to an equivalence on left dualizable objects, giving
	\[
		\EEc^\dbl \iso \lim_\ii \cMod_{\AA/\xx^\ii}^\dbl.
	\]
	Using \cref{auto-complete} in reverse, we see that dualizable $\xx$-complete $\AA/\xx^\ii$-modules are the same as dualizable $\AA/\xx^\ii$-modules, which by \cref{mod-like-rigid} are the same as compact $\AA/\xx^\ii$-modules, giving the required equivalence.
\end{proof}

This also allows to compute the compactly generated limit.

\begin{prop}\label{lim-w}
	The limit computed in $\PrLstw$ is
	\[
		\Ind(\EEc^\dbl) \iso \limw[\ii] \Mod_{\AA/\xx^\ii}.
	\]
\end{prop}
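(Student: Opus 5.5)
The plan is to use the standard description of limits in $\PrLstw$: the functor $\Ind\colon \Cat^{\perf} \to \PrLstw$ is an equivalence (with inverse $(-)^\omega$). Strongly continuous functors between compactly generated categories are exactly those preserving compact objects, i.e.\ the Ind-extensions of exact functors between the idempotent complete small categories of compacts. Hence the compactly generated limit is computed as
\[
	\limw[\ii] \Mod_{\AA/\xx^\ii} \simeq \Ind\big(\lim_\ii \Mod_{\AA/\xx^\ii}^\omega\big),
\]
where the inner limit is taken in $\Cat^{\perf}$. The first step is therefore to check that the transition functors in the diagram $(\Mod_{\AA/\xx^\ii})$ are strongly continuous, so that the diagram really lives in $\PrLstw$. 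The second step is to note that the limit in $\Cat^{\perf}$ agrees with the limit in $\Cat$, since idempotent complete stable categories and exact functors are closed under limits in $\Cat$.

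For the first step, the transition maps are extensions of scalars along the algebra maps $\AA/\xx^\ii \to \AA/\xx^\jj$. Such a map is strongly continuous because its right adjoint, restriction of scalars, preserves colimits. Thus it preserves compacts, and every $\Mod_{\AA/\xx^\ii}$ is compactly generated by the free modules on the $C_\alpha$.

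With this in place, \cref{lim-dbl} identifies $\lim_\ii \Mod_{\AA/\xx^\ii}^\omega$ with $\EEc^\dbl$. Here the limit of the compact parts is the full subcategory of the ordinary limit $\lim_\ii \Mod_{\AA/\xx^\ii}$ on compatible families of compacts, which is exactly the form used in \cref{lim-dbl}. Taking $\Ind$ gives the claimed equivalence $\Ind(\EEc^\dbl) \simeq \limw[\ii] \Mod_{\AA/\xx^\ii}$.

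The one point I expect to require care is that the equivalence $\Ind\colon\Cat^{\perf}\simeq\PrLstw$ really computes limits in the non-full subcategory $\PrLstw$. I would verify this directly from the universal property. A cone in $\PrLstw$ from a compactly generated $\DD$ is a compatible family of compact-preserving functors. Restricting to compacts, this is the same as a compatible family of exact functors $\DD^\omega \to \Mod_{\AA/\xx^\ii}^\omega$. These factor uniquely through $\lim_\ii\Mod_{\AA/\xx^\ii}^\omega$, and the resulting functor extends uniquely to a strongly continuous functor out of $\Ind$ of it. The projection maps are strongly continuous since they send compacts to compacts by construction.
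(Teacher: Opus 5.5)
Your proposal is correct and follows essentially the same route as the paper: compute the limit in $\Catperf$ (which agrees with the limit in $\Cat$), identify it with $\EEc^\dbl$ via \cref{lim-dbl}, and transport along the equivalence $\Ind\colon \Catperf \simeq \PrLstw$. Your extra checks are sound but not strictly needed. Strong continuity of the transition maps was already available from earlier sections. The direct universal-property verification is also unnecessary, since an equivalence of categories automatically preserves limits.
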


\begin{proof}
	Since the forgetful from $\Catperf$ to $\Cat$ preserves all limits, the limit from \cref{lim-dbl} can be computed in $\Catperf$.
	Thus, the result follows from the equivalence (c.f.\ \cite[Lemma 5.3.2.9]{HA})
	\[
		\Ind\colon \Catperf \adj \PrLstw \noloc (-)^\omega.
	\]
\end{proof}

This also has the following consequence.

\begin{prop}\label{compact-dbl}
	Any compact $\xx$-complete object of $\EE$ is dualizable, that is
	\[
		\EEc^\omega \quad\subset\quad \EEc^\dbl.
	\]
\end{prop}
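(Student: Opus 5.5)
The plan is to deduce this from the fact that $\EEc$ is compactly generated by dualizable objects. Dualizable objects form a thick subcategory (closed under finite colimits and retracts), so it then suffices to check that every compact object of $\EEc$ is a retract of a finite colimit of the generators.

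\textbf{Step 1: produce dualizable compact generators.} Recall the proof of \cref{compact-gen}. The projection $\EEc \to \Mod_{\Ax}$ is conservative and has a strongly continuous left adjoint $L$. Hence the objects $L(\Ax \otimesA C_\alpha)$ are compact generators of $\EEc$. I would show these are dualizable in $\EEc$, i.e.\ lie in $\EEc^\dbl$.

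A cleaner route uses \cref{lim-dbl}. Under $\EEc \simeq \lim_\ii \Mod_{\AA/\xx^\ii}$, an object is dualizable exactly when each component is a compact $\AA/\xx^\ii$-module. Here it is simpler to use the generators $\xc{C_\alpha}$ instead of $L(\Ax \otimesA C_\alpha)$. Each $\xc{C_\alpha}$ has components $\AA/\xx^\ii \otimesA C_\alpha$, which are compact $\AA/\xx^\ii$-modules since $C_\alpha$ is compact, hence dualizable by rigidity. So $\xc{C_\alpha} \in \EEc^\dbl$ by \cref{lim-dbl}.

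\textbf{Step 2: show the $\xc{C_\alpha}$ are compact generators of $\EEc$.}
\begin{itemize}
\item \emph{Generation.} Completion is a left adjoint to the inclusion, so $\hom_{\EEc}(\xc{C_\alpha}, M) \simeq \hom_\EE(C_\alpha, M)$ for complete $M$. Since the $C_\alpha$ generate $\EE$, they detect zero objects, and so the $\xc{C_\alpha}$ generate $\EEc$.
\item \emph{Compactness.} This is the main obstacle. Filtered colimits in $\EEc$ are completions of colimits in $\EE$, and $\hom_\EE(C_\alpha, \xc{(\colim M_k)})$ need not a priori commute with the colimit. I would instead use that dualizable objects in a dualizable presentable category whose unit is compact are compact, via $\hom(D, -) \simeq \hom(\one, D^\vee \otimes -)$. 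So I would try to show that the unit $\xc{\AA}$ is compact in $\EEc$.
\end{itemize}

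\textbf{Step 3: compactness of $\xc{\AA}$.} I would express $\xc{\AA}$ as a finite colimit of objects of the form $L(\Ax \otimesA C)$, which are compact because $L$ is strongly continuous. Alternatively, I would use that $\EEc^\dbl$ contains the compact generators $L(\Ax\otimesA C_\alpha)$. These are dualizable because, under \cref{lim-dbl}, their components $\AA/\xx^\ii \otimesA L(\ldots)$ are compact; this holds since $\AA/\xx^\ii \otimesA -$ restricted along $\Ax$ preserves compacts by \cref{quotient-compact} and \cref{thick}.

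With compact generators that are dualizable, every compact object of $\EEc$ is a retract of a finite colimit of them. Since $\EEc^\dbl$ is thick, this gives $\EEc^\omega \subset \EEc^\dbl$. I expect the verification that $L(\Ax \otimesA C_\alpha)$ has compact components, that is, computing $\AA/\xx^\ii \otimesA L(-)$ via \cref{thick}, to be the delicate point.
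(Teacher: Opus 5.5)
Step 2 fails, and with it your main line of argument: the unit $\xc{\AA}$ is in general \emph{not} compact in $\EEc$, so the $\xc{C_\alpha}$ are not compact either. Take $\EE=\Mod_{\ZZ}$ with $x\mapsto p$. Then $\EEc$ is the category of derived $p$-complete modules, $\xc{\AA}=\Zp$, and $\hom_{\EEc}(\Zp,M)\simeq M$ for complete $M$. The sequence $\Zp\xrightarrow{p}\Zp\xrightarrow{p}\cdots$ has colimit $(\Qp)^\wedge_p=0$ in $\EEc$, yet $\colim_k\hom_{\EEc}(\Zp,\Zp)\simeq\Qp\neq 0$. More conceptually, a compact unit would force $\EEc^\dbl\subset\EEc^\omega$. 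But \cref{verdier-complete-compact-dualizable} is about the quotient $\EEc^\dbl/\EEc^\omega$, which is nonzero in examples like this precisely because $\xc{\AA}\notin\EEc^\omega$. So the first option in Step 3, writing $\xc{\AA}$ as a finite colimit of compacts, is impossible. The thick-closure argument with the generators $\xc{C_\alpha}$ is also unjustified: compact objects are only guaranteed to be retracts of finite colimits of \emph{compact} generators.

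Your second option in Step 3 does give a correct proof, once you drop the detour through $\xc{\AA}$ and replace the vague appeal to \cref{quotient-compact}. Write $L=\xc{(-)}\circ G$, where $G$ is the left adjoint of $\Ax\otimesA-\colon\EE\to\Mod_{\Ax}$, which exists by the tower Mittag-Leffler condition. The generators $L(\Ax\otimesA C_\alpha)$ from \cref{compact-gen} are compact, since $L$ has a colimit-preserving right adjoint. Since $G$ preserves compacts, $G(\Ax\otimesA C_\alpha)\in\EE^\omega=\EE^\dbl$ by rigidity. Completion is monoidal by \cref{Ec-E2-monoidal}, so $L(\Ax\otimesA C_\alpha)\in\EEc^\dbl$. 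Since $\EEc^\dbl$ is thick, this gives $\EEc^\omega\subset\EEc^\dbl$.

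The paper argues object by object instead. The projections $\EEc\to\Mod_{\AA/\xx^\ii}$ are strongly continuous by \cite[Proposition 5.5(i)]{EfimovLimit}, so a compact $M$ has compact components $\AA/\xx^\ii\otimesA M$, and \cref{lim-dbl} then makes $M$ dualizable. Your repaired route avoids both Efimov's strong continuity and \cref{lim-dbl}, at the cost of using the compact generation from \cref{compact-gen} and the monoidality of completion.
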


\begin{proof}
	Recall the identification of the ordinary limit
	\[
		\EEc \iso \lim_\ii \Mod_{\AA/\xx^\ii},
		\qquad M \mapstoo (M \otimesA \AA/\xx^\ii)_\ii
	\]
	from \cref{ordinary-limit}.
	Now, since the tower is strong Mittag-Leffler (\cref{ML-quotient-Sp}), the projections are strongly continuous, hence preserve compact objects, by \cite[Proposition 5.5(i)]{EfimovLimit}.
	Thus, if $M \in \EEc$ is compact, then each of the modules $M \otimesA \AA/\xx^\ii$ is compact in $\Mod_{\AA/\xx^\ii}$, hence $M$ is dualizable by \cref{lim-dbl}.
\end{proof}

We now summarize the comparison between the different limits computed in
\[
	\PrLstw \quad\subset\quad \PrLstdbl \quad\subset\quad \PrLst,
\]
and the different maps between them.

\begin{defn}
	We denote the assembly map for the first inclusion by
	\[
		\mdef{\varphi}\colon \Ind(\EEc^\dbl) \too \NucE
		\qin \PrLstdbl.
	\]
	We also denote the Ind-completion of the fully faithful functor from \cref{compact-dbl} by
	\[
		\mdef{j}\colon \EEc \hoook \Ind(\EEc^\dbl)
		\qin \PrLstw.
	\]
\end{defn}

\begin{prop}\label{three-lims}
	The ordinary, dualizable and compactly generated limits of $(\Mod_{\AA/\xx^\bullet})$, together with the assembly maps between them, fit into the solid commutative square below.
	The square is vertically left adjointable, and the functors $j$, $\varphi$ and $\iota$ are fully faithful.
	% https://q.uiver.app/#q=WzAsNCxbMCwwLCJcXEluZChcXEVFY15cXGRibCkiXSxbMSwwLCJcXE51Y0UiXSxbMSwxLCJcXEVFYyJdLFswLDEsIlxcRUVjIl0sWzMsMiwiIiwwLHsibGV2ZWwiOjIsInN0eWxlIjp7ImhlYWQiOnsibmFtZSI6Im5vbmUifX19XSxbMCwxLCJcXHZhcnBoaSIsMCx7InN0eWxlIjp7InRhaWwiOnsibmFtZSI6Imhvb2siLCJzaWRlIjoidG9wIn19fV0sWzEsMiwicSJdLFswLDNdLFsyLDEsIlxcaW90YSIsMCx7Im9mZnNldCI6LTEsImN1cnZlIjotMSwic3R5bGUiOnsidGFpbCI6eyJuYW1lIjoiaG9vayIsInNpZGUiOiJ0b3AifSwiYm9keSI6eyJuYW1lIjoiZGFzaGVkIn19fV0sWzMsMCwiaiIsMCx7Im9mZnNldCI6LTEsImN1cnZlIjotMSwic3R5bGUiOnsidGFpbCI6eyJuYW1lIjoiaG9vayIsInNpZGUiOiJ0b3AifSwiYm9keSI6eyJuYW1lIjoiZGFzaGVkIn19fV0sWzksNywiIiwwLHsibGV2ZWwiOjEsInN0eWxlIjp7Im5hbWUiOiJhZGp1bmN0aW9uIn19XSxbOCw2LCIiLDAseyJsZXZlbCI6MSwic3R5bGUiOnsibmFtZSI6ImFkanVuY3Rpb24ifX1dXQ==
	\[\begin{tikzcd}[sep=large]
		{\Ind(\EEc^\dbl)} & \NucE \\
		\EEc & \EEc
		\arrow["\varphi", hook, from=1-1, to=1-2]
		\arrow[""{name=0, anchor=center, inner sep=0}, from=1-1, to=2-1]
		\arrow[""{name=1, anchor=center, inner sep=0}, "q", from=1-2, to=2-2]
		\arrow[""{name=2, anchor=center, inner sep=0}, "j", shift left, curve={height=-6pt}, dashed, hook, from=2-1, to=1-1]
		\arrow[equals, from=2-1, to=2-2]
		\arrow[""{name=3, anchor=center, inner sep=0}, "\iota", shift left, curve={height=-6pt}, dashed, hook, from=2-2, to=1-2]
		\arrow["\dashv"{anchor=center}, draw=none, from=2, to=0]
		\arrow["\dashv"{anchor=center}, draw=none, from=3, to=1]
	\end{tikzcd}\]
\end{prop}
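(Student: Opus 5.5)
We first pin down the square and its maps. The left vertical map is the assembly map $\limw \Mod_{\AA/\xx^\ii} \to \lim \Mod_{\AA/\xx^\ii}$. Under \cref{lim-w} and \cref{ordinary-limit} it is the functor $\Ind(\EEc^\dbl) \to \EEc$ that Ind-extends the inclusion $\EEc^\dbl \subset \EEc$; call it $p$. The top map $\varphi$ is the assembly map from the compactly generated limit to the dualizable limit. The right map is $q$. The square commutes because the assembly maps compose: the composite $\PrLstw \subset \PrLstdbl \subset \PrLst$ gives the assembly map $p$, and this equals $q\varphi$ by the universal properties of the three limits.

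Next we show $j \dashv p$ with $j$ fully faithful. By \cref{compact-dbl} the compacts of $\EEc$ lie in $\EEc^\dbl$. By \cref{compact-gen} (since $\EE$ is compactly generated) $\EEc \simeq \Ind(\EEc^\omega)$, so $j = \Ind(\EEc^\omega \hookrightarrow \EEc^\dbl)$ is fully faithful. Its right adjoint is restriction of Yoneda, which on $\Ind(\EEc^\dbl)$ sends $\colim X_\alpha$ to $\colim X_\alpha$ computed in $\EEc$. This is exactly $p$, so $j \dashv p$. We also record $pj \simeq \Id$.

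For $\iota \dashv q$ with $\iota$ fully faithful we invoke \cref{iota}. Vertical left adjointability means the Beck--Chevalley map $\iota \to \varphi j$ (mate of the commuting square) is an equivalence. Both functors preserve colimits, and $\EEc$ is generated under colimits by $\EEc^\omega$, so it suffices to check on a compact $M$. Following Efimov's description, we expect $\iota$ on an object whose projections are compact to agree with the canonical compact object of the dualizable limit: by \cite[Proposition 5.5]{EfimovLimit} the compacts of $\NucE$ are tuples of compact modules, and $\varphi$ identifies $\lim \Mod_{\AA/\xx^\ii}^\omega$ with these. Concretely, we compute $\hom(\iota M, N) \simeq \hom(M, qN)$ and $\hom(\varphi jM, N) \simeq \hom(jM, \varphi^\RR N) $. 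Since $jM$ is compact in $\Ind(\EEc^\dbl)$, this reduces to comparing mapping spectra into $N$ from the object $M$ viewed in $\lim \Mod^\omega$, and we check that the comparison is the identity using $q\varphi = p$ and $pj = \Id$.

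Finally, $\varphi$ is fully faithful because $\varphi$ is strongly continuous between dualizable categories and $\Ind(\EEc^\dbl)$ is compactly generated. It therefore suffices that $\varphi$ is fully faithful on compacts, i.e.\ on $\EEc^\dbl = \lim \Mod^\omega_{\AA/\xx^\ii}$. For compact $X, Y$ we have $\hom_{\NucE}(\varphi X, \varphi Y) \simeq \lim_\ii \hom(X_\ii, Y_\ii)$ by Efimov's description of maps out of compacts in a Mittag-Leffler dualizable limit, and this equals $\hom_{\EEc}(X, Y)$ by \cref{lim-dbl}.

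The main obstacle is justifying this last hom formula in $\NucE$ (equivalently, the adjointability check), which needs Efimov's explicit model of the dualizable limit under the strong Mittag-Leffler condition (\cref{ML-quotient-Sp}). If a direct reference is lacking, we would instead deduce it from $q\varphi = p$: on compacts, $\hom(\varphi X, \varphi Y)$ maps to $\hom(qX, qY) = \hom(X, Y)$. We would then show this map is an isomorphism using that $\varphi X \simeq \iota X$ for $X \in \EEc^\omega$, where $\iota$ is fully faithful, and extend to all of $\EEc^\dbl$ by writing dualizable objects as retracts of compact ones in $\Ind$.
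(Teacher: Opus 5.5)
Commutativity from composing assembly maps, $j \dashv p$ via Ind-extension, and $\iota$ from \cref{iota} are all fine. However, the two substantive claims are not proved.

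\textbf{Adjointability.} The identities $q\varphi \simeq p$ and $pj \simeq \Id$ only show that $\varphi j$ is a colimit-preserving \emph{section} of $q$. Since $j$ is fully faithful, the mate $\iota \to \varphi j$ (adjoint to $\Id \simeq q\varphi j$) is an equivalence exactly when $\varphi j$ lands in the essential image of $\iota$. A section of $q$ need not be its left adjoint, so your hom comparison, which uses only these two identities, is circular. The missing input is the universal property of $\limdual$:
\begin{itemize}
\item The cone $\EEc \to (\Mod_{\AA/\xx^\bullet})$ lies in $\PrLstw$, by \cref{compact-gen} and strong continuity of the projections as in \cref{compact-dbl}.
\item $j$ is its factorization through $\limw$, so $\varphi j$ factors it in $\PrLstdbl$.
\item $\iota$ also factors it. It is strongly continuous, since its right adjoint $q$ preserves colimits. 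Writing $\pi_\bullet$ and $\pi^{\dbl}_\bullet$ for the projections from $\EEc$ and $\NucE$, we have $\pi^{\dbl}_\bullet \iota \simeq \pi_\bullet q \iota \simeq \pi_\bullet$.
\end{itemize}
Hence $\iota \simeq \varphi j$, which is exactly the paper's argument.

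\textbf{Full faithfulness of $\varphi$.} Your reduction to compact objects is correct. However, the formula $\hom_{\NucE}(\varphi X,\varphi Y) \simeq \lim_\ii \hom(X_\ii,Y_\ii)$ is left to an unspecified ``description''. Your fallback fails on two counts. First, objects of $\EEc^\dbl$ are generally not retracts of objects of $\EEc^\omega$: the latter is idempotent complete, and the unit $\xc{\AA}$ is dualizable but typically not compact in $\EEc$, which is why \cref{verdier-complete-compact-dualizable} has content. Second, the fallback presupposes $\varphi X \simeq \iota X$, which was not established.

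The idea you are missing is that $\PrLstw \hookrightarrow \PrLstdbl$ has the right adjoint $\Ind((-)^\omega)$. This right adjoint preserves limits and fixes the compactly generated $\Mod_{\AA/\xx^\ii}$. It therefore identifies $\limw$ with $\Ind(\NucE^\omega)$, and $\varphi$ with the inclusion of the subcategory generated by compacts. This also gives $\NucE^\omega \simeq \lim_\ii \Mod^\omega_{\AA/\xx^\ii}$, so $q$ is fully faithful on $\NucE^\omega$. With that fact, your compact-object approach to adjointability can also be completed: for $M \in \EEc^\omega$, both $\iota M$ and $\varphi j M$ are compact in $\NucE$ with $q\iota M \simeq M \simeq q\varphi j M$, hence they are isomorphic.
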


\begin{proof}
	The identification of the ordinary limit is \cref{ordinary-limit}, the dualizable limit is by \cref{nuc-def}, and the compactly generated limit is \cref{lim-w}.
	The commutativity of the solid square is the fact that assembly maps compose.
	
	To see that $\varphi$ is fully faithful, recall that $\PrLstw \hookrightarrow \PrLstdbl$ has a right adjoint, given by $\Ind((-)^\omega)$, i.e.\ the full subcategory generated by the compact objects (see for example \cite[Observation 1.28 and Corollary 3.15]{DualCat} and \cite[Proof of Theorem 1.91]{EfimovK}).
	Since it is a right adjoint, it takes the dualizable limit to the compactly generated limit, and exhibits it as the inclusion of the full subcategory generated by the compact objects.

	The fact that the left adjoint $j$ is fully faithful is \cref{compact-dbl}, and the fact that $\iota$ is fully faithful is \cref{iota}.

	The adjointability of the square follows from the fact that both $\iota$ and $j$ were constructed by the observation that the universal cone
	\[
		\EEc \too (\Mod_{\AA/\xx^\bullet})
	\]
	exhibiting the source as the ordinary limit of the target, is a cone in $\PrLstw$, hence also in $\PrLstdbl$, so it factors through the compactly generated and dualizable limits.
\end{proof}

Now, we turn our attention to the Verdier quotient of the inclusion of compacts in dualizables.
\cref{compact-dbl} provides one such inclusion, but in fact, there is a second inclusion.
Consider the inclusion (see \cref{non-complete-modules})
\[
	\EEc = \cMod_{\xc{\AA}} \quad\subset\quad \Mod_{\xc{\AA}}.
\]
We show that any compact $\xc{\AA}$-module is itself $\xx$-complete, and dualizable among them.

\begin{prop}\label{compact-mod-complete-dualizable}
	There is an inclusion
	\[
		\Mod_{\xc{\AA}}^\omega \quad\subset\quad \EEc^\dbl.
	\]
\end{prop}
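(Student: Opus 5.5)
We need to show that every compact left $\xc{\AA}$-module $M$ has $\xx$-complete underlying object and is left dualizable in $\EEc$. The idea is to prove both properties on generators, i.e.\ for the free modules $\xc{\AA} \otimesA C_\alpha$, and then use that both properties are stable under finite colimits, shifts and retracts. Since $\Mod_{\xc{\AA}}$ is compactly generated by the $\xc{\AA} \otimesA C_\alpha$, its compact objects form the thick subcategory they generate. The $\xx$-complete objects form a thick (indeed stable, limit-closed) subcategory of $\EE$. The dualizable objects of $\EEc$ are also thick. So the whole proof reduces to the case $M = \xc{\AA} \otimesA C_\alpha$.

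For completeness, I would argue as follows. $C_\alpha$ is dualizable in $\EE$ by \assref{assum-rigid-cg}, so $(-)\otimesA C_\alpha$ commutes with limits. Using the formula from \cref{ordinary-limit},
\[
	\xc{\AA} \otimesA C_\alpha \simeq \lim_\ii (\AA/\xx^\ii \otimesA C_\alpha).
\]
Each term $\AA/\xx^\ii \otimesA C_\alpha$ is a left $\AA/\xx^\ii$-module, hence $\xx$-complete by \cref{auto-complete}. Complete objects are closed under limits, since they are the local objects of \cref{ordinary-limit}. So $\xc{\AA}\otimesA C_\alpha$ is complete.

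For dualizability in $\EEc$, I see two routes. The first uses the $\Et$-monoidal structure of \cref{Ec-E2-monoidal}, in which the unit of $\EEc$ is $\xc{\AA}$. Since completion is monoidal, $\xc{C_\alpha} = \xc{\AA \otimesA C_\alpha}$ is dualizable in $\EEc$ as the image of a dualizable object. From the computation above, $\xc{C_\alpha} \simeq \xc{\AA}\otimesA C_\alpha$, which identifies the generator with $\xc{C_\alpha}$. The second route, which is more in line with the paper, applies \cref{lim-dbl}. It suffices that each image $\AA/\xx^\ii \otimesA (\xc{\AA} \otimesA C_\alpha) \simeq \AA/\xx^\ii \otimesA C_\alpha$ is compact in $\Mod_{\AA/\xx^\ii}$, and it is, being a free module on a compact object. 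The identification $\AA/\xx^\ii \otimesA \xc{\AA} \simeq \AA/\xx^\ii$ was established in the proof of \cref{quotient-compact}. I would use this second route, since \cref{lim-dbl} is exactly phrased in terms of the projections $M \mapsto (\AA/\xx^\ii\otimesA M)$ and needs no separate monoidality bookkeeping.

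The main subtlety is the left/right bookkeeping. The identification $\AA/\xx^\ii \otimesA \xc{\AA} \simeq \AA/\xx^\ii$ was shown with $\xc{\AA}$ on the other side, so I would invoke the braiding of $\EE$ (available by \assref{assum-rigid-cg}) to swap sides, as in \cref{auto-complete}. I would also need to confirm that the equivalence of \cref{lim-dbl} is compatible with the functors $M\mapsto \AA/\xx^\ii \otimesA M$ used here. Once the generators are handled, the thick-subcategory closure completes the argument.
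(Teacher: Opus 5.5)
Your proof is correct, and its skeleton is exactly the paper's. The paper likewise reduces to the generators $\xc{\AA}\otimesA C_\alpha$ by thickness. It then uses dualizability of $C_\alpha$ to get $\xc{\AA}\otimesA C_\alpha \simeq \lim_\ii(\AA/\xx^\ii\otimesA C_\alpha) \simeq \xc{C_\alpha}$, which gives completeness.

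The only divergence is the dualizability step. The paper takes your first route: $\xc{\AA}\otimesA C_\alpha$ is the completion of $C_\alpha$, and completion is $\Et$-monoidal by \cref{Ec-E2-monoidal}. It therefore carries the dualizable $C_\alpha$ to a dualizable object of $\EEc$, which is a one-line argument.

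Your second route via \cref{lim-dbl} also works; it is the same mechanism the paper uses in \cref{compact-dbl}. It does not actually avoid monoidality, though. \cref{lim-dbl} itself uses the $\Et$-structure (to make right $\AA/\xx^\ii$-modules complete) and \cref{mod-like-rigid}.

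The left/right bookkeeping you worry about is also unnecessary. The equivalence in \cref{lim-dbl} is the restriction of $u$, and completion is $u^\RR u$. Since the counit $u u^\RR \to \Id$ is an isomorphism by \cref{bML-ordinary-limit}, you get $u(\xc{C_\alpha}) \simeq u(C_\alpha)$. Hence $\AA/\xx^\ii\otimesA \xc{C_\alpha}\simeq \AA/\xx^\ii\otimesA C_\alpha$ as $\AA/\xx^\ii$-modules directly, with no need for the braiding or for the computation in \cref{quotient-compact}.
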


\begin{proof}
	Recall that $\{\xc{\AA} \otimesA C_\alpha\}$ are compact generators of $\xc{\AA}$-modules, so that $\Mod_{\xc{\AA}}^\omega$ is the thick subcategory generated by them.
	Since dualizable objects form a thick subcategory, it suffices to show that each $\xc{\AA} \otimesA C_\alpha$ is $\xx$-complete and left dualizable in $\EEc$.

	Since $\EE$ is rigid, $C_\alpha$ is left dualizable in $\EE$, so tensoring with it commutes with limits and we get
	\[
		\xc{\AA} \otimesA C_\alpha
		\simeq (\lim_\ii \AA/\xx^\ii) \otimesA C_\alpha
		\simeq \lim_\ii (\AA/\xx^\ii \otimesA C_\alpha)
		\simeq \xc{C_\alpha}
	\]
	That is, $\xc{\AA} \otimesA C_\alpha$ is the $\xx$-completion of $C_\alpha$, which in particular shows it is $\xx$-complete, and since $\xx$-completion is monoidal (\cref{Ec-E2-monoidal}), we also learn that it is dualizable.
\end{proof}

The following argument is similar to \cite[Proposition 4.15]{DescVan}.

\begin{prop}\label{verdier-complete-compact-dualizable}
	The Verdier quotient of the inclusions
	\[
		\EEc^\omega \hoook \EEc^\dbl,
		\qquad \Mod_{\xc{\AA}}^\omega \hoook \EEc^\dbl
	\]
	are $\Ax$-acyclic.
	That is, for every $M \in \EEc^\dbl$, the tensor product $\Ax \otimesA M$ is compact both in $\EEc$ and in $\xc{\AA}$-modules.
\end{prop}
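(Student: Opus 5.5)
The plan is a compact-tensor-dualizable argument, as in \cite[Proposition 4.15]{DescVan}. Fix $M \in \EEc^\dbl$. First note that $\Ax \otimesA M$ lies in $\EEc^\dbl$: by \cref{auto-complete} it is $\xx$-complete, being a left $\Ax$-module; and since $\Ax$ is dualizable in $\EE$ (\cref{quotient-dualizable}), hence in $\EEc$ after completion, it is a tensor product of two dualizable objects of the $\Et$-monoidal category $\EEc$ (\cref{Ec-E2-monoidal}). So it suffices to show that $\Ax \otimesA M$ is compact in $\EEc$ and in $\Mod_{\xc{\AA}}$. For the Verdier-quotient formulation, use that the thick subcategories $\EEc^\omega$ and $\Mod_{\xc{\AA}}^\omega$ of $\EEc^\dbl$ are closed under $\Ax \otimesA (-)$ (and under right tensoring, via the braiding). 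Hence $\Ax \otimesA -$ descends to the quotient and vanishes there.

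For compactness, rewrite $\Ax \otimesA M$ as the restriction of scalars along $\xc{\AA} \to \Ax$ of the $\Ax$-module $\Ax \otimesA M$. The latter is the image of $M$ under the projection $\EEc \simeq \lim_\ii \Mod_{\AA/\xx^\ii} \to \Mod_{\Ax}$ from \cref{ordinary-limit}. By \cref{lim-dbl}, dualizable objects of $\EEc$ are sent to compact objects of each $\Mod_{\AA/\xx^\ii}$, in particular of $\Mod_{\Ax}$. Then \cref{quotient-compact} says that restriction along $\xc{\AA} \to \Ax$ preserves compact objects, both as a functor $\Mod_{\Ax} \to \Mod_{\xc{\AA}}$ and as $\cMod_{\Ax} \to \EEc$. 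Since every $\Ax$-module is complete (\cref{auto-complete}), $\cMod_{\Ax} = \Mod_{\Ax}$, and this gives both compactness claims at once.

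The step needing the most care is identifying the underlying object of the restricted module with $\Ax \otimesA M$. There are two points to check. First, the projection to $\Mod_{\Ax}$ really is $M \mapsto \Ax \otimesA M$, and no completion is needed there, since the result is already complete. Second, the left-versus-right module conventions are consistent: \cref{lim-dbl} concerns left dualizable left modules, while the statement tensors on the left. Here I would use the braiding from \assref{assum-rigid-cg} to pass between $\Ax \otimesA M$ and $M \otimesA \Ax$ wherever needed, exactly as in \cref{auto-complete}.
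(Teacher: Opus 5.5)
Your proposal is correct and follows essentially the same route as the paper: use the equivalence of \cref{lim-dbl} and project to the bottom factor to see that $\Ax \otimesA M$ is a compact $\Ax$-module, note that $\Mod_{\Ax} = \cMod_{\Ax}$ by \cref{auto-complete}, and conclude with the two restrictions of scalars along $\xc{\AA} \to \Ax$ from \cref{quotient-compact}. Your preliminary checks, that $\Ax \otimesA M$ is again dualizable and that $\Ax \otimesA (-)$ descends to the Verdier quotients, are bookkeeping the paper leaves implicit; they are harmless and in fact follow from the main claim itself.
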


\begin{proof}
	Recall the equivalence from \cref{lim-dbl}
	\[
		\EEc^\dbl \iso \lim_\ii \Mod_{\AA/\xx^\ii}^\omega,
		\qquad M \mapstoo (\AA/\xx^\ii \otimesA M)_\ii.
	\]
	Considering the bottom factor $\Ax = \AA/\xx$, this shows that $\Ax \otimesA M \in \Mod_{\Ax}^\omega$ is a compact $\Ax$-module.
	Since $\Mod_{\Ax} = \cMod_{\Ax}$ by \cref{auto-complete}, it is also compact as an $\xx$-complete $\Ax$-module.
	The result follows from \cref{quotient-compact} by the two restriction of scalars along $\xc{\AA} \to \Ax$.
\end{proof}

	\section{Continuity along chromatic completion}

\subsection{Redshift preliminaries}\label{subsec-redshift}

In the next subsection we will apply the results of the previous sections to the chromatic setting.
In doing so, we will use the purity theorem of Land--Mathew--Meier--Tamme \cite{purity}, and the redshift upper bound of Clausen--Mathew--Naumann--Noel \cite{DescVan} (see also \cite{redshiftQL} for another proof by the author).
In particular, we need to know that these hold for dualizable categories and continuous K-theory, and we record these mild extensions here.
We start with the following variant of the purity theorem.

\begin{prop}[{\cite[Purity Theorem]{purity}}]\label{purity-variant}
	Let $F\colon \CC \to \CC' \in \Catperf$ be an exact functor between perfect categories, whose essential image thickly generates $\CC'$.
	Fix some number $m \geq 1$, and assume that for every $M \in \CC$, the induced map on endomorphism spectra
	\[
		\End_\CC(M) \too \End_{\CC'}(F(M))
	\]
	is a $(\Tmn\oplus\Tm)$-local isomorphism.
	Then $F$ induces an isomorphism on $\Tm$-localized K-theory
	\[
		\LTm\KK(\CC) \iso \LTm\KK(\CC').
	\]
	More generally, for any dualizable $\DD \in \PrLstdbl$, the following is an isomorphism
	\[
		\LTm\Kcont(\Ind(\CC) \otimes \DD) \iso \LTm\Kcont(\Ind(\CC') \otimes \DD).
	\]
\end{prop}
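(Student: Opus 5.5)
The first statement is the Purity Theorem of Land--Mathew--Meier--Tamme essentially verbatim. If one wants to reduce it to the form in \cite{purity}, one passes to a set of generators. Since the essential image of $F$ thickly generates $\CC'$, we may replace $\CC$ by the full subcategory on finitely many (or, by filtered colimits, arbitrary) objects whose images generate. K-theory commutes with filtered colimits of perfect categories, and so does $\LTm$ after $\Tm$-localization. Via Morita theory this reduces to a map of ring spectra $\End_\CC(M) \to \End_{\CC'}(F(M))$. This map is a $(\Tmn\oplus\Tm)$-equivalence, and $\Perf$ of the target equals $\CC'$ up to idempotent completion when $F(M)$ generates. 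The theorem of \cite{purity} then gives that $\LTm\KK$ of the two rings agree. Idempotent completion does not change $\KK$ in nonnegative degrees, and nonconnective K-theory is Morita invariant anyway. This step is a citation rather than an argument.

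For the dualizable generalization, I would use Efimov's description of continuous K-theory. For $\DD$ dualizable, write $\DD$ as a retract in $\PrLstdbl$ of a compactly generated category. The Calkin construction gives $\Kcont(\EE') \simeq \Omega \KK(\mathrm{Calk}(\EE'))$. More directly, \cite{EfimovK} shows $\Kcont(\DD) \simeq \Omega\KK(\Ind(\hat{\mathcal{Y}}(\DD)^\omega)/\DD)$ where $\hat{\mathcal{Y}}\colon\DD\hookrightarrow \Ind(\DD^{\omega_1})$ is a strongly continuous embedding into a compactly generated category. Tensoring with $\Ind(\CC)$ is exact in $\PrLstdbl$, so we get a natural short exact sequence
\[
\Ind(\CC)\otimes\DD \hoook \Ind(\CC)\otimes\Ind(\DD^{\omega_1}) \too \Ind(\CC)\otimes\Ind(\DD^{\omega_1})/\DD,
\]
whose last two terms are compactly generated. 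The same holds for $\CC'$, naturally in $F$. Since $\Kcont$ is localizing and $\LTm$ is exact, it suffices to prove the statement for $\DD$ compactly generated. Here I expect the Calkin category to again be compactly generated by construction.

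For $\DD=\Ind(\mathcal{B})$ compactly generated, we have $\Ind(\CC)\otimes\DD \simeq \Ind(\CC\otimes\mathcal{B})$, with $\Kcont = \KK(\CC\otimes\mathcal{B})$. Apply the first part to $F\otimes\mathcal{B}\colon \CC\otimes\mathcal{B}\to\CC'\otimes\mathcal{B}$. Its image thickly generates, since it contains the generators $F(M)\otimes b$. On the generating objects $M\otimes b$, the endomorphism map is
\[
\End_\CC(M)\otimes\End_{\mathcal{B}}(b)\too \End_{\CC'}(F(M))\otimes\End_{\mathcal{B}}(b).
\]
This is a $(\Tmn\oplus\Tm)$-equivalence because $\LTm$ and $L_{\Tmn}$ are smashing on the telescopic side, so tensoring preserves such equivalences. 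Homs between pairs $M\otimes b$, $M'\otimes b'$ decompose similarly. The reduction to finitely many generators, via the category on $M\oplus M'$, lets the first part apply.

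The main obstacle is the precise reduction from dualizable to compactly generated $\DD$: identifying a natural localization sequence in $\PrLstdbl$ that is functorial in the first tensor factor and has compactly generated outer terms. I would take this from \cite[Theorem 1.87/Proposition 1.65]{EfimovK}, namely that $\Kcont(\DD)$ is computed as $\Omega\KK$ of the compact part of the Calkin category. Its naturality with respect to $\Ind(F)\otimes\Id$ follows because the construction $\DD\mapsto\mathrm{Calk}$ commutes with tensoring by a compactly generated category.
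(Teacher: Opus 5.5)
Your proposal is correct and has the same three-step structure as the paper's proof.

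\textbf{Step 1.} The first part is reduced to endomorphism rings and then to the purity theorem of \cite{purity}. Your phrasing suggests that a single $F(M)$ thickly generates $\CC'$, which need not hold. The colimit has to be run in parallel on both sides, $\Thick(M)\to\Thick(F(M))$, using $\colim\Thick(F(\CC_0))\simeq\CC'$, as the paper does.

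\textbf{Step 2.} The compactly generated case applies the first part to $F\otimes\Id$. It uses $\hom(M\otimes b, M'\otimes b')\simeq\hom(M,M')\otimes\hom(b,b')$, and the retract trick with $M\oplus M'$ to pass from generators to all objects. One justification here is wrong but unneeded: $L_{\Tm}$ is not smashing. What you need is only that $f$ is an $E$-equivalence iff $E\otimes f$ is an equivalence, so $E$-equivalences are stable under tensoring with any spectrum.

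\textbf{Step 3.} The only genuine difference is the reduction from general dualizable to compactly generated $\DD$.
\begin{itemize}
\item The paper notes that $\LTm\Kcont(\Ind(\CC)\otimes -)$ and $\LTm\Kcont(\Ind(\CC')\otimes -)$ are localizing invariants of dualizable categories, since tensoring with $\Ind(\CC)$ preserves short exact sequences. It then cites Efimov's theorem that such invariants are determined by their restriction to $\Catperf$.
\item You unwind that theorem by hand. You tensor the Calkin sequence $\DD\hookrightarrow\Ind(\DD^{\omega_1})\to\Ind(\DD^{\omega_1})/\DD$ with $\Ind(\CC)$ and $\Ind(\CC')$, observe that the outer terms are compactly generated, and conclude on fibers.
\end{itemize}
This is exactly the mechanism behind Efimov's theorem. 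Your route is more self-contained, at the cost of needing that the quotient is compactly generated. That is true, because the quotient functor is strongly continuous with a fully faithful right adjoint.

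Your ``main obstacle'' is not really one. Naturality in $F$ is automatic, since you tensor one fixed sequence for $\DD$ with $\Ind(F)$. Nothing about the Calkin construction commuting with tensor products is needed.
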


\begin{remark}\label{end-hom}
	We note that the condition of being a $(\Tmn\oplus\Tm)$-local isomorphism on endomorphisms is equivalent to being a $(\Tmn\oplus\Tm)$-local isomorphism on the hom spectrum for any pair of objects $M,N \in \CC$.
	The latter clearly implies the former.
	For the converse, note that
	\[
		\End(M \oplus N) \simeq \End(M) \oplus \hom(M,N) \oplus \hom(N,M) \oplus \End(N),
	\]
	so $\hom(M,N)$ is a retract of $\End(M \oplus N)$, and a retract of an isomorphism is an isomorphism.

	Since $\hom$ is exact in each coordinate separately, the latter condition can be checked where $M$ and $N$ range over thick generators of $\CC$.
\end{remark}

\begin{proof}[{Proof of \cref{purity-variant}}]
	We begin with the first part, without an extra category $\DD$.
	By the Schwede--Shipley theorem \cite{SchwedeShipley} (see for example \cite[Proposition 2.9]{Desc} for the following formulation), $\CC$ can be written as the filtered colimit
	\[
		\colim_{\CC_0 \in P} \CC_0 \iso \CC,
	\]
	where $P$ is the poset with respect to inclusion of subcategories $\CC_0 \subset \CC$ thickly generated by a single object, i.e.\ of the form $\CC_0 = \Thick(M)$.
	Moreover, by the assumption that the essential image of $F$ thickly generates $\CC'$, we conclude that we also have an isomorphism
	\[
		\colim_{\CC_0 \in P} \Thick(F(\CC_0)) \iso \CC'.
	\]
	Since K-theory commutes with filtered colimits, and $\Tm$-localization commutes with any colimits, we are therefore reduced to the case of a single $\CC_0$, i.e.\ showing that
	\[
		\LTm \KK(\CC_0) \too \LTm \KK(\Thick(F(\CC_0)))
	\]
	is an isomorphism.
	Choosing a generator $M$ for $\CC_0$, this map identifies with
	\[
		\LTm \KK(\End_\CC(M)) \too \LTm \KK(\End_{\CC'}(F(M))).
	\]
	By assumption, the map on endomorphism spectra is a $(\Tmn\oplus\Tm)$-local isomorphism, so this map is an isomorphism by \cite[Purity Theorem]{purity}.

	We now handle the case where $\DD$ is compactly generated.
	We claim that
	\[
		F \otimes \Id_{\DD^\omega}\colon \CC \otimes \DD^\omega \too \CC' \otimes \DD^\omega
	\]
	satisfies the conditions of the proposition.
	Note that $\CC \otimes \DD^\omega$ is thickly generated by objects of the form $M \otimes X$ for $M \in \CC$ and $X \in \DD^\omega$, and similarly for $\CC' \otimes \DD^\omega$.
	This shows that the essential image of $F \otimes \Id_{\DD^\omega}$ thickly generates $\CC' \otimes \DD^\omega$, verifying the first condition.
	For the second condition, recall that by \cref{end-hom}, we can check the condition on hom spaces for thick generators, that is, we need to show that for $M, N \in \CC$ and $X, Y \in \DD^\omega$, the map
	\[
		\hom_{\CC \otimes \DD^\omega}(M \otimes X, N \otimes Y) \too \hom_{\CC' \otimes \DD^\omega}(F(M) \otimes X, F(N) \otimes Y)
	\]
	is a $(\Tmn\oplus\Tm)$-local isomorphism.
	Note that this map is identified with
	\[
		\hom_\CC(M, N) \otimes \hom_{\DD^\omega}(X, Y) \too \hom_{\CC'}(F(M), F(N)) \otimes \hom_{\DD^\omega}(X, Y).
	\]
	Using \cref{end-hom} again, we see that $\hom_\CC(M, N) \too \hom_{\CC'}(F(M), F(N))$ is a $(\Tmn\oplus\Tm)$-local isomorphism, and the conclusion follows by compatibility with the tensor product.

	Finally, we prove the case of general dualizable categories.
	Consider the natural transformation
	\[
		\LTm\Kcont(\Ind(\CC) \otimes -) \too \LTm\Kcont(\Ind(\CC') \otimes -)
	\]
	of functors from $\PrLstdbl$ to $\SpTm$.
	Observe that tensoring with $\Ind(\CC)$ or $\Ind(\CC')$ preserves Verdier sequences by \cite[Theorem 2.2]{EfimovK}, so, since $\Kcont$ is a localizing invariant, both of these functors are localizing invariants.
	By \cite[Theorem 0.1]{EfimovK}, the category of localizing invariants of dualizable categories is equivalent to the category of localizing invariants of perfect categories, so we are done by the previous case.
\end{proof}

We now wish to extend the redshift upper bound.
To do this, we need to know that continuous K-theory is lax symmetric monoidal.
This is certainly well-known, but unfortunately we could not find a reference for the precise statement, and we include a proof for completeness.

\begin{prop}
	Continuous K-theory is given by
	\[
		\Kcont \simeq \hom_{\Motloc}(\Uloccont(\Sp), \Uloccont(-)).
	\]
	where
	\[
		\Uloccont\colon \PrLstdbl \too \Motloc
	\]
	is the continuous extension of the universal localizing invariant.
\end{prop}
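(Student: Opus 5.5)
We reduce to the corresponding statement for small stable categories, using Efimov's definition of continuous K-theory. Recall from \cite{EfimovK} that for $\DD \in \PrLstdbl$ one sets $\Kcont(\DD) := \Omega\KK(\mrm{Calk}^{\cont}(\DD))$. More conceptually, $\Uloccont$ is characterized as the unique localizing invariant on $\PrLstdbl$ extending $\Uloc\circ(-)^\omega$ on $\PrLstw$. Concretely, it sends $\DD$ to the fiber of $\Uloc(\Ind(\DD^{\omega_1})^\omega) \to \Uloc(\mrm{Calk}_{\omega_1}(\DD))$, using the short exact sequence $\DD \hookrightarrow \Ind(\DD^{\omega_1}) \to \mrm{Calk}$. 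Here $\DD$ embeds via the left adjoint $\hat{y}$ of the colimit functor, and this embedding is strongly continuous.

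The plan has three steps.

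\textbf{Step 1: the compactly generated case.} For $\DD = \Ind(\CC)$ we have $\Uloccont(\Ind(\CC)) \simeq \Uloc(\CC)$. Also $\Uloccont(\Sp) = \Uloc(\Sp^\omega)$. By the corepresentability theorem of Blumberg--Gepner--Tabuada,
\[
	\hom_{\Motloc}(\Uloc(\Sp^\omega), \Uloc(\CC)) \simeq \KK(\CC).
\]
So the claimed formula holds on $\PrLstw$.

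\textbf{Step 2: both sides are localizing invariants on $\PrLstdbl$.} The functor $\Uloccont$ is localizing by construction: \cite[Theorem 0.1]{EfimovK} identifies localizing invariants of dualizable categories with those of perfect categories. Composing with the exact functor $\hom_{\Motloc}(\Uloccont(\Sp), -)$ therefore gives a localizing invariant. The same theorem shows that a localizing invariant on $\PrLstdbl$ is determined by its restriction to $\PrLstw$. By definition, $\Kcont$ is the extension of $\KK$. Hence the two functors agree, since by Step 1 they agree on compactly generated categories, naturally in exact functors.

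\textbf{Step 3: naturality.} The identification in Step 1 must be natural, so that it is an equivalence of functors on $\PrLstw$ and not merely objectwise. This naturality is part of the BGT statement. Alternatively, one can verify directly that $\Kcont(\DD) \simeq \Omega\KK(\mrm{Calk})$. This follows by applying $\hom(\Uloc(\Sp^\omega), -)$ to the defining cofiber sequence of $\Uloccont(\DD)$, using that $\KK(\Ind(\DD^{\omega_1})^\omega)=0$ by an Eilenberg swindle, which holds because countable sums exist.

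The main subtlety I expect is Step 2, specifically the bookkeeping needed to ensure that the extension equivalence of \cite[Theorem 0.1]{EfimovK} is compatible with corepresentation. In other words, we must check that $\Motloc$ as a target accommodates $\Uloccont$ with the same mapping spectra. I would handle this by working with the explicit Calkin formula from Step 3, which avoids any abstract uniqueness argument.
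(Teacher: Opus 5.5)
Your proposal is correct and matches the paper's argument. BGT corepresentability gives the formula on compactly generated categories. Since $\hom_{\Motloc}(\Uloccont(\Sp), \Uloccont(-))$ is a localizing invariant restricting to $\KK$, the uniqueness of continuous extensions in \cite[Theorem 0.1]{EfimovK} identifies it with $\Kcont$. Your Step 3 Calkin computation is a valid but unnecessary alternative check. The ``subtlety'' you anticipate in Step 2 does not actually arise, because composing $\Uloccont$ with the exact functor $\hom_{\Motloc}(\Uloccont(\Sp), -)$ is automatically localizing.
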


\begin{proof}
	By \cite[Theorem 1.3]{BGT1}, we have
	\[
		\KK(-) \simeq \hom_{\Motloc}(\Uloc(\Perf(\SS)), \Uloc(-)).
	\]
	Recall that by construction $\Uloccont(\Sp) \simeq \Uloc(\Perf(\SS))$, and so
	\[
		\hom_{\Motloc}(\Uloccont(\Sp), \Uloccont(-))
	\]
	is a continuous extension of the right hand side.
	Recall that \cite[Theorem 0.1]{EfimovK} says that every localizing invariant has a \emph{unique} continuous extension, hence it is also the continuous extension of $\KK(-)$.
\end{proof}

\begin{prop}\label{Kcont-lax-sm}
	The functor
	\[
		\Kcont\colon \PrLstdbl \too \Sp
	\]
	is lax symmetric monoidal.
\end{prop}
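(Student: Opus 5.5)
The plan is to use the corepresentability formula from the previous proposition, $\Kcont \simeq \hom_{\Motloc}(\Uloccont(\Sp), \Uloccont(-))$, and to exhibit this as a composite of lax symmetric monoidal functors. First I equip $\Motloc$ with its presentably symmetric monoidal structure, for which $\Uloc\colon \Catperf \to \Motloc$ is symmetric monoidal; this is the result of Blumberg--Gepner--Tabuada on multiplicativity of the universal localizing invariant. Second, I show that the continuous extension $\Uloccont\colon \PrLstdbl \to \Motloc$ is symmetric monoidal, where $\PrLstdbl$ carries the Lurie tensor product (dualizable categories are closed under $\otimes$, and strongly continuous functors are too). Third, $\hom_{\Motloc}(\one, -)$, with $\one = \Uloccont(\Sp) \simeq \Uloc(\Perf(\SS))$ the monoidal unit, is lax symmetric monoidal, being right adjoint to the symmetric monoidal functor $\Sp \to \Motloc$, $X \mapsto X \otimes \one$. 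Composing gives the claim.

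For the second step I would use Efimov's description of $\Uloccont$. Each $\DD \in \PrLstdbl$ sits in a short exact sequence $\DD \to \Ind(\DD^{\omega_1}) \to \Ind(\text{Calk}_{\omega_1}(\DD))$ with compactly generated middle and right terms, and $\Uloccont(\DD)$ is defined as the fiber of $\Uloc$ of the induced map on compacts (up to the usual shift), independent of choices by \cite[Theorem 0.1]{EfimovK}. Alternatively, and more cleanly, I would argue by universal property. By \cite[Theorem 0.1]{EfimovK}, restriction along $\Ind\colon \Catperf \to \PrLstdbl$ induces an equivalence between localizing invariants on $\PrLstdbl$ and on $\Catperf$ valued in any presentable stable target. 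Apply this with target $\Motloc$ and, for fixed $\DD'$, to the two functors $\DD \mapsto \Uloccont(\DD \otimes \DD')$ and $\DD \mapsto \Uloccont(\DD) \otimes \Uloccont(\DD')$. Both are localizing in $\DD$, since tensoring with a dualizable category preserves short exact sequences \cite[Theorem 2.2]{EfimovK} and $\otimes$ in $\Motloc$ is exact in each variable. They agree on compactly generated $\DD$ after a symmetric argument in $\DD'$, using monoidality of $\Uloc$ and $\Ind(\CC)\otimes\Ind(\CC') \simeq \Ind(\CC \otimes \CC')$. Hence they agree everywhere.

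The main obstacle is promoting these objectwise identifications to a genuinely coherent symmetric monoidal structure, rather than just a binary natural equivalence. To handle this I would phrase the equivalence of \cite[Theorem 0.1]{EfimovK} as an equivalence of functor categories $\Fun^{\mathrm{loc}}(\PrLstdbl, \mcl{T}) \simeq \Fun^{\mathrm{loc}}(\Catperf, \mcl{T})$ for presentably symmetric monoidal $\mcl{T}$. I would then observe that it is compatible with Day convolution restricted to multi-variable functors localizing in each variable, by applying the theorem variable by variable. This exhibits $\Uloccont$ as the unique symmetric monoidal localizing extension of $\Uloc$. If this becomes too heavy, I would fall back on the concrete model: realize $\Uloccont$ as $\Uloc$ of the compactly generated category $\Ind(\DD^{\omega_1})$ modulo its Calkin part, and use that $(-)^{\omega_1}$ is lax symmetric monoidal on the relevant categories.
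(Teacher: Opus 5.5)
Your proposal is correct and matches the paper's proof, which writes $\Kcont$ as the composite of the symmetric monoidal functor $\Uloccont$ with the lax symmetric monoidal functor $\hom_{\Motloc}(\Uloccont(\Sp), -)$. The only difference is that the paper does not reprove the symmetric monoidality of $\Uloccont$. It cites it from \cite[Remark 4.15]{EfimovK} (building on \cite[Theorem 1.4]{BGT2}), so the coherence issue you raise in your second step is handled by that reference rather than by the variable-by-variable argument you sketch.
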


\begin{proof}
	As explained in \cite[Remark 4.15]{EfimovK} (building on \cite[Theorem 1.4]{BGT2}) the functor $\Uloccont$ is symmetric monoidal.
	In particular, $\Kcont$ is given by composing the symmetric monoidal functor $\Uloccont$ with the lax symmetric monoidal functor of hom from the unit, and the result follows.
\end{proof}

This then immediately implies the following extension.

\begin{prop}[{\cite[Theorem C]{DescVan}}]\label{vanishing}
	Let $\DD \in \PrLstdbl$ be $\Lnmf$-local (that is, a module over $\Lnmf\Sp$).
	Then for any $m \geq n{+}1$ we have
	\[
		\LTm\Kcont(\DD) = 0.
	\]
	Consequently, if $\CC \hookrightarrow \CC' \in \PrLstdbl$ is a fully faithful strongly continuous functor between dualizable categories whose Verdier quotient $\CC'/\CC$ is $\Lnmf$-local, then the induced map
	\[
		\Kcont(\CC) \too \Kcont(\CC')
	\]
	is a $\Tm$-local isomorphism for $m \geq n{+}1$.
\end{prop}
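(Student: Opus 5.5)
The proof has two parts: first the vanishing statement for a single $\Lnmf$-local dualizable category, then the consequence for a fully faithful strongly continuous functor via localization sequences.

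\textbf{Vanishing.} Since $\DD$ is a module over $\Lnmf\Sp$ in $\PrL$, and $\Lnmf\Sp$ is compactly generated, hence dualizable, the action map
\[
	\Lnmf\Sp \otimes \DD \too \DD
\]
exhibits $\DD$ as a module over the commutative algebra $\Lnmf\Sp \in \CAlg(\PrLstdbl)$. Here we use that $\Lnmf\Sp$ is an idempotent algebra, so the action map is a localization and is strongly continuous.

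By \cref{Kcont-lax-sm}, $\Kcont$ is lax symmetric monoidal. Hence $\Kcont(\Lnmf\Sp) = \KK((\Lnmf\Sp)^\omega)$ is an $\bbE_\infty$-ring, and $\Kcont(\DD)$ is a module over it. It therefore suffices to show $\LTm\KK((\Lnmf\Sp)^\omega) = 0$ for $m \geq n{+}1$, since a module over a ring with vanishing $\Tm$-localization also has vanishing $\Tm$-localization. This is exactly \cite[Theorem C]{DescVan}, the statement that $\KK(\Lnmf\SS)$ or $\KK$ of $\Lnmf$-local categories is $\Lnf$-local, i.e.\ $\Tm$-acyclic for $m \geq n{+}1$. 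The only real point to check is that the module structure over $\Lnmf\Sp$ upgrades to one in $\PrLstdbl$ compatibly with the monoidal structure, and I expect this is immediate from idempotence.

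\textbf{Consequence.} Let $\CC \hookrightarrow \CC'$ be fully faithful and strongly continuous. Its Verdier quotient $\CC'/\CC$ is dualizable, and
\[
	\CC \too \CC' \too \CC'/\CC
\]
is a short exact sequence in $\PrLstdbl$. Since $\Kcont$ is a localizing invariant, this gives a cofiber sequence
\[
	\Kcont(\CC) \too \Kcont(\CC') \too \Kcont(\CC'/\CC).
\]
Applying the exact functor $\LTm$ and using the first part, the third term vanishes, so the first map is a $\Tm$-local isomorphism.

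The main subtlety is checking that ``$\Lnmf$-local'' for the quotient yields a module structure in $\PrLstdbl$, not merely in $\PrL$. This follows because $\Lnmf\Sp \otimes \DD \to \DD$ is an equivalence onto $\DD$ when $\DD$ is local.
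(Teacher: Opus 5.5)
Your proof is correct and is the paper's argument: $\Kcont(\DD)$ is a module over $\Kcont(\Lnmf\Sp)\simeq\KK(\Lnmf\SS)$ by lax monoidality (\cref{Kcont-lax-sm}), the latter is $\Tm$-acyclic for $m\geq n{+}1$ by \cite[Theorem C]{DescVan}, and the second part is the localization sequence for $\Kcont$. Your check that the module structure lives in $\PrLstdbl$ (by idempotence, the action map is an equivalence) makes explicit what the paper leaves implicit. One wording fix: \cite[Theorem C]{DescVan} gives exactly this $\Tm$-acyclicity, not that $\KK(\Lnmf\SS)$ is $\Lnf$-local, which is false in general (e.g.\ $\KK(\QQ)=\KK(L_0^f\SS)$ is not $L_1^f$-local); only the acyclicity is used, so the argument is unaffected.
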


\begin{proof}
	For the first part, $\Kcont(\DD)$ is a module over $\Kcont(\Lnmf\Sp) \simeq \KK(\Lnmf\SS)$ since $\Kcont$ is lax symmetric monoidal (\cref{Kcont-lax-sm}), and by \cite[Theorem C]{DescVan} the latter is $\Tm$-acyclic.
	The second part follows from the fact that $\Kcont$ is a localizing invariant.
\end{proof}

\subsection{Chromatic comparison}\label{subsec-chrom}

In this subsection we apply the results of the previous section to the chromatic setting.
We will in particular work in \assref{setup} and \assref{comp-assum} throughout, and assume rigidity explicitly when needed.
We will assume moreover that (the underlying $\Et$-ring spectrum of) $\AA$ is complex orientable, which for example holds if it is even.
Recall that $(p,v_1,\dotsc,v_{n-1}) \subseteq \pi_*(\MU)$ is an invariant ideal, hence its image in $\pi_*(\AA)$ is independent of the choice of complex orientation, so we get a well-defined height $n$ ideal in $\pi_*(\AA)$.
Thus, the setup for this subsection is the following.

\begin{namedassum}{Chromatic Setup}\label{chromatic-assum}
    We fix a presentably $\Et$-monoidal stable category $\EE \in \Alg(\PrLst)$ and an $\Et$-map
    \[
        \SS[\xx] := \SS[x_1] \otimes \cdots \otimes \SS[x_r] \too \End(\AA)
        \qin \Alg_{\Et}(\Sp).
    \]
	We assume that the unit is complex orientable, and the elements lie in the radical of the height $n$ ideal
	\[
		x_1,\dotsc,x_r \ \in\ \sqrt{(p,v_1,\dotsc,v_{n-1})}
		\qquad\subseteq\qquad \pi_*(\AA).
	\]
\end{namedassum}

\begin{example}
	As in \cref{ex-of-setup}, this (and rigidity) holds for $\EE = \Mod_R$, where $R$ is an even $\Eth$-ring spectrum, and elements of non-negative even degree $x_1,\dotsc,x_r$ in the radical of the height $n$ ideal.
\end{example}

We start with the following observation, which gives the connection to chromatic localizations.

\begin{prop}\label{moore-Ax}
	For any type $n$ generalized Moore spectrum
	\[
		F = \SS/(p^{e_0},v_1^{e_1},\dotsc,v_{n-1}^{e_{n-1}}),
	\]
	the tensor product $\AA \otimes F$ is in the thick subcategory of $\EE$ generated by $\Ax$.
\end{prop}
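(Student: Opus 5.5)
We will show that $\AA \otimes F$ lies in $\Thick(\Ax)$ by a standard thick-subcategory/nilpotence argument. The plan has two halves: first show that $\Ax \otimes F$ generates $\AA\otimes F$ thickly, and then use $F$ to absorb the passage from $\Ax\otimes F$ back to $\Ax$.

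\emph{Step 1: it suffices to show $\AA \otimes F \in \Thick(\Ax \otimes F)$.} Indeed, $\Thick(\Ax)$ is closed under $(-)\otimes F$, since $F$ is a finite spectrum built from $\SS$ by finitely many cofibers, so $\Ax \otimes F \in \Thick(\Ax)$. Hence $\Thick(\Ax\otimes F) \subseteq \Thick(\Ax)$.

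\emph{Step 2: reduce to one variable at a time.} Write $\Ax = \AA/x_1 \otimes_\AA \cdots \otimes_\AA \AA/x_r$, and set $\AA_s := \AA/(x_1,\dots,x_s)$. By \cref{quotient-cofiber}, base-changed, there is a cofiber sequence
\[
	\Sigma^{|x_s|}\AA_{s-1} \too[x_s] \AA_{s-1} \too \AA_s .
\]
Tensoring with $F$ gives the analogous sequence for $\AA_{s-1}\otimes F$. The octahedral axiom then shows that $\AA_{s-1}\otimes F/x_s^k$ lies in $\Thick(\AA_s \otimes F)$ for every $k$; compare the proof of \cref{thick}, where $\AA/x^k$ is built from $\AA/x$. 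It therefore suffices to show that, for some $k$, multiplication by $x_s^k$ on $\AA_{s-1}\otimes F$ is null. Once that holds, $\AA_{s-1}\otimes F$ is a retract of $\AA_{s-1}\otimes F/x_s^k$, and induction on $s$ concludes.

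\emph{Step 3 (main obstacle): nilpotence of $x_s$ on $\AA_{s-1}\otimes F$.} By assumption $x_s^N \in (p,v_1,\dots,v_{n-1})$ for some $N$. The key input is that each of $p^{e_0},\dots,v_{n-1}^{e_{n-1}}$ acts nilpotently on $\AA\otimes F$, and hence on $\AA_{s-1}\otimes F$, which is a left $\AA$-module. For this I would use that $F$ is built inductively by cofibers of self-maps $v_j^{e_j}$, which are chosen compatibly with the complex orientation after possibly enlarging the exponents. Tensoring with $\AA$ and using the identification of $v_j$ via the orientation, multiplication by $v_j^{e_j}$ on $\AA\otimes F$ agrees with $\AA\otimes(\text{self-map of }F)$, and the latter is null on the cofiber $F$. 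Concretely, I would argue degree-by-degree: $\AA\otimes\SS/(p^{e_0},\dots,v_{j}^{e_j})$ is a cofiber of $v_j^{e_j}$ acting on $\AA\otimes\SS/(p^{e_0},\dots,v_{j-1}^{e_{j-1}})$. On a cofiber $\mathrm{cof}(y)$, the element $y^2$ acts as zero, by the standard argument in $\Et$-rings where left and right multiplication agree up to homotopy.

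Given this, a sum of elements each nilpotent on the module is nilpotent, because the action is through the commutative ring $\pi_*$ of the endomorphisms generated by central elements. So $x_s^{Nk}$ acts by zero for large $k$. The delicate points are the centrality of these multiplications and their compatibility with the chosen Moore spectrum. I would handle both via the $\Et$-structure on $\AA$ and the independence of thick subcategories from exponent choices: any two type $n$ generalized Moore spectra generate the same thick subcategory by the thick subcategory theorem, so we may pick exponents as convenient.
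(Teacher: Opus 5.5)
Your argument is correct and is essentially the paper's proof. Both show that the $x_s$ act nilpotently on $\AA\otimes F$ by identifying, after tensoring with the complex orientable $\AA$, the Moore-spectrum self-maps with multiplication by $v_j^{e_j}$, and then exhibit $\AA\otimes F$ as a retract of a quotient by powers of the $x_s$. The input you leave as ``chosen compatibly with the orientation'' is exactly the Hovey--Strickland result (Theorem 4.6) that the paper cites, and you should cite it too. Your variable-by-variable octahedral induction reproves the needed case of \cref{thick}. Your $y^2$-on-a-cofiber argument gives only nilpotence rather than the paper's stated nullity, but nilpotence is all the retract argument needs.
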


\begin{proof}
	We will show that $\AA \otimes F$ is a retract of $\AA/\xx^\ii \otimes F$ for $\ii$ large enough.
	This will imply the claim, since $\AA/\xx^\ii$ is in the thick subcategory generated by $\Ax$ by \cref{thick}, and $F$ is a finite spectrum, so $\AA/\xx^\ii \otimes F$ is in the thick subcategory generated by $\Ax$, hence the same will follow for $\AA \otimes F$.
	
	We begin by showing that all elements of the ideal $(p^{e_0},v_1^{e_1},\dotsc,v_{n-1}^{e_{n-1}}) \subseteq \pi_*(\AA)$ are null in $\AA \otimes F$.
	It suffices to show this for the generators of the ideal.
	Since $\AA$ is complex oriented, it is in particular an $\MU$-module in the homotopy category.
	Thus, by \cite[Theorem 4.6]{HSt}, the following triangle commutes
	% https://q.uiver.app/#q=WzAsMyxbMCwwLCJcXFNpZ21hXntlX218dl9tfH0gUiBcXG90aW1lcyBcXFNTLyh7cF57ZV8wfSxcXGRvdHNjLHZfe20tMX1ee2Vfe20tMX19fSkiXSxbMCwyLCJSIFxcb3RpbWVzIFxcU2lnbWFee2VfbXx2X218fSBcXFNTLyh7cF57ZV8wfSxcXGRvdHNjLHZfe20tMX1ee2Vfe20tMX19fSkiXSxbMSwxLCJSIFxcb3RpbWVzIFxcU1MvKHtwXntlXzB9LFxcZG90c2Msdl97bS0xfV57ZV97bS0xfX19KSJdLFswLDEsIlxcd3IiLDJdLFswLDIsInZfbV57ZV9tfSBcXG90aW1lcyBcXElkIl0sWzEsMiwiXFxJZCBcXG90aW1lcyB2X21ee2VfbX0iLDJdXQ==
	\[\begin{tikzcd}
		{\Sigma^{e_m|v_m|} \AA \otimes \SS/({p^{e_0},\dotsc,v_{m-1}^{e_{m-1}}})} & \\
		& {\AA \otimes \SS/({p^{e_0},\dotsc,v_{m-1}^{e_{m-1}}})} \\
		{\AA \otimes \Sigma^{e_m|v_m|} \SS/({p^{e_0},\dotsc,v_{m-1}^{e_{m-1}}})}
		\arrow["{v_m^{e_m} \otimes \Id}", from=1-1, to=2-2]
		\arrow["\wr"', from=1-1, to=3-1]
		\arrow["{\Id \otimes v_m^{e_m}}"', from=3-1, to=2-2]
	\end{tikzcd}\]
	We see that $v_m^{e_m}$ indeed is null already in $\AA \otimes \SS/({p^{e_0},\dotsc,v_m^{e_m}})$.

	Now, take $e$ large enough, such as $1{+}e_0{+}\cdots{+}e_{n-1}$, so that
	\[
		(p,v_1,\dotsc,v_{n-1})^e \subseteq (p^{e_0},v_1^{e_1},\dotsc,v_{n-1}^{e_{n-1}}) \subseteq \pi_*(\AA).
	\]
	By assumption the elements $x_1,\dotsc,x_r$ are in the radical of the height $n$ ideal, that is, for some $k$ we have $x_1^k,\dotsc,x_r^k \in (p,v_1,\dotsc,v_{n-1})$.
	Combining these, we get that for $i \geq ek$ we have
	\[
		x_1^i,\dotsc,x_r^i \in (p^{e_0},v_1^{e_1},\dotsc,v_{n-1}^{e_{n-1}}) \subseteq \pi_*(\AA),
	\]
	so these are null in $\AA \otimes F$.
	Therefore, for $\ii \geq (ek,\dotsc,ek)$, we get that $\AA/\xx^\ii \otimes F$ is a direct sum of suspensions of $\AA \otimes F$, so, in particular, $\AA \otimes F$ is indeed a retract of $\AA/\xx^\ii \otimes F$.
\end{proof}

Recall that being $\Lnmf$-local is, by definition, the same as being acyclic with respect to some (equivalently, any) type $n$ finite spectrum, so we immediately get the following.

\begin{cor}\label{Ax-Lnmf}
	If $\CC \in \PrLE$ is $\Ax$-acyclic then it is $\Lnmf$-local.
\end{cor}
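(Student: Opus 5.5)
We unwind the definitions and reduce to \cref{moore-Ax}. Take $\CC \in \PrLE$ to be $\Ax$-acyclic, meaning $\Ax \otimesA X \simeq 0$ for every $X \in \CC$; here we use the (left) $\EE$-action on $\CC$, available through the braiding since $\EE$ is $\Et$-monoidal. Fix a type $n$ generalized Moore spectrum $F = \SS/(p^{e_0},v_1^{e_1},\dotsc,v_{n-1}^{e_{n-1}})$. We must show that $F \otimes X \simeq 0$ for every $X \in \CC$, since $\Lnmf$-locality of a stable presentable category is equivalent to its objects being $F$-acyclic, i.e.\ $\CC$ being a module over $\Lnmf\Sp$.

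The first step rewrites the $\Sp$-action of $F$ through the $\EE$-action:
\[
	F \otimes X \simeq (\AA \otimes F) \otimesA X,
\]
using that the $\Sp$-linear structure on $\CC$ is the restriction of the $\EE$-linear one along the unit map $\Sp \to \EE$. The second step considers the full subcategory of $\EE$ consisting of objects $E$ with $E \otimesA X \simeq 0$ for all $X \in \CC$. This subcategory is thick, because the action is exact in the $\EE$-variable and preserves retracts. It contains $\Ax$ by assumption, so by \cref{moore-Ax} it contains $\AA \otimes F$. Hence $F \otimes X \simeq 0$, which is what we needed.

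The main point to check is bookkeeping of sides. The action of $\EE$ on $\CC$ is a right action by the convention $\PrLE = \RMod_\EE(\PrL)$. Acyclicity should therefore be read as $X \otimesA \Ax \simeq 0$, matching its use in \cref{comp-nuc}. With this reading the thick-subcategory argument runs verbatim with the other side, and no braiding is needed.

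Finally, $F \otimes X$ is unambiguous because $\Sp$ is symmetric monoidal. The only input beyond formalities is \cref{moore-Ax}, so we expect no serious obstacle.
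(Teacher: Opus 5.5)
Your proposal is correct and is exactly the paper's argument: \cref{moore-Ax} upgrades $\Ax$-acyclicity to $(\AA \otimes F)$-acyclicity relative to $\EE$, and via the unit map $\Sp \to \EE$ this is $F$-acyclicity relative to $\Sp$, i.e.\ $\Lnmf$-locality. Your thick-subcategory formulation and the right-action bookkeeping spell out what the paper compresses into two lines.
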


\begin{proof}
	By assumption, $\CC$ is $\Ax$-acyclic, hence by \cref{moore-Ax}, it is $(\AA \otimes F)$-acyclic for some type $n$ generalized Moore spectrum $F$ (relative to $\EE$).
	Namely, it is $F$-acyclic (relative to $\Sp$), that is, $\Lnmf$-local.
\end{proof}

With this in place, we deduce the following theorem.

\begin{thm}\label{main-thm}
	The following maps are $\Tm$-local isomorphisms for $m \geq n{+}1$
	\[
		\Kcont(\EEc) \too \Kcont(\NucE) \iso \lim_\ii \Kcont(\Mod_{\AA/\xx^\ii}).
	\]
	More generally, for any dualizable $\DD \in \PrLstdbl$, the same is true for
	\[
		\Kcont(\EEc \otimes \DD) \too \Kcont(\NucE \otimes \DD) \iso \lim_\ii \Kcont(\Mod_{\AA/\xx^\ii} \otimes \DD).
	\]
\end{thm}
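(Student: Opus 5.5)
The second map in the statement is already an isomorphism by \cref{k-nuc}, with or without $\DD$. So the only thing to prove is that
\[
	\Kcont(\EEc \otimes \DD) \too \Kcont(\NucE \otimes \DD)
\]
is a $\Tm$-local isomorphism for $m \geq n{+}1$. The case without $\DD$ is the special case $\DD = \Sp$.

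By \cref{iota}, $\EEc$ is dualizable and $\iota\colon \EEc \hookrightarrow \NucE$ is a fully faithful strongly continuous functor. Its right adjoint $q$ is colimit preserving, so the cofiber $\NucE/\EEc \simeq \ker(q)$ is again dualizable. I plan to apply the second part of \cref{vanishing} to $\iota \otimes \Id_\DD$. For this I need two things:
\begin{enumerate}
	\item $\iota \otimes \Id_\DD$ is fully faithful, with Verdier quotient $(\NucE/\EEc)\otimes \DD$;
	\item this quotient is $\Lnmf$-local.
\end{enumerate}
The first holds because tensoring with a dualizable $\DD$ preserves short exact sequences in $\PrLstdbl$ \cite[Theorem 2.2]{EfimovK}.

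For the $\Lnmf$-locality, I first treat $\DD = \Sp$. By \cref{comp-nuc}, $\ker(q)$ is $\Ax$-acyclic: for $M \in \ker(q)$ we have $M \otimesA \Ax = 0$. Now $\ker(q)$ is stable under the right $\EE$-action, since $q$ is $\EE$-linear. By \cref{moore-Ax}, $\AA \otimes F$ lies in the thick subcategory generated by $\Ax$ for a type $n$ generalized Moore spectrum $F$. Hence $M \otimes F \simeq M \otimesA (\AA \otimes F) = 0$, which says $\ker(q)$ is $F$-acyclic, i.e.\ $\Lnmf$-local. This is \cref{Ax-Lnmf}, but applied with the right action.

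The point I expect to need the most care is exactly this bookkeeping of sides. \cref{comp-nuc} gives acyclicity for the right tensor $(-)\otimesA \Ax$, while \cref{Ax-Lnmf} is phrased for $\CC \in \PrLE$. I will check that the thick-subcategory argument of \cref{moore-Ax} is compatible with the right module structure used in \cref{comp-nuc}. It should be, because $\AA \otimes F$ is just the action of the finite spectrum $F$, which commutes with everything. Consequently $\Lnmf$-locality here just means being a module over $\Lnmf\Sp$, equivalently that the action of $F$ vanishes.

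With that in hand, $(\NucE/\EEc) \otimes \DD$ is again a module over $\Lnmf\Sp$, since the action of $F$ factors through the first factor and is therefore zero. Then \cref{vanishing} shows that $\Kcont$ of the quotient is $\Tm$-acyclic for $m \geq n{+}1$. Since $\Kcont$ is a localizing invariant, the map $\Kcont(\EEc\otimes\DD) \to \Kcont(\NucE \otimes \DD)$ is a $\Tm$-local isomorphism. Composing with \cref{k-nuc} finishes the proof.
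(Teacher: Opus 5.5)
Your proposal is correct and follows essentially the same route as the paper. You use \cref{k-nuc} for the second map, then \cref{comp-nuc} together with \cref{Ax-Lnmf} to see that the Verdier quotient of $\iota$ is $\Lnmf$-local, Efimov's Theorem~2.2 to pass to $\iota \otimes \Id_{\DD}$, and finally \cref{vanishing}. Your extra bookkeeping about which side $\Ax$ acts on is consistent with the paper's argument, since $\PrLE$ consists of right $\EE$-modules and the action of the finite spectrum $F$ is central.
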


\begin{proof}
	The second map is an isomorphism by \cref{k-nuc}.
	For the first map, recall from \cref{comp-nuc} that the Verdier quotient of $\iota\colon \EEc \hookrightarrow \NucE$ is $\Ax$-acyclic, so by \cref{Ax-Lnmf} it is $\Lnmf$-local.
	By \cite[Theorem 2.2]{EfimovK}, the tensor product
	\[
		\iota \otimes \Id_{\DD}\colon \EEc \otimes \DD \too \NucE \otimes \DD
	\]
	is fully faithful and strongly continuous as well, and its Verdier quotient is $(\Nuc/\EEc) \otimes \DD$, which is therefore $\Lnmf$-local.
	The result then follows from \cref{vanishing}.
\end{proof}

As examples, we apply this to the categories of functors and sheaves with coefficients in $\EE$.

\begin{example}[Functor categories]\label{fun-comp}
	Continuing \cref{fun-nuc}, by \cref{main-thm}, for a (small) category $\CC$, the following is a $\Tm$-local isomorphism for $m \geq n{+}1$
	\[
		\Kcont(\Fun(\CC, \EEc)) \too \lim_\ii \Kcont(\Fun(\CC, \Mod_{\AA/\xx^\ii})).
	\]
\end{example}

\begin{example}[Sheaves]\label{sh-comp}
	Continuing \cref{sh-nuc}, by \cref{main-thm}, for a locally compact Hausdorff space $Y$, the following is a $\Tm$-local isomorphism for $m \geq n{+}1$
	\[
		\Kcont(\Shv(Y, \EEc)) \too \lim_\ii \Kcont(\Shv(Y, \Mod_{\AA/\xx^\ii})).
	\]
	which \cite[Theorem 0.2]{EfimovK} identifies with
	\[
		\Gammac(Y, \Kcont(\EEc)) \too \lim_\ii \Gammac(Y, \Kcont(\Mod_{\AA/\xx^\ii})).
	\]
\end{example}

Our next goal is to show that in the rigid compactly generated case, we can also connect this to the K-theory of the compact objects of $\EE$.
Consider the functor of extension of scalars along $\AA \to \xc{\AA}$
\[
	\EE \too \Mod_{\xc{\AA}}.
\]
Since it is strongly continuous, it in particular sends compact objects to compact objects, and we prove the following result.

\begin{prop}\label{can-assume-complete}
	If $\EE$ is compactly generated the following map is a $\Tm$-local isomorphism for $m \geq n{+}1$
	\[
		\KK(\EE^\omega) \too \KK(\Mod_{\xc{\AA}}^\omega).
	\]
	More generally, for any dualizable $\DD \in \PrLstdbl$, the same holds for
	\[
		\Kcont(\EE \otimes \DD) \too \Kcont(\Mod_{\xc{\AA}} \otimes \DD).
	\]
\end{prop}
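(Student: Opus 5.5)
The plan is to apply the purity theorem in the form of \cref{purity-variant} to the extension of scalars functor
\[
	F = \xc{\AA} \otimesA (-)\colon \EE^\omega \too \Mod_{\xc{\AA}}^\omega,
\]
taking $\CC = \EE^\omega$, $\CC' = \Mod_{\xc{\AA}}^\omega$, and working with each $m \geq n{+}1$ separately. The second, more general statement then follows directly from the final clause of \cref{purity-variant}, since $\Ind(\EE^\omega) = \EE$ and $\Ind(\Mod_{\xc{\AA}}^\omega) = \Mod_{\xc{\AA}}$.

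First, the essential image of $F$ thickly generates $\CC'$. The objects $\xc{\AA} \otimesA C_\alpha = F(C_\alpha)$ are compact generators of $\Mod_{\xc{\AA}}$, so they thickly generate $\Mod_{\xc{\AA}}^\omega$.

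Second, and this is the main point, I need that for all compact $M, N \in \EE$ the map
\[
	\hom_\EE(M,N) \too \hom_{\xc{\AA}}(\xc{\AA}\otimesA M, \xc{\AA}\otimesA N) \simeq \hom_\EE(M, \xc{\AA} \otimesA N)
\]
is a $(\Tmn \oplus \Tm)$-local isomorphism. By \cref{end-hom}, it suffices to check this on hom spectra between the generators, and this is the form I will use. Since $\hom_\EE(M,-)$ is exact, it is enough to show that the cofiber $\hom_\EE(M, \operatorname{cofib}(N \to \xc{\AA}\otimesA N))$ is $\Tmn\oplus\Tm$-acyclic.

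Now $N$ is dualizable, so as in the proof of \cref{compact-mod-complete-dualizable} we have $\xc{\AA} \otimesA N \simeq \xc{N}$, the $\xx$-completion. The cofiber $Q$ of $N \to \xc{N}$ is therefore $\Ax$-acyclic. By \cref{moore-Ax}, $Q \otimes F' = 0$ for a type $n$ generalized Moore spectrum $F'$. Hence $\hom_\EE(M, Q) \otimes F' \simeq \hom_\EE(M, Q \otimes F') = 0$, using that $F'$ is finite. So $\hom_\EE(M,Q)$ is $\Lnmf$-local.

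It remains to see that an $\Lnmf$-local spectrum is $\Tmn\oplus\Tm$-acyclic when $m-1 \geq n$. This holds because $\Tm$ for $m \geq n$ is $F'$-local in the sense that $\Tm$ has a type $m \geq n$ finite spectrum as a factor, so $\Tm \otimes X = 0$ whenever $X \otimes F' = 0$. Concretely, $\Tm = v_m^{-1} F''$ for a type $m$ complex $F''$ lying in the thick subcategory generated by $F'$. I expect this step to be the main obstacle. It requires $m-1 \geq n$, which is exactly the hypothesis $m \geq n{+}1$, and the care needed is in matching the thick-subcategory argument with the $\Tmn$ factor.

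Having verified both hypotheses, \cref{purity-variant} gives both the $\KK(\EE^\omega)$ statement and the version with $\DD$.
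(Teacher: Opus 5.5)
This is essentially the paper's argument. Both apply \cref{purity-variant} with thick generation by the $\xc{\AA}\otimesA C_\alpha$, then use \cref{moore-Ax} to show that the map on hom/endomorphism spectra becomes an isomorphism after tensoring with a type $n$ generalized Moore spectrum, which makes it a $(\Tmn\oplus\Tm)$-local isomorphism for $m\geq n{+}1$.

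One step needs repair. The statement assumes $\EE$ only compactly generated, not rigid, so a compact $N$ need not be dualizable, and $\xc{\AA}\otimesA N\simeq \xc{N}$ is not justified. You do not need it: $Q\simeq \mathrm{cofib}(\AA\to\xc{\AA})\otimesA N$, and $\Ax\otimesA \mathrm{cofib}(\AA\to\xc{\AA})=0$, so $Q$ is $\Ax$-acyclic. This matches the paper, which tensors the isomorphism $F\otimes\AA\simeq F\otimes\xc{\AA}$ on the right with $M$.
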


\begin{proof}
	We verify the conditions of \cref{purity-variant}.
	First, we observe that the essential image of
	\[
		\EE^\omega \too \Mod_{\xc{\AA}}^\omega
	\]
	thickly generates the target.
	Indeed, by assumption $\EE$ is compactly generated by $\{C_\alpha\}$, which implies that $\{\xc{\AA} \otimesA C_\alpha\}$ are compact generators of $\Mod_{\xc{\AA}}$.

	It remains to show the second condition.
	Note that the collection of objects $X \in \EE$ such that
	\[
		X \otimes \AA \too X \otimes \xc{\AA}
		\qin \EE
	\]
	is an isomorphism, forms a thick subcategory, which, since $\AA \to \xc{\AA}$ is an $\xx$-completion, contains $\Ax$.
	By \cref{moore-Ax}, it also contains $\AA \otimes F$, for some type $n$ generalized Moore spectrum, so the following map is an isomorphism
	\[
		F \otimes \AA \iso F \otimes \xc{\AA}.
	\]
	Now, let $M \in \EE^\omega$.
	Tensoring the isomorphism above, we get an isomorphism
	\[
		F \otimes M \iso F \otimes \xc{\AA} \otimes M.
	\]
	Taking hom from $M$, and using that $F$ is finite, we get
	\[
		F \otimes \hom_{\EE}(M, M)
		\simeq \hom_{\EE}(M, F \otimes M)
		\iso \hom_{\EE}(M, F \otimes \xc{\AA} \otimes M)
		\simeq F \otimes \hom_{\EE}(M, \xc{\AA} \otimes M).
	\]
	By free-forgetful adjunction, this can be rewritten as
	\[
		F \otimes \End_{\EE}(M)
		\iso F \otimes \End_{\Mod_{\xc{\AA}}}(\xc{\AA} \otimesA M),
	\]
	namely the map on endomorphism spectra is an $F$-local isomorphism.
	Thus, for $m \geq n{+}1$, this map is in particular a $(\Tmn\oplus\Tm)$-local isomorphism, so the result follows from \cref{purity-variant}.
\end{proof}

\begin{thm}[{\cref{main-thm-rcg-intro}}]\label{main-thm-rcg}
	If $\EE$ is rigid and compactly generated then the following maps are $\Tm$-local isomorphisms for $m \geq n{+}1$
	% https://q.uiver.app/#q=WzAsNixbMSwwLCJcXEtLKFxcTW9kX3tcXHhje1xcQUF9fV5cXG9tZWdhKSJdLFsyLDAsIlxcS0soXFxFRWNeXFxkYmwpIl0sWzMsMCwiXFxLY29udChcXE51Y0UpIl0sWzIsMSwiXFxLSyhcXEVFY15cXG9tZWdhKSJdLFswLDAsIlxcS0soXFxFRV5cXG9tZWdhKSJdLFs0LDAsIlxcbGltX1xcaWkgXFxLSyhcXE1vZF97XFxBQS9cXHh4XlxcaWl9Xlxcb21lZ2EpIl0sWzAsMV0sWzEsMl0sWzMsMV0sWzQsMF0sWzMsMiwiIiwyLHsiY3VydmUiOjJ9XSxbMiw1LCJcXHNpbSJdXQ==&macro_url=https%3A%2F%2Fgist.githubusercontent.com%2Fshaybenmoshe%2F301102e6fdd6d215848c519c149abcab%2Fraw%2Fquiver
	\[\begin{tikzcd}[row sep=scriptsize]
		{\KK(\EE^\omega)} & {\KK(\Mod_{\xc{\AA}}^\omega)} & {\KK(\EEc^\dbl)} & {\Kcont(\NucE)} & {\lim_\ii \KK(\Mod_{\AA/\xx^\ii}^\omega)} \\
		&& {\KK(\EEc^\omega)}
		\arrow[from=1-1, to=1-2]
		\arrow[from=1-2, to=1-3]
		\arrow[from=1-3, to=1-4]
		\arrow["\sim", from=1-4, to=1-5]
		\arrow[from=2-3, to=1-3]
		\arrow[curve={height=12pt}, from=2-3, to=1-4]
	\end{tikzcd}\]
	More generally, for any dualizable $\DD \in \PrLstdbl$, the same is true for
	% https://q.uiver.app/#q=WzAsNixbMCwwLCJcXEtjb250KFxcRUUge1xcb3RpbWVzfSBcXEREKSJdLFsxLDAsIlxcS2NvbnQoXFxNb2Rfe1xceGN7XFxBQX19IHtcXG90aW1lc30gXFxERCkiXSxbMiwwLCJcXEtjb250KFxcSW5kKFxcRUVjXlxcZGJsKSB7XFxvdGltZXN9IFxcREQpIl0sWzMsMCwiXFxLY29udChcXE51Y0Uge1xcb3RpbWVzfSBcXEREKSJdLFs0LDAsIlxcbGltX1xcaWkgXFxLY29udChcXE1vZF97XFxBQS9cXHh4XlxcaWl9IHtcXG90aW1lc30gXFxERCkiXSxbMiwxLCJcXEtjb250KFxcRUVjIHtcXG90aW1lc30gXFxERCkiXSxbMCwxXSxbMSwyXSxbMiwzXSxbMyw0LCJcXHNpbSJdLFs1LDJdLFs1LDMsIiIsMCx7ImN1cnZlIjoyfV1d
	\[\begin{tikzcd}[cramped,column sep=1em,row sep=scriptsize]
		{\Kcont(\EE {\otimes} \DD)} & {\Kcont(\Mod_{\xc{\AA}} {\otimes} \DD)} & {\Kcont(\Ind(\EEc^\dbl) {\otimes} \DD)} & {\Kcont(\NucE {\otimes} \DD)} & {\lim_\ii \Kcont(\Mod_{\AA/\xx^\ii} {\otimes} \DD)} \\
		&& {\Kcont(\EEc {\otimes} \DD)}
		\arrow[from=1-1, to=1-2]
		\arrow[from=1-2, to=1-3]
		\arrow[from=1-3, to=1-4]
		\arrow["\sim", from=1-4, to=1-5]
		\arrow[from=2-3, to=1-3]
		\arrow[curve={height=12pt}, from=2-3, to=1-4]
	\end{tikzcd}\]
\end{thm}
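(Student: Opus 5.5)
We will prove the general statement with $\DD$; the first diagram is the case $\DD = \Sp$, after identifying $\Kcont(\CC)$ with $\KK(\CC^\omega)$ for compactly generated $\CC$. The same identification applies to $\Ind(\EEc^\dbl)$, whose compact objects are $\EEc^\dbl$ up to idempotent completion, and to $\Mod_{\AA/\xx^\ii}$. Commutativity of the diagram comes from \cref{three-lims} together with the obvious factorizations of the inclusions. The plan is to check each arrow separately.

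The first arrow $\Kcont(\EE\otimes\DD)\to\Kcont(\Mod_{\xc{\AA}}\otimes\DD)$ is a $\Tm$-local isomorphism by \cref{can-assume-complete}. The last arrow is an isomorphism by \cref{k-nuc}. It remains to treat the three maps involving $\Ind(\EEc^\dbl)$, $\EEc$ and $\NucE$. For each of them we will show that it is induced by a fully faithful strongly continuous functor of dualizable categories whose Verdier quotient is $\Ax$-acyclic. By \cref{Ax-Lnmf} such a quotient is $\Lnmf$-local. After tensoring with $\DD$, which preserves these properties by \cite[Theorem 2.2]{EfimovK} as in the proof of \cref{main-thm}, \cref{vanishing} then gives the $\Tm$-local isomorphism for $m \geq n{+}1$.

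The three maps are handled as follows.
\begin{enumerate}
\item The map $\Kcont(\EEc\otimes\DD)\to\Kcont(\NucE\otimes\DD)$ is exactly \cref{main-thm}.
\item The map $\Mod_{\xc{\AA}}\to\Ind(\EEc^\dbl)$ is the Ind-completion of $\Mod_{\xc{\AA}}^\omega\hookrightarrow\EEc^\dbl$ from \cref{compact-mod-complete-dualizable}. Its Verdier quotient is $\Ax$-acyclic by \cref{verdier-complete-compact-dualizable}.
\item The map $j\colon\EEc\to\Ind(\EEc^\dbl)$ is the Ind-completion of $\EEc^\omega\hookrightarrow\EEc^\dbl$, using compact generation of $\EEc$ from \cref{compact-gen}. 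It is again covered by \cref{verdier-complete-compact-dualizable}.
\end{enumerate}
By two-out-of-three, item 1 combined with item 3, applied to the commuting triangle $\varphi\circ j$ through $\iota$ from \cref{three-lims}, shows that $\varphi$ is a $\Tm$-local isomorphism as well.

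The main point needing care is the passage from Verdier quotients of small idempotent-complete categories to their Ind-completions. We need that the Ind-quotient is a module over $\Lnmf\Sp$. Since $\Ax$-acyclicity of each object of the small quotient means that $\Ax\otimes-$ kills it, and this property passes to the Ind-completion because $\Ax\otimes-$ commutes with colimits, the claim should follow. We must also use that the relevant action of $\EE$ is compatible on both sides, which is where the $\Et$-monoidality in \assref{comp-assum} enters. This is done by \cref{Ax-Lnmf} applied to the quotient viewed in $\PrLE$.
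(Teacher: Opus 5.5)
Your proposal is correct and follows essentially the same route as the paper: \cref{can-assume-complete} for the first arrow, \cref{k-nuc} and \cref{main-thm} for the right-most arrow and the curved arrow, the Ind of \cref{verdier-complete-compact-dualizable} together with \cref{Ax-Lnmf} and \cref{vanishing} (after tensoring with $\DD$ via \cite[Theorem 2.2]{EfimovK}) for $j$ and the unlabeled inclusion, and two-out-of-three through $\iota = \varphi \circ j$ for $\varphi$. Your added remarks on passing to Ind-completions and on which side $\Ax$ acts are reasonable elaborations of points the paper leaves implicit.
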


\begin{proof}
	The left-most horizontal morphism is the content of \cref{can-assume-complete}.
	The right-most horizontal morphism and the curved diagonal morphisms are the content of \cref{main-thm}.
	For the other morphisms, consider the following commutative diagram (see \cref{three-lims} and \cref{compact-mod-complete-dualizable})
	% https://q.uiver.app/#q=WzAsNCxbMCwwLCJcXE1vZF97XFx4Y3tcXEFBfX0iXSxbMSwwLCJcXEluZChcXEVFY15cXGRibCkiXSxbMiwwLCJcXE51Y0UiXSxbMSwxLCJcXEVFYyJdLFswLDEsIiIsMCx7InN0eWxlIjp7InRhaWwiOnsibmFtZSI6Imhvb2siLCJzaWRlIjoidG9wIn19fV0sWzEsMiwiXFx2YXJwaGkiLDAseyJzdHlsZSI6eyJ0YWlsIjp7Im5hbWUiOiJob29rIiwic2lkZSI6InRvcCJ9fX1dLFszLDEsImoiLDAseyJzdHlsZSI6eyJ0YWlsIjp7Im5hbWUiOiJob29rIiwic2lkZSI6InRvcCJ9fX1dLFszLDIsIlxcaW90YSIsMix7ImN1cnZlIjoyLCJzdHlsZSI6eyJ0YWlsIjp7Im5hbWUiOiJob29rIiwic2lkZSI6InRvcCJ9fX1dXQ==&macro_url=https%3A%2F%2Fgist.githubusercontent.com%2Fshaybenmoshe%2F301102e6fdd6d215848c519c149abcab%2Fraw%2Fquiver
	\[\begin{tikzcd}
		{\Mod_{\xc{\AA}}} & {\Ind(\EEc^\dbl)} & \NucE \\
		& \EEc
		\arrow[hook, from=1-1, to=1-2]
		\arrow["\varphi", hook, from=1-2, to=1-3]
		\arrow["j", hook, from=2-2, to=1-2]
		\arrow["\iota"', curve={height=12pt}, hook, from=2-2, to=1-3]
	\end{tikzcd}\]
	By (the Ind of) \cref{verdier-complete-compact-dualizable}, the Verdier quotients of $j$ and the unlabeled morphism are $\Ax$-acyclic, hence by \cref{Ax-Lnmf} also $\Lnmf$-local.
	By \cite[Theorem 2.2]{EfimovK}, they remain fully faithful after tensoring with $\DD$, and since the Verdier quotient is given by tensoring with $\DD$, it remains $\Lnmf$-local as well.
	Therefore, by \cref{vanishing}, these maps induce an isomorphism on $\Tm$-localized K-theory.
	By $2$-out-of-$3$, the same is true for $\varphi$, concluding the proof.
\end{proof}

The main example satisfying \assref{chromatic-assum} is the case of ring spectra.

\begin{example}
	Let $R \in \Alg_{\Eth}(\Sp)$ be equipped with an $\Et$-map $\SS[\xx] \to R$, and assume that $x_1,\dotsc,x_r \in \pi_*(R)$ are in the radical of the height $n$ ideal $(p,v_1,\dotsc,v_{n-1}) \subset \pi_*(R)$.
	Then, \cref{main-thm-rcg} says that the following maps are $\Tm$-local isomorphisms for $m \geq n{+}1$
	% https://q.uiver.app/#q=WzAsNSxbMCwwLCJcXEtLKFIpIl0sWzEsMCwiXFxLSyhcXHhje1J9KSJdLFsyLDAsIlxcS0soXFxjTW9kX1JeXFxkYmwpIl0sWzMsMCwiXFxsaW1fXFxpaSBcXEtLKFIvXFx4eF5cXGlpKSJdLFsyLDEsIlxcS0soXFxjTW9kX1JeXFxvbWVnYSkiXSxbMCwxXSxbMSwyXSxbMiwzXSxbNCwyXV0=&macro_url=https%3A%2F%2Fgist.githubusercontent.com%2Fshaybenmoshe%2F301102e6fdd6d215848c519c149abcab%2Fraw%2Fquiver
	\[\begin{tikzcd}[row sep=scriptsize]
		{\KK(R)} & {\KK(\xc{R})} & {\KK(\cMod_R^\dbl)} & {\lim_\ii \KK(R/\xx^\ii)} \\
		&& {\KK(\cMod_R^\omega)}
		\arrow[from=1-1, to=1-2]
		\arrow[from=1-2, to=1-3]
		\arrow[from=1-3, to=1-4]
		\arrow[from=2-3, to=1-3]
	\end{tikzcd}\]
\end{example}

We repeat the examples of functor categories and sheaves from \cref{fun-comp} and \cref{sh-comp} in this case, taking $R$ as in the above example.

\begin{example}\label{sh-comp-R}
	For a locally compact Hausdorff space $Y$, the following maps are $\Tm$-local isomorphisms for $m \geq n{+}1$
	% https://q.uiver.app/#q=WzAsNSxbMCwwLCJcXEtjb250KFxcU2h2KFksIFIpKSJdLFsxLDAsIlxcS2NvbnQoXFxTaHYoWSwgXFx4Y3tSfSkpIl0sWzIsMCwiXFxLY29udChcXFNodihZLCBcXEluZChcXGNNb2RfUl5cXGRibCkpKSJdLFszLDAsIlxcbGltX1xcaWkgXFxLY29udChcXFNodihZLCBSL1xceHheXFxpaSkpIl0sWzIsMSwiXFxLY29udChcXFNodihZLCBcXGNNb2RfUl5cXG9tZWdhKSkiXSxbMCwxXSxbMSwyXSxbMiwzXSxbNCwyXV0=&macro_url=https%3A%2F%2Fgist.githubusercontent.com%2Fshaybenmoshe%2F301102e6fdd6d215848c519c149abcab%2Fraw%2Fquiver
	\[\begin{tikzcd}[row sep=scriptsize]
		{\Kcont(\Shv(Y, R))} & {\Kcont(\Shv(Y, \xc{R}))} & {\Kcont(\Shv(Y, \Ind(\cMod_R^\dbl)))} & {\lim_\ii \Kcont(\Shv(Y, R/\xx^\ii))} \\
		&& {\Kcont(\Shv(Y, \cMod_R^\omega))}
		\arrow[from=1-1, to=1-2]
		\arrow[from=1-2, to=1-3]
		\arrow[from=1-3, to=1-4]
		\arrow[from=2-3, to=1-3]
	\end{tikzcd}\]
\end{example}

\begin{example}\label{fun-comp-R}
	For a (small) category $\CC$, the following maps are $\Tm$-local isomorphisms for $m \geq n{+}1$
	% https://q.uiver.app/#q=WzAsNSxbMCwwLCJcXEtjb250KFxcRnVuKFxcQ0MsIFxcTW9kX1IpKSJdLFsxLDAsIlxcS2NvbnQoXFxGdW4oXFxDQywgXFxNb2Rfe1xceGN7Un19KSkiXSxbMiwwLCJcXEtjb250KFxcRnVuKFxcQ0MsIFxcSW5kKFxcY01vZF9SXlxcZGJsKSkpIl0sWzMsMCwiXFxsaW1fXFxpaSBcXEtjb250KFxcRnVuKFxcQ0MsIFIvXFx4eF5cXGlpKSkiXSxbMiwxLCJcXEtjb250KFxcRnVuKFxcQ0MsIFxcY01vZF9SXlxcb21lZ2EpKSJdLFswLDFdLFsxLDJdLFsyLDNdLFs0LDJdXQ==&macro_url=https%3A%2F%2Fgist.githubusercontent.com%2Fshaybenmoshe%2F301102e6fdd6d215848c519c149abcab%2Fraw%2Fquiver
	\[\begin{tikzcd}[cramped,column sep=1.1em,row sep=scriptsize]
		{\Kcont(\Fun(\CC, \Mod_R))} & {\Kcont(\Fun(\CC, \Mod_{\xc{R}}))} & {\Kcont(\Fun(\CC, \Ind(\cMod_R^\dbl)))} & {\lim_\ii \Kcont(\Fun(\CC, R/\xx^\ii))} \\
		&& {\Kcont(\Fun(\CC, \cMod_R^\omega))}
		\arrow[from=1-1, to=1-2]
		\arrow[from=1-2, to=1-3]
		\arrow[from=1-3, to=1-4]
		\arrow[from=2-3, to=1-3]
	\end{tikzcd}\]
\end{example}

Finally, in the connective setting we can also connect this to topological cyclic homology.

\begin{cor}[{\cref{ring-dgm-intro}}]\label{ring-dgm}
	Let $R \in \Alg_{\Eth}(\Spcn)$ be equipped with an $\Et$-map $\SS[\xx] \to R$, and assume that $x_1,\dotsc,x_r \in \pi_*(R)$ are in the radical of the height $n$ ideal $(p,v_1,\dotsc,v_{n-1}) \subset \pi_*(R)$.
	The following map is a $\Tm$-local isomorphism for $m \geq \max\{ n{+}1, 2\}$
	\[
		\KK(R) \too \lim_\ii \TC(R/\xx^\ii).
	\]
\end{cor}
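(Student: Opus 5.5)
The plan is to combine the ring-spectrum case of \cref{main-thm-rcg} with the Dundas--Goodwillie--McCarthy theorem level by level. The map in question factors as
\[
	\KK(R) \too \lim_\ii \KK(R/\xx^\ii) \too \lim_\ii \TC(R/\xx^\ii).
\]
Take $\EE = \Mod_R$. Since $R$ is an $\Eth$-ring, this satisfies \assref{chromatic-assum} together with rigidity and compact generation. The ring-spectrum example following \cref{main-thm-rcg} then shows that the first map is a $\Tm$-local isomorphism for $m \geq n{+}1$. It therefore remains to show that the second map is a $\Tm$-local isomorphism for $m \geq 2$.

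For the second map, use the pullback squares from the proof of \cref{nuc-dgm}. By \cref{pi0}, each $R/\xx^\ii$ is connective and $R/\xx^\ii \to \Ax$ is surjective on $\pi_0$ with nilpotent kernel. Hence \cite[Theorem 7.2.2.1]{DGM} identifies the fiber of $\KK(R/\xx^\ii) \to \TC(R/\xx^\ii)$ with the fiber of $\KK(\Ax) \to \TC(\Ax)$, compatibly in $\ii$. Passing to the limit, the fiber of $\lim_\ii \KK(R/\xx^\ii) \to \lim_\ii \TC(R/\xx^\ii)$ is the limit of a constant diagram. This limit is the fiber $\fib(\KK(\Ax) \to \TC(\Ax))$ itself, because $\Ng^r$ is weakly contractible.

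So it suffices to show that $\fib(\KK(\Ax) \to \TC(\Ax))$ is $\Tm$-acyclic for $m \geq 2$. Here $\Ax$ is a connective ring spectrum ($\Eo$ suffices). Apply the purity theorem \cite{purity} to $\Ax \to \pi_0(\Ax)$, whose fiber is connective. For $m \geq 2$, $\LTm$ of $\KK$ and of $\TC$ are unchanged by the map $\Ax \to \pi_0\Ax$, since $\LTm$ of an $\Eo$-ring and of its $\pi_0$ agree after applying $\LTm\KK$ when the map is an $\LTmn\oplus\LTm$-isomorphism. Alternatively, apply DGM to the truncation $\Ax \to \tau_{\le 0}\Ax$, which is $1$-connected on $\pi_0$. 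Either way, we reduce to the discrete ring $\pi_0(\Ax)$.

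For a discrete ring $A$, $\LTm\KK(A) = 0 = \LTm\TC(A)$ for $m \geq 2$. For $\KK$ this is the redshift bound \cite{DescVan} (discrete rings are $\Lnf_1$-local in the relevant sense, via $\KK(\ZZ)$-modules). For $\TC$ it holds because $\TC(A)$ is a $\TC(\ZZ)$-module, and $\TC(\ZZ)$ has height at most $1$. Hence the fiber is $\Tm$-acyclic.

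The main obstacle is this last justification: making precise that $\LTm$ of $\KK$ and $\TC$ depend only on $\pi_0$ and vanish for discrete rings. I would first try the cleanest route. $\KK$ and $\TC$ of $\Ax$ are modules over $\KK(\SS)$ and $\TC(\SS)$, and the fiber of DGM is the same for $\Ax \to \pi_0\Ax$. So the fiber is that of $\KK(\pi_0\Ax) \to \TC(\pi_0\Ax)$, which is a $\KK(\ZZ)$-module (via the $\TC(\ZZ)$-module structure on $\TC$), and $\LTm\KK(\ZZ) = 0$ for $m \geq 2$ by \cite{DescVan}.
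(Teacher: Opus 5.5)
Your argument is essentially the paper's. Like the paper, you combine \cref{main-thm-rcg} with the level-wise DGM pullback squares of \cref{nuc-dgm}, which reduces everything to the $\Tm$-acyclicity of $\fib(\KK(\Ax) \to \TC(\Ax))$ for $m \geq 2$. The paper simply cites this last fact as \cite[Corollary 4.30]{purity}. Your final paragraph reproves it correctly: apply DGM along $\Ax \to \pi_0\Ax$, observe that the trace for the discrete ring $\pi_0\Ax$ is $\KK(\ZZ)$-linear, and use $\LTm\KK(\ZZ)=0$.

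Two intermediate justifications should be dropped, however.

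The ``purity'' route does not work. In general $\Ax \to \pi_0\Ax$ is not a $(\Tmn\oplus\Tm)$-local equivalence. For example, take $R=\BPn$ and $\xx=(p,\dotsc,v_{n-1})$; then $\Ax = \kn \to \Fp$ is not a $\Tn$-equivalence, and $m=n{+}1$ is exactly the case that matters. Consequently $\LTm\KK(\Ax)$ need not agree with $\LTm\KK(\pi_0\Ax)$. Only the fiber of the trace is invariant, by DGM.

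Discrete rings are also not $L_1^f$-local, since $\ZZ\otimes F\neq 0$ for every nonzero finite $F$. What is true is that $\ZZ$ is $\mrm{T}(i)$-acyclic for $i\geq 1$. The vanishing $\LTm\KK(\ZZ)=0$ for $m\geq 2$ is Mitchell's theorem, which can be recovered from \cite{purity} because $L_{\Tmn\oplus\Tm}\ZZ=0$.
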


\begin{proof}
	Combining \cref{main-thm-rcg} with \cref{nuc-dgm}, we see that the square
	% https://q.uiver.app/#q=WzAsNCxbMCwwLCJcXEtLKFIpIl0sWzAsMSwiXFxLSyhcXEF4KSJdLFsxLDAsIlxcbGltX1xcaWkgXFxUQyhSL1xceHheXFxpaSkiXSxbMSwxLCJcXFRDKFxcQXgpIl0sWzEsM10sWzIsM10sWzAsMV0sWzAsMl1d
	\[\begin{tikzcd}
		{\KK(R)} & {\lim_\ii \TC(R/\xx^\ii)} \\
		{\KK(\Ax)} & {\TC(\Ax)}
		\arrow[from=1-1, to=1-2]
		\arrow[from=1-1, to=2-1]
		\arrow[from=1-2, to=2-2]
		\arrow[from=2-1, to=2-2]
	\end{tikzcd}\]
	becomes a pullback square after $\Tm$-localization for any $m \geq n{+}1$.
	By \cite[Corollary 4.30]{purity}, the bottom map is a $\Tm$-local isomorphism for any $m \geq 2$, and the result follows.
\end{proof}

	% \newpage
	\bibliographystyle{alpha}
	\bibliography{refs}

\end{document}